\newtheorem{thm}{Theorem}[section]
\newtheorem{lem}[thm]{Lemma}
\newtheorem{pro}[thm]{Proposition}
\newtheorem{cor}[thm]{Corollary}
\theoremstyle{definition}
\newtheorem{exa}[thm]{Example}
\theoremstyle{remark}
\newtheorem{rem}[thm]{Remark}
\newcommand{\R}{\mathbb{R}}
\newcommand{\Z}{\mathbb{Z}}
\newcommand{\N}{\mathbb{N}}
\newcommand{\K}{\mathbb{K}}
\newcommand{\cD}{\mathcal{D}}
\newcommand{\cL}{\mathcal{L}}
\newcommand{\cM}{\mathcal{M}}
\newcommand{\cR}{\mathcal{R}}
\newcommand{\cS}{\mathcal{S}}
\newcommand{\al}{\alpha}
\newcommand{\be}{\beta}
\newcommand{\ga}{\gamma}
\newcommand{\Ga}{\Gamma}
\newcommand{\de}{\delta}
\newcommand{\ep}{\varepsilon}
\newcommand{\om}{\omega}
\newcommand{\si}{\sigma}
\newcommand{\la}{\lambda}
\newcommand{\La}{\Lambda}
\renewcommand{\phi}{\varphi}
\newcommand{\crt}{\operatorname{crt}}
\newcommand{\dist}{\operatorname{dist}}
\newcommand{\Hd}{\operatorname{Hd}}
\newcommand{\CAT}{\operatorname{CAT}}
\newcommand{\hyp}{\operatorname{H}}
\newcommand{\id}{\operatorname{id}}
\newcommand{\const}{\operatorname{const}}
\newcommand{\aut}{\operatorname{Aut}}
\newcommand{\lift}{\operatorname{lift}}
\newcommand{\sign}{\operatorname{sign}}
\newcommand{\slope}{\operatorname{slope}}
\newcommand{\ad}{\operatorname{ad}}
\newcommand{\Int}{\operatorname{int}}
\newcommand{\fil}{\operatorname{Fill}}
\newcommand{\es}{\emptyset}
\renewcommand{\d}{\partial}
\newcommand{\di}{\d_{\infty}}
\newcommand{\set}[2]{\{#1:\,\text{#2}\}}
\newcommand{\sm}{\setminus}
\newcommand{\sub}{\subset}
\newcommand{\sups}{\supset}
\newcommand{\ov}{\overline}
\newcommand{\wt}{\widetilde}
\newcommand{\wh}{\widehat}
\begin{document}

\title{M\"obius characterization of the boundary at infinity
of rank one symmetric spaces}
\author{Sergei Buyalo\footnote{Supported by RFBR Grant
11-01-00302-a and SNF Grant 20-119907/1}
\ \& Viktor Schroeder\footnote{Supported by Swiss National
Science Foundation Grant 20-119907/1}}

\date{}
\maketitle

\begin{abstract} A M\"obius structure (on a set 
$X$)
is a class of metrics having the same cross-ratios.
A M\"obius structure is ptolemaic if it is invariant
under inversion operations. The boundary at infinity of a
$\CAT(-1)$ 
space is in a natural way a M\"obius space, which is 
ptolemaic. We give a free of classification proof of the following
result that characterizes the rank one symmetric spaces
of noncompact type purely in terms of their M\"obius geometry: Let
$X$
be a compact Ptolemy space which contains a Ptolemy circle 
and allows many space inversions. Then 
$X$
is M\"obius equivalent to the boundary at infinity
of a rank one symmetric space.
\end{abstract}

\section{Introduction}
\label{sect:introduction}

Two metrics on a set 
$X$
are M\"obius equivalent if they have the same cross-ratios.
A M\"obius structure on 
$X$
is a class of M\"obius equivalent metrics. If a M\"obius structure
is fixed then
$X$
is called a M\"obius space. Ptolemy spaces are M\"obius spaces
with the property that the inversion operation preserves the
M\"obius structure. A classical example of a Ptolemy space is
the extended
$\wh\R^n=\R^n\cup\infty=S^n$, $n\ge 0$,
where the M\"obius structure is generated by an Euclidean metric
on
$\R^n$,
and
$\wh\R^n$
is identified with the unit sphere
$S^n\sub\R^{n+1}$
via the stereographic projection. For more detail see Section~\ref{sect:moebius_structures}.

There is a well known deep connection between the geometry
of the hyperbolic space
$\hyp^{n+1}$
and the M\"obius geometry of its boundary
$\di\hyp^{n+1}=\wh\R^n$.
More generally the boundary
$X=\di Y$
of a 
$\CAT(-1)$
space
$Y$
carries in a natural way a M\"obius structure and is actually
a Ptolemy space \cite{FS1}. An isometry of
$Y$
induces a M\"obius map of
$X$
and a Ptolemy circle
$\si$
in
$X$
corresponds to a totally geodesic subspace 
$Y_\si\sub Y$
isometric to
$\hyp^2$
with
$\di Y_\si=\si$.
Here a Ptolemy circle is a subspace 
M\"obius equivalent to the Ptolemy space
$\wh\R=S^1$.

Our motivation is to find a M\"obius characterization of the boundary
at infinity of rank one symmetric spaces 
$Y$
of non-compact type. In the case
$Y=\hyp^{n+1}$
this problem is solved in \cite{FS2} for every
$n\ge 1$: {\em every compact Ptolemy space such that through any three points there is
a Ptolemy circle is M\"obius equivalent to
$\wh\R^n=\di\hyp^{n+1}$.}

Given distinct points
$\om$, $\om'$
in a M\"obius space
$X$,
there is a well defined notion of a {\em sphere}
$S$
between
$\om$, $\om'$,
see sect.~\ref{subsect:spheres_between_points},
and a notion of a {\em space inversion} w.r.t.
$\om$, $\om'$, $S$,
which is a fixed point free M\"obius involution
$\phi_{\om,\om',S}:X\to X$,
permuting
$\om$
and
$\om'$,
preserving 
$S$
and any Ptolemy circle through
$\om$, $\om'$,
see sect.~\ref{subsect:space_inversions}.

We consider a M\"obius space
$X$
with the following basic properties. 

\noindent
(E) Existence: there is at least one Ptolemy circle in
$X$.

\noindent
(I) Inversion: for each distinct
$\om$, $\om'\in X$
and every sphere
$S\sub X$
between
$\om$, $\om'$
there is a unique space inversion
$\phi_{\om,\om',S}:X\to X$
w.r.t.
$\om$, $\om'$
and
$S$.

Our main result gives the following M\"obius characterization of 
the boundary at infinity of rank one symmetric spaces of non-compact
type.

\begin{thm}\label{thm:moebius} Let
$X$
be a compact Ptolemy space with properties 
($E$) and (I). Then
$X$
is M\"obius equivalent to the boundary at infinity
of a rank one symmetric space of non-compact type
taken with the canonical M\"obius structure. 
\end{thm}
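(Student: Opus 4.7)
The plan is to fix a point $\omega \in X$ and realize $X \sm \{\omega\}$ as a homogeneous Carnot group $N$ of Heisenberg type, using axioms (E) and (I) to build a transitive group of similarities on a metric in the M\"obius class having $\omega$ as point at infinity. The four Heisenberg-type Carnot groups over $\R$, $\C$, $\bH$, and $\bO$ are precisely the nilpotent Iwasawa factors of the rank one symmetric spaces of non-compact type, and identifying $N$ with one of them would give the desired conclusion after comparing M\"obius completions.

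I would begin by extracting dilations at $\omega$: for a second point $\omega' \neq \omega$ and two distinct spheres $S_1,S_2 \sub X$ between $\omega$ and $\omega'$, the composition $\phi_{\omega,\omega',S_1} \circ \phi_{\omega,\omega',S_2}$ is M\"obius, fixes both $\omega$ and $\omega'$, preserves every Ptolemy circle through them, and acts as a metric homothety after stereographic normalization at $\omega$. Varying $S_2$ while keeping $S_1$ fixed should yield a one-parameter group of dilations at $\omega$. Next, I would build translations as compositions of pairs of inversions with $\omega$ fixed but with distinct second fixed points; these are fixed-point free on $X \sm \{\omega\}$. Uniqueness in (I), combined with the abundance of Ptolemy circles through $\omega$ obtained by pushing the circle from (E) around by inversions, should force the translations to form a connected nilpotent Lie group $N$ acting simply transitively on $X \sm \{\omega\}$. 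The Ptolemy inequality then constrains $N$ to be of step at most two, with horizontal layer $V$ spanned by tangents to Ptolemy circles through $\omega$ and central layer $\fz$ generated by commutators; the dilations act on $N$ by Carnot automorphisms with appropriate weights.

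The decisive step is to show that $N$ is of Heisenberg type, i.e.\ that the adjoint-type maps $J_Z : V \to V$ attached to central elements $Z \in \fz$ satisfy $J_Z^2 = -|Z|^2 \id$. This should be derived from the existence and, crucially, uniqueness of the inversion $\phi_{\omega,\omega',S}$: uniqueness imposes a reflective symmetry on cross-ratios that forces orthogonality of $J_Z$ on $V$, while the Ptolemy inequality applied to configurations on Ptolemy circles forces the Clifford identity. A self-contained, classification-free analysis of Clifford modules compatible with Ptolemaic dilation scaling then restricts $N$ to the four cases $V = \R^n$, $\C^n$, $\bH^n$, or $\bO$.

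Finally, $X$ is compact and hence the one-point M\"obius compactification of the homogeneous metric space $N$, which in each of the four cases coincides with $\di Y$ for the corresponding rank one symmetric space $Y$ endowed with its canonical M\"obius structure. The main obstacle I foresee is the Heisenberg-type step: passing from the mere existence and uniqueness of M\"obius inversions to the rigid Clifford-module structure that pins down one of only four normed division algebras, and doing so \emph{free of classification} as the authors advertise, so without appealing to the classification of simple Lie algebras or of two-point homogeneous spaces. This requires extracting the full geometric content of the Ptolemy inequality together with (I) in a way that directly produces the Clifford identity.
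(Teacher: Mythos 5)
Your plan is not the paper's proof, and as it stands it has genuine gaps at exactly the points you flag as decisive. First, the assertion that the Ptolemy inequality forces the translation group $N$ to be of step at most two is nowhere substantiated; in the paper the two-step structure (equivalently the $1/4$-pinching) only emerges at the very end, from Heintze's structure theory for homogeneous negatively curved manifolds, \emph{after} the symmetric space has already been constructed --- it is an output of the argument, not an input. Second, the derivation of the Heisenberg-type (Clifford) identity $J_Z^2=-|Z|^2\id$ from uniqueness in (I) is the heart of your proposal and is left entirely open; ``uniqueness imposes a reflective symmetry on cross-ratios'' is a hope, not a mechanism. Third, and most seriously: even if you establish that $N$ is of Heisenberg type, this does not restrict you to the four Iwasawa groups. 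There are infinitely many $H$-type groups, one for each Clifford module, and only the ones satisfying the additional $J^2$-condition are nilpotent Iwasawa factors of rank one symmetric spaces; that condition is not a consequence of the Clifford identity, and extracting the four division-algebra cases from the list of Clifford modules is itself a classification argument --- precisely what the theorem's advertised proof is supposed to avoid. So even a completed version of your plan would not be classification-free.

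The paper takes a different route that sidesteps the identification of $N$ altogether. It first proves (sections 3--5) that $X_\om$ fibers over a Euclidean base $B_\om$ with property ($\K$) and carries a simply connected nilpotent Lie group of shifts, but it never classifies that group. Instead it defines the filling $Y=\fil X$ as the \emph{set of all space inversions}, shows any two points of $Y$ lie on a unique line $(a,a')$ (Proposition~\ref{pro:unique_line_filling}), introduces the cross-ratio distance $\rho$ for which these lines are geodesics and each $Y_\si$ is an isometric $\hyp^2$, and then uses the extension property (${\rm E}_2$) --- every M\"obius map between Ptolemy circles extends to all of $X$, itself a substantial second-order estimate on circles --- to show the stabilizer of a point acts transitively on unit tangent directions, whence the infinitesimal $\rho$-norm is Euclidean by a L\"owner-ellipsoid argument. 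From there $Y$ is a negatively curved Riemannian symmetric space with $\di Y=X$, and the normalization $\max K_\si=-1$ recovers the canonical M\"obius structure. If you want to salvage your approach, you would need to supply actual proofs of the step-two bound, the Clifford identity, and the $J^2$-condition, and accept that the last step is a (mild) classification.
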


The canonical M\"obius structure on the boundary at infinity
$X=\di Y$
of every rank one symmetric space
$Y$
of non-compact type can be described as follows. Assume that 
the metric of
$Y$
is normalized such that maximum of sectional curvatures equals
$-1$.
Take 
$\om\in X$
and a Busemann function
$b:Y\to\R$
centered at
$\om$.
Then the function
$d_b:X\times X\to\R$
defined by
$$d_b(\xi,\xi')=e^{-(\xi|\xi')_b},$$
where
$(\xi|\xi')_b\in\R$
is the Gromov product of
$\xi$, $\xi'$
w.r.t.
$b$,
is a metric on
$X$.
The canonical M\"obius structure on
$X$
is generated by all such metrics
$d_b$
with
$b\in\om\in X$.

\begin{rem}\label{rem:normalization} 
The property (E) plays a double role. First, it put a restriction 
on the topology of
$X$
excluding e.g. totally disconnected spaces. Second, it
also serves as a normalization condition,
because, for example, the Ptolemy space 
$X=\di\hyp^{n+1}$, 
where
$\hyp^{n+1}$
is a real hyperbolic space of curvature
$-1$,
contains a lot of Ptolemy circles, while for every
$0<\la<1$
the boundary at infinity
$X_\la$
of the rescaled hyperbolic space 
$\la\hyp^{n+1}$
is still a Ptolemy space which however contains no
Ptolemy circle. This is because the rescaling
$\hyp^{n+1}\mapsto\la\hyp^{n+1}$
implies the operation
$d\mapsto d^\la$
for every metric 
$d$
of the M\"obius structure of
$X$. 
\end{rem}

The proof of Theorem~\ref{thm:moebius} relies on two existence
results. The first one, which is formulated as the property
($\K$)
in Theorem~\ref{thm:basic_ptolemy}, provides for every 
$\om\in X$ 
the existence of a canonical fibration
$\pi_\om :X_\om\to B_\om$ 
and for every 
$x\in X_\om$ 
the existence of a Ptolemy circle through 
$x$, $\om$,
which hits a prescribed fiber of 
$\pi_\om$; 
here 
$X_\om=X\sm\om$.
Preparation to the proof of this result, the proof itself
and discussion of various consequences occupy the first
part of the paper, sections~\ref{sect:many_circles_auto} --
\ref{sect:topology_space}. Most important consequence obtained here
is the existence of a simply connected, nilpotent Lie
group structure 
$N_\om$
on every punctured space
$X_\om$, $\om\in X$,
see sect.~\ref{sect:topology_space}. At this point, one can 
formally conclude the proof of Theorem~\ref{thm:moebius}
referring to the Tits classification of 2-transitive group actions,
see \cite{Kr}. The Tits classification gives an alternative
between two possibilities. One of them is related to the rank
one symmetric space on non-compact type, and the first part of 
the paper is actually the proof that the other one cannot be 
realized as M\"obius transformations of any M\"obius structure on
$X$.

However, we are not satisfied with that formal classification
argument, and we prefer to give a direct, classification free 
proof of Theorem~\ref{thm:moebius} to clarify the geometry of the phenomenon.
We directly construct a symmetric space which has the M\"obius space 
$X$ 
as boundary at infinity. This is done in the second part of the paper, 
sections~\ref{sect:extension_circles} -- \ref{sect:filling}: 
We introduce a filling
$Y=\fil X$
of
$X$
as the set of all space inversions of
$X$
and a distance 
$\rho$
on
$Y$
defined via cross-ratios of the M\"obius structure of
$X$.
Then we show that 
$\rho$
is a Riemannian distance associated with
a Riemannian rank one symmetric space
of non-compact type with maximum of sectional curvatures
$-1$.
The proof is based on our second existence result that
every M\"obius map between two Ptolemy circles in
$X$
extends to a M\"obius automorphism of
$X$,
see sect.~\ref{sect:extension_circles}. Finally, we
conclude the proof of Theorem~\ref{thm:moebius} by
showing that 
$X=\di Y$
and that the canonical M\"obius structure
associated with
$Y$
coincides with the initial one on
$X$.

Our proof of Theorem~\ref{thm:moebius} neither uses the 
classification of rank one symmetric spaces of non-compact type,
nor gives a new approach to that classification.

Section~\ref{sect:moebius_structures} serves as a brief
introduction to M\"obius geometry.

{\em Acknowledgments.} We are thankful to
L.~Kramer for informing us about 2-transitive group
actions. The first author
is also grateful to the University of Z\"urich for 
hospitality and support.

\tableofcontents

\section{M\"obius structures and Ptolemy spaces}
\label{sect:moebius_structures}
This section is a brief introduction to M\"obius geometry.

\subsection{M\"obius structures}
\label{subsect:moebius_structures}

A quadruple
$Q=(x,y,z,u)$
of points in a set
$X$
is said to be {\em admissible} if no entry occurs three or
four times in 
$Q$.
Two metrics 
$d$, $d'$
on 
$X$ 
are {\em M\"obius equivalent} if for any admissible quadruple
$Q=(x,y,z,u)\sub X$
the respective {\em cross-ratio triples} coincide,
$\crt_d(Q)=\crt_{d'}(Q)$,
where
$$\crt_d(Q)=(d(x,y)d(z,u):d(x,z)d(y,u):d(x,u)d(y,z))\in\R P^2.$$
We actually consider {\em extended} metrics on
$X$
for which existence of an {\em infinitely remote} point
$\om\in X$
is allowed, that is,
$d(x,\om)=\infty$
for all
$x\in X$, $x\neq\om$.
We always assume that such a point is unique if exists, and that
$d(\om,\om)=0$.
We use notation
$X_\om:=X\sm\om$
and the standard conventions for the calculation with 
$\om=\infty$.
If 
$\infty$ 
occurs once in 
$Q$, 
say 
$u=\infty$,
then
$\crt_d(x,y,z,\infty)=(d(x,y):d(x,z):d(y,z))$.
If 
$\infty$ 
occurs twice, say 
$z=u=\infty$, 
then
$\crt_d(x,y,\infty,\infty)=(0:1:1)$.

A {\em M\"obius structure} on a set
$X$
is a class 
$\cM=\cM(X)$
of metrics on
$X$
which are pairwise M\"obius equivalent.

The topology considered on 
$(X,d)$ 
is the topology with the basis consisting of all open distance balls 
$B_r(x)$
around points in 
$x\in X_\om$ 
and the complements $X\sm D$ 
of all closed balls 
$D=\overline{B}_r(x)$. 
M\"obius equivalent metrics define
the same topology on
$X$.
When a M\"obius structure 
$\cM$
on
$X$
is fixed, we say that
$(X,\cM)$
or simply
$X$
is a {\em M\"obius space.}

A map
$f:X\to X'$
between two M\"obius spaces
is called {\em M\"obius}, if 
$f$ 
is injective and for all admissible quadruples
$Q\sub X$
$$\crt(f(Q))=\crt(Q),$$
where the cross-ratio triples are taken with respect to
some (and hence any) metric of the M\"obius structure
of
$X$
and of 
$X'$.
M\"obius maps are continuous. If a M\"obius map
$f:X\to X'$
is bijective, then 
$f^{-1}$
is M\"obius,
$f$
is homeomorphism, and the M\"obius
spaces
$X$, $X'$
are said to be {\em M\"obius equivalent}.

In general different metrics in a M\"obius structure
$\cM$ 
can look very differently. However if two metrics have the same infinitely remote
point, then they are homothetic. Since this result is crucial 
for our considerations, we state it as a lemma. 

\begin{lem}\label{lem:homothety_infinite}
Let 
$\cM$ be a M\"obius structure on a set
$X$, 
and let
$d$, $d'\in\cM$
have the same infinitely remote point
$\om\in X$.
Then there exists 
$\la>0$
such that
$d'(x,y)=\la d(x,y)$ 
for all 
$x$, $y\in X$.
\end{lem}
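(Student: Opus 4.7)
The plan is to exploit the fact that cross-ratio triples that involve the infinitely remote point $\omega$ reduce to triangle side-length ratios. Doing so shows that $d$ and $d'$ are similar on every triangle in $X_\omega$, and then a short chaining argument glues these local similarity ratios into one global constant $\lambda$.

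First, assume $X_\omega$ has at least three points and pick three distinct points $x,y,z\in X_\omega$. The quadruple $(x,y,z,\omega)$ is admissible and, by the formula recorded in the text,
\[
\crt_d(x,y,z,\omega)=(d(x,y):d(x,z):d(y,z)),
\]
with the analogous identity for $d'$. Since $d$ and $d'$ are M\"obius equivalent, these two triples in $\R P^2$ coincide. All six entries are finite and strictly positive (as $x,y,z$ are pairwise distinct in $X_\omega$ and $d,d'$ are genuine metrics), so there is a unique $\lambda(x,y,z)>0$ with
\[
d'(x,y)=\lambda(x,y,z)\,d(x,y),\quad d'(x,z)=\lambda(x,y,z)\,d(x,z),\quad d'(y,z)=\lambda(x,y,z)\,d(y,z).
\]

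Next, show $\lambda(x,y,z)$ is independent of the triangle. If two triangles share an edge $\{a,b\}$, the value of $\lambda$ read off that edge is the same for both, so the two triangle-ratios agree. In general, given two pairs $\{a,b\},\{c,d\}\sub X_\omega$, chain through the triangles $\{a,b,c\}$ and $\{a,c,d\}$ (which share the edge $\{a,c\}$); if the two pairs already share a point, a single triangle suffices. This produces a common positive constant $\lambda$ with $d'(x,y)=\lambda\,d(x,y)$ for every pair $x,y\in X_\omega$ with $x\neq y$. When $x=y$ both sides vanish, so the equality extends. Finally, for any $x\in X_\omega$ we have $d(x,\omega)=d'(x,\omega)=\infty$, and $d(\omega,\omega)=d'(\omega,\omega)=0$, so the identity $d'=\lambda d$ holds on all of $X\times X$ under the usual conventions for $\infty$. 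The degenerate cases $|X_\omega|\le 2$ are trivial: there is at most one positive distance to prescribe, so any $\lambda>0$ matching that one ratio works.

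The argument is essentially mechanical, so there is no substantial obstacle; the only point requiring a moment's care is the chaining step, since one must ensure enough distinct auxiliary points exist (which is automatic once $|X_\omega|\ge 3$) to move the common similarity ratio between two arbitrary pairs.
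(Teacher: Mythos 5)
Your argument is correct and is essentially the paper's own proof: both rest on the identity $\crt(x,y,z,\om)=(d(x,y):d(x,z):d(y,z))$ for quadruples containing the infinitely remote point, which forces triangle-wise proportionality, and then propagate the ratio across $X_\om$. The only difference is cosmetic — the paper fixes one base pair $(x,y)$ and reads off $\la$ from it, while you define a ratio per triangle and chain through shared edges; your version spells out the step (extending to pairs disjoint from the base pair) that the paper leaves implicit.
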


\begin{proof}
Since otherwise the result is trivial, we can assume that
there are distinct points
$x$, $y\in X_\om$.
Take 
$\la>0$ 
such that
$d'(x,y)=\la d(x,y)$.
If 
$z\in X_\om$, 
then
$\crt_d(x,y,z,\om)=\crt_{d'}(x,y,z,\om)$, 
hence
$(d'(x,y):d'(x,z):d'(y,z))=(d(x,y):d(x,z):d(y,z)).$
Since 
$d'(x,y)=\la d(x,y)$
we therefore obtain
$d'(x,z)=\la d(x,z)$ 
and
$d'(y,z)=\la d(y,x)$.
\end{proof}

In what follows we always consider
$X_\om=X\sm\om$
as a metric space with a metric from the M\"obius structure
for which the point 
$\om$
is infinitely remote.

A classical example of a M\"obius space is the extended
$\wh\R^n=\R^n\cup\infty=S^n$, $n\ge 1$,
where the M\"obius structure is generated by some extended
Euclidean metric on
$\wh\R^n$,
and
$\R^n\cup\infty$
is identified with the unit sphere
$S^n\sub\R^{n+1}$
via the stereographic projection. Note that Euclidean metrics 
which are not homothetic to each other generate different 
M\"obius structures by the lemma above, which however are M\"obius equivalent.

\subsection{Ptolemy spaces}
\label{subsect:Ptolemy_spaces}

A M\"obius space
$X$
is called a {\em Ptolemy space}, if it satisfies the
Ptolemy property, that is, for all admissible quadruples  
$Q\sub X$
the entries of the respective cross-ratio triple
$\crt(Q)\in\R P^2$
satisfies the triangle inequality.

The importance of the Ptolemy property comes from the following fact.
Given a metric
$d\in\cM(X)$
possibly with infinitely remote point
$\om\in X$ 
and a point
$z\in X_\om$,
the {\em metric inversion}, or m-inversion for brevity, of
$d$
of radius
$r>0$
with respect to
$z$
is a function
$d_z(x,y)=\frac{r^2d(x,y)}{d(z,x)d(z,y)}$
for all
$x$, $y\in X$
distinct from
$z$, $d_z(x,z)=\infty$
for all
$x\in X\sm\{z\}$
and
$d_z(z,z)=0$.
In particular,
$z$
is infinitely remote for 
$d_z$.
Using the standard convention we also have
$d_z(x,\om)=\frac{r^2}{d(x,z)}$.
A direct computation shows that
$d_z$
is M\"obius equivalent to
$d$.

\begin{rem}\label{rem:radius_one_inversion} When saying
about an m-inversion of a metric without specifying its radius, 
we mean that the radius is 1.
\end{rem}

In general
$d_z$
is not a metric because the triangle inequality may not
be satisfied. However, we have

\begin{pro}\label{pro:moeb_ptolemy}
A M\"obius structure
$\cM$ 
on a set
$X$ 
is Ptolemy if and only if 
$\cM$
is invariant under the metric inversion 
$d\mapsto d_z$
w.r.t. every
$z\in X$.
\end{pro}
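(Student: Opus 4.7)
The plan is to exploit a single algebraic identity that converts the triangle inequality for an m-inversion $d_z$ into the Ptolemy inequality for a quadruple containing $z$. The equivalence in the proposition then falls out by reading this identity in the two directions, combined with the fact (noted just before the proposition) that $d_z$ has the same cross-ratios as $d$ by direct computation.

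The core identity runs as follows. Assume momentarily that $d$ has no infinitely remote point, let $r=1$, and let $x,y,u\in X\sm\{z\}$ be arbitrary. Multiplying the candidate triangle inequality $d_z(x,u)\le d_z(x,y)+d_z(y,u)$ by the positive factor $d(z,x)d(z,y)d(z,u)$ turns it into
\[
d(x,u)\,d(y,z)\le d(x,y)\,d(z,u)+d(x,z)\,d(y,u),
\]
which is precisely one of the three Ptolemy inequalities for the cross-ratio triple $\crt_d(x,y,z,u)$. The other two come from the other two orderings of the same triangle inequality of $d_z$ on $\{x,y,u\}$, so all three Ptolemy inequalities for $\crt_d(x,y,z,u)$ are equivalent to the three triangle inequalities of $d_z$ on $\{x,y,u\}$.

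Granted this identity, the forward direction is immediate: if $\cM$ is Ptolemy, then $d_z$ is already M\"obius equivalent to $d$, is symmetric and nonnegative, vanishes exactly on the diagonal, and satisfies the triangle inequality by the identity, so $d_z\in\cM$. For the converse, given an admissible quadruple $(x,y,z,u)$ and $d\in\cM$, one invokes the hypothesis $d_z\in\cM$, so that $d_z$ is a metric, and reads the three triangle inequalities of $d_z$ on $\{x,y,u\}$ backwards through the identity to obtain the three Ptolemy inequalities for $\crt_d(x,y,z,u)$. Thus $\cM$ is Ptolemy.

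The only real obstacle is clerical: one must verify that the cases in which $x$, $y$, or $u$ equals the infinitely remote point $\om$ of $d$, or in which admissibility permits some coincidence among the entries, are correctly absorbed by the standard conventions $d(\cdot,\om)=\infty$ and $d_z(\cdot,z)=\infty$, together with the limiting value $d_z(x,\om)=1/d(x,z)$. In each such case both the Ptolemy inequality and the corresponding triangle inequality for $d_z$ reduce to the same finite statement, so the identity above, and with it the equivalence, persists throughout.
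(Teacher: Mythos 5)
Your proof is correct and follows essentially the same route as the paper: the paper identifies $(d_z(x,y):d_z(y,u):d_z(x,u))$ with $\crt_d(x,y,z,u)$ via the Möbius equivalence of $d_z$ and $d$, which is exactly your clearing-of-denominators identity, and then reads the triangle inequality for $d_z$ as the Ptolemy inequality for $d$ in both directions. Your extra remarks on the infinitely remote point and degenerate admissible quadruples are care the paper leaves implicit, but the substance is identical.
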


\begin{proof} Since
$d_z$
is M\"obius equivalent to
$d$
$$(d_z(x,y):d_z(y,u):d_z(x,u))=\crt_{d_z}(x,y,z,u)
=\crt_d(x,y,z,u)$$
for 
$x$, $y$, $u\in X\sm z$.
Thus the triangle inequality for 
$d_z$
is equivalent to the Ptolemy property of
$d$.
\end{proof}

The classical example of Ptolemy space is
$\wh\R^n$
with a standard M\"obius structure as it follows from 
the proposition above. Here is the list of some known results 
on metric spaces with Ptolemy property.
A real normed vector space, which is ptolemaic, is an inner product space
(Schoenberg, 1952, \cite{Sch});
a Riemannian locally ptolemaic space is nonpositively curved (Kay, 1963, \cite{Kay});
all Bourdon and Hamenst\"adt metrics on 
$\di Y$,
where
$Y$
is CAT($-1$), generate a Ptolemy space (Foertsch-Schroeder, 2006, \cite{FS1});
a geodesic metric space is CAT(0) if and only if it is ptolemaic and 
Busemann convex, a ptolemaic proper geodesic metric space is uniquely geodesic 
(Foertsch-Lytchak-Schroeder, 2007, \cite{FLS}); any Hadamard space ptolemaic, 
a complete Riemannian manifold is ptolemaic if and only if it is a Hadamard manifold, 
a Finsler ptolemaic manifold is Riemannian (Buckley-Falk-Wraith, 2009, \cite{BFW}). 
These results allow to suggest that the Ptolemy property is a sort
of a M\"obius invariant nonpositive curvature condition.

\subsection{Spheres between points}
\label{subsect:spheres_between_points}

The notion of a sphere between two points is
a notion of M\"obius geometry and can be described in terms 
of the cross-ratio triple. Let
$X$
be a M\"obius space. Given distinct
$\om$, $\om'\in X$,
we say that 
$x$, $y\in X$
lie on some sphere between
$\om$, $\om'$
if
$\crt(\om,x,y,\om')=(1:1:\ast)$,
i.e. the first two entries are equal. In particular, both
$x$, $y$
are distinct from
$\om$, $\om'$.
One easily checks this defines an equivalence relation on
$X\sm\{\om,\om'\}$,
and any equivalence class
$S\sub X\sm\{\om,\om'\}$
is called a {\em sphere between}
$\om$, $\om'$.
M\"obius maps preserve the sets of spheres
between points: if
$\phi:X\to X$
is a M\"obius map,
and
$S$
is a sphere between
$\om$
and
$\om'$,
then
$\phi(S)$
is a sphere between
$\phi(\om)$
and
$\phi(\om')$.

If 
$\om$
is infinitely remote for some metric 
$d$
of the M\"obius structure then
$$S=\set{x\in X}{$d(x,\om')=r$}=S_r^d(\om')$$
for some 
$r>0$,
which justifies our terminology.

Given distinct points 
$x$, $y$, $\om$, $\om'\in X$
the symmetries of the cross-ratio triple implies that
$x$
and
$y$
lie on a some sphere between
$\om$
and
$\om'$
if and only if
$\om$
and
$\om'$
lie on a some sphere between
$x$
and
$y$.

If we take some point on the sphere as infinitely remote,
then the sphere becomes a bisector w.r.t. a respective metric. 

\begin{lem}\label{lem:bisector} Let 
$S\sub X$
be a sphere between distinct
$a$, $a'\in X$.
Then for every
$\om\in S$
the set 
$S_\om=S\sm\{\om\}$
is the bisector in
$X_\om$
between
$a$, $a'$, $S_\om=\set{x\in X_\om}{$d(x,a)=d(x,a')$}$.
\end{lem}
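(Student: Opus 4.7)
The plan is to unwind the definition of the sphere $S$ using a representative metric of the M\"obius structure for which $\om$ is infinitely remote, so that the cross-ratio condition defining the equivalence class collapses to a distance equation.

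First, I would choose a metric $d\in\cM(X)$ having $\om$ as its infinitely remote point; such a metric exists by the standing conventions set after Lemma~\ref{lem:homothety_infinite} and used throughout the paper when writing $X_\om$ as a metric space. Next, for $x\in X_\om$ distinct from $a,a'$, I would evaluate $\crt_d(a,x,\om,a')$ using the paper's convention that factors involving the infinitely remote point are dropped. Since $\om$ appears exactly once in each of the three monomials of the triple (paired with $a'$, with $a$, and with $x$ respectively), the convention yields
$$\crt_d(a,x,\om,a') \;=\; \bigl(d(a,x)\,:\,d(x,a')\,:\,d(a,a')\bigr).$$

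Finally, I would invoke the definition of $S$ as the equivalence class of $\om$ under the relation $x\sim y\iff \crt(a,x,y,a')=(1:1:\ast)$. For $x\in X_\om\setminus\{a,a'\}$, membership $x\in S_\om$ is equivalent to $x\sim\om$, which by the displayed formula is exactly the equation $d(a,x)=d(x,a')$, i.e.\ $x$ lies on the bisector. The points $a$ and $a'$ are excluded from both sides: neither lies in $S$ by admissibility of the defining quadruple, and neither satisfies the bisector equation since $d(a,a)=0\ne d(a,a')$ and similarly for $a'$. This establishes $S_\om=\{x\in X_\om:d(x,a)=d(x,a')\}$.

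There is no substantive obstacle here; the proof is essentially a one-line calculation once one commits to a metric in the M\"obius class for which $\om=\infty$. The only point requiring mild care is the bookkeeping of the infinite entries via the paper's convention on extended cross-ratios, together with verifying that the two excluded points $a,a'$ do not spoil the equality.
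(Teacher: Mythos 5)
Your proof is correct and is essentially the paper's own argument: fix a metric with $\om$ infinitely remote, compute the extended cross-ratio of the quadruple formed by $a$, $a'$, $x$, $\om$, and observe that the sphere condition $(1:1:\ast)$ collapses to $d(x,a)=d(x,a')$. The only cosmetic difference is the ordering of the sphere points inside the quadruple (the paper uses $\crt(a,\om,x,a')$), which is immaterial since swapping them merely permutes the first two entries of the triple.
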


\begin{proof} In the space
$X_\om$
we have
$\crt(a,\om,x,a')=(d(x,a'):d(x,a):\ast)$.
Hence,
$x\in S_\om$
if and only if
$d(x,a)=d(x,a')$. 
\end{proof}

\subsection{Circles in Ptolemy spaces}
\label{subsect:circles_ptolemy_spaces}

A Ptolemy circle in a M\"obius space 
$X$ 
is a subset 
$\si\sub X$ 
homeomorphic to 
$S^1$  
such that for every quadruple
$(x,y,z,u)\in\si$
of distinct points the equality 
\begin{equation}\label{eq:PT_eq}
d(x,z)d(y,u)=d(x,y)d(z,u)+d(x,u)d(y,z)
\end{equation}
holds for some and hence for any metric 
$d$
of the M\"obius structure , where it is supposed that the pair
$(x,z)$
separates the pair
$(y,u)$,
i.e.
$y$
and
$u$
are in different components of
$\si\sm\{x,z\}$.
Recall the classical Ptolemy theorem that four points 
$x$, $y$, $z$, $u$ 
of the Euclidean plane lie on a circle (in this order) if and
only if their distances satisfy
the Ptolemy equality (\ref{eq:PT_eq}).

Let
$\si$ 
be a Ptolemy circle passing through the infinitely remote point
$\om$
for some metric
$d\in\cM$
and let
$\si_\om=\si\sm\om$.
Then (\ref{eq:PT_eq}) says that for 
$x$, $y$, $z\in\si_\om$
(in this order)
$d(x,y)+d(y,z)=d(x,z)$,
i.e. it implies that
$\si_\om$
is a geodesic, actually a complete geodesic isometric to
$\R$.

We recall the following fact from \cite{FS2}. 

\begin{pro} \label{pro:moebch-circ}
Let 
$\si$ 
and 
$\si'$ 
be Ptolemy circles. Let 
$x_1$, $x_2$, $x_3$ 
and 
$x'_1$, $x'_2$, $x'_3$ 
be distinct points on 
$\si$ 
respectively on 
$\si'$.
Then there exists a unique M\"obius homeomorphism
$\varphi:\si\to \si'$ 
with 
$\varphi(x_i)=x'_i$.
\qed
\end{pro}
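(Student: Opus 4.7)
The plan is to reduce to the model case $\si=\si'=\wh\R$ and then invoke the classical $3$-transitivity of the Möbius group of $\wh\R$. First fix $\om\in\si$ and a metric $d\in\cM$ having $\om$ as the infinitely remote point. The observation immediately preceding the proposition shows that $\si_\om:=\si\sm\om$ is a complete geodesic isometric to $\R$. Extending this isometry by $\om\mapsto\infty$ gives a bijection $f_\si:\si\to\wh\R$; because an isometry of $\R$ preserves cross-ratio triples (which are functions of distances) while cross-ratios involving the infinitely remote point are computed by the conventions of Section~\ref{subsect:moebius_structures}, $f_\si$ is a Möbius bijection. Defining $f_{\si'}:\si'\to\wh\R$ analogously, the problem reduces to the case $\si=\si'=\wh\R$.

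For the reduced problem I would construct $\phi$ explicitly. Real affine maps $x\mapsto ax+b$ with $a\neq 0$ (extended by $\infty\mapsto\infty$) and the inversion $x\mapsto 1/x$ (extended by $0\leftrightarrow\infty$) are Möbius self-bijections of $\wh\R$, by a direct check of $\crt$ using the standard conventions for $\infty$. The group they generate acts $3$-transitively on $\wh\R$: given distinct $(x_1,x_2,x_3)$, first send $x_3$ to $\infty$ (by $x\mapsto 1/(x-x_3)$ if $x_3\in\R$, else do nothing), then affinely normalize the images of $x_1$, $x_2$ to $0$, $1$. Composing such a normalization for $(x_1,x_2,x_3)$ with the inverse of the analogous normalization for $(x_1',x_2',x_3')$ produces the required Möbius homeomorphism $\phi$.

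For uniqueness it suffices to check that a Möbius self-bijection $\psi:\wh\R\to\wh\R$ fixing three distinct points is the identity. Conjugating by a map of the type just described, we may assume the fixed points are $0$, $1$, $\infty$. For any $t\in\R\sm\{0,1\}$ the invariance $\crt(0,1,t,\infty)=\crt(0,1,\psi(t),\infty)$ reads $(1:|t|:|t-1|)=(1:|\psi(t)|:|\psi(t)-1|)$, which forces $|\psi(t)|=|t|$ and $|\psi(t)-1|=|t-1|$, hence $\psi(t)=t$. Applying this to $\psi=\phi_2^{-1}\circ\phi_1$ for any two candidate Möbius homeomorphisms, and transporting back to $\si$ via $f_\si$, $f_{\si'}$, yields uniqueness. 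There is no single hard step here; the proof is essentially a reduction to the real line, and the only delicate point is the bookkeeping when $\infty$ occurs among the arguments of $\crt$, which is made routine by the extended metric conventions set up in Section~\ref{subsect:moebius_structures}.
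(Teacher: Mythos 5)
Your proof is correct. Note first that the paper itself gives no argument for this proposition: it is recalled from the reference [FS2] and stamped with a \qed, so there is no internal proof to compare against. Your reduction is the natural one and is sound: restricting the M\"obius structure to $\si$, inverting at a chosen $\om\in\si$ (legitimate because the Ptolemy equality forces the cross-ratio triangle inequality on $\si$, so the m-inversion is a metric), and using the fact stated just before the proposition that $\si_\om$ is a complete geodesic isometric to $\R$, you obtain a M\"obius identification $f_\si:\si\to\wh\R$; your case check of the $\crt$ conventions when $\infty$ appears once or twice is exactly what is needed there. The explicit normalization to $(0,1,\infty)$ via affine maps and $x\mapsto 1/x$ gives existence, and the uniqueness step is clean: from $(1:|t|:|t-1|)=(1:|\psi(t)|:|\psi(t)-1|)$ one gets $\psi(t)\in\{t,-t\}\cap\{t,2-t\}$, and since $-t=2-t$ is impossible this forces $\psi(t)=t$. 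The one point worth making explicit if you write this up is that the metric with $\om$ infinitely remote lives in the M\"obius structure induced on $\si$ (via Proposition~\ref{pro:moeb_ptolemy} applied to $\si$ as a Ptolemy space in its own right), not necessarily in $\cM(X)$; with that said, the argument is complete and self-contained where the paper merely cites [FS2].
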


In particular all Ptolemy circles are M\"obius equivalent.
The standard metric models of a circle are 
$(\wh\R,d)$,
where 
$d$
is the standard Euclidean metric, or
$(S^1,d_c)$,
where
$d_c$ 
is the chordal metric on 
$S^1$,
i.e. the metric induced by the standard embedding
$S^1\sub\R^2$
as a unit circle. These two standard realizations of a circle
are M\"obius equivalent via the  stereographic projection.
Note that by Lemma~\ref{lem:homothety_infinite} there is up to homothety only 
one metric on a circle with a infinitely remote point, while there are plenty 
of bounded metrics (for a description of all Ptolemy metrics on
$S^1$ 
see \cite{FS2}).

\subsection{Duality between Busemann and distance functions}
\label{subsect:duality_dist_busemann}

Let
$X$ 
be a Ptolemy space,
$d$
a metric of the M\"obius structure with infinitely remote point
$\om$, $X_\om=X\sm\om$.
If a Ptolemy circle
$\si\sub X$
passes through
$\om$,
then
$l=\si_\om$ 
is isometric w.r.t.
$d$
to a geodesic line and it is called a {\em Ptolemy} line
in
$X_\om$.

With every oriented Ptolemy line 
$l\sub X_\om$
and every point 
$\om'\in l$
we associate a function
$b:X_\om\to\R$,
called a {\em Busemann function} of
$l$, 
as follows. Given
$x\in X_\om$,
the difference
$d(x,y)-d(\om',y)$
is nonincreasing by triangle inequality as
$y\in l$
goes to infinity according the orientation of
$l$, $y>\om'$.
Thus the limit
$b(x)=\lim_{l\ni y\to\infty}(d(x,y)-d(\om',y))$
exists. Note that
$b(\om')=0$
and
$b(x)=-d(\om',x)$
for all
$l\ni x>\om'$.

For any Ptolemy space
$X$
there is a remarkable duality between Busemann and distance functions
which is described as follows. 

On a Ptolemy line
$l\sub X_\om$,
we fix
$\om'\in l$,
and let
$d'$
be the m-inversion of
$d$
w.r.t.
$\om'$.
Then
$d'$
is a metric of the M\"obius structure with infinitely remote point
$\om'$.
In particular,
$l'=\si_{\om'}$
is a Ptolemy line in
$X_{\om'}$.
One easily checks that
$d$
is the m-inversion of
$d'$
w.r.t.
$\om$,
that is, the inversion operation is involutive.

Let
$c:\R\to X_{\om'}$
be a unit speed parameterization of
$l'$
with
$c(0)=\om$, $b^\pm:X_\om\to\R$
the {\em opposite} Busemann functions of
$l$,
that is, associated with opposite ends of
$l$,
which are normalized by
$b^\pm(\om')=0$
and
$b^+\circ c(t)<0$
for
$t>0$, $b^-\circ c(t)<0$
for
$t<0$.
Since
$d(x,\om')\cdot d'(x,\om)=1$
for every
$x\in X\sm\{\om,\om'\}$,
we have
$b^\pm\circ c(t)=\mp 1/t$
for all
$t\neq 0$.

\begin{pro}\label{pro:duality_dist_busemann} For all
$x\in X\sm\{\om,\om'\}$
we have
\begin{equation}\label{eq:duality}
b^\pm(x)=\frac{d^\pm}{dt}\ln d'(x,c(t))|_{t=0},
\end{equation} 
where
$\frac{d^\pm}{dt}$
is the right/$-$left derivative.
\end{pro}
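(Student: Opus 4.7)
The plan is to unfold the right-hand side in terms of the metric $d$ via the m-inversion formula, and then invoke the Busemann-function asymptotic of $d$ along $l$. By definition $d' = d_{\om'}$ (m-inversion of radius one), so for $t \neq 0$ and $x \neq \om, \om'$,
$$d'(x, c(t)) = \frac{d(x, c(t))}{d(\om', x)\, d(\om', c(t))}.$$
The involutive identity $d(\om', y) \cdot d'(y, \om) = 1$ (valid for $y \neq \om, \om'$, noted just before the proposition), applied to $y = c(t)$ together with $d'(\om, c(t)) = |t|$, gives $d(\om', c(t)) = 1/|t|$, and hence $d'(x, c(t)) = |t|\, d(x, c(t))/d(\om', x)$. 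Since also $d'(x, c(0)) = d'(x, \om) = 1/d(\om', x)$, this collapses to
$$\ln d'(x, c(t)) - \ln d'(x, c(0)) = \ln\bigl(|t|\, d(x, c(t))\bigr) \qquad (t \neq 0).$$

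Next I would reparameterize the points of $l$: let $\ga \colon \R \to X_\om$ be the unit-speed $d$-parameterization of $l = \si_\om$ with $\ga(0) = \om'$, oriented so that $b^+$ and $b^-$ correspond to the ends $s \to +\infty$ and $s \to -\infty$ respectively. A direct computation from the definition of $b^\pm$ gives $b^\pm(\ga(s)) = \mp s$, and the sign conditions $b^\pm \circ c(t) < 0$ (for $\pm t > 0$) together with $d(\om', c(t)) = 1/|t|$ force $c(t) = \ga(1/t)$ for all $t \neq 0$. The defining property of the Busemann function then yields the asymptotic
$$d(x, \ga(s)) = |s| + b^\pm(x) + o(1) \qquad \text{as } s \to \pm\infty,$$
and substituting $s = 1/t$ gives $|t|\, d(x, c(t)) = 1 + b^\pm(x)\, |t| + o(t)$ as $t \to 0^\pm$.

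Applying $\ln(1 + u) = u + o(u)$ produces $\ln(|t|\, d(x, c(t))) = b^\pm(x)\, |t| + o(t)$, so after dividing by $t$ the difference quotient tends to $b^+(x)$ as $t \to 0^+$ and to $-b^-(x)$ as $t \to 0^-$. With the stated convention that $\frac{d^-}{dt}$ denotes the \emph{negative} of the left derivative, both cases combine into $b^\pm(x) = \frac{d^\pm}{dt}\ln d'(x, c(t))|_{t=0}$. The proposition is essentially a careful bookkeeping exercise between the two dual metrics and their infinitely remote points; the only subtle point is the matching of orientations, namely that the sign convention on $b^\pm$ is precisely what forces the identification $c(t) = \ga(1/t)$ (rather than $\ga(-1/t)$) and produces the correct final sign when the $-$left derivative is interpreted according to the stated convention.
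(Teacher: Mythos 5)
Your proof is correct and follows essentially the same route as the paper: both unfold the m-inversion relation between $d$ and $d'$, use $d(\om',c(t))=1/|t|$, and then invoke the defining limit of the Busemann function along $\si_\om$ as $t\to 0^\pm$. The paper packages this as the exact identity $d(x,c(t))-d(\om',c(t))=\frac{1}{|t|\,d'(x,c(0))}\bigl(d'(x,c(t))-d'(x,c(0))\bigr)$ and passes to the limit, whereas you take logarithms first and expand asymptotically via $\ln(1+u)=u+o(u)$; the difference is purely organizational, and your version even establishes existence of the one-sided derivatives en route (the paper gets this from convexity of $t\mapsto d'(x,c(t))$).
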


\begin{rem} Note that the left hand side of (\ref{eq:duality})
is computed in the space
$X_\om$
while the right hand side in the inverted space
$X_{\om'}$.
The equality (\ref{eq:duality}) is our first example of 
duality equalities which appear in different places of 
the paper, see e.g. Remark~\ref{rem:first_variation}.
\end{rem}

\begin{proof} We first note that the function
$t\mapsto d'(x,c(t))$
is convex by the Ptolemy condition, and thus it has
the right and the left derivatives at every point. Hence, the
right hand side of Equation~(\ref{eq:duality}) is well
defined. By definition,
$d(x,y)=\frac{d'(x,y)}{d'(\om,x)d'(\om,y)}$
and
$d(x,\om')=\frac{1}{d'(x,\om)}$
for all
$x$, $y\in X_\om$.
Now, we compute 
\begin{eqnarray*}
d(x,c(t))-d(\om',c(t))&=&\frac{d'(x,c(t))}{d'(\om,x)d'(\om,c(t))}
  -\frac{1}{d'(\om,c(t))}\\
&=&\frac{1}{|t|d'(x,c(0))}
   \left(d'(x,c(t))-d'(x,c(0)\right)
\end{eqnarray*}
for all
$t\neq 0$,
because
$d'(x,\om)=d'(x,c(0))$
and
$d'(\om,c(t))=|t|$.
Since
$b^\pm(x)=\lim_{t\to\pm 0}(d(x,c(t))-d(\om',c(t)))$,
we obtain
$$b^\pm(x)=\frac{d^\pm}{dt}\ln d'(x,c(t))|_{t=0}.$$
\end{proof}

Given a Ptolemy circle
$\si\in X$
and distinct points 
$\om$, $\om'\in\si$,
we denote by
$D_{\si,\om}^{\om'}$
the subset in
$X_{\om'}$
which consists of all
$x$
such that
$\om$
is a closest to 
$x$
point in the geodesic line
$\si_{\om'}$
(w.r.t. the metric of
$X_{\om'}$).

\begin{lem}\label{lem:omega_closest_subset} Let
$X$
be a Ptolemy space. Then for every Ptolemy circle
$\si\sub X$
and each pair of distinct points
$\om$, $\om'\in\si$
we have
\begin{equation}\label{eq:dist_busemann}
D_{\si,\om}^{\om'}\cup\om'=B_{\si,\om'}^\om\cup\om,
\end{equation}
where
$B_{\si,\om'}^\om=\set{x\in X_\om}{$b^+(x)\ge 0\ \text{and}\ b^-(x)\ge 0$}$,
$b^\pm:X_\om\to\R$
are the opposite Busemann functions of the Ptolemy line
$\si_\om\sub X_\om$
with
$b^\pm(\om')=0$.
\end{lem}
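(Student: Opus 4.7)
The plan is to reduce the claimed equality to the duality formula of Proposition~\ref{pro:duality_dist_busemann} combined with the convexity of the function $t\mapsto d'(x,c(t))$ along a Ptolemy line, which is the only place where the Ptolemy property enters.

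First I would dispose of the two distinguished points $\om$ and $\om'$. By the chosen normalization $b^\pm(\om')=0\ge 0$, so $\om'\in B_{\si,\om'}^\om$; and $\om$ lies on the line $\si_{\om'}$, so it is trivially a closest point in $\si_{\om'}$ to itself, giving $\om\in D_{\si,\om}^{\om'}$. Both $\om$ and $\om'$ therefore already belong to each side of the asserted equality, and it remains to prove the equivalence
$$x\in D_{\si,\om}^{\om'}\iff x\in B_{\si,\om'}^\om$$
for every $x\in X\sm\{\om,\om'\}$.

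Fix such an $x$ and set $f(t):=d'(x,c(t))$. As observed in the proof of Proposition~\ref{pro:duality_dist_busemann}, $f$ is convex on $\R$, and $f(0)=d'(x,\om)>0$ since $x\neq\om$. The condition $x\in D_{\si,\om}^{\om'}$ says precisely that $c(0)=\om$ minimizes $f$, which by convexity is equivalent to the pair of one-sided derivative inequalities
$$f'_+(0)\ge 0\quad\text{and}\quad f'_-(0)\le 0.$$
Dividing by $f(0)>0$ and applying the duality formula (\ref{eq:duality}) yields
$$b^+(x)=\frac{f'_+(0)}{f(0)},\qquad b^-(x)=-\frac{f'_-(0)}{f(0)},$$
so the inequality $b^+(x)\ge 0$ matches $f'_+(0)\ge 0$ and $b^-(x)\ge 0$ matches $f'_-(0)\le 0$. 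Combining, $x\in B_{\si,\om'}^\om$ if and only if $f$ attains a minimum at $0$, if and only if $x\in D_{\si,\om}^{\om'}$.

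The main, and essentially only, subtlety is the sign bookkeeping induced by the convention $\frac{d^-}{dt}=-(\text{left derivative})$: it is exactly this sign flip which makes the pair of inequalities $b^\pm(x)\ge 0$ carve out a \emph{minimum} of the convex function $f$, rather than a monotone piece. No additional ingredient beyond Proposition~\ref{pro:duality_dist_busemann} and convexity is needed.
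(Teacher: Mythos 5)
Your proof is correct and follows essentially the same route as the paper's: both reduce the statement to the observation that $t=0$ minimizes the convex function $t\mapsto d'(x,c(t))$ exactly when its one-sided derivatives have the right signs, and then translate these signs into $b^\pm(x)\ge 0$ via the duality formula of Proposition~\ref{pro:duality_dist_busemann}. Your explicit treatment of the two distinguished points $\om$, $\om'$ is a small tidy addition the paper leaves implicit, but the substance of the argument is identical.
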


\begin{proof} Denote by
$d'$
the metric of
$X_{\om'}$
and let
$c:\R\to X_{\om'}$
be the unit speed parameterization of the Ptolemy line
$\si_{\om'}\sub X_{\om'}$
such that
$c(0)=\om$
and
$b^\pm\circ c(t)=\mp 1/t$,
see the paragraph preceding Proposition~\ref{pro:duality_dist_busemann}.
For every
$x\in D_{\si,\om}^{\om'}$
we have 
$\frac{d^+}{dt}d'(x,c(t))_{t=0}\ge 0$
for the right derivative, and 
$-\frac{d^-}{dt}d'(x,c(t))_{t=0}\le 0$
for left derivative because
$t=0$
is a minimum point of the convex function
$t\mapsto d'(x,c(t))$.
Equation~(\ref{eq:duality}) implies that 
$x\in B_{\si,\om'}^\om$.

Assume that
$b^+(x)\ge 0$
and
$b^-(x)\ge 0$
for some
$x\in X\sm\{\om,\om'\}$.
Equation~(\ref{eq:duality}) implies that the right derivative
$\frac{d^+}{dt}d'(x,c(t))_{t=0}\ge 0$
and the left derivative
$-\frac{d^-}{dt}d'(x,c(t))_{t=0}\le 0$.
Thus
$t=0$
is a minimum point of the convex function
$t\mapsto d'(x,c(t))$
and hence
$x\in D_{\si,\om}^{\om'}$.
\end{proof}

\subsection{Busemann flat Ptolemy spaces}
\label{subsect:busemann_flat_ptolemy}

A Ptolemy space
$X$
is said to be {\em (Busemann) flat} if for every Ptolemy
circle
$\si\sub X$
and every point
$\om\in\si$,
we have
\begin{equation}\label{eq:busemann_flat}
 b^++b^-\equiv\const
\end{equation}
for opposite Busemann functions
$b^\pm:X_\om\to\R$
associated with Ptolemy line
$\si_\om$.

Busemann flatness implies that Busemann functions
are affine in the following sense.

\begin{pro}\label{pro:busemann_affine} 
Let 
$X$
be a Busemann flat ptolemaic space and let
$l$, $l'\in X_\om$
be Ptolemy lines. Then the Busemann functions of
$l$
are affine functions on
$l'$.
\end{pro}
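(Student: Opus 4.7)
The plan is to show that both opposite Busemann functions $b^\pm$ of $l$, when restricted to $l'$, are convex, and then invoke Busemann flatness to upgrade convexity to affineness. Since an affine function differs from any other Busemann function of $l$ by an additive constant and a sign, this is enough.

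First I would parametrize $l'$ by $\gamma:\R\to X_\om$ with unit speed and fix a base point $\om'\in l$ with opposite Busemann functions $b^\pm:X_\om\to\R$ normalized by $b^\pm(\om')=0$. The key convexity lemma is the following: for every $y\in l$, the function $t\mapsto d(y,\gamma(t))$ is convex on $\R$. This is essentially the argument used already in the proof of Proposition~\ref{pro:duality_dist_busemann}: for any $t_1<t_2<t_3$ the Ptolemy inequality applied to the quadruple $(y,\gamma(t_1),\gamma(t_2),\gamma(t_3))$ reads
\[
d(y,\gamma(t_2))\,d(\gamma(t_1),\gamma(t_3))\le d(y,\gamma(t_1))\,d(\gamma(t_2),\gamma(t_3))+d(y,\gamma(t_3))\,d(\gamma(t_1),\gamma(t_2)),
\]
and since $l'$ is a Ptolemy line we have $d(\gamma(t_i),\gamma(t_j))=|t_i-t_j|$, so dividing by $t_3-t_1$ yields convexity of $t\mapsto d(y,\gamma(t))$. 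For $y$ far enough along $l$ the quadruple is admissible, which is all we need.

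Subtracting the $t$-independent constant $d(\om',y)$ preserves convexity, and taking the pointwise limit as $y$ tends to $+\infty$ (respectively $-\infty$) along $l$ shows that both $b^+\circ\gamma$ and $b^-\circ\gamma$ are convex on $\R$ (pointwise limits of convex functions being convex). Busemann flatness, Equation~(\ref{eq:busemann_flat}), gives $b^++b^-\equiv\const$ on $X_\om$, hence
\[
(b^+\circ\gamma)+(b^-\circ\gamma)\equiv\const.
\]
Thus $b^-\circ\gamma=\const-b^+\circ\gamma$, so $b^+\circ\gamma$ is simultaneously convex and concave, i.e.\ affine. The same conclusion holds for $b^-\circ\gamma$.

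There is no real obstacle to this argument: the Ptolemy-to-convexity step is a routine unwinding of the inequality and was already used in the paper, and the final elementary step (two convex functions with constant sum are both affine) is just the observation that a convex function whose negative is convex is affine. The only mild care needed is to choose $y$ along $l$ so that the quadruple $(y,\gamma(t_1),\gamma(t_2),\gamma(t_3))$ is admissible, which is automatic in the limit $y\to\infty$ since $y$ is eventually distinct from all three $\gamma(t_i)$.
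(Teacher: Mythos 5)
Your proof is correct and follows essentially the same route as the paper: the paper's proof likewise observes that the opposite Busemann functions $b^\pm$ of $l$ are convex along $l'$ (citing \cite{FS2} for this) and then uses Busemann flatness, $b^++b^-\equiv\const$, to conclude that each is simultaneously convex and concave, hence affine. The only difference is that you spell out the convexity step via the Ptolemy inequality, which the paper outsources to a reference; your unwinding of that inequality is accurate.
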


\begin{proof} The opposite Busemann functions 
$b^+$, $b^-$
of
$l$
are convex on
$l'$,
see \cite{FS2}. Since the sum
$b^++b^-$
is affine on
$l'$
by Busemann flatness,
$b^+$
and
$b^-$
are affine.
\end{proof}

The property of Busemann flatness is equivalent to that any horospheres of
$b^+$, $b^-$
coincide whenever they have a common point. Thus the horosphere
$H_{\si,\om'}^\om\sub X_\om$
of
$\si_\om$
through
$\om'\in\si_\om$
is well defined in a flat Ptolemy space.

\begin{pro}\label{pro:busemann_flat} A Ptolemy space
$X$
is flat if and only if for every
$\om\in X$
and every
$x\in X_\om$
the distance function
$d(x,\cdot)$
is $C^1$-smooth along any Ptolemy line
$l\sub X_\om$, $l\not\ni x$.
\end{pro}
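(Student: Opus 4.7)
The proof is driven entirely by Proposition~\ref{pro:duality_dist_busemann}: if $b^\pm$ are the opposite Busemann functions of a Ptolemy line $l\sub X_\om$ normalized at $\om'\in l$, and $c$ is a unit-speed parametrization of the dual Ptolemy line $l'\sub X_{\om'}$ with $c(0)=\om$, then for every $x\in X\sm\{\om,\om'\}$ the one-sided logarithmic derivatives assemble into
\[
b^+(x)+b^-(x)\;=\;\frac{(d'(x,c(\cdot)))'_+(0)-(d'(x,c(\cdot)))'_-(0)}{d'(x,\om)}.
\]
Because $t\mapsto d'(x,c(t))$ is convex by Ptolemy, the right-hand side is a nonnegative ``kink'' that measures the failure of differentiability at $t=0$. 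Thus Busemann flatness (vanishing of $b^++b^-$) and $C^1$-smoothness of distance functions are two faces of the same identity, and the proof consists in traversing this identity in both directions, exploiting that the inversion operation is involutive.

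For ``$\Leftarrow$'', assume the $C^1$ hypothesis. Given a Ptolemy circle $\si$, a point $\om\in\si$, a base point $\om'\in l:=\si_\om$, and any $x\in X_\om$, there are two cases. If $x\in l$, then the identities $b^\pm(y)=\mp d(\om',y)$ along the respective half-lines give $b^+(x)+b^-(x)=0$ directly, with the convention $b^\pm(\om')=0$ handling $x=\om'$. If $x\notin l$, then $x\in X_{\om'}$ and $x\notin l'$, so by hypothesis $d'(x,\cdot)$ is $C^1$ along $l'$; in particular $t\mapsto d'(x,c(t))$ is differentiable at $0$, and the displayed identity forces $b^+(x)+b^-(x)=0$. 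Thus $b^++b^-\equiv 0$ on $X_\om$, proving Busemann flatness.

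For ``$\Rightarrow$'', assume flatness. Fix $\om$, $x\in X_\om$, and a Ptolemy line $l\not\ni x$ in $X_\om$; let $c_0:\R\to l$ be a unit-speed parametrization and $F(t)=d(x,c_0(t))$. By Ptolemy, $F$ is convex. To verify differentiability at an arbitrary $t_0$, put $p=c_0(t_0)$ and apply Proposition~\ref{pro:duality_dist_busemann} in the inverted space $X_p$ to the Ptolemy line $l'=\si_p$ (where $\si=l\cup\{\om\}$), with base point $\om\in l'$: involutivity of inversion returns the original metric $d$, and the resulting parametrization $s\mapsto c(s)$ of $l$ with $c(0)=p$ satisfies $d(x,c(s))=F(t_0+s)$ up to orientation. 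The duality formula then reads
\[
\beta^+(x)+\beta^-(x)\;=\;\frac{F'_+(t_0)-F'_-(t_0)}{F(t_0)},
\]
where $\beta^\pm$ are the opposite Busemann functions of $l'$ in $X_p$ normalized at $\om$. Flatness of $X$ applied to $\si$ in $X_p$ gives $\beta^++\beta^-\equiv 0$, so $F'_+(t_0)=F'_-(t_0)$. Since a convex function on $\R$ that is differentiable at every point is automatically $C^1$, $F$ is $C^1$-smooth, completing the proof.

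The main subtlety is bookkeeping: tracking base points, orientations and normalization constants while the inversion is traversed in either direction, together with the observation that Busemann flatness is an intrinsic, inversion-invariant property of $X$ so it can be applied to $\si$ in any inverted metric. Once these conventions are pinned down, the duality identity does the rest mechanically.
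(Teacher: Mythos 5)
Your proof is correct and follows essentially the same route as the paper's: both directions rest on reading Proposition~\ref{pro:duality_dist_busemann} as the identity $b^+(x)+b^-(x)=\bigl((d'(x,c(\cdot)))'_+(0)-(d'(x,c(\cdot)))'_-(0)\bigr)/d'(x,\om)$, so that vanishing of $b^++b^-$ and differentiability of the convex function $t\mapsto d'(x,c(t))$ are equivalent, with involutivity of the inversion used to pass between the two punctured spaces. Your relabelling of base points ($p$ versus the paper's $\om$, $\om'$) and the explicit remark that a convex function differentiable everywhere is $C^1$ are the only cosmetic differences.
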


\begin{proof} Assume that distance functions are $C^1$-smooth
along Ptolemy lines. We fix
$\om\in X$, 
a Ptolemy line
$l\sub X_\om$,
and let
$b^\pm$
be opposite Busemann functions of
$l$.
We suppose W.L.G. that
$b^\pm(\om')=0$
for some point
$\om'\in l$.
Then
$b^++b^-=0$
along
$l$.
Equation~(\ref{eq:duality}) implies that in fact
$b^+(x)+b^-(x)=0$
for every
$x\in X_\om$. 
Thus
$X$
is flat.

Conversely, assume that 
$X$
is flat. Given
$\om'\in X$,
a Ptolemy line
$l'\in X_{\om'}$
and
$x\in X_{\om'}\sm l'$,
we show that the distance function
$d'(x,\cdot)$
in
$X_{\om'}$
is $C^1$-smooth along
$l'$
at every point
$\om\in l'$.

Let
$c:\R\to X_{\om'}$
by a unit speed parameterization of
$l'$
with
$c(0)=\om$, $b^\pm:X_\om\to\R$
the opposite Busemann function associated 
with the Ptolemy line
$l=(l'\cup\om')\sm\om\sub X_\om$
such that
$b^\pm(\om')=0$, $b^+\circ c(t)<0$
for all
$t>0$.
Then 
$b^++b^-\equiv 0$
by the assumption, and
by Proposition~\ref{pro:duality_dist_busemann} we have 
$\frac{d^+}{dt}d'(x,c(t))|_{t=0}=-\frac{d^-}{dt}d'(x,c(t))|_{t=0}$,
where
$\frac{d^+}{dt}$
is the right derivative
and
$-\frac{d^-}{dt}$
is the left derivative. Hence
$d'(x,\cdot)$
is $C^1$-smooth.
\end{proof}

By Proposition~\ref{pro:busemann_flat}, the duality equation~(\ref{eq:duality}) 
in a flat Ptolemy space
$X$
takes the following form
\begin{equation}\label{eq:smooth_duality}
b^\pm(x)=\pm\frac{d}{dt}\ln d'(x,c(t))|_{t=0}.
\end{equation}

\begin{exa}\label{exa:hyp} The Ptolemy space
$\wh\hyp^n=\hyp^n\cup\{\infty\}$, $n\ge 2$,
generated by the real hyperbolic space
$\hyp^n$,
is not flat because the equality
$b^++b^-\equiv\const$
is violated in
$\hyp^n$.
(Recall that
$\hyp^n$
possesses the Ptolemy property and thus it generates
a Ptolemy space by taking all metrics on
$\wh\hyp^n$
which are M\"obius equivalent to the metric of
$\hyp^n$.)
Note that the distance function
$d(x,\cdot):\hyp^n\to\R$
is smooth for every
$x\in\hyp^n$
along any geodesic line
$l$, $x\not\in l\sub\hyp^n$.
This does not contradict Proposition~\ref{pro:busemann_flat} 
because the m-inversion of
$d$
with respect to any point
$x\in\hyp^n$
has a singularity at the infinity point of
$\wh\hyp^n$.
\end{exa}

In flat Ptolemy spaces, the duality between distance and 
Busemann functions takes the following form.

\begin{lem}\label{lem:flat_duality} Let
$X$
be a flat Ptolemy space,
$\si\sub X$
a Ptolemy circle, and
$\om$, $\om'\sub\si$
distinct points. Let
$H_{\si,\om'}^\om\sub X_\om$
be the horosphere through
$\om'$
of the Ptolemy line
$\si_\om\sub X_\om$,
$D_{\si,\om}^{\om'}\sub X_{\om'}$
the set of all
$x\in X_{\om'}$
such that
$\om$
is the closest to
$x$
point in the Ptolemy line
$\si_{\om'}$.
Then
\begin{equation}\label{eq:horosphere_distance}
H_{\si,\om'}^\om\cup\om=D_{\si,\om}^{\om'}\cup\om'.
\end{equation}
\end{lem}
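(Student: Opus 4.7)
The plan is to reduce this to Lemma~\ref{lem:omega_closest_subset} and exploit flatness to simplify the condition characterizing $B_{\si,\om'}^\om$.

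First I would recall from Lemma~\ref{lem:omega_closest_subset} that in any Ptolemy space
\[
D_{\si,\om}^{\om'}\cup\om'=B_{\si,\om'}^\om\cup\om,
\]
where $B_{\si,\om'}^\om=\{x\in X_\om:b^+(x)\ge 0\text{ and }b^-(x)\ge 0\}$ and $b^\pm$ are the opposite Busemann functions of the Ptolemy line $\si_\om\sub X_\om$ normalized by $b^\pm(\om')=0$. So it suffices to identify $B_{\si,\om'}^\om$ with the horosphere $H_{\si,\om'}^\om$ through $\om'$.

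Next I would use the flatness hypothesis. Along $\si_\om$ we have $b^+ + b^-=0$ because $b^\pm(\om')=0$, so Busemann flatness forces $b^+(x)+b^-(x)=0$ for every $x\in X_\om$. Substituting $b^-=-b^+$ into the defining inequalities for $B_{\si,\om'}^\om$, the conditions $b^+(x)\ge 0$ and $b^-(x)\ge 0$ collapse to $b^+(x)=0$. Thus
\[
B_{\si,\om'}^\om=\{x\in X_\om:b^+(x)=0\}=H_{\si,\om'}^\om,
\]
since by flatness any horosphere of $b^+$ coincides with the corresponding horosphere of $b^-$ once they share a point (here $\om'$), so $H_{\si,\om'}^\om$ is unambiguously defined as this common level set.

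Combining the two displays yields $D_{\si,\om}^{\om'}\cup\om'=H_{\si,\om'}^\om\cup\om$, which is (\ref{eq:horosphere_distance}). There is no real obstacle here: everything follows once one notices that Busemann flatness turns the one-sided "cone" $B_{\si,\om'}^\om$ cut out by $b^\pm\ge 0$ into a codimension-one level set, which by definition is the horosphere. The only minor subtlety worth stating explicitly is the normalization $b^\pm(\om')=0$ that makes the additive constant in $b^++b^-\equiv\const$ equal to zero, matching precisely the horosphere passing through $\om'$.
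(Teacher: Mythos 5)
Your proof is correct and follows essentially the same route as the paper: reduce to Lemma~\ref{lem:omega_closest_subset} and observe that Busemann flatness (with the normalization $b^\pm(\om')=0$ forcing $b^++b^-\equiv 0$) collapses the two inequalities defining $B_{\si,\om'}^\om$ to the single equation $b^+=0$, i.e.\ $B_{\si,\om'}^\om=H_{\si,\om'}^\om$. The paper states this identification in one sentence; you have merely spelled out the same computation.
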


\begin{proof} In a flat Ptolemy space we have
$H_{\si,\om'}^\om=B_{\si,\om'}^\om$
because level sets of opposite Busemann functions associated with 
a Ptolemy line coincide when they have a common point. On the other hand, by duality, Lemma~\ref{lem:omega_closest_subset}, we have
$B_{\si,\om'}^\om\cup\om=D_{\si,\om}^{\om'}\cup{\om'}$.
\end{proof}

\section{M\"obius spaces with circles and many space inversions}
\label{sect:many_circles_auto}

We begin this section with discussion of what is a space
inversion of an arbitrary M\"obius space.

\subsection{Space inversions}
\label{subsect:space_inversions}

We define a {\em space inversion}, or s-inversion for brevity,
w.r.t. distinct
$\om$, $\om'\in X$
and a sphere 
$S\sub X$
between
$\om$, $\om'$
as a M\"obius involution
$\phi=\phi_{\om,\om',S}:X\to X$, $\phi^2=\id$,
without fixed points such that
\begin{itemize}
 \item[(1)] $\phi(\om)=\om'$ (and thus
$\phi(\om')=\om$);
 \item[(2)] $\phi$
preserves
$S$,
$\phi(S)=S$;
 \item[(3)] $\phi(\si)=\si$
for any Ptolemy circle
$\si\sub X$
through
$\om$, $\om'$.
\end{itemize}

\begin{rem}\label{rem:sinversion_motivation} Motivation of this
definition comes from the fact that in the case
$X=\di Y$,
where
$Y$
is a symmetric rank one space of non-compact type, any central symmetry
$f:Y\to Y$
with a center 
$o\in Y$, $f(o)=o$,
induces a space inversion
$\di f=\phi_{\om,\om',S}:X\to X$,
where a geodesic line 
$l=(\om,\om')\sub Y$
with the end points
$\om$, $\om'$
passes through
$o$,
and
$S\sub X$
is a sphere between
$\om$, $\om'$.
\end{rem}

\begin{rem}\label{rem:weak_unique} In general, there is no reason 
that an s-inversion
$\phi=\phi_{\om,\om',S}$
is uniquely determined by its data
$\om$, $\om'$, $S$.
However, if 
$\phi'$
is another s-inversion with the same data, then
it coincides with 
$\phi$
along any Ptolemy circle through
$\om$, $\om'$
because any M\"obius automorphism of a Ptolemy circle
is uniquely determined by values at three distinct points,
see Proposition~\ref{pro:moebch-circ}.
\end{rem}

A M\"obius automorphism
$\phi:X\to X$
of a M\"obius space induces a map
$\phi^\ast:\cM\to\cM$, $(\phi^\ast d)(x,y)=d(\phi(x),\phi(y))$
for every metric
$d\in\cM$
and each
$x$, $y\in X$,
where
$\cM$
is the M\"obius structure of
$X$.
Note that a metric inversion of a bounded metric 
cannot be induced by any M\"obius automorphism
$X\to X$,
because a metric inversion w.r.t.
$\om\in X$
has
$\om$
as the infinitely remote point.

However if 
$\phi$
is a M\"obius automorphism with
$\phi(\om)=\om'$
and
$d$ 
is a metric with infinite remote point
$\om$,
then
$\om'$
is infinitely remote for
$\phi^\ast d$.
Thus by Lemma~\ref{lem:homothety_infinite}
$\phi^\ast d=\la d'$
for some
$\la > 0$, 
where
$d'$
is the m-inversion of
$d$
w.r.t.
$\om'$,
i.e.
$$(\phi^\ast d)(x,y)=\frac{\la d(x,y)}{d(x,\om')d(y,\om')}$$
for each
$x$, $y\in X$
which are not equal to
$\om'$
simultaneously.

In the case that
$\phi=\phi_{\om,\om',S}$
an s-inversion we can say in addition that

\begin{lem}\label{lem:sinversion_minversion} We have
 $$(\phi^\ast d)(x,y)=\frac{r^2d(x,y)}{d(x,\om')d(y,\om')}$$
where 
$r=r(d)$
is determined by the equation
$S=S^d_r(\om')$.
\end{lem}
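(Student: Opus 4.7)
The statement is really a pin-down of the scalar $\la$ that appeared in the discussion just before the lemma, so the plan is short and computational.

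The setup already establishes, by Lemma~\ref{lem:homothety_infinite} applied to the metrics $\phi^\ast d$ and $d'$ on $X$ (both of which have $\om'$ as their unique infinitely remote point, the first because $\phi(\om')=\om$ and $d(\cdot,\om)\equiv\infty$, the second by definition of m-inversion), that
$$(\phi^\ast d)(x,y)=\la\, d'(x,y)=\frac{\la\, d(x,y)}{d(x,\om')\, d(y,\om')}$$
for some $\la>0$ and for all $x,y\in X$ not simultaneously equal to $\om'$. So the only thing left is to identify $\la=r^2$ with $r$ determined by $S=S^d_r(\om')$.

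For this the plan is to test the equality on a well-chosen pair of points, exploiting the assumption $\phi(S)=S$. Pick any $s\in S$ (such a point exists because $S$ is nonempty by definition of a sphere between $\om$ and $\om'$), and evaluate the two sides of the formula at the pair $(s,\om)$. On the left, using $\phi(\om)=\om'$ and $\phi(s)\in S$, the standard convention for $\infty$ yields
$$(\phi^\ast d)(s,\om)=d(\phi(s),\om')=r,$$
since $\phi(s)\in S=S^d_r(\om')$. On the right, the m-inversion formula with radius $1$ and the convention $d'(s,\om)=1/d(s,\om')$ gives $\la\, d'(s,\om)=\la/r$. Equating the two expressions yields $\la=r^2$.

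There is no real obstacle here: property~(2) of the s-inversion (that $\phi$ preserves $S$) is precisely the ingredient that forces the scaling factor to equal $r^2$, and the homothety lemma does all the remaining work. The only point to watch is the careful use of the infinity conventions for m-inversion, but these are already set up in Section~\ref{subsect:Ptolemy_spaces} and Remark~\ref{rem:radius_one_inversion}, so the computation goes through verbatim.
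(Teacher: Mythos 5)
Your proof is correct and follows essentially the same strategy as the paper's: reduce to identifying the scalar $\la$ from Lemma~\ref{lem:homothety_infinite} and pin it down by a test evaluation using $\phi(S)=S$. The only (harmless) difference is the test pair --- the paper evaluates at $(x,\phi(x))$ with $x\in S$, which keeps both points on $S$ and avoids the infinity conventions, whereas you evaluate at $(s,\om)$ and invoke the convention $d'(s,\om)=1/d(s,\om')$; both computations give $\la=r^2$.
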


\begin{proof}
We already know that
$(\phi^\ast d)(x,y)=\frac{\la d(x,y)}{d(x,\om')d(y,\om')}$
for each
$x$, $y\in X$
which are not equal to
$\om,$
simultaneously. We compute
$\la$
by taking
$x\in S$, $y=\phi(x)$.
Then
$\phi(y)=x$
and since
$(\phi^\ast d)(x,y)=d(x,y)$, $d(x,\om')=r=d(y,\om')$,
we have
$\la=r^2$. 
\end{proof}

Contrary to metric inversions which always exist, in general 
there is no reason for a space inversion to exist. The existence
of many space inversions put severe restrictions on the space.
In the following theorem we recover some geometric properties 
of a space satisfying the assumptions of Theorem~\ref{thm:moebius}.
Recall the basic properties, see Introduction

\noindent
(E) Existence: there is at least one Ptolemy circle in
$X$.

\noindent
(I) Inversion: for each distinct
$\om$, $\om'\in X$
and every sphere
$S\sub X$
between
$\om$, $\om'$
there is a unique space inversion
$\phi_{\om,\om',S}:X\to X$
w.r.t.
$\om$, $\om'$
and
$S$.

\begin{thm}\label{thm:basic_ptolemy} Let
$X$
be a compact Ptolemy space with properties 
($E$) and (I). Then for every
$\om\in X$
there is a 1-Lipschitz submetry
$\pi_\om:X_\om\to B_\om$
with the base
$B_\om$
isometric to an Euclidean space
$\R^k$, $k\ge 1$, 
such that any M\"obius automorphism
$\phi:X\to X$
with
$\phi(\om)=\om'$
induces a homothety
$\ov\phi:B_\om\to B_{\om'}$
with
$\pi_{\om'}\circ\phi=\ov\phi\circ\pi_\om$.

The fibers of
$\pi_\om$,
also called
$\K$-lines,
have the property

\begin{itemize}
 \item[($\K$)] given a $\K$-line
$F\sub X_\om$
and
$x\in X_\om\sm F$,
there is a unique Ptolemy line
$l\sub X_\om$
through
$x$
that intersects
$F$.
\end{itemize}
\end{thm}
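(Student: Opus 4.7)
The plan is to build the fibration $\pi_\omega:X_\omega\to B_\omega$ by bootstrapping from property (I) to a rich group of M\"obius automorphisms acting on $X_\omega$, then to identify $\K$-lines intrinsically as a distinguished family of Ptolemy lines through $\omega$, and finally to take the quotient and verify that the base is Euclidean and satisfies $(\K)$.

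First, I would exploit (I) to populate the stabilizer $G_\omega\subset\aut(X)$ with enough elements to produce a transitive isometry group on $(X_\omega,d)$ for a fixed $d\in\cM$ with $\om$ infinitely remote. The key building blocks are two kinds of compositions: (a) $\phi_{\om,\om',S_1}\circ\phi_{\om,\om',S_2}$, which fixes both $\om$ and $\om'$ and, by Lemma~\ref{lem:sinversion_minversion} together with Lemma~\ref{lem:homothety_infinite}, acts on $(X_\om,d)$ as a dilation centered at $\om'$ of ratio $r_1^2/r_2^2$; and (b) conjugates of (a) by an inversion swapping $\om$ with a second point, yielding maps that fix $\om$ and act as isometries of $(X_\om,d)$ translating along a chosen Ptolemy line. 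Together these produce a transitive action of a subgroup $N_\om\subset G_\om$ on $X_\om$ by $d$-isometries, and a one-parameter group of dilations.

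Next, I would identify the $\K$-lines intrinsically. A candidate definition: a Ptolemy line $F\subset X_\om$ is a $\K$-line if, for every other Ptolemy line $l\subset X_\om$ passing through a point of $F$ (or transverse to it), the Busemann functions $b^\pm$ of $l$ are constant along $F$. Using the duality of Proposition~\ref{pro:duality_dist_busemann} and the horosphere description of Lemma~\ref{lem:omega_closest_subset}, this is equivalent to $F$ being ``parallel'' to every transverse Ptolemy line in the strongest sense. The transitive $N_\om$-action then lets me propagate one such $F$ to a $\K$-line through every point, while the space inversions provide the uniqueness by a standard rigidity argument (two candidate $\K$-lines through a common point would be interchanged by a space inversion fixing that point, forcing them to coincide via Remark~\ref{rem:weak_unique}).

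I would then set $\pi_\om$ to be the quotient of $X_\om$ by the $\K$-line equivalence and give $B_\om$ the quotient pseudo-metric. That $\pi_\om$ is 1-Lipschitz is immediate; the submetry property reduces to showing that $N_\om$ acts on $\K$-lines transitively enough that every ball of radius $r$ in $X_\om$ maps onto the ball of radius $r$ in $B_\om$. To identify $B_\om$ with $\R^k$, I would show that the induced action of $G_\om$ on $B_\om$ is both transitive and isotropic (the dilations give scaling, and $N_\om$ modulo its ``center'' gives the translational part); combining this with the fact that the images of Ptolemy lines transverse to $\K$-lines are geodesics in $B_\om$ and that $B_\om$ carries a unique geodesic between any two points (from property $(\K)$), an elementary characterization forces $B_\om$ to be Euclidean of some dimension $k\ge 1$, where $k\ge 1$ because property (E) supplies at least one Ptolemy line not entirely vertical. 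Property $(\K)$ itself then translates into the obvious statement in $\R^k$: the unique Ptolemy line through $x$ hitting $F$ is the ``horizontal lift'' of the straight segment from $\pi_\om(x)$ to $\pi_\om(F)$. Equivariance $\pi_{\om'}\circ\phi=\ov\phi\circ\pi_\om$ is formal from the M\"obius-invariance of the $\K$-line definition, and that $\ov\phi$ is a homothety follows from Lemma~\ref{lem:homothety_infinite} applied to the two induced metrics on the bases.

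The principal obstacle is the second paragraph: giving an intrinsic, M\"obius-invariant characterization of $\K$-lines and proving both their existence through every point and their uniqueness. All the subsequent steps are essentially formal consequences, but isolating the ``vertical'' direction of the nascent nilpotent structure on $X_\om$ from purely M\"obius data requires a delicate interplay between space inversions, the Ptolemy inequality, and the duality between Busemann and distance functions developed in Section~\ref{subsect:duality_dist_busemann}; it is no accident that the paper devotes several sections to this preparation.
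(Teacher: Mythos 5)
Your overall strategy (bootstrap from (I) to homotheties and shifts, define the fibers M\"obius-invariantly, quotient, identify the base) matches the paper's architecture, but the central definition on which everything rests is wrong. You propose that a $\K$-line is a \emph{Ptolemy line} $F\sub X_\om$ along which the Busemann functions of all transverse Ptolemy lines are constant. The fibers of $\pi_\om$ cannot be Ptolemy lines: the paper defines $F_x=\bigcap_{l\ni x}H_l$, the intersection of the horospheres through $x$ of \emph{all} Ptolemy lines through $x$, and no Ptolemy line can lie in such a fiber (if $l'\sub F_x$ and $y\in l'$, then $F_y=F_x$ by Lemma~\ref{lem:fiber_def}, yet $l'$ is one of the lines entering the intersection defining $F_y$ and is not contained in its own horosphere). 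In the model case $X=\di\C\hyp^2$ the fiber is the center of the Heisenberg group, on which the metric behaves like a square root of a translation distance --- not even rectifiable, and of dimension $p$ possibly greater than $1$ (Proposition~\ref{pro:fiber_nilpotent_lie_group}). Your uniqueness argument is also incoherent as stated: you invoke ``a space inversion fixing that point,'' but space inversions are fixed-point free by definition, so no such object exists; the paper's uniqueness proof for ($\K$) instead uses affineness of Busemann functions along Ptolemy lines together with Lemma~\ref{lem:unique_line}.

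The second genuine gap is that you assume away the hard half of the theorem. The existence part of ($\K$) --- that through every $x\notin F$ there actually \emph{is} a Ptolemy line hitting $F$ --- and the identification $B_\om\cong\R^k$ are proved in the paper by the zigzag-curve construction (limits of piecewise geodesics along Busemann-parallel foliations, shown to be genuine Ptolemy lines via the homothety invariance of Lemma~\ref{lem:zigzag_preserved}) and the orthogonalization procedure, culminating in the identity $\sum_i\al_i^2=1$ for a maximal orthogonal family, which supplies the Euclidean coordinates $\ov b_1,\dots,\ov b_k$ on $B_\om$. You instead deduce ($\K$) as ``the obvious statement in $\R^k$'' after asserting $B_\om\cong\R^k$ from transitivity and unique geodesics; but unique geodesics in $B_\om$ is itself a consequence of ($\K$), and the Euclidean identification requires the orthogonal Busemann coordinates --- so the argument is circular precisely at the point you yourself flag as the principal obstacle. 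The preparatory material you cite (duality, Lemma~\ref{lem:omega_closest_subset}, the dilation/shift constructions) is used correctly and in the same spirit as the paper, but it does not substitute for the missing constructions.
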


\begin{rem}\label{rem:submetry} Recall that a map
$f:X\to Y$
between metric spaces is called a {\em submetry}
if for every ball 
$B_r(x)\sub X$
of radius 
$r>0$
centered at
$x$
its image
$f(B_r(x))$
coincides with the ball
$B_r(f(x))\sub Y$. 
\end{rem}

The proof occupies the rest of sect.~\ref{sect:many_circles_auto} 
and sections~\ref{sect:slope}--\ref{sect:fibration}.
In the remaining parts of this section we give first
consequences of the properties (E) and (I).
In particular we prove the existence of 
homotheties (H), the existence of shifts, two point homogeneity 
and the Busemann flatness of 
$X$.

In what follows, we always consider the weak topology on
the group 
$\aut X$
of M\"obius automorphisms of
$X$,
i.e. a sequence
$\phi_i\in\aut X$
converges to
$\phi\in\aut X$, $\phi_i\to\phi$,
if and only if
$\phi_i(x)\to\phi(x)$
for every 
$x\in X$.

\subsection{M\"obius automorphisms and homothety property (H)}
\label{subsect:moebius_automorphisms}

In this section we establish some important additional
properties of a Ptolemy space
$X$
which follow from (E) and (I).

Given two distinct points
$\om$, $\om'\in X$,
we denote by
$C_{\om,\om'}$
the set of all the Ptolemy circles
$\si\sub X$
through
$\om$, $\om'$,
and by
$\Ga_{\om,\om'}$
the group of M\"obius automorphisms
$\phi:X\to X$
such that
$\phi(\om)=\om$, $\phi(\om')=\om'$, $\phi(\si)=\si$
and
$\phi$
preserves orientations of
$\si$ 
for every
$\si\in C_{\om,\om'}$.

\begin{pro}\label{pro:homothety_property} Any Ptolemy space
$X$ 
with properties (E) and (I) possesses the following property

\noindent
(H) Homothety: for each distinct
$\om$, $\om'\in X$
the group
$\Ga_{\om,\om'}$
acts transitively on every arc of
$\si\sm\{\om,\om'\}$
for every circle
$\si\in C_{\om,\om'}$.
\end{pro}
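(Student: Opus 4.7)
The plan is to build elements of $\Ga_{\om,\om'}$ by composing pairs of s-inversions. Given distinct $\om,\om'\in X$ and any two spheres $S,S'$ between $\om,\om'$, property (I) supplies the unique s-inversions $\phi_S:=\phi_{\om,\om',S}$ and $\phi_{S'}:=\phi_{\om,\om',S'}$. I set $\psi:=\phi_{S'}\circ\phi_S$. By the defining properties of an s-inversion, both factors swap $\om\leftrightarrow\om'$ and preserve every Ptolemy circle through $\om,\om'$, so $\psi$ fixes $\om$ and $\om'$ and leaves every $\si\in C_{\om,\om'}$ invariant.

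To conclude $\psi\in\Ga_{\om,\om'}$, I must verify that $\psi$ preserves the orientation of every $\si\in C_{\om,\om'}$. The restriction $\phi_S|_\si$ is a continuous fixed-point-free involution of $\si\cong S^1$; since any orientation-reversing self-involution of $S^1$ is a reflection and hence has two fixed points, $\phi_S|_\si$ must be orientation-preserving, and so is the composition $\psi|_\si$.

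Next I identify $\psi|_\si$ as a dilation. Fix a metric $d\in\cM$ with infinitely remote point $\om$, so that $\si_\om$ is a Ptolemy line; choose coordinates identifying $\si_\om$ with $\R$ and $\om'$ with $0$, so that the two arcs of $\si\sm\{\om,\om'\}$ correspond to $(0,\infty)$ and $(-\infty,0)$. Every sphere between $\om,\om'$ in this setup has the form $S=S_r^d(\om')$ for some $r>0$, and then $S\cap\si=\{r,-r\}$. Since $\phi_S|_\si$ is an orientation-preserving M\"obius involution of $\si\cong\wh\R$ without fixed points that swaps $0\leftrightarrow\infty$ and permutes $\{r,-r\}$, the uniqueness in Proposition~\ref{pro:moebch-circ} forces $\phi_S|_\si(x)=-r^2/x$. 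Composing two such inversions yields $\psi|_{\si_\om}(x)=(r'/r)^2\,x$.

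Finally, as $r,r'$ range over $(0,\infty)$ the factor $(r'/r)^2$ attains every positive value, so $\Ga_{\om,\om'}$ contains every positive dilation of $\si_\om$; such dilations act transitively on each of the rays $(0,\infty)$ and $(-\infty,0)$, which establishes (H). The only genuinely subtle step is the orientation argument combined with the application of Proposition~\ref{pro:moebch-circ} to pin down the explicit formula for $\phi_S|_\si$; once these are in place, the result drops out from composing two involutions.
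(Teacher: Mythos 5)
Your proof is correct and follows essentially the same route as the paper: compose two s-inversions $\phi_{\om,\om',S'}\circ\phi_{\om,\om',S}$, deduce orientation preservation from fixed-point-freeness on each invariant circle, and identify the composition as a dilation with coefficient $(r'/r)^2$. The only cosmetic difference is that you pin down $\phi_S|_\si$ as $x\mapsto -r^2/x$ via Proposition~\ref{pro:moebch-circ} and compute on the circle itself, whereas the paper reads the coefficient off globally from Lemma~\ref{lem:sinversion_minversion}.
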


\begin{rem}\label{rem:homothety} If one of the points
$\om$, $\om'$
is infinitely remote for a metric 
$d$
of the M\"obius structure, then every 
$\ga\in\Ga_{\om,\om'}$
is a homothety w.r.t.
$d$.
This is why we use (H) for the notation of the property above.
\end{rem}

\begin{proof} We assume that 
$\om'$
is infinitely remote for a metric 
$d\in\cM$.
Then for any
$\si\in C_{\om,\om'}$
the curve
$\si_{\om'}=\si\sm\om'$
is a Ptolemy line w.r.t.
$d$,
and any
$\ga\in\Ga_{\om,\om'}$
acts on
$\si_{\om'}$
as a preserving orientation homothety.

Composing s-inversions
$\phi=\phi_{\om,\om',S}$, $\phi'=\phi_{\om,\om',S'}$
of
$X$,
where  
$S$, $S'\sub X$
are spheres between
$\om$, $\om'$,
we obtain a M\"obius automorphism
$\ga=\phi'\circ\phi$
with properties
$\ga(\om)=\om$, $\ga(\om')=\om'$
and
$\ga(\si)=\si$
for any Ptolemy circle
$\si\in C_{\om,\om'}$.
Having no fixed point, both
$\phi$, $\phi'$
preserve orientations of
$\si$.
Hence, 
$\ga$
preserves its orientations, thus
$\ga$
acts on every arc of
$\si\sm\{\om,\om'\}$
as a homothety. That is,
$\ga\in\Ga_{\om,\om'}$.

Let
$r$, $r'>0$
be the radii of
$S$, $S'$
respectively w.r.t. the metric 
$d$, $S=S_r^d(\om)$, $S'=S_{r'}^d(\om)$.
Then for every
$x\in X\sm\{\om,\om'\}$
we have
$d(\phi(x),\om)=\frac{r^2}{d(x,\om)}$
and
$d(\ga(x),\om)=d(\phi'\circ\phi(x),\phi'(\om'))
=\frac{r'^2}{d(\phi(x),\om)}=(r'/r)^2d(x,\om)$.
Therefore, the dilatation coefficient of
$\ga$
equals
$\la:=(r'/r)^2$,
and it can be chosen arbitrarily by changing
$S$, $S'$
appropriately.
\end{proof}

\begin{cor}\label{cor:weak_unique} Any two distinct Ptolemy 
circles in a Ptolemy space with properties (E) and (I)
have in common at most two points.
\end{cor}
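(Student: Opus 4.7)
The plan is to argue by contradiction: suppose distinct Ptolemy circles $\si$ and $\si'$ share three distinct points $\om$, $\om'$, $x$, and then deduce $\si = \si'$. I would proceed arc by arc: first identify the arc of $\si$ containing $x$ with the corresponding arc of $\si'$ using the homothety property (H), then use a single space inversion (supplied by (I)) to produce a fourth common point on the opposite arc, and finally rerun the same argument on that side to identify the remaining arcs.

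Since $\si,\si'\in C_{\om,\om'}$, Proposition~\ref{pro:homothety_property} gives that $\Ga_{\om,\om'}$ acts transitively on each arc of $\si\sm\{\om,\om'\}$; and by the very definition of $\Ga_{\om,\om'}$ each of its elements simultaneously preserves $\si'$ (it preserves \emph{every} Ptolemy circle through $\om,\om'$). Hence for any $y$ on the $x$-arc of $\si$, picking $\ga\in\Ga_{\om,\om'}$ with $\ga(x)=y$ yields $y=\ga(x)\in\si'$. The $x$-arc of $\si$ is thus a connected subset of $\si'\sm\{\om,\om'\}$ and so lies inside a single arc of $\si'$; by exchanging the roles of $\si$ and $\si'$ the two $x$-arcs must coincide.

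For the opposite arc, I would invoke property (I) to pick any s-inversion $\phi=\phi_{\om,\om',S}$. By definition $\phi$ is fixed-point-free, swaps $\om$ with $\om'$, and preserves both $\si$ and $\si'$. As a continuous fixed-point-free involution of the circle $\si$ that swaps the two endpoints of each open arc of $\si\sm\{\om,\om'\}$, it must interchange the two arcs — otherwise an intermediate-value argument on a circular parameter would force a fixed point inside each preserved arc. Consequently $\phi(x)$ is a common point of $\si$ and $\si'$ lying on the opposite arc of each, and applying the previous paragraph to $\phi(x)$ in place of $x$ identifies the remaining arcs, forcing $\si=\si'$.

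The only step with any real content is the small topological observation that a fixed-point-free involution of $S^1$ swapping two marked points must swap the two arcs they bound; everything else is a direct application of (H), (I), and the definitions. Informally, (H) supplies ``translations'' along one arc while (I) supplies the ``flip'' needed to reach the other, and these two ingredients together are precisely what is required to promote agreement at three points to agreement along the whole circles.
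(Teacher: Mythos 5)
Your argument is correct. The first half coincides with the paper's: both use property (H) together with the fact that every $\ga\in\Ga_{\om,\om'}$ preserves \emph{both} circles to sweep the common point $x$ over its whole arc, identifying the two $x$-arcs. Where you diverge is in reaching the opposite arc. The paper stays entirely inside property (H): it picks a point $\om''$ in the interior of the already-identified common arc and reruns the same transitivity argument for the pair $(\om',\om'')$ with $\om$ as the third common point; the arc of $\si$ between $\om'$ and $\om''$ containing $\om$ is the ``large'' arc that swallows the not-yet-identified portion, so one application finishes the job. You instead invoke property (I) directly: an s-inversion $\phi_{\om,\om',S}$ preserves both circles, and your intermediate-value observation (a fixed-point-free homeomorphism of $S^1$ that swaps $\om$ and $\om'$ cannot preserve either closed arc, since the restriction to a preserved arc would be a self-map of an interval exchanging its endpoints and hence would have an interior fixed point) correctly forces $\phi$ to exchange the two arcs, so $\phi(x)$ is a common point on the opposite side and the first step applies again. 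Both routes are sound; the paper's is marginally more economical in that it needs no topological lemma and uses (I) only through its consequence (H), while yours makes the geometric role of the inversion as the ``flip'' between arcs explicit, which is a fair trade.
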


\begin{proof} Assume
$\om$, $\om'$, $x\in\si\cap\si'$
are distinct common points of Ptolemy circles
$\si$, $\si'\sub X$.
We have
$\ga(x)\in\si\cap\si'$
for every
$\ga\in\Ga_{\om,\om'}$.
Then by property (H), the arcs of
$\si$
and 
$\si'$
between
$\om$, $\om'$
which contain
$x$
coincide. Taking
$\om''$
inside of this common arc and applying the same
argument to
$\om'$, $\om''$, $x=\om$,
we obtain 
$\si=\si'$. 
\end{proof}

\subsection{Busemann parallel lines, pure homotheties and shifts}
\label{subsect:parallel_lines_pure_homothethies_shifts}

In this section we assume that the compact Ptolemy space
$X$
possesses the properties (E) an (I). 

We say that Ptolemy lines
$l$, $l'\sub X_\om$
are {\em Busemann parallel} if 
$l$, $l'$
share Busemann functions, that is, any Busemann function
associated with
$l$
is also a Busemann function associated with
$l'$
and vice versa.

\begin{lem}\label{lem:unique_line} Let
$l$, $l'\sub X_\om$
be Ptolemy lines with a common point,
$o\in l\cap l'$, $b:X_\om\to\R$
a Busemann function of
$l$
with
$b(o)=0$.
Assume 
$b\circ c(t)=-t=b\circ c'(t)$
for all 
$t\ge 0$
and for appropriate unit speed parameterizations
$c$, $c':\R\to X_\om$
of
$l$, $l'$
respectively
with
$c(0)=o=c'(0)$.
Then
$l=l'$.
In particular, Busemann parallel Ptolemy lines coincide if
they have a common point.
\end{lem}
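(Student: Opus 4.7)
The plan is to reduce the lemma to showing that $A := d(c(1), c'(1)) = 0$. By Proposition~\ref{pro:homothety_property} applied to the pair $(\om, o)$, for every $\la > 0$ there is a homothety $\ga_\la \in \Ga_{\om, o}$ of ratio $\la$ on $(X_\om, d)$ fixing $o$. Since both $\si = l \cup \om$ and $\si' = l' \cup \om$ lie in $C_{\om, o}$, every such $\ga_\la$ preserves them together with their orientations, hence $\ga_\la(c(t)) = c(\la t)$ and $\ga_\la(c'(t)) = c'(\la t)$. This yields the scaling identity $d(c(\la), c'(\la)) = \la A$ for every $\la > 0$. If $A = 0$, then $c(t) = c'(t)$ on $(0, \infty)$, and an analogous analysis on the negative rays completes $l = l'$.

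For the second step, let $b^-$ denote the Busemann function of the opposite end of $l$, normalized by $b^-(o) = 0$. Standard facts going back to \cite{FS2} give that $b^\pm$ are $1$-Lipschitz and convex along every Ptolemy line of $X_\om$, and that $b^+ + b^- \ge 0$ on $X_\om$ with equality along $l$. The hypothesis $b^+(c'(t)) = -t$ for $t \ge 0$ combined with nonnegativity yields $b^-(c'(t)) \ge t$, while the Lipschitz bound $|b^-(c'(t))| \le d(c'(t), o) = t$ supplies the reverse; hence $b^-(c'(t)) = t$ for $t \ge 0$. Convexity of $b^\pm$ along the Ptolemy line $l'$, combined with the tangent inequality at $t = 0$, then propagates both identities $b^+(c'(t)) = -t$ and $b^-(c'(t)) = t$ to all $t \in \R$, so that $(b^+ + b^-) \circ c' \equiv 0$.

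The concluding step shows $A = 0$ and is the main obstacle. The zero locus $Z := \{b^+ + b^- = 0\}$ contains both $l$ and $l'$, is closed, is convex along every Ptolemy line of $X_\om$, and is $\Ga_{\om, o}$-invariant since a direct computation from $\ga_\la(c(t)) = c(\la t)$ gives $\ga_\la^{\ast} b^\pm = \la b^\pm$. I would exploit this homothety-invariance together with the Ptolemy inequality on quadruples such as $(o, c(1), c'(1), c(r))$ as $r \to \infty$---whose off-diagonal distance $d(c'(1), c(r)) = r - 1 + o(1)$ is pinned down by the Busemann limit---and iterate across scales via the $\ga_\la$ to force $A = 0$. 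The difficulty is that individual four-point Ptolemy inequalities only reproduce the triangle and Lipschitz bounds, so extracting the sharp equality requires combining these with the Busemann asymptotics and the homogeneity $F(\la t_1, \la t_2) = \la F(t_1, t_2)$ of $F(t_1, t_2) := d(c(t_1), c'(t_2))$.
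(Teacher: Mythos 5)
Your proposal has a genuine gap at precisely the step you flag as ``the main obstacle'': you never actually prove $A=d(c(1),c'(1))=0$. The scaling identity $d(c(\la),c'(\la))=\la A$ and the computation $(b^++b^-)\circ c'\equiv 0$ are both correct, but the strategy you sketch for the final step does not close. For instance, the Ptolemy inequality on $(o,c(1),c'(1),c(r))$ with $r\to\infty$ only yields bounds of the form $rA\le 2r+o(r)$, i.e.\ $A\le 2$, and the quadruple $(c(-s),c(t),c'(t),\om)$ gives a cross-ratio triple proportional to $(s+t:s+t:At)$, whose triangle inequality is satisfied for all $A\le 2(s+t)/t$. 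As you yourself observe, four-point inequalities at a single scale reproduce only the triangle and Lipschitz bounds, and homothety-invariance alone cannot improve a scale-invariant statement like $d(c(\la),c'(\la))=\la A$. The missing ingredient is a rigidity input beyond the Ptolemy inequality.

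The paper supplies that input via Corollary~\ref{cor:weak_unique} (two distinct Ptolemy circles meet in at most two points, itself a consequence of the homothety property (H)). Its proof does not attack $d(c(t),c'(t))$ at all: it shows that the concatenation of $c|(-\infty,0]$ with $c'|[0,\infty)$ is a geodesic, by combining the triangle inequality $|c(-s)c'(t)|\le s+t$ with the lower bound $|c(-s)c'(t)|\ge|c(-s)c(t_i)|-|c'(t)c(t_i)|=(t_i+s)-|c'(t)c(t_i)|\to t+s$, where the limit uses exactly your hypothesis $b\circ c'(t)=-t$. A complete geodesic in $X_\om$ together with $\om$ is a Ptolemy circle, and this circle shares the entire ray $c|(-\infty,0]$ with $l\cup\om$, hence coincides with it by Corollary~\ref{cor:weak_unique}; then $l'$ and $l$ share the ray $c'|[0,\infty)$ and coincide by the same corollary. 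Note this last point also repairs a small secondary flaw in your outline: you cannot run an ``analogous analysis on the negative rays,'' since the hypothesis $b\circ c'(t)=-t$ is only given for $t\ge 0$; the coincidence of the negative rays must come from circle rigidity, not from a symmetric argument.
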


\begin{proof} We show that the concatenation of
$c|(-\infty,0]$
with
$c'|[0,\infty)$
is also a Ptolemy line. Then
$l=l'$
by Corollary~\ref{cor:weak_unique}. It suffices to show	that for
$s$, $t\ge 0$
we have
$|c(-s)c'(t)|=t+s$.
By triangle inequality we have
$|c(-s)c'(t)|\le t+s$.
Letting 
$t_i\to \infty$ 
we have
$|c'(t)c(t_i)|-t_i\to b\circ c'(t)=-t$.
Thus by triangle inequality again, we have
$$|c(-s)c'(t)| \ge |c(-s)c(t_i)| - |c'(t)c(t_i)| = (t_i+s)-|c'(t)c(t_i)|\to t+s.$$
Thus
$|c(-s)c'(t)|=t+s$.
\end{proof}

Next, we show that a sublinear divergence of Ptolemy lines
is equivalent for them to be Busemann parallel.

\begin{lem}\label{lem:busparallel_sublinear}
If two Ptolemy lines 
$l$, $l'\sub X_\om$
are Busemann parallel, then
they diverge at most sublinearly, that is
$|c(t)c'(t)|/|t|\to 0$
as
$|t|\to\infty$
for appropriate unit speed parameterizations
$c$, $c'$
of 
$l$, $l'$.

Conversely, if
$|c(t_i)c'(t_i)|/|t_i|\to 0$
for some sequences
$t_i\to\pm\infty$,
then the lines
$l$, $l'$
are Busemann parallel.
\end{lem}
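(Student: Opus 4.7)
The plan is to prove the two implications separately, relying on a common choice of parameterizations. By Lemma~\ref{lem:unique_line}, I may assume $l\neq l'$, hence $l\cap l'=\es$. Let $b^\pm$ denote the opposite Busemann functions of $l$, normalized by $b^\pm(c(0))=0$, so that $b^+(c(t))=-t$ and $b^-(c(t))=t$. Set $f(t):=d(c(t),c'(t))$.

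For the reverse direction, suppose $f(t_i)/|t_i|\to 0$ along some $t_i\to+\infty$; the hypothesis implicitly fixes the orientation of $c'$ so that its positive end corresponds to that of $c$. Applying the triangle inequality inside the defining limit
\[
b^+(x)=\lim_{t\to\infty}(d(x,c(t))-t)
\]
at $x=c'(t_i)$ with the approximating sequence $t=t_i$, and using $b^+(c(t_i))=-t_i$, yields $|b^+(c'(t_i))+t_i|\le f(t_i)$, whence $b^+(c'(t_i))/t_i\to -1$. The function $g:=b^+\circ c'\colon\R\to\R$ is 1-Lipschitz and convex on $\R$ (convexity of Busemann functions along Ptolemy lines, \cite{FS2}). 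If the right derivative satisfied $g'_+(s_0)>-1$ at some $s_0$, monotonicity of $g'_+$ under convexity would give $\liminf_{t\to\infty}g(t)/t\ge g'_+(s_0)>-1$, contradicting $g(t_i)/t_i\to -1$. Hence $g'_+\equiv -1$, so $g$ is affine of slope $-1$ on all of $\R$. A symmetric argument using the second sequence $t_i\to -\infty$ and the Busemann function $b^-$ shows that $b^-\circ c'$ is affine of slope $+1$. Since horofunctions at the boundary point $\om$ are determined up to additive constants, this identifies $b^\pm$ with the Busemann functions of $l'$ on all of $X_\om$, establishing Busemann parallelism.

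For the forward direction, assume $l,l'$ are Busemann parallel, and choose $c'$ so that $b^+(c'(t))=-t$. Let $\beta:=b^-(c'(0))$, so $b^-(c'(t))=t+\beta$. The plan is to apply the Ptolemy inequality to the quadruple $\{c(-T),c(T),c(s),c'(s)\}$ for $T\gg|s|$, using the monotone convergences $d(c'(s),c(\pm T))-T\searrow b^\pm(c'(s))$ to express $d(c'(s),c(\pm T))$ as $T\mp s+\mathrm{(const)}+\epsilon^\pm(T)$ with $\epsilon^\pm(T)\searrow 0$. Combining the three Ptolemy inequalities so that the leading $T^2$ contributions cancel, then letting $T\to\infty$, should yield $f(s)=o(|s|)$ as $|s|\to\infty$.

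The hard part will be the forward direction. A naive Ptolemy estimate gives only the linear bound $f(s)=O(|s|)$, because each of the products $d(c(-T),c(T))\cdot d(c(s),c'(s))$, $d(c(-T),c(s))\cdot d(c(T),c'(s))$, and $d(c(-T),c'(s))\cdot d(c(T),c(s))$ carries a leading $O(T^2)$ term of the same order $2(T^2-s^2)$, and the dominant contributions do not cancel against $2Tf(s)$ in any single Ptolemy inequality. Extracting sublinearity will require either an adaptive choice $T=T(s)\to\infty$ that balances the Busemann remainders $\epsilon^\pm(T)$ against the offset $\beta$, or invocation of the Busemann flatness of $X$ — established later in the first half of the paper from properties (E) and (I) — which forces $\beta=0$ and permits clean cancellation of the leading terms in the Ptolemy estimate.
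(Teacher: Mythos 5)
Your converse direction is on the right track but stops one step short. The estimate $|b^+(c'(t_i))+t_i|\le f(t_i)$ and the convexity argument forcing $b^+\circ c'$ to be affine of slope $-1$ (and $b^-\circ c'$ of slope $+1$) are fine. But the concluding sentence --- that horofunctions at the boundary point $\om$ are determined up to additive constants, so $b^\pm$ must be the Busemann functions of $l'$ --- is false in this setting: if it were true, \emph{every} pair of Ptolemy lines in $X_\om$ would be Busemann parallel, whereas the whole structure of the paper (the fibration $\pi_\om$, $\K$-lines, the Heisenberg-type examples) rests on the fact that distinct lines in $X_\om$ generally carry distinct Busemann functions. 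Knowing that $b^+$ restricted to $l'$ is affine of slope $-1$ does not identify $b^+$ with the Busemann function of $l'$ at points off $l'$. The paper closes this gap with a second application of the Ptolemy inequality, to the quadruples $Q_{x,i}=(x,c(t_i),c'(t_i),o)$ for an \emph{arbitrary} $x\in X_\om$, which yields $|b'(x)-b(x)|\le o(1)$ directly. You need such an argument (your quadruple $Q_i=(o,c(t_i),c'(t_i),o')$ computation is essentially the paper's first step, giving $b'(o)=b(o')=0$; you are missing the second step).

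Your forward direction is not a proof. You correctly diagnose that the naive Ptolemy estimate only gives $f(s)=O(|s|)$, but neither proposed remedy works: the ``adaptive choice $T=T(s)$'' is unspecified and, as your own accounting of the leading terms shows, no choice of $T$ makes the $T^2$ contributions cancel in the one inequality ($AB\le CD+EF$) that could bound $f(s)$ from above; and invoking Busemann flatness is circular, since Proposition~\ref{pro:smooth_convex} is proved later via Lemma~\ref{lem:limit_circle}, which relies on shifts (Lemma~\ref{lem:shift_busemann_parallel}), which in turn rely on the present lemma. The paper's actual argument is of a completely different nature: assuming $\mu(t_i)/t_i\ge a>0$ along some $t_i\to\infty$, it rescales by homotheties $\phi_i\in\Ga_{\om,c(0)}$ of factor $1/t_i$ (property (H)), checks that the rescaled lines $\phi_i(l')$ still satisfy $b\circ c_i'(t)=-t$, extracts by compactness a limit Ptolemy line $l''$ through $c(0)$ with the same normalized Busemann restriction but with $|c''(1)c(1)|\ge a>0$, and derives a contradiction from Lemma~\ref{lem:unique_line}, which forces $l''=l$. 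Some such blow-down or renormalization idea is indispensable here; the Ptolemy inequality alone cannot deliver sublinearity.
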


\begin{proof} Let
$c$, $c':\R\to X_\om$
be unit speed parameterizations of Busemann parallel lines
$l$, $l'\sub X_\om$
respectively, and a common Busemann function
$b:X_\om\to\R$
such that
$b\circ c(t)=b\circ c'(t)=-t$
for all
$t\in\R$.
Let
$\mu(t):=|c(t)c'(t)|$.
We claim that
$\mu(t)/|t|\to 0$
for
$t\to\pm\infty$.
Assume to the contrary, that W.L.G. there exists a sequence
$t_i\to\infty$ 
with
$\mu(t_i)/t_i\ge a >0$.

By the homothety property~(H) there exists a homothety
$\phi_i$
of
$X_\om$
with factor
$1/t_i$
such that
$\phi_i\circ c(s)=c(s/t_i)$
for all
$s\in\R$.
Note that
$c'_i(s)=\phi_i\circ c'(t_i s)$
is a unit speed parameterization of
the Ptolemy line
$\phi_i(l')$.
For fixed
$i$
we calculate
\begin{align*}
b\circ c'_i(t)) &= \lim_{s\to \infty}(|c'_i(t) c(s)|-s)
= \lim_{s\to \infty}(|\phi_i(c'(tt_i))c(s)|-s) \\
&= \lim_{s\to \infty}(|\phi_i(c'(tt_i))c(s/t_i)|-s/t_i)
= \lim_{s\to \infty}(|\phi_i(c'(tt_i))\phi_i(c(s))|-s/t_i)\\
&= \lim_{s\to \infty}\frac{1}{t_i}(|c'(tt_i)c(s)|-s)
=\frac{1}{t_i}b(c'(tt_i))=\frac{1}{t_i}(-tt_i )=-t
\end{align*}
for all 
$t\in\R$.
The Ptolemy lines
$\phi_i(l')$
subconverge to a Ptolemy line
$l''$
through
$c(0)$.
If
$c'':\R\to X$
is the limit unit speed parameterization of
$l''$,
then
$b\circ c''(t)=-t$
for all 
$t\in\R$,
and
$|c''(1)c(1)|\ge a>0$.
This contradicts Lemma \ref{lem:unique_line} by which
$l=l'$
and thus
$c''(t)=c(t)$
for all 
$t\in\R$.

Conversely, assume 
$c$, $c':\R\to X_\om$
are unit speed parameterizations of Ptolemy lines
$l$, $l'\sub X_\om$
with
$c(0)=o$, $c'(0)=o'$
such that
$b(o)=b(o')=0$
for the Busemann function
$b:X_\om\to\R$
of
$l$
with
$b\circ c(t)=-t$, $t\in\R$,
and
$\mu(t_i)/t_i\to 0$
for some sequence
$t_i\to\infty$,
where
$\mu(t)=|c(t)c'(t)|$.
Let
$b':X_\om\to\R$
be the Busemann function of
$l'$
with
$b'\circ c'(t)=-t$.
Applying the Ptolemy inequality to the cross-ratio triple
$\crt(Q_i)$
of the quadruple
$Q_i=(o,c(t_i),c'(t_i),o')$,
we obtain 
$$\left||oc'(t_i)||o'c(t_i)|-|oc(t_i)||o'c'(t_i)|\right|
  \le|oo'||c(t_i)c'(t_i)|.$$
Using
$|oc(t_i)|=t_i=|o'c'(t_i)|$,
$|o'c(t_i)|=b(o')+t_i+o(1)$,
$|oc'(t_i)|=b'(o)+t_i+o(1)$,
and
$|c(t_i)c'(t_i)|=\mu(t_i)=o(1)t_i$,
we obtain
$$|(b'(o)+t_i+o(1))(b(o')+t_i+o(1))-t_i^2|\le|oo'|o(1)t_i,$$
thus
$|b'(o)|\le o(1)$
and hence
$b'(o)=b(o')=0$.

Finally, for an arbitrary
$x\in X_\om$
consider the quadruple
$Q_{x,i}=(x,c(t_i),c'(t_i),o)$.
By the same argument as above, we have
$$\left||xc'(t_i)|t_i-|xc(t_i)||oc'(t_i)|\right|\le
  |ox|\mu(t_i).$$
Using
$|oc'(t_i)|=b'(o)+t_i+o(1)=t_i+o(1)$,
$|xc'(t_i)|=b'(x)+t_i+o(1)$,
$|xc(t_i)|=b(x)+t_i+o(1)$,
we finally obtain 
$|b'(x)-b(x)|\le o(1)$
and hence
$b(x)=b'(x)$.
Therefore, the lines
$l$, $l'$
are Busemann parallel.
\end{proof}

Given
$x$, $x'\in X_\om$,
we construct an isometry
$\eta_{xx'}:X_\om\to X_\om$
called a {\em shift} as follows. We take a sequence
$\la_i\to\infty$
and using the homothety property (H) for every 
$i$
consider homotheties
$\phi_i\in\Ga_{\om,x}$, $\psi_i\in\Ga_{\om,x'}$
with coefficient
$\la_i$.
Then
$\eta_i=\psi_i^{-1}\circ\phi_i$
is an isometry of
$X_\om$
for every
$i$
because the coefficient of the homothety
$\eta_i$
is 1. Furthermore, we have
$|\eta_i(x)x'|=\la_i^{-1}|xx'|\to 0$
as 
$i\to\infty$.
Since
$X$
is compact, the sequence
$\eta_i$
subconverges to an isometry
$\eta=\eta_{xx'}$
with
$\eta(x)=x'$.
The term shift for 
$\eta$
is justified by the following

\begin{lem}\label{lem:shift_busemann_parallel} A shift 
$\eta_{xx'}$
moves any Ptolemy line
$l$
through
$x$
to a Busemann parallel Ptolemy line 
$\eta_{xx'}(l)$
through
$x'$.
\end{lem}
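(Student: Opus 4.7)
The plan is to apply the converse direction of Lemma~\ref{lem:busparallel_sublinear}: it suffices to exhibit unit-speed parameterizations $c,c'$ of $l,l':=\eta_{xx'}(l)$ and a sequence $t_j\to+\infty$ with $|c(t_j)c'(t_j)|/t_j\to 0$. First I would verify that $\eta:=\eta_{xx'}$ is a M\"obius isometry of $X_\om$ fixing $\om$. As the pointwise limit on the compact space $X$ of the M\"obius automorphisms $\eta_i\in\aut X$, it preserves cross-ratios (these being continuous on admissible quadruples) and fixes $\om$; passing to the limit in the isometry identity $|\eta_i(a)\eta_i(b)|=|ab|$ valid on $X_\om$ shows $\eta$ is an isometry there. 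Since $l\cup\om$ is a Ptolemy circle through $\om$ and $x$, $\eta(l\cup\om)=l'\cup\om$ is also a Ptolemy circle, so $l'$ is a Ptolemy line through $x'=\eta(x)$. Take $c\colon\R\to X_\om$ unit-speed with $c(0)=x$ and set $c':=\eta\circ c$, a unit-speed parameterization of $l'$ with $c'(0)=x'$.

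Next I would exploit the structure $\eta_i=\psi_i^{-1}\phi_i$. Since $\phi_i\in\Ga_{\om,x}$ preserves every Ptolemy circle through $\om$ and $x$, in particular the one $l\cup\om$, we have $\phi_i(l)=l$, and orienting $c$ compatibly yields $\phi_i\circ c(t)=c(\la_i t)$, so $\eta_i(c(t))=\psi_i^{-1}(c(\la_i t))$. Applying $\psi_i$, which is a $\la_i$-homothety at $x'$, to both arguments of the distance yields the central identity
\[
|c(t)\eta_i(c(t))|=|c(t)\psi_i^{-1}(c(\la_i t))|=\la_i^{-1}|\psi_i(c(t))\,c(\la_i t)|,
\]
and letting $i\to\infty$ gives $|c(t)c'(t)|=\lim_i\la_i^{-1}|\psi_i(c(t))\,c(\la_i t)|$.

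The hard part will be showing that this limit is sublinear in $t$. My plan is to apply the Ptolemy triangle inequality to the cross-ratio triple of the quadruple $(x,x',c(\la_i t),\psi_i(c(t)))$, using: (i) the Busemann asymptotic $|y\,c(s)|=s+b(y)+o(1)$ as $s\to\infty$, where $b$ is the Busemann function of $l$ normalized by $b(x)=0$, specialized to $y=x'$, giving $|x'c(\la_i t)|=\la_i t+b(x')+o(1)$; and (ii) the homothety scaling $|\psi_i(c(t))\,x'|=\la_i|c(t)x'|$, whence $|\psi_i(c(t))\,x|=\la_i|c(t)x'|+O(1)$ via triangle inequality with $x'$. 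Two of the three cross-ratio entries then coincide to leading order $\la_i^2t|c(t)x'|$, their difference being of smaller order, forcing the third entry $|xx'|\cdot\la_i|c(t)\eta_i(c(t))|$ into a narrow range. Taking $i\to\infty$ and then $t\to\infty$, I expect $|c(t)c'(t)|$ to remain bounded (essentially by $|xx'|$, as confirmed by the Euclidean model where the identity reduces to a genuine parallel translation). Sublinear divergence then follows, and Lemma~\ref{lem:busparallel_sublinear} gives Busemann parallelism. The delicate point is verifying that Ptolemy inequality is sharp enough in this asymptotic regime; this is where the full strength of the Ptolemy property, the homothety axiom (H), and the specific symmetric form $\psi_i^{-1}\phi_i$ of $\eta_i$ enter in an essential way.
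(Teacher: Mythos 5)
Your setup is sound and matches the paper's framework: $\eta=\lim_i\psi_i^{-1}\circ\phi_i$ is a M\"obius isometry of $X_\om$, $l'=\eta(l)$ is a Ptolemy line through $x'$, $\phi_i\circ c(t)=c(\la_i t)$, and the identity $|c(t)\eta_i(c(t))|=\la_i^{-1}|\psi_i(c(t))\,c(\la_i t)|$ is correct. But the key analytic step fails. For the quadruple $(x,x',c(\la_i t),\psi_i(c(t)))$ the three cross-ratio entries are $a=|xx'|\cdot\la_i\mu_i(t)$, $b=\la_i t\cdot\la_i|x'c(t)|$ and $c=|x\,\psi_i(c(t))|\cdot|x'c(\la_i t)|$, and the Ptolemy (triangle) inequality only yields $|b-c|\le a\le b+c$. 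When $b$ and $c$ agree to leading order $\la_i^2t\,|x'c(t)|$, this does \emph{not} force $a$ into a narrow range: it confines $a$ to the interval from $o(\la_i^2t)$ up to $\approx 2\la_i^2t\,|x'c(t)|$, so after dividing by $\la_i$ you only get $\mu_i(t)\lesssim 2\la_i t\,|x'c(t)|/|xx'|$, which blows up as $i\to\infty$. An upper bound on one entry in terms of the closeness of the other two is simply not what the Ptolemy inequality provides. Moreover, the heuristic you invoke --- that $|c(t)c'(t)|$ should stay bounded by roughly $|xx'|$ as in the Euclidean model --- is false in general: on Heisenberg-type boundaries two Busemann parallel Ptolemy lines can diverge like $\sqrt{t}$. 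Only sublinearity holds, so no estimate aiming at boundedness can succeed.

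The paper closes the argument by contradiction rather than by a direct estimate, and you already have all the ingredients for it. Suppose $\mu(t)=|c(t)c'(t)|\ge at$ for all $t>0$. Since $l'$ is a Ptolemy line through $x'$, each $\psi_i\in\Ga_{\om,x'}$ preserves $l'$ and acts on it as the orientation-preserving $\la_i$-homothety fixing $x'=c'(0)$, so $\psi_i^{-1}(c'(\la_i))=c'(1)$; likewise $\phi_i(c(1))=c(\la_i)$. Hence
$$|\eta_i(c(1))\,c'(1)|=|\psi_i^{-1}(c(\la_i))\,\psi_i^{-1}(c'(\la_i))|=\la_i^{-1}\,\mu(\la_i)\ge a>0$$
for every $i$, contradicting $\eta_i(c(1))\to\eta(c(1))=c'(1)$. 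This rules out at-least-linear divergence in either direction, producing sequences $t_i\to\pm\infty$ with $\mu(t_i)=o(|t_i|)$, and then the converse direction of Lemma~\ref{lem:busparallel_sublinear} finishes the proof.
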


\begin{proof} We show that the line 
$l'=\eta_{xx'}(l)$
cannot have at least linear divergence with 
$l$.
Assume to the contrary that
$\mu(t)\ge at$
for some 
$a>0$
and all
$t>0$,
where
$\mu(t)=|c(t)c'(t)|$, $c:\R\to X_\om$
is a unit speed parameterization of
$l$
with 
$c(0)=x$, $c'=\eta_{xx'}\circ c$.

Recall that
$\eta_{xx'}=\lim\eta_i$,
where
$\eta_i=\psi_i^{-1}\circ\phi_i$,
and
$\phi_i\in\Ga_{\om,x}$, $\psi_i\in\Ga_{\om,x'}$
are homotheties with the same coefficient
$\la_i\to\infty$.
By definition of the groups
$\Ga_{\om,x}$, $\Ga_{\om,x'}$,
we have
$\phi_i(l)=l$, $\psi_i(l')=l'$.
We take
$y=c(1)$, $y'=c'(1)$.
Then for 
$y_i=\phi_i(y)=c(\la_i)$
we have
$|y_ic'(\la_i)|=\mu(\la_i)\ge a\la_i$.
Thus for 
$y_i'=\psi_i^{-1}(y_i)$
the estimate
$|y_i'y'|=|\psi_i^{-1}(y_i)\psi_i^{-1}\circ c'(\la_i)|\ge a$
holds for all 
$i$
in contradiction with 
$y_i'\to y'$
as
$i\to\infty$.

Therefore, there are sequences
$t_i\to\pm\infty$, 
with 
$\mu(t_i)=o(1)|t_i|$.
By Lemma~\ref{lem:busparallel_sublinear} the lines
$l$, $l'$
are Busemann parallel. 
\end{proof}

From Lemma~\ref{lem:unique_line} and 
Lemma~\ref{lem:shift_busemann_parallel} we immediately obtain

\begin{cor}\label{cor:busparallel_foliation} Given a Ptolemy line
$l\sub X_\om$,
through any point 
$x\in X_\om$
there is a unique Ptolemy line
$l(x)$
Busemann parallel to
$l$.
\qed
\end{cor}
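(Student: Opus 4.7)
The plan is to obtain the corollary as a two-line consequence of the preceding two results, handling existence and uniqueness separately.

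For existence, I would argue as follows. Fix any point $x_0\in l$. If $x=x_0$, then $l$ itself is the required Ptolemy line (any line is Busemann parallel to itself, since being Busemann parallel just means sharing Busemann functions). Otherwise, invoke the construction of the shift isometry $\eta_{x_0x}:X_\om\to X_\om$ (available thanks to compactness of $X$ and the homothety property (H), both of which are in force by hypothesis of Theorem~\ref{thm:basic_ptolemy}). By Lemma~\ref{lem:shift_busemann_parallel}, $l(x):=\eta_{x_0x}(l)$ is a Ptolemy line through $\eta_{x_0x}(x_0)=x$ which is Busemann parallel to $l$.

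For uniqueness, suppose $l_1$ and $l_2$ are two Ptolemy lines through $x$ that are each Busemann parallel to $l$. Being Busemann parallel is an equivalence relation: it simply means that two lines share the family of Busemann functions associated to them. Hence $l_1$ and $l_2$ share their Busemann functions, i.e. they are Busemann parallel to each other. Since they also share the point $x$, Lemma~\ref{lem:unique_line} forces $l_1=l_2$.

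There is essentially no obstacle here; the work has been done in the two preceding lemmas. The only mild subtlety worth flagging is the transitivity of the Busemann parallel relation, which one should verify is immediate from the definition (a Busemann function of $l_1$ equals a Busemann function of $l$ equals a Busemann function of $l_2$, up to the usual additive normalization that is absorbed by the choice of basepoint on each line). Once this is noted, the corollary is an immediate bookkeeping consequence of Lemmas~\ref{lem:unique_line} and \ref{lem:shift_busemann_parallel}.
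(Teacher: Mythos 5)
Your proposal is correct and is precisely the argument the paper intends: the corollary is stated there with the remark that it follows immediately from Lemma~\ref{lem:unique_line} and Lemma~\ref{lem:shift_busemann_parallel}, which is exactly how you decompose it (shifts for existence, transitivity of the Busemann-parallel relation plus the ``common point'' clause of Lemma~\ref{lem:unique_line} for uniqueness). Your flag about transitivity is a reasonable bit of care, and it is indeed immediate from the definition of Busemann parallel lines as sharing their Busemann functions.
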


Recall that any M\"obius map 
$\phi:X\to X$
with 
$\phi(\om)=\om$
for
$\om\in X$
acts on
$X_\om$
as a homothety. A homothety
$\phi:X_\om\to X_\om$
is said to be {\em pure} if it preserves any
foliation of
$X_\om$
by Busemann parallel Ptolemy lines.

\begin{lem}\label{lem:pure_homothety} For every
$o\in X_\om$
the group
$\Ga_{\om,o}$
consists of pure homotheties. In particular,
every shift of
$X_\om$
preserves any foliation of
$X_\om$
by Busemann parallel Ptolemy lines. 
\end{lem}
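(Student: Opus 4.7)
The plan is to use the sublinear-divergence characterization of Busemann parallelism (Lemma~\ref{lem:busparallel_sublinear}) together with the fact that homotheties transform divergence rates in a controlled way.

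Fix $\gamma\in\Ga_{\om,o}$, acting on $X_\om$ as a homothety with some coefficient $\la>0$, and fix a Ptolemy line $l\sub X_\om$. By Corollary~\ref{cor:busparallel_foliation} the foliation of $X_\om$ by lines Busemann parallel to $l$ contains a unique leaf $l_o=l(o)$ through $o$. The first observation is that $\gamma(l_o)=l_o$: by definition $\Ga_{\om,o}$ preserves every Ptolemy circle through $\om$ and $o$, equivalently every Ptolemy line in $X_\om$ through $o$, and $l_o$ is such a line.

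For any other leaf $l'$, I would pick unit speed parameterizations $c_o,c':\R\to X_\om$ of $l_o,l'$ with $|c_o(t)c'(t)|/|t|\to 0$ as $|t|\to\infty$, which exist by the forward direction of Lemma~\ref{lem:busparallel_sublinear}. Then
\[
\tilde c_o(t)=\gamma\circ c_o(t/\la),\qquad \tilde c'(t)=\gamma\circ c'(t/\la)
\]
are unit speed parameterizations of $\gamma(l_o)=l_o$ and $\gamma(l')$, and writing $s=t/\la$ one computes
\[
\frac{|\tilde c_o(t)\,\tilde c'(t)|}{|t|}=\frac{\la\,|c_o(s)c'(s)|}{\la\,|s|}=\frac{|c_o(s)c'(s)|}{|s|}\longrightarrow 0.
\]
The converse direction of Lemma~\ref{lem:busparallel_sublinear} then gives that $\gamma(l')$ is Busemann parallel to $l_o$, hence to $l$, so $\gamma$ sends every leaf of the foliation into the foliation. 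As $l$ was arbitrary, $\gamma$ is a pure homothety.

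For the ``in particular'' clause, let $\eta=\eta_{xx'}$ be a shift and $l\sub X_\om$ a Ptolemy line; consider the leaf $l_x=l(x)$ of the Busemann parallel foliation of $l$ through $x$. By Lemma~\ref{lem:shift_busemann_parallel}, $\eta(l_x)$ is Busemann parallel to $l_x$, hence to $l$. For any other leaf $l'$ of the foliation, the same sublinear-divergence argument with $\la=1$ applies: the unit speed parameterizations $\eta\circ c_x,\,\eta\circ c'$ of $\eta(l_x),\eta(l')$ satisfy $|\eta c_x(t)\,\eta c'(t)|/|t|=|c_x(t)c'(t)|/|t|\to 0$, so $\eta(l')$ is Busemann parallel to $\eta(l_x)$, and therefore to $l$. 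There is no serious obstacle here: the substantive analytic content sits inside Lemma~\ref{lem:busparallel_sublinear}, which converts Busemann parallelism into a quantitative divergence rate transforming transparently under the rescaling by $\la$; the only bookkeeping is to choose the parameter reparameterization $t\mapsto t/\la$ so that $\tilde c_o,\tilde c'$ are unit speed, and to notice that $\gamma(l_o)=l_o$ (resp.\ $\eta(l_x)\in$ foliation) places the image leaves in the correct equivalence class.
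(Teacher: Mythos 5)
Your proof is correct, but it takes a genuinely different route from the paper's. The paper argues directly with Busemann functions: for $\phi\in\Ga_{\om,o}$ with coefficient $\la$ and any Busemann function $b$ of a leaf $l(x)$ normalized by $b(o)=0$, one has $b\circ\phi=\la b$ (since $b$ is, by Corollary~\ref{cor:busparallel_foliation}, a Busemann function of the leaf through the fixed point $o$, which $\phi$ preserves); as $\la^{-1}b\circ\phi$ is a Busemann function of $\phi^{-1}(l(x))$, that line shares a Busemann function with $l(x)$ and is therefore Busemann parallel to it. You instead route everything through the sublinear-divergence characterization of Lemma~\ref{lem:busparallel_sublinear}: you anchor the foliation at the leaf $l_o$ through $o$ (correctly observing that $\Ga_{\om,o}$ fixes $l_o$ setwise, since it is a Ptolemy line through $o$), and check that the reparameterized images $\gamma\circ c(\cdot/\la)$ are unit speed and that the divergence ratio is invariant under the rescaling. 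Both arguments are sound and use only material established before the lemma; the paper's is a one-line algebraic identity on Busemann functions, while yours trades that for a quantitative geometric estimate. A genuine merit of your write-up is that you explicitly prove the ``in particular'' clause for shifts (via Lemma~\ref{lem:shift_busemann_parallel} plus the isometry case $\la=1$ of the same divergence argument), which the paper leaves implicit — there one would have to note that shifts are limits of compositions of pure homotheties and that preservation of a foliation passes to such limits. The only point worth making explicit in your version is that Busemann parallelism is an equivalence relation (immediate from the definition as sharing Busemann functions), since you repeatedly pass from ``parallel to $l_o$'' or ``parallel to $l_x$'' to ``parallel to $l$''.
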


\begin{proof} Let
$l\sub X_\om$
be a Ptolemy line through
$o$, $b:X_\om\to\R$
a Busemann function of
$l$
with
$b(o)=0$.
Then
$b\circ\phi=\la b$
for every homothety
$\phi\in\Ga_{\om,o}$,
where
$\la>0$
is the coefficient of
$\phi$.
By Corollary~\ref{cor:busparallel_foliation}, any Busemann
function of any Ptolemy line 
$l(x)$
through
$x\in X_\om$
is a Busemann function of a line
$l$
through
$o$.
Therefore, every 
$\phi\in\Ga_{\om,o}$
preserves any Busemann function
$b$
of
$l(x)$
with
$b(o)=0$
in the sense that
$\la^{-1}b\circ\phi=b$,
where
$\la>0$
is the coefficient of
$\phi$. 
Since
$\la^{-1}b\circ\phi$
is a Busemann function of the Ptolemy line
$\phi^{-1}(l(x))$, 
we see that this line is Busemann parallel to
$l(x)$.
Thus
$\phi$
preserves the foliation
$l(x)$, $x\in X_\om$
by Busemann parallel Ptolemy lines. 
\end{proof}

A construction of a homothety from the group
$\Ga_{\om,\om'}$
given in Proposition~\ref{pro:homothety_property} is not
uniquely determined because to obtain a homothety
with the same coefficient 
$\la$
one can take a composition of different pairs of 
s-inversions. Thus for given
$x$, $x'\in X_\om$
a shift
$\eta_{xx'}$
is not uniquely determined. We give a refined construction
of shifts with property
$\eta_{xx'}\to\id$
as
$x\to x'$
which will be used in the proof of Lemma~\ref{lem:zigzag_change_basepoint}
below.

\begin{lem}\label{lem:shift_identity} For
$x$, $x'\in X_\om$
there is a shift
$\eta_{xx'}:X_\om\to X_\om$
with 
$\eta_{xx'}(x)=x'$
such that
$\eta_{xx'}\to\id$
as 
$x\to x'$.
\end{lem}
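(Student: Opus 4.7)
The freedom in the construction of Lemma~\ref{lem:shift_busemann_parallel} is twofold: the choice of the pair of s-inversions composing each of the homotheties $\phi_i\in\Ga_{\om,x}$ and $\psi_i\in\Ga_{\om,x'}$, and the subsequential limit that extracts the shift. The idea is to pin down the first freedom by a canonical $(x,x')$-continuous recipe that reduces to the identity precisely when $x=x'$, and then to handle the subsequential limit by compactness.

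Fix $d\in\cM$ with $\om$ as its infinitely remote point and a sequence $\la_i\to\infty$. Property (I) supplies, for every $z\in X_\om$ and $r>0$, the unique s-inversion $\phi^z_r:=\phi_{\om,z,S_r^d(z)}$. Set $h_i^z:=\phi^z_{\la_i}\circ\phi^z_1\in\Ga_{\om,z}$; rerunning the radii computation from the proof of Proposition~\ref{pro:homothety_property}, with the roles of $\om,\om'$ swapped so that $\om$ is infinitely remote, shows that $h_i^z$ is a $d$-homothety of $X_\om$ fixing $z$ with coefficient $\la_i^2$. Put
\[ \eta_i^{x,x'}:=(h_i^{x'})^{-1}\circ h_i^x. \]
Then $\eta_i^{x,x'}$ is a $d$-isometry of $X_\om$, $\eta_i^{x,x}\equiv\id$ for every $i$, and $\eta_i^{x,x'}(x)=(h_i^{x'})^{-1}(x)$ lies at $d$-distance $\la_i^{-2}d(x,x')$ from $x'$, hence tends to $x'$ as $i\to\infty$. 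Any weak accumulation point $\eta_{xx'}$ of $\eta_i^{x,x'}$ (existing by compactness of $X$ and equicontinuity of the $d$-isometries regarded as self-homeomorphisms of $X$) is a M\"obius automorphism sending $x$ to $x'$, so qualifies as a shift.

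To obtain $\eta_{xx'}\to\id$ as $x\to x'$, the key ingredient is the weak continuity of the map $z\mapsto\phi^z_r$ for each fixed $r>0$. If $z_n\to z$, then any weak accumulation point of $\phi^{z_n}_r$ is a fixed-point-free M\"obius involution swapping $\om$ and $z$, preserving $S_r^d(z)$, and preserving every Ptolemy circle through $\om,z$; the uniqueness in (I) identifies it with $\phi^z_r$, so the full sequence converges weakly. It follows that $(x,x')\mapsto\eta_i^{x,x'}$ is weakly continuous for each $i$ and specializes to $\id$ at $x=x'$. To promote this to a statement about the limits, one fixes a non-principal ultrafilter $\cU$ on $\N$ and defines $\eta_{xx'}:=\cU\text{-}\lim_i\eta_i^{x,x'}$; then $\eta_{xx'}(y)=\cU\text{-}\lim_i\eta_i^{x,x'}(y)$ converges to $y$ as $x\to x'$, and $\eta_{xx'}\to\id$ follows.

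The expected main obstacle is this weak continuity of s-inversions: one has to verify that the weak limit of the involutions $\phi^{z_n}_r$ is still fixed-point free, preserves the limit sphere $S_r^d(z)$ (not just some nearby sphere), and preserves each Ptolemy circle through $\om,z$. The last property requires that every Ptolemy circle through $\om,z$ be approximated by Ptolemy circles through $\om,z_n$, a local compactness statement for the set of Ptolemy circles that should be assembled from the preceding sections. A secondary subtlety is the uniformity needed to interchange the ultrafilter limit in $i$ with the sequential limit in $x$ when deducing $\eta_{xx'}\to\id$; this is handled by the equicontinuity of the family $\{\eta_i^{x,x'}\}$ when interpreted as homeomorphisms of the compact space $X$.
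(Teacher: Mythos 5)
Your construction is the paper's own: $\phi_i=\phi_{x,\om,S_{\la_i}(x)}\circ\phi_{x,\om,S_1(x)}\in\Ga_{\om,x}$, the analogous $\psi_i\in\Ga_{\om,x'}$, and $\eta_i=\psi_i^{-1}\circ\phi_i$; and your continuity-via-uniqueness idea for $z\mapsto\phi_{\om,z,S_r^d(z)}$ at fixed $r$ is in the same spirit as the paper's appeal to uniqueness in (I). The genuine gap is precisely the step you flag as a ``secondary subtlety'' and then dismiss: the interchange of the limit in $i$ with the limit $x\to x'$. Equicontinuity of $\{\eta_i^{x,x'}\}_i$ as self-maps of the compact space $X$ controls the modulus of continuity in the \emph{argument} $y$; what you need is uniformity in the \emph{parameter} $i$, namely $\sup_i d(\eta_i^{x,x'}(y),y)\to 0$ as $x\to x'$ for each fixed $y$. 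These are independent: an equicontinuous family of isometries of $X_\om$, each moving the single point $x$ by at most $2\,d(x,x')$, can still displace a fixed $y$ by a definite amount (think of ``rotations'' about $x$), so pointwise convergence $\eta_i^{x,x'}\to\id$ for each fixed $i$ together with equicontinuity in $y$ does not let you pull $x\to x'$ through the ultrafilter limit in $i$. A rescue by contradiction via Arzel\`a--Ascoli would produce a nontrivial limit isometry fixing $x'$ and preserving all foliations by Busemann parallel lines, and killing that requires the simple transitivity of shifts (Proposition~\ref{pro:simply_transitivity_shifts}), which is proved much later and rests on results downstream of this lemma. The paper closes the gap quantitatively instead: $\Hd(S_r^d(x),S_r^d(x'))\le|xx'|$ with a bound \emph{independent of the radius} $r$, so the pairs $\phi_{x',\om,S_{\la_i}(x')}\circ\phi_{x,\om,S_{\la_i}(x)}$ and $\phi_{x',\om,S_1(x')}\circ\phi_{x,\om,S_1(x)}$ tend to $\id$ uniformly in $i$ as $x\to x'$, whence $\eta_i\to\id$ uniformly in $i$. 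You need this (or an equivalent uniform estimate) in place of the equicontinuity remark.

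A lesser but real issue sits inside your weak-continuity step: to invoke uniqueness from (I) you must verify that an accumulation point of $\phi_{\om,z_n,S_r(z_n)}$ is fixed-point free and preserves every Ptolemy circle through $\om,z$. Neither is automatic. A limit of fixed-point-free involutions on a compact space can acquire fixed points, and the points where this must be excluded include those lying in the $\K$-line through $z$, which lie on \emph{no} Ptolemy circle through $\om$ and $z$, so the circle-approximation argument cannot reach them; moreover the approximation of circles through $\om,z$ by circles through $\om,z_n$ is Lemma~\ref{lem:limit_circle}, which appears after this lemma and whose proof already uses shifts. The paper avoids the abstract accumulation point altogether by exploiting that s-inversions preserve the family of metric spheres between their data points, so that Hausdorff convergence of the data spheres directly controls the maps.
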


\begin{proof} For
$\la_i\to\infty$
we denote by
$S=S_1(x)$, $S_i=S_{\la_i}(x)$
the metric spheres in
$X_\om$
centered at
$x$
of radius 1 and
$\la_i$
respectively. Similarly we put
$S'=S_1(x')$, $S_i'=S_{\la_i}(x')$.
Then
$\phi_i=\phi_{x,\om,S_i}\circ\phi_{x,\om,S}\in\Ga_{\om,x}$, 
$\psi_i=\phi_{x',\om,S_i'}\circ\phi_{x',\om,S'}\in\Ga_{\om,x'}$
are homotheties with the same coefficient
$\la_i^2$,
see the proof of Proposition~\ref{pro:homothety_property}.
The sequence of isometries
$\eta_i=\psi_i^{-1}\circ\phi_i:X_\om\to X_\om$
converges to a shift
$\eta:X_\om\to X_\om$
with
$\eta(x)=x'$.
We have
$$\eta_i=\phi_{x',\om,S'}\circ\phi_{x',\om,S_i'}\circ
   \phi_{x,\om,S_i}\circ\phi_{x,\om,S}$$
and
$\phi_{x',\om,S_i'}\circ\phi_{x,\om,S_i}\to\id$,
$\phi_{x',\om,S'}\circ\phi_{x,\om,S}\to\id$
as
$x\to x'$
because
$S_i\to S_i'$, $S\to S'$
in the Hausdorff metric, every s-inversion is uniquely determined by
its data according to our assumption, and s-inversions preserve the family
of metric spheres between data points, see sect.~\ref{subsect:spheres_between_points}. 
Thus
$\eta_i\to\id$
for every
$i$
as
$x\to x'$.
Moreover, the convergence of metric spheres around
$x$
to metric spheres around
$x'$
in the Hausdorff metric is uniform in radius as
$x\to x'$, $\Hd(S_r^d(x),S_r^d(x'))\le|xx'|$
for every
$r>0$.
Therefore, 
$\eta_i\to\id$
uniformly in
$i$
as
$x\to x'$.
This implies
$\eta\to\id$
as
$x\to x'$.
\end{proof}

\subsection{Two point homogeneity}
\label{subsect:enhancing_E}

By property (E) formulated in sect.~\ref{sect:introduction}
we know that the space
$X$
contains at least one Ptolemy circle.

\begin{pro}\label{pro:two_point_homogeneous} Any compact Ptolemy
space with the inversion property (I) is two point homogeneous,
that is, for each (ordered) pairs
$(x,y)$, $(x',y')$
of distinct points in
$X$
there is a M\"obius automorphism
$f:X\to X$
with
$f(x)=x'$, $f(y)=y'$. 
\end{pro}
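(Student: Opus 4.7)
The plan is to write the desired automorphism as a composition $f = \eta \circ f_1$, where $f_1$ is an s-inversion sending $x$ to $x'$ and $\eta$ is a shift that fixes $x'$ and corrects the image of $y$ to $y'$.

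First I would construct $f_1$. By property~(E) the space $X$ contains a Ptolemy circle and hence has at least three points; pick any $z \in X \setminus \{x, x'\}$ and let $S$ be the sphere between $x$ and $x'$ containing $z$ (its equivalence class under $\crt(x, \cdot, \cdot, x') = (1:1:\ast)$). Property~(I) then supplies the s-inversion $f_1 := \phi_{x, x', S} \colon X \to X$, which by definition swaps $x$ and $x'$. Set $y'' := f_1(y)$; since $f_1$ is bijective and $y \neq x$, we have $y'' \neq x'$.

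Next I would take $\om := x'$, so that both $y''$ and $y'$ lie in $X_\om$, and invoke the shift construction from the paragraph preceding Lemma~\ref{lem:shift_busemann_parallel} to obtain an isometry $\eta := \eta_{y'', y'} \colon X_\om \to X_\om$ with $\eta(y'') = y'$. The crucial observation is that $\eta$ fixes $\om = x'$: by construction it is a subsequential limit of $\eta_i = \psi_i^{-1} \circ \phi_i$, where $\phi_i \in \Gamma_{\om, y''}$ and $\psi_i \in \Gamma_{\om, y'}$, and every element of a group of the form $\Gamma_{\om, \cdot}$ fixes $\om$ by definition. Thus $\eta$ extends to a M\"obius automorphism of $X$ fixing $x'$, and the composition $f := \eta \circ f_1$ is a M\"obius automorphism of $X$ satisfying $f(x) = \eta(x') = x'$ and $f(y) = \eta(y'') = y'$, as required.

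There is essentially no obstacle remaining: the substantive ingredients — existence of s-inversions (property~(I)), the homothety property~(H) provided by Proposition~\ref{pro:homothety_property}, and the existence of shifts via compactness of $X$ — have all been established in the preceding subsections, so the proposition reduces to chaining one s-inversion with one shift. The only mildly subtle point to verify en route is that the shift, which is a priori only defined as an isometry of $X_\om$, really does fix $\om$ as a point of $X$ and hence extends to a M\"obius automorphism of the whole compact space $X$; this is immediate from the fact that each approximating map $\eta_i$ fixes $\om$ together with the weak continuity of the convergence $\eta_i \to \eta$.
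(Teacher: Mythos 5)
Your proposal is correct and follows essentially the same route as the paper: the paper's proof also composes one s-inversion carrying $x$ to $x'$ with a shift of $X_{x'}$ carrying the image of $y$ to $y'$. Your additional remarks (choice of the sphere $S$, the fact that the shift fixes the infinitely remote point $x'$ and hence extends to a M\"obius automorphism of $X$) just make explicit details the paper leaves implicit.
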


\begin{proof} Applying an inversion, we can map 
$x$
to
$x'$.
Let
$y''$
be the image of
$y$
under the inversion. Then
$y''\neq x'$
by the assumption. We consider a metric of the M\"obius
structure with infinitely remote point 
$x'$.
By discussion above, there is a shift w.r.t. that metric which maps 
$y''$
to
$y'$.
The resulting composition gives a required M\"obius
automorphism.
\end{proof}

\begin{rem}\label{rem:2-transitive} In another terminology, under conditions of
Proposition~\ref{pro:two_point_homogeneous}, the action of
the group
$\aut X$ 
of M\"obius transformations
on
$X$
is {\em 2-transitive}, i.e.
$\aut X$
acts transitively on all pairs
$(x,x')\in X\times X$
with distinct entries, see \cite{Kr}.
\end{rem}

It immediately follows from Proposition~\ref{pro:two_point_homogeneous}
that the property (E) in any compact Ptolemy space with (I) 
is promoted to

\noindent
(E) Enhanced existence: through any two points in
$X$
there is a Ptolemy circle.

In what follow, we use this property under the name (E).

\subsection{Busemann functions and Busemann flatness}
\label{subsect:busemann_functions}

The proof of Theorem~\ref{thm:basic_ptolemy} is based on
study of Busemann functions on
$X_\om$.
In this section we assume that a compact Ptolemy space
$X$
possesses the properties (E) and (I).

\begin{lem}\label{lem:limit_circle} Assume that
$x_i\to x$
in
$X$,
and a point
$\om\in X$
distinct from
$x$
is fixed. Then any Ptolemy circle
$l\sub X$
through
$\om$, $x$
is the (pointwise) limit of a sequence of Ptolemy circles
$l_i\sub X$
through
$\om$, $x_i$.
\end{lem}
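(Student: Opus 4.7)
The plan is to transport a fixed Ptolemy circle $l$ through $\om$ and $x$ onto Ptolemy circles through $\om$ and $x_i$ by applying M\"obius automorphisms of $X$ that move $x$ to $x_i$, fix $\om$, and converge to the identity. The natural tool is the family of shifts provided by Lemma~\ref{lem:shift_identity}.

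First I would note that, since $x_i\to x$ and $x\neq\om$, we may assume $x_i\neq\om$ for all $i$. Working in the metric space $X_\om$ (with $\om$ as the infinitely remote point), I apply Lemma~\ref{lem:shift_identity} to obtain, for each $i$, a shift $\eta_i=\eta_{xx_i}\colon X_\om\to X_\om$ with $\eta_i(x)=x_i$ and such that $\eta_i\to\id$ as $x_i\to x$. Each $\eta_i$ is a M\"obius automorphism of $X$ fixing $\om$: it is built in the proof of Lemma~\ref{lem:shift_identity} as a composition $\psi_i^{-1}\circ\phi_i$ of elements of $\Ga_{\om,x_i}$ and $\Ga_{\om,x}$, which all fix $\om$, and this property passes to the limit in the weak topology on $\aut X$.

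Next, I set $l_i:=\eta_i(l)$. Because $\eta_i$ is a M\"obius automorphism of $X$, $l_i$ is a Ptolemy circle; because $\eta_i(\om)=\om$ and $\eta_i(x)=x_i$, the circle $l_i$ passes through both $\om$ and $x_i$. It remains to establish that $l_i\to l$ pointwise in the sense of the lemma: for every $y\in l$ there exist $y_i\in l_i$ with $y_i\to y$. This is immediate once one takes $y_i:=\eta_i(y)\in l_i$, since $\eta_i\to\id$ in the weak topology on $\aut X$ means exactly $\eta_i(y)\to y$ for every $y\in X$.

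The only subtle point, and thus the place where one must be careful, is the justification that the shifts $\eta_i$ really extend to M\"obius automorphisms of the whole space $X$ (not just of $X_\om$) with $\eta_i(\om)=\om$, and that the convergence $\eta_i\to\id$ holds on all of $X$ (including at $\om$). But both statements are built into the construction of Lemma~\ref{lem:shift_identity}: each $\eta_i$ is a composition of four s-inversions of $X$, each of which is a M\"obius automorphism of $X$; and the identity $\eta_i(\om)=\om$ is preserved under the limit used to produce shifts. Since the convergence of the approximating s-inversions to the identity is uniform in the Hausdorff metric on the spheres involved, the weak convergence $\eta_i\to\id$ extends to all points of $X$, which completes the argument.
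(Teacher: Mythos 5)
Your proof is correct, and its core is the same as the paper's: both set $l_i=\eta_i(l)$ for shifts $\eta_i$ of $X_\om$ with $\eta_i(x)=x_i$, so that each $l_i$ is a Ptolemy circle through $\om$ and $x_i$. Where you diverge is in how convergence $l_i\to l$ is established. The paper uses \emph{arbitrary} shifts and argues indirectly: by Lemma~\ref{lem:pure_homothety} (or Lemma~\ref{lem:shift_busemann_parallel}) each $l_i$ is Busemann parallel to $l$, and since any sublimit of the $l_i$ is a line through $x$ Busemann parallel to $l$, it must equal $l$ by Lemma~\ref{lem:unique_line}; this buys convergence without needing any control on the shifts themselves, at the price of a compactness/sublimit argument. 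You instead invoke the refined shifts of Lemma~\ref{lem:shift_identity} with $\eta_{xx_i}\to\id$, which makes the convergence immediate and pointwise on all of $X$ (including at $\om$, since each $\eta_i$ fixes $\om$). This is cleaner and entirely legitimate, since Lemma~\ref{lem:shift_identity} precedes this statement in the paper. The only point worth flagging is that Lemma~\ref{lem:shift_identity} is stated as ``$\eta_{xx'}\to\id$ as $x\to x'$'' with the \emph{target} fixed, whereas you need the source fixed and the target $x_i\to x$; this is harmless because the construction there depends only on the Hausdorff convergence of the spheres $S_r(x_i)$ to $S_r(x)$, which is symmetric in the two points, but you should say a word about it rather than cite the lemma verbatim.
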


\begin{proof} In the space
$X_\om$
the circle
$l$
is a Ptolemy line (with infinitely remote point 
$\om$)
through
$x$.
Then the sequence
$l_i=\eta_i(l)$
of Ptolemy lines with
$x_i\in l_i$
converges to
$l$,
where
$\eta_i:X_\om\to X_\om$
is a shift with 
$\eta_i(x)=x_i$,
because the lines
$l_i$
are Busemann parallel to 
$l$
by Lemma~\ref{lem:pure_homothety}, and any sublimit
of the sequence
$\{l_i\}$
coincides with 
$l$
by Lemma~\ref{lem:unique_line}.
\end{proof}

We fix
$\om\in X$
and a metric 
$d$
of the M\"obius structure such that
$\om$
is the infinitely remote point. It immediately 
follows from the Ptolemy inequality that the distance function 
$d(z,\cdot)$
to a point 
$z\in X_\om$
is convex along any Ptolemy line in
$X_\om$, 
see \cite{FS2}. Under the homothety property (H) we prove that in fact 
$d(z,\cdot)$
is $C^1$-smooth.

\begin{pro}\label{pro:smooth_convex} 
A compact Ptolemy space with properties
(E) and (I)
is Busemann flat.
\end{pro}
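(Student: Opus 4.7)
The plan is to reduce Busemann flatness to $C^1$-smoothness of distance functions via Proposition~\ref{pro:busemann_flat}, and then to extract that smoothness from the Ptolemy inequality applied to a quadruple involving a small translate of the base point. Explicitly, it will suffice to show that for every $\om\in X$, every Ptolemy line $l\sub X_\om$, and every $x\in X_\om\sm l$, the distance function $f(t):=d(x,c(t))$ is $C^1$-smooth, where $c:\R\to l$ is a unit-speed parameterization. Since $f$ is convex by the Ptolemy inequality, one-sided derivatives $f'_\pm(t)$ exist everywhere, and after reparameterizing it is enough to show the jump $g:=f'_+(0)-f'_-(0)$ vanishes at $o:=c(0)$.

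To compare $x$ with a slightly shifted copy of itself along $l$, I will use Lemma~\ref{lem:shift_identity} to obtain, for each small $h>0$, a shift $\eta_h:X_\om\to X_\om$ with $\eta_h(o)=c(h)$ such that $\eta_h\to\id$ as $h\to 0$. By Lemma~\ref{lem:shift_busemann_parallel} and Corollary~\ref{cor:busparallel_foliation}, $\eta_h(l)$ is Busemann parallel to $l$ and contains $c(h)\in l$, so $\eta_h(l)=l$; and since $\eta_h$ arises as a limit of compositions of elements of $\Ga_{\om,o}$ and $\Ga_{\om,c(h)}$, each preserving $l$ with its orientation, $\eta_h$ restricts on $l$ to the translation $c(t)\mapsto c(t+h)$. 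Setting $x_h:=\eta_h(x)$ and using that $\eta_h$ is an isometry of $X_\om$, I obtain $d(x_h,c(s))=f(s-h)$ for all $s\in\R$; in particular $d(x_h,c(0))=f(-h)$ and $d(x_h,c(h))=f(0)$. Note $x_h\neq x$ for $h\neq 0$, since otherwise the identity $f(t+h)=f(t)$ for all $t$ would force $f$ to be periodic, contradicting $f(t)\to\infty$ as $|t|\to\infty$.

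The heart of the argument is to apply the Ptolemy inequality to the admissible quadruple $(x,x_h,c(0),c(h))$: the three entries of its cross-ratio triple are $h\cdot d(x,x_h)$, $f(0)^2$, and $f(h)f(-h)$, and the triangle inequality among them yields in particular
\[
f(h)f(-h)\;\le\;h\cdot d(x,x_h)+f(0)^2.
\]
On the other hand, convexity of $f$ supplies the supporting-line bounds $f(h)\ge f(0)+h\al_+$ and $f(-h)\ge f(0)-h\al_-$ for $h>0$, where $\al_\pm:=f'_\pm(0)$; both factors are positive for small $h$ since $f(0)>0$, and multiplying them gives
\[
f(h)f(-h)\;\ge\;f(0)^2+hf(0)g-h^2\al_+\al_-.
\]
Combining the two displayed inequalities, dividing by $h>0$, and letting $h\to 0^+$, the estimate $d(x,x_h)\to 0$ coming from $\eta_h\to\id$ together with the 1-Lipschitz bound $|\al_\pm|\le 1$ forces $f(0)g\le 0$, whence $g=0$ because $f(0)>0$ and $g\ge 0$ by convexity. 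The crucial ingredient to verify carefully will therefore be the quantitative shift estimate $d(x,\eta_h(x))\to 0$, which is exactly what Lemma~\ref{lem:shift_identity} was designed to provide. Repeating the argument at every base point of $l$ gives $f\in C^1(\R)$ and completes the proof of Busemann flatness.
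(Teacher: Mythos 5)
Your proof is correct, and it takes a genuinely different route from the paper's. Both arguments begin by reducing Busemann flatness to $C^1$-smoothness of $d(x,\cdot)$ along Ptolemy lines via Proposition~\ref{pro:busemann_flat}, and both exploit convexity of $f(t)=d(x,c(t))$. From there the paper argues by contradiction with a blow-down: it applies homotheties $h_\la\in\Ga_{\om,x}$ fixing $x=c(0)$ and $l$ to push $z$ to a point $\om_\la\to\om$, invokes Lemma~\ref{lem:limit_circle} to produce Ptolemy circles through $x$ and $\om_\la$ converging to $l$, and then uses the Ptolemy \emph{equality} on those circles to force the second difference $(f(t)-2f(0)+f(-t))/t$ to vanish. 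You instead translate along $l$: the shift $\eta_h$ of Lemma~\ref{lem:shift_identity} (which restricts to $c(t)\mapsto c(t+h)$ on $l$, as you correctly justify via the orientation-preserving factors in $\Ga_{\om,o}$ and $\Ga_{\om,c(h)}$) produces the auxiliary point $x_h=\eta_h(x)$ with $d(x_h,c(s))=f(s-h)$, and the Ptolemy \emph{inequality} on the admissible quadruple $(x,x_h,c(0),c(h))$ gives $f(h)f(-h)\le h\,d(x,x_h)+f(0)^2$; combined with the supporting-line lower bound this yields $f(0)\,g\le d(x,x_h)+h\al_+\al_-\to 0$, hence $g=0$. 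The decisive point --- that only $d(x,x_h)\to 0$ with no rate is needed, because the offending cross-ratio entry already carries the factor $h$ --- is sound, and Lemma~\ref{lem:shift_identity} supplies exactly this (its proof controls $\Hd(S_r^d(x),S_r^d(x'))\le|xx'|$ symmetrically, so the convergence applies with $o$ fixed and $c(h)\to o$, even though the lemma is phrased with the first argument moving). There is no circularity: the shift machinery of sect.~\ref{subsect:parallel_lines_pure_homothethies_shifts} does not rely on Busemann flatness. What your argument buys is a more elementary, quantitative derivation that avoids Lemma~\ref{lem:limit_circle} and the limiting-circle construction entirely; the paper's version, in exchange, stays closer to the circle geometry it reuses later (e.g.\ in Lemma~\ref{lem:quaratic_excess}).
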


\begin{proof} By Proposition~\ref{pro:busemann_flat} 
it suffices to prove that the distance function
$d_z=d(z,\cdot):X_\om\to\R$
is $C^1$-smooth along any Ptolemy line
$l\sub X_\om$
for any
$\om\in X$, $z\in X_\om\sm l$.

Assume that 
$d_z$
is not $C^1$-smooth at some point
$x\in l$.
We fix an arclength parameterization 
$c:\R\to X_\om$
of
$l$
such that
$x=c(0)$.
Since
$f=d_z\circ c$
is convex, it has the left and the right derivatives at every point.
By assumption, these derivatives are different at
$t=0$.
It follows that
$$\liminf_{t\to 0}\frac{f(t)-2f(0)+f(-t)}{t}>0.$$
By Proposition~\ref{pro:homothety_property}, property~(H)
holds for 
$X$.
Now, using (H), we find for every
$\la>0$
a homothety
$h_\la:X_\om\to X_\om$
with coefficient
$\la$
that preserves the point
$x$
and the Ptolemy line
$l$, $h_\la(x)=x$, $h_\la(l)=l$.
Then
$d(x,h_\la(z))\to\infty$
as
$\la\to\infty$
and thus
$\om_\la=h_\la(z)\to\om$.
By Lemma~\ref{lem:limit_circle}, there is a Ptolemy circle
$l_\la$
through
$x$, $\om_\la$
such that
$l_\la\to l$
as
$\la\to\infty$. 
We put
$\la=1/t$
and consider points
$x_t^+$, $x_t^-\in l_\la$
separated by
$x$
with
$d(x,x_t^\pm)=1$.
Then, W.L.G.,
$x_t^\pm\to c(\pm 1)$
as
$t\to 0$.
The points
$x_t^+$, $x$, $x_t^-$, $\om_\la$
lie on the Ptolemy circle
$l_\la$
(in this order), thus
$$2d(x,\om_\la)\ge d(x,\om_\la)d(x_t^+,x_t^-)=d(x_t^+,\om_\la)+d(x_t^-,\om_\la)$$
by the Ptolemy equality. On the other hand,
$f(0)/t=d(x,\om_\la)$
and
$f(\pm t)/t=d(c(\pm 1),\om_\la)$.
Thus
$|d(x_t^\pm,\om_\la)-f(\pm t)/t|\le d(x_t^\pm,c(\pm 1))\to 0$
as
$t\to 0$.
Therefore,
$(f(t)-2f(0)+f(-t))/t\to 0$
as
$t\to 0$
in contradiction with our assumption. 
\end{proof}

Using Proposition~\ref{pro:busemann_affine}, we immediately obtain 

\begin{cor}\label{cor:busemann_affine} Given two Ptolemy lines
$l$, $l'\in X_\om$,
the Busemann functions of
$l$
are affine functions on
$l'$.
\qed
\end{cor}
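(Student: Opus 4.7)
The plan is essentially to combine the two results that immediately precede the corollary. By Proposition~\ref{pro:smooth_convex}, the compact Ptolemy space $X$ (which we are assuming throughout sect.~\ref{subsect:busemann_functions} satisfies properties (E) and (I)) is Busemann flat, meaning that for every Ptolemy circle $\si\sub X$ and every $\om\in\si$ the opposite Busemann functions $b^\pm$ associated with the Ptolemy line $\si_\om\sub X_\om$ satisfy $b^++b^-\equiv\const$. Once Busemann flatness is in hand, Proposition~\ref{pro:busemann_affine} applies directly: it asserts precisely that in any Busemann flat Ptolemy space, the Busemann functions of a Ptolemy line $l\sub X_\om$ are affine when restricted to any other Ptolemy line $l'\sub X_\om$.

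Thus the only step is to invoke these two results in succession. There is no genuine obstacle here; the content of the corollary is merely to record, for later use, that the hypotheses of Proposition~\ref{pro:busemann_affine} are satisfied by any compact Ptolemy space with (E) and (I). For completeness one could recall the mechanism behind Proposition~\ref{pro:busemann_affine}: a Busemann function $b$ of $l$ is convex along $l'$ (this comes from the Ptolemy inequality, as cited from \cite{FS2}), and the opposite Busemann function $-b$ — which under Busemann flatness differs from the Busemann function of the oppositely oriented line only by an additive constant — is likewise convex along $l'$; a function on $\R$ that is both convex and concave is affine.

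So the proof I would write is essentially one sentence: ``Since $X$ is Busemann flat by Proposition~\ref{pro:smooth_convex}, the claim follows from Proposition~\ref{pro:busemann_affine}.''
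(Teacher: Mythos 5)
Your proposal is correct and is exactly the paper's argument: the corollary is stated immediately after Proposition~\ref{pro:smooth_convex} with the words ``Using Proposition~\ref{pro:busemann_affine}, we immediately obtain,'' so the intended proof is precisely the two-step invocation you give. Your optional unpacking of Proposition~\ref{pro:busemann_affine} (convexity of both opposite Busemann functions plus flatness forces each to be affine) also matches the paper's proof of that proposition.
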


\begin{lem}\label{lem:geoconvex_horosphere} For any Busemann function
$b:X_\om\to\R$
of any Ptolemy line
$l\sub X_\om$,
every horosphere
$H_t=b^{-1}(t)$, $t\in\R$,
is geodesically convex, that is, any Ptolemy line
$l'\sub X_\om$
having two distinct points 
$z$, $z'$ 
in common with
$H_t$
is contained in
$H_t$, $l'\sub H_t$. 
\end{lem}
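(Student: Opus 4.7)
The plan is to reduce the statement immediately to Corollary~\ref{cor:busemann_affine}, which asserts that every Busemann function associated with a Ptolemy line is affine when restricted to any other Ptolemy line. First I would fix an arclength parameterization $c':\R\to X_\om$ of the Ptolemy line $l'$ and consider the composition $f=b\circ c':\R\to\R$. By Corollary~\ref{cor:busemann_affine}, $f$ is an affine function of the real parameter, so $f(s)=\alpha s+\beta$ for some $\alpha,\beta\in\R$.

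Next, I would use the hypothesis that $l'$ meets the horosphere $H_t=b^{-1}(t)$ in two distinct points $z,z'\in l'$. Writing $z=c'(s_1)$ and $z'=c'(s_2)$ with $s_1\neq s_2$, the equalities $f(s_1)=t=f(s_2)$ force $\alpha=0$ since $f$ is affine. Hence $f\equiv t$, which is to say $b\circ c'\equiv t$, and therefore $l'=c'(\R)\subset H_t$. This concludes the proof.

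There is no real obstacle here, since Busemann flatness (Proposition~\ref{pro:smooth_convex}) and the resulting affinity of Busemann functions on Ptolemy lines have already done all the work; the lemma is just the observation that an affine function on $\R$ which agrees with a constant at two distinct points is that constant. I would simply make sure to explicitly cite Corollary~\ref{cor:busemann_affine} so the chain of dependence on flatness is transparent to the reader.
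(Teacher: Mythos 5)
Your proof is correct and follows essentially the same route as the paper: both rest on Corollary~\ref{cor:busemann_affine} (affinity of Busemann functions along Ptolemy lines, which comes from Busemann flatness) plus the observation that an affine function on $\R$ taking the same value at two distinct points is constant. The paper's version includes some additional preliminary remarks about the opposite Busemann function and convexity of horoballs, but the decisive step is identical to yours.
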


\begin{proof} We put
$b^+=b$
and assume W.L.G. that
$b^+(z)=0=b^-(z)$,
where the Busemann function 
$b^-$
of
$l$
is opposite to
$b^+$. 
Then
$b^++b^-\equiv 0$
because
$X$
is Busemann flat, see Proposition~\ref{pro:smooth_convex}. Thus
$H_0=(b^+)^{-1}(0)=(b^-)^{-1}(0)$
is a common horosphere for
$b^+$, $b^-$.
Since horoballs, i.e. sublevel sets of Busemann functions, 
are convex, the geodesic segment
$zz'\sub l$
lies in
$H_0$.
By Corollary~\ref{cor:busemann_affine}, the function
$b$
is affine along
$l'$,
that is, 
$b\circ c(t)=\al t+\be$
for any arclength parameterization 
$c:\R\to l'$
of
$l'$
and some
$\al$, $\be\in\R$, $|\al|\le 1$.
We choose
$c$
so that
$c(0)=z$, $c(|zz'|)=z'$.
Then
$\be=0$
by the assumption
$b(z)=0$,
and
$0=b(z')=b\circ c(|zz'|)=\al|zz'|$.
Hence
$\al=0$
and
$b|l'\equiv 0$.
This shows that
$l'\sub H_0$.
\end{proof}

\section{Slope of Ptolemy lines and circles}
\label{sect:slope}

\subsection{Definition and properties}
\label{subsect:slope}

By Corollary~\ref{cor:busemann_affine}, a Busemann
function associated with a Ptolemy line is affine along
any other Ptolemy line. We introduce a quantity which measures
a mutual position of Ptolemy lines in the space.

Let
$l$, $l'\sub X_\om$
be oriented Ptolemy lines.
We define the {\em slope} of
$l'$
w.r.t.
$l$
as the coefficient of a Busemann function
$b$
associated with
$l$
when restricted to
$l'$, $\slope(l';l)=\al$
if and only if
$b\circ c'(t)=\al t+\be$
for some
$\be\in\R$
and all
$t\in\R$,
where
$c':\R\to X_\om$
is a unit speed parameterization of
$l'$
compatible with its orientation. The quantity
$\slope(l';l)\in[-1,1]$
is well defined, i.e. it depends of the choice neither the Busemann
function
$b$ 
nor the parameterization
$c'$
(we assume that
$b$
is defined via a parameterization of
$l$
compatible with its orientation).
Note that the slope changes the sign when the orientation of
$l$
or
$l'$
is changed,
$$\slope(-l';l)=-\slope(l';l)=\slope(l';-l).$$
The first equality is obvious, while the second one holds because
$X$
is Busemann flat by Proposition~\ref{pro:smooth_convex}. 

By definition, we have
$\slope(l;l)=-1$
for any oriented Ptolemy line
$l\sub X_\om$.
More generally, let
$l$, $l'\sub X_\om$
be Busemann parallel Ptolemy lines. If an orientation of
$l$
is fixed, then a {\em compatible} orientation of
$l'$
is well defined. Indeed, we take a Busemann function 
$b$
of
$l$
such that
$b\to -\infty$
along
$l$
in the chosen direction. Since
$b$
is also a Busemann function of
$l'$,
the respective direction of
$l'$
such that
$b\to-\infty$
along
$l'$
is well defined, and it is independent of the choice of
$b$.

Now, if orientations of Busemann parallel
$l$, $l'$
are compatible, then 
$\slope(l';l)=-1=\slope(l;l')$.

\begin{lem}\label{lem:paraline_busemann} Let
$l$, $l'\sub X_\om$
be Busemann parallel Ptolemy lines with compatible orientations. Then 
for any oriented Ptolemy line
$l''\sub X_\om$
we have
$\slope(l;l'')=\slope(l';l'')$.
\end{lem}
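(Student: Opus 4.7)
The plan is to derive the equality of slopes directly from the sublinear divergence of Busemann parallel lines (Lemma~\ref{lem:busparallel_sublinear}), using that Busemann functions are $1$-Lipschitz. The underlying idea is very simple: the slope of $l$ with respect to $l''$ is the linear coefficient of $b''|_l$ (where $b''$ is a Busemann function of $l''$), and if $l,l'$ stay at sublinear distance when parameterized compatibly, then the linear coefficient of $b''|_l$ must equal that of $b''|_{l'}$.

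First I would unravel the definition. Let $b$ be a common Busemann function of $l$ and $l'$ (the common choice exists precisely because $l,l'$ are Busemann parallel), and let $c,c'\colon\R\to X_\om$ be unit-speed parameterizations of $l,l'$ compatible with the prescribed orientations. By definition of compatibility, $b\circ c(t)=-t+\const$ and $b\circ c'(t)=-t+\const$. Lemma~\ref{lem:busparallel_sublinear} then applies to these parameterizations and yields
\[
|c(t)\,c'(t)|/|t|\to 0\qquad\text{as }|t|\to\infty.
\]

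Next I would bring in the third line. Let $b''$ be a Busemann function of $l''$ (associated with its fixed orientation). By Corollary~\ref{cor:busemann_affine}, $b''$ is affine along both $l$ and $l'$, so there exist constants $\beta,\beta'\in\R$ with
\[
b''\circ c(t)=\alpha t+\beta,\qquad b''\circ c'(t)=\alpha' t+\beta',
\]
where $\alpha=\slope(l;l'')$ and $\alpha'=\slope(l';l'')$. On the other hand, every Busemann function is $1$-Lipschitz, being a pointwise limit of $1$-Lipschitz maps $y\mapsto d(\cdot,y)-d(o,y)$. Hence
\[
|b''(c(t))-b''(c'(t))|\le |c(t)\,c'(t)|=o(|t|).
\]
Combining the two displays gives $(\alpha-\alpha')t+(\beta-\beta')=o(|t|)$, which as $t\to\infty$ forces $\alpha=\alpha'$, exactly the asserted equality.

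There is no real obstacle once the earlier machinery is in place; essentially the whole proof is a one-line estimate plus the Lipschitz property. The only point requiring a moment of attention is the bookkeeping of orientations: one must check that \emph{compatible} orientations of $l,l'$ (as defined via a common Busemann function $b$) are precisely what is needed to invoke Lemma~\ref{lem:busparallel_sublinear} with the parameterizations $c,c'$ synchronized so that $b\circ c(t)=b\circ c'(t)=-t+\const$; otherwise one could only conclude $\alpha=\pm\alpha'$.
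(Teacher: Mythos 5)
Your proof is correct and follows essentially the same route as the paper's: both synchronize the parameterizations of the Busemann parallel lines, invoke Lemma~\ref{lem:busparallel_sublinear} to get sublinear divergence, and then compare the two affine functions $b''\circ c$ and $b''\circ c'$ via the $1$-Lipschitz property of the Busemann function of $l''$ (which the paper writes out explicitly as a triangle inequality for $|c''(s)\cdot|$). The only cosmetic difference is that the paper normalizes $c,c'$ so that the two affine functions share the same constant term, whereas you allow $\beta\neq\beta'$ and absorb the difference into the $o(|t|)$ estimate.
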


\begin{proof}
Let
$b:X_\om\to\R$
be a Busemann function associated with
$l''$.
There are
unit speed parameterizations
$c:\R\to l$, $c':\R\to l'$
compatible with the orientations of
$l$, $l'$
such that
$b\circ c(0)=b\circ c'(0)=:\be$.
Then, since
$b$
is affine along Ptolemy lines, 
$b\circ c(t)=\al t+\be$, $b\circ c'(t)=\al' t+\be$
for some
$|\al|$, $|\al'|\le 1$
and all
$t\in\R$.
We show that
$\al=\al'$.

Since the orientations of
$l$, $l'$
are compatible, we have
$|c(t)c'(t)|=o(1)|t|$
as
$|t|\to\infty$
by Lemma~\ref{lem:busparallel_sublinear}. Let
$c'':\R\to l''$
be a unit speed parameterization such that
$b(x)=\lim_{s\to\infty}|c''(s)x|-s$, $x\in X_\om$.
Since
$||c''(s)c(t)|-|c''(s)c'(t)||\le|c(t)c'(t)|=o(1)t$
as
$t\to\infty$,
we have
$$|(\al-\al')t|=|b\circ c(t)-b\circ c'(t)|=o(1)t$$
and hence
$\al=\al'$.
\end{proof}

Proposition~\ref{pro:busemann_affine} combined with duality gives
rise to a first variation formula to describe which we use
the following agreement. Let
$\si$, $\si'\sub X$
be Ptolemy circles meeting each other at two distinct points
$\om$
and
$\om'$, $\si\cap\si'=\{\om,\om'\}$,
which decompose the circles into (closed) arcs
$\si=\si_+\cup\si_-$
and
$\si'=\si_+'\cup\si_-'$.
The choice of
$\om$
as an infinitely remote point automatically introduces the orientation
of 
$\si$
as well as of
$\si'$
such that 
$\om'$
is the initial point of the arcs
$\si_+$, $\si_+'$,
while
$\om$
the final point of
$\si_+$, $\si_+'$,
and the similar agreement holds true for the choice of
$\om'$
as an infinitely remote point. Then the 
$\slope(\si_\om';\si_\om)$
of the Ptolemy line
$\si_\om'\sub X_\om$
w.r.t. the Ptolemy line
$\si_\om\sub X_\om$
is well defined.

\begin{lem}\label{lem:first_variation} Let
$\si$, $\si'\sub X$
be Ptolemy circles meeting each other at two distinct points
$\om$
and
$\om'$, $\si\cap\si'=\{\om,\om'\}$,
which decompose the circles into (closed) arcs
$\si=\si_+\cup\si_-$
and
$\si'=\si_+'\cup\si_-'$.
Let
$c:\R\to X_{\om'}$, $c':\R\to X_\om$
be the unit speed parameterizations of the oriented Ptolemy lines
$\si_{\om'}\sub X_{\om'}$, $\si_\om'\sub X_\om$
respectively compatible with the orientations such that
$c(0)=\om$, $c'(0)=\om'$.
Then
\begin{equation}\label{eq:first_variation}
\frac{d}{dt}d'(c'(s),c(t))|_{t=0}=\al\sign s
\end{equation}
for all
$s\neq 0$,
where
$d'$
is the metric of
$X_{\om'}$,
and 
$\al=\slope(\si_\om';\si_\om)$.
\end{lem}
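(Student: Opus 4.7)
The plan is to reduce the statement to a direct application of the smooth duality formula~(\ref{eq:smooth_duality}) from Proposition~\ref{pro:duality_dist_busemann}, which is available because $X$ is Busemann flat by Proposition~\ref{pro:smooth_convex}. Specialized to $l=\si_\om$, $l'=\si_{\om'}$ and the parameterization $c$ of the statement, it yields for every $x\in X\sm\{\om,\om'\}$
\begin{equation*}
b^+(x)=\frac{d}{dt}\ln d'(x,c(t))\Big|_{t=0},
\end{equation*}
where $b^+:X_\om\to\R$ is the Busemann function of the Ptolemy line $\si_\om\sub X_\om$ associated with the end going along the arc $\si_+$, normalized by $b^+(\om')=0$. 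Matching the agreement set up just before the lemma with the condition $b^+\circ c(t)<0$ for $t>0$ from Proposition~\ref{pro:duality_dist_busemann} shows that this $b^+$ is precisely the Busemann function entering the definition of $\slope(\si'_\om;\si_\om)$ with the chosen orientation of $\si_\om$.

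Taking $x=c'(s)$ for $s\neq 0$, Corollary~\ref{cor:busemann_affine} says that $b^+$ is affine along the Ptolemy line $\si'_\om$ parameterized by $c'$, and since $b^+(c'(0))=b^+(\om')=0$, the definition of slope gives
\begin{equation*}
b^+(c'(s))=\al s,\qquad \al=\slope(\si'_\om;\si_\om).
\end{equation*}

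It remains to evaluate $d'(c'(s),c(0))=d'(c'(s),\om)$. Since $d'$ is the m-inversion of radius $1$ with respect to $\om'$ of a metric $d$ of the M\"obius structure having $\om$ as infinitely remote point, the standard convention for m-inversions gives $d'(x,\om)=1/d(x,\om')$ for every $x\in X_\om\sm\om$. Because $c'$ is a unit speed parameterization in $X_\om$ with $c'(0)=\om'$, this evaluates to $d'(c'(s),\om)=1/|s|$.

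Combining the three ingredients through the identity $(\ln f)'=f'/f$,
\begin{equation*}
\frac{d}{dt}d'(c'(s),c(t))\Big|_{t=0}
=d'(c'(s),c(0))\cdot\frac{d}{dt}\ln d'(c'(s),c(t))\Big|_{t=0}
=\frac{1}{|s|}\cdot\al s
=\al\sign s,
\end{equation*}
which is the desired formula. The one step that requires care is the orientation bookkeeping in the first paragraph, namely verifying that the $b^+$ produced by the duality formula is the Busemann function used in the definition of slope for the oriented line $\si_\om$; everything else is then a short calculation.
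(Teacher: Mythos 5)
Your proof is correct and follows essentially the same route as the paper's: both establish $b\circ c'(s)=\al s$ via affineness of Busemann functions (Corollary~\ref{cor:busemann_affine}), apply the smooth duality formula~(\ref{eq:smooth_duality}) available by Busemann flatness, and close with $d'(c'(s),\om)=1/|s|$. The extra attention you give to the orientation bookkeeping is consistent with the conventions set up before the lemma and does not change the argument.
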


\begin{rem}\label{rem:first_variation} We emphasize that
(\ref{eq:first_variation}) is a typical duality equality
where the left hand side is computed in the space
$X_{\om'}$,
while the right hand side is computed in the inverted space
$X_\om$.
\end{rem}

\begin{proof} By Corollary~\ref{cor:busemann_affine}, 
the Busemann function
$b$
of
$\si_\om$
with
$b(\om')=0$
is affine along the Ptolemy line
$\si_\om'\sub X_\om$.
Thus
$b\circ c'(s)=\al s$
for 
$\al=\slope(\si_\om';\si_\om)$
and all
$s\in\R$
because
$b\circ c'(0)=b(\om')=0$.
Since
$X$
is Busemann flat by Proposition~\ref{pro:smooth_convex},
Equation~(\ref{eq:smooth_duality}) applied to
$b^+=b$
gives
$$\al s=b\circ c'(s)=\frac{d}{dt}\ln d'(c'(s),c(t))|_{t=0}
 =\frac{1}{d'(c'(s),\om)}\frac{d}{dt}d'(c'(s),c(t))|_{t=0}$$
for all
$s\neq 0$. 
Using that
$d'(c'(s),\om)=1/|s|$,
we obtain the required equality.
\end{proof}

In the situation with two Ptolemy circles intersecting at two
distinct points as in Lemma~\ref{lem:first_variation}, we have
four a priori different slopes. The duality and existence of 
s-inversions allows to reduce this number to one.

\begin{lem}\label{lem:opposite_slopes} Let
$\si$, $\si'\sub X$
be Ptolemy circles meeting each other at two distinct points
$\om$
and
$\om'$, $\si\cap\si'=\{\om,\om'\}$,
which decompose the circles into (closed) arcs
$\si=\si_+\cup\si_-$
and
$\si'=\si_+'\cup\si_-'$.
Then
$$\slope(\si_\om';\si_\om)=\slope(\si_{\om'};\si_{\om'}').$$
\end{lem}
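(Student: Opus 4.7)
Set $\al=\slope(\si_\om';\si_\om)$ and $\beta=\slope(\si_{\om'};\si_{\om'}')$; the goal is $\al=\beta$. Let $c_1,c_2$ be the unit-speed parametrizations of $\si_{\om'},\si_{\om'}'$ in $X_{\om'}$ with $c_1(0)=c_2(0)=\om$, and let $c_1',c_2'$ be those of $\si_\om,\si_\om'$ in $X_\om$ with $c_1'(0)=c_2'(0)=\om'$, all compatible with the orientations induced by the respective choices of infinitely remote point. A direct application of Lemma~\ref{lem:first_variation} gives
$$\frac{d}{dt}\Big|_{t=0}d'(c_2'(s),c_1(t))=\al\sign s,$$
while applying the same lemma with both pairs $(\om,\om')$ and $(\si,\si')$ swapped yields
$$\frac{d}{dt}\Big|_{t=0}d(c_1(s),c_2'(t))=\beta\sign s.$$
The m-inversion identity $d(x,\om')\cdot d'(x,\om)=1$, applied at $x=c_1(s)$ together with unit speed of both parametrizations, gives the reparametrization $c_1(s)=c_1'(1/s)$, and analogously $c_2(s)=c_2'(1/s)$ for $s\ne 0$.

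The decisive step exploits the homothety property~(H) from Proposition~\ref{pro:homothety_property}: for each $\la>0$ there is $\ga_\la\in\Ga_{\om,\om'}$ acting as a $\la$-homothety of $(X_\om,d)$ and preserving the orientation of every Ptolemy circle through $\om,\om'$. Hence $\ga_\la(c_1'(u))=c_1'(\la u)$ and $\ga_\la(c_2'(v))=c_2'(\la v)$, so the function $D(u,v):=d(c_1'(u),c_2'(v))$ is positively homogeneous of degree one, $D(\la u,\la v)=\la D(u,v)$. Therefore $D(u,v)=uF(v/u)$ for $u>0$, where $F(w):=d(c_1'(1),c_2'(w))$. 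By Proposition~\ref{pro:smooth_convex}, $F$ is $C^1$ near $0$.

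Now compute $F'(0)$ in two ways. Using the swapped first-variation formula together with $c_1(s)=c_1'(1/s)$, one gets $\partial_t D(u,t)|_{t=0}=\beta\sign u=\beta$ for $u>0$; since direct differentiation of $D(u,t)=uF(t/u)$ at $t=0$ yields $F'(0)$, we have $F'(0)=\beta$. For the other computation, the definition of slope gives $b(c_2'(s))=\al s$, where $b$ is the Busemann function of $\si_\om$ normalized by $b(\om')=0$. The limit definition of $b$ then yields
$$D(u,s)=u+\al s+o(1)\quad\text{as }u\to\infty\text{ for each fixed }s.$$
Substituting $D(u,s)=uF(s/u)$ and dividing by $s/u$ shows $(F(s/u)-1)/(s/u)\to\al$ as $u\to\infty$ with $s>0$ fixed, and $C^1$-smoothness of $F$ at $0$ forces $F'(0)=\al$. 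Equating the two computations gives $\al=\beta$. The only delicate point is tracking orientation conventions so that the reparametrization $c_1(s)=c_1'(1/s)$ emerges with the correct sign; once this is fixed, the rest is a one-variable Taylor expansion compared with a Busemann asymptotic.
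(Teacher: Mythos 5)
Your argument is correct, and it reaches the conclusion by a genuinely different route from the paper's. The paper also starts from the first-variation formula (Lemma~\ref{lem:first_variation}), but instead of a homogeneity argument it uses the monotonicity of the Busemann difference quotient: writing $b'$ for the Busemann function of $\si'_{\om'}$ with $b'(\om)=0$, one has $\al' t=b'\circ c(t)\le d'(c'(s),c(t))-d'(c'(s),\om)$ for all $s>0$ and all small $t$, with equality at $t=0$; differentiating at $t=0$ and invoking~(\ref{eq:first_variation}) gives $\al'\le\al$, and the reverse inequality follows by interchanging the roles of $\om,\om'$ and of $\si,\si'$. You replace this inequality-plus-symmetrization by an exact identity: property~(H) makes $D(u,v)=d(c_1'(u),c_2'(v))$ positively homogeneous of degree one, so everything collapses to the single number $F'(0)$, evaluated once via the swapped first-variation formula (giving $\be$) and once via the Busemann asymptotic $D(u,s)=u+\al s+o(1)$ (giving $\al$); note that your statement of the unswapped formula is never actually used. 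The sign issue you flag does resolve correctly under the paper's conventions: the positive rays of $c_1$ and of $c_1'$ both sweep out the arc $\si_+$, from opposite ends, and $d(x,\om')\cdot d'(x,\om)=1$ then forces $c_1(s)=c_1'(1/s)$ with no extra minus sign, and likewise for $c_2,c_2'$. Your route costs the additional input of the homothety group and the orientation bookkeeping, but buys an equality-based argument with no monotonicity step and no need to run the argument twice; the paper's proof is shorter and uses only the convexity of distance functions along Ptolemy lines.
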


\begin{proof} Denote
$\al=\slope(\si_\om';\si_\om)$
and
$\al'=\slope(\si_{\om'};\si_{\om'}')$. 
As in Lemma~\ref{lem:first_variation} let
$c:\R\to X_{\om'}$, $c':\R\to X_\om$
be the unit speed parameterizations of the oriented Ptolemy lines
$\si_{\om'}\sub X_{\om'}$, $\si_\om'\sub X_\om$
respectively compatible with the orientations such that
$c(0)=\om$, $c'(0)=\om'$.
Let
$b':X_{\om'}\to\R$
be the Busemann function associated with 
$\si_{\om'}'$
such that
$b'(\om)=0$
(according to our agreement,
$b'$
is computed via a parameterization of
$\si_{\om'}'$
which is opposite in orientation to that of
the parameterization 
$c'$). Then
$b'\circ c(t)=\al' t$
for all 
$t\in\R$
by the definition of
$\al'=\slope(\si_{\om'};\si_{\om'}')$.
Thus
$$\al' t=b'\circ c(t)\le d'(c'(s),c(t))-d'(c'(s),\om)$$
for all
$s>0$
and all (sufficiently small)
$t\in\R$,
where
$d'$
is the metric of
$X_{\om'}$.
The last inequality holds because the right hand side 
decreases monotonically to
$b'\circ c(t)$
for every fixed
$t$
as
$s\to 0$.
Applying Equality~(\ref{eq:first_variation}), we obtain
$\al'\le\al$.
Interchanging
$\om$
with
$\om'$
and
$\si$
with
$\si'$,
we have
$\al\le\al'$
by the same argument. Hence, the claim.
\end{proof}

Using Lemma~\ref{lem:opposite_slopes}, the first variation
formula~(\ref{eq:first_variation}) can be rewritten as follows
\begin{equation}\label{eq:re_first_variation}
\frac{d}{dt}d'(c'(s),c(t))|_{t=0}=\al'\sign s
\end{equation}
for all
$s\neq 0$,
where
$\al'=\slope(\si_{\om'};\si_{\om'}')$.
Now, the both sides of (\ref{eq:re_first_variation})
are computed in the same space
$X_{\om'}$.

Lemma~\ref{lem:opposite_slopes} implies the symmetry of the slope
w.r.t. the arguments. 

\begin{lem}\label{lem:slope_symmetry} For any oriented Ptolemy lines
$l$, $l'\sub X_\om$
we have
$\slope(l';l)=\slope(l;l')$.
\end{lem}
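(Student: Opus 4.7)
The plan is to reduce to Lemma~\ref{lem:opposite_slopes} by combining two ingredients: a general M\"obius invariance of slope, which lets us pass between the perspectives $X_\om$ and $X_{\om'}$, and Busemann parallel transport, which lets us arrange that $l$ and $l'$ meet.

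I first record that any M\"obius automorphism $\phi\colon X\to X$ with $\phi(\om)=\om'$ preserves slopes, i.e.\
$$\slope_{X_\om}(l';l)=\slope_{X_{\om'}}(\phi(l');\phi(l))$$
for oriented Ptolemy lines $l,l'\sub X_\om$ (with orientations on $\phi(l),\phi(l')$ transported by $\phi$). Indeed, by Lemma~\ref{lem:homothety_infinite} the pullback under $\phi$ of the $\om$-normalized metric on $X_\om$ is homothetic to the $\om'$-normalized metric on $X_{\om'}$, so $\phi$ restricts to a similarity $X_\om\to X_{\om'}$; Busemann functions and unit-speed parameterizations then rescale by the same positive factor, leaving the ratio that defines slope invariant.

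Assume next $l\cap l'\neq\es$ and pick $\om'\in l\cap l'$. Set $\si=l\cup\{\om\}$, $\si'=l'\cup\{\om\}$; by Corollary~\ref{cor:weak_unique} either $\si=\si'$ (trivial) or $\si\cap\si'=\{\om,\om'\}$. In the nontrivial case let $\phi=\phi_{\om,\om',S}$ be the s-inversion provided by (I) for any sphere $S$ between $\om,\om'$. It is an involution preserving $\si,\si'$ and swapping $\om$ with $\om'$, so $\phi(l)=\si_{\om'}$ and $\phi(l')=\si_{\om'}'$ as sets; because $\phi|_\si$ is a free, hence orientation-preserving, involution of $\si\cong S^1$, the orientations transported by $\phi$ match the canonical orientations used in Lemma~\ref{lem:opposite_slopes}. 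Combining that lemma with the M\"obius invariance above applied to $\phi=\phi^{-1}$ yields
$$\slope_{X_\om}(l';l)=\slope_{X_{\om'}}(\si_{\om'};\si_{\om'}')=\slope_{X_\om}(\phi(\si_{\om'});\phi(\si_{\om'}'))=\slope_{X_\om}(l;l').$$

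For the general case $l\cap l'=\es$, pick $x\in l'$ and let $\tilde l$ be the unique Ptolemy line through $x$ Busemann parallel to $l$, furnished by Corollary~\ref{cor:busparallel_foliation} and oriented compatibly with $l$. Since $l,\tilde l$ share Busemann functions, $\slope(l';l)=\slope(l';\tilde l)$ directly from the definition of slope, while Lemma~\ref{lem:paraline_busemann} gives $\slope(l;l')=\slope(\tilde l;l')$. As $\tilde l\cap l'\ni x$, the intersecting case applies to $(\tilde l,l')$ and produces $\slope(l';\tilde l)=\slope(\tilde l;l')$, whence the symmetry. The most delicate point in this plan is the orientation bookkeeping in the intersecting case; ensuring that the equalities in the displayed chain line up correctly rests on the fact that free involutions of the circle preserve orientation, which is precisely what makes the canonical orientations used in Lemma~\ref{lem:opposite_slopes} compatible with transport by the s-inversion.
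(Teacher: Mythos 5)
Your proof is correct and follows essentially the same route as the paper: reduce to the case where $l$ and $l'$ meet, realize them as $\si_\om$, $\si_\om'$ for circles through $\om$, apply the s-inversion $\phi_{\om,\om',S}$ (which preserves both circles and their orientations, being fixed-point free) to identify $\slope(\si_{\om'}';\si_{\om'})$ with $\slope(\si_\om';\si_\om)$, and then invoke Lemma~\ref{lem:opposite_slopes}. The only difference is that you spell out the reduction to the intersecting case via Corollary~\ref{cor:busparallel_foliation} and Lemma~\ref{lem:paraline_busemann}, which the paper compresses into a ``W.L.G.''; that is a welcome clarification, not a divergence.
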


\begin{proof} We assume W.L.G. that
$l\cap l'=\om'$
and represent
$l=\si_\om$, $l'=\si_\om'$
for Ptolemy circles
$\si=l\cup\om$, $\si'=l'\cup\om$.
Then
$\si\cap\si'=\{\om,\om'\}$.
Let
$S\sub X$
be a sphere between
$\om$, $\om'$, $\phi=\phi_{\om,\om',S}:X\to X$
the s-inversion w.r.t.
$\om$, $\om'$, $S$.
Then
$\phi$
preserves any Ptolemy circle though
$\om$, $\om'$
and its orientations. In particular,
$\phi(\si_\om)=\si_{\om'}$
and
$\phi(\si_\om')=\si_{\om'}'$.
We assume that
$S=S_1^d(\om')$,
where
$d\in\cM$
is the metric of
$X_\om$.
Then the metric 
$d'=\phi^\ast d$
is the m-inversion of
$d$,
and vice versa, see Lemma~\ref{lem:sinversion_minversion}.
It follows that the map
$\phi:(X_\om,d)\to (X_{\om'},d')$
is an isometry.

Now, we have
$$\slope(\si_{\om'}';\si_{\om'})=
  \slope(\phi(\si_\om');\phi(\si_\om))=\slope(\si_\om';\si_\om)
  =\slope(l';l).$$
Using Lemma~\ref{lem:opposite_slopes} we obtain
$\slope(l;l')=\slope(\si_\om;\si_\om')
=\slope(\si_{\om'}';\si_{\om'})=\slope(l';l)$. 
\end{proof}

From now on, we use notation
$\slope(l,l')$
for the slope instead of
$\slope(l;l')$.
We say that Ptolemy lines
$l$, $l'\sub X_\om$
are {\em orthogonal} if
$\slope(l',l)=0$.
By Lemma~\ref{lem:slope_symmetry} this is a symmetric
relation. For orthogonal lines we also use notation
$l\bot l'$.

\subsection{Tangent lines and slope of Ptolemy circles}
\label{subsect:tangent_lines}

A Ptolemy line 
$l\sub X_\om$
is {\em tangent} to a Ptolemy circle 
$\si\sub X_\om$
at a point
$x\in\si$
if for every
$y\in\si$
sufficiently close to
$x$
we have
$\dist(y,l)=o(|xy|)$.

\begin{pro}\label{pro:tangent_rcircle} Every Ptolemy circle 
$\si\sub X_\om$
possesses a unique tangent Ptolemy line
$l$ 
at every point 
$x\in\si$. 
\end{pro}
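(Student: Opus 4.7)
The plan is to construct a candidate tangent line $l$ at $x$ by rescaling $\sigma$ via homotheties centered at $x$, verify tangency using the fact that $\Gamma_{\omega,x}$ preserves every Ptolemy line through $x$, and deduce uniqueness from Corollary~\ref{cor:weak_unique}. Fix a metric $d$ on $X_\omega$ and any point $\omega'\in\sigma$ with $\omega'\neq x$. For a sequence $y_n\in\sigma$ with $y_n\to x$, property~(H) from Proposition~\ref{pro:homothety_property} provides homotheties $h_n\in\Gamma_{\omega,x}$ of coefficient $\lambda_n=1/d(x,y_n)$. Setting $z_n=h_n(y_n)$ gives $d(x,z_n)=1$, so by compactness of $X$ we may pass to a subsequence with $z_n\to z$, $d(x,z)=1$. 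Define $l$ as the unique Ptolemy line through $x$ and $z$ in $X_\omega$, whose existence follows from~(E) and whose uniqueness is guaranteed by Corollary~\ref{cor:weak_unique}.

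For the tangency verification, the key observation is that each $h\in\Gamma_{\omega,x}$ preserves $l$ as a set, since $l\cup\omega\in C_{\omega,x}$. Consequently, for any $y\in X_\omega$ near $x$ and any $h_y\in\Gamma_{\omega,x}$ of coefficient $1/d(x,y)$, one has
$$d(y,l)=d(x,y)\cdot d(h_y(y),l),$$
because $h_y$ scales distances by $1/d(x,y)$ and $h_y(l)=l$. Given an arbitrary sequence $y_k\to x$ in $\sigma$, the rescaled circles $h_{y_k}(\sigma)$ pass through $x$ and through $h_{y_k}(\omega')\to\omega$ (since $d(x,h_{y_k}(\omega'))=\lambda_k d(x,\omega')\to\infty$), while $h_{y_k}(y_k)$ stays at distance $1$ from $x$. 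By Proposition~\ref{pro:moebch-circ} combined with compactness, any subsequential limit of $h_{y_k}(\sigma)$ is a Ptolemy circle $\tau$ through $x$ and $\omega$, and the corresponding limit $z'$ of $h_{y_k}(y_k)$ lies on $\tau$ at unit distance from $x$.

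The main obstacle is to show $\tau=l\cup\omega$ (equivalently $z'\in l$) for every such subsequence, including sequences approaching $x$ from the side of $\sigma$ opposite to that of the original $y_n$. The plan is to exploit the freedom in choosing $h_{y_k}$ within $\Gamma_{\omega,x}$ together with the refined shift construction of Lemma~\ref{lem:shift_identity}: matching rescalings from two different approach sequences by elements of $\Gamma_{\omega,x}$ close to the identity, one forces two subsequential limit circles to share a third point besides $\{x,\omega\}$, hence to coincide by Corollary~\ref{cor:weak_unique}. Once $d(h_{y_k}(y_k),l)\to 0$ is established, the rescaling identity above yields $d(y_k,l)=o(d(x,y_k))$, proving tangency. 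Uniqueness of $l$ is then immediate: if $l,l'$ are both tangent to $\sigma$ at $x$ and $y_n\to x$ in $\sigma$, then $h_n(y_n)$ has distance $o(1)$ from both $l$ and $l'$ (both being preserved by $h_n$), so any subsequential limit $z$ lies in $l\cap l'$ with $d(x,z)=1$, giving three shared points $\{x,z,\omega\}$ between the Ptolemy circles $l\cup\omega$ and $l'\cup\omega$, and Corollary~\ref{cor:weak_unique} forces $l=l'$.
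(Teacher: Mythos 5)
There is a genuine gap, and it sits exactly where the real content of the proposition lies. First, your candidate line $l$ is not well defined: you declare it to be ``the unique Ptolemy line through $x$ and $z$ in $X_\omega$, whose existence follows from~(E)''. Property (E), even in its enhanced form, only gives a Ptolemy \emph{circle} through two points of $X$; a Ptolemy \emph{line} in $X_\omega$ through $x$ and $z$ is a Ptolemy circle through the three points $x$, $z$, $\omega$, and nothing in the hypotheses provides one. Indeed, for the boundary of complex hyperbolic space two generic points of the Heisenberg group $X_\omega$ are joined by no Ptolemy line at all --- this is precisely why property ($\K$) is phrased in terms of hitting a fiber rather than a point. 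The construction can be repaired by taking $l\cup\omega$ to be a subsequential limit of the rescaled circles $h_n(\si)$ (your second paragraph essentially does this), but that only shifts the burden to the next issue.

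Second, the heart of the proof --- that \emph{all} blow-up limits coincide, equivalently that $d(h_y(y),l)\to 0$ as $y\to x$ along $\si$, which by your rescaling identity is exactly the tangency estimate $d(y,l)=o(d(x,y))$ --- is announced as ``the main obstacle'' and then only given a ``plan''. The proposed mechanism is not convincing: for two points $y,y'\in\si$ near $x$ the composition $h_y\circ h_{y'}^{-1}$ is a homothety of coefficient $d(x,y')/d(x,y)$, which need not be close to $1$, and Lemma~\ref{lem:shift_identity} concerns shifts $\eta_{xx'}$, not elements of $\Gamma_{\omega,x}$, so I do not see how two subsequential limit circles are forced to share a third point. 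The paper circumvents all of this by inverting the metric at $x$: then $\si\sm x$ becomes a Ptolemy \emph{line} in $X_x$, Corollary~\ref{cor:busparallel_foliation} supplies the unique Busemann parallel line $\wt l$ through $\omega$, and Lemma~\ref{lem:busparallel_sublinear} converts Busemann parallelism into two-sided sublinear divergence, which the inversion formula $d(y,y')=t\,d'(y,y')/(1/t)$ turns directly into $d(y,y')=o(t)$. Your uniqueness argument (a common limit point $z$ at distance $1$ from $x$ forces $l=l'$ via Corollary~\ref{cor:weak_unique}) is fine once tangency is available, but as written the existence half of the proposition is not proved.
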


\begin{proof} Let
$d$
be the metric of
$X_\om$, $d'$
the metric inversion of
$d$
w.r.t.
$x$, $d'(y,z)=\frac{d(y,z)}{d(y,x)d(z,x)}$.
Then
$x$
is infinitely remote for
$d'$,
and
$\si\sm x\sub X_x$
is a Ptolemy line w.r.t. the metric
$d'$
on
$X_x$.
By Corollary~\ref{cor:busparallel_foliation}, there is
a unique Ptolemy line 
$\wt l\sub X_x$
through
$\om$
which is Busemann parallel to the line
$\si\sm x$.
Then
$l=(\wt l\cup x)\sm\om\sub X_\om$
is a Ptolemy line through
$x$.
We show that
$l$
is tangent to
$\si$
at
$x$.
 
We fix on
$\si\sm x$
and
$\wt l$
compatible orientations, see sect.~\ref{subsect:slope},
and choose
$y\in\si$, $y'\in l$
with sufficiently small positive
$t=d(x,y)=d(x,y')$
according to the orientations. Recall that
$d$
is also the metric inversion of
$d'$
w.r.t.
$\om$,
and that
$d(x,z)=1/d'(\om,z)$
for every
$z\in X\sm\{x,\om\}$.
Then
$$d(y,y')=\frac{d'(y,y')}{d'(\om,y)d'(\om,y')}
         =t\frac{d'(y,y')}{1/t}.$$
By Lemma~\ref{lem:busparallel_sublinear},
$\frac{d'(y,y')}{1/t}\to 0$
as
$t\to 0$,
hence
$d(y,y')=o(t)$,
and thus
$l$
is tangent to
$\si$
at
$x$.

If 
$l'\sub X_\om$
is another tangent line to
$\si$
at
$x$,
then reversing the argument above we observe that
the Ptolemy lines
$\wt l$, $\wt l'=(l'\cup\om)\sm x\sub X_x$
through 
$\om$
diverge sublinearly and thus they are Busemann parallel
again by Lemma~\ref{lem:busparallel_sublinear}. It follows
that
$\wt l=\wt l'$
and
$l=l'$. 
\end{proof}

Now, we reformulate Corollary~\ref{cor:busparallel_foliation}
as follows.

\begin{cor}\label{cor:tangent_unique} Given a Ptolemy line 
$l\sub X_\om$
and a point 
$x\in l$,
for any other point
$y\in X_\om$
there exists a unique Ptolemy circle
$\si\sub X_\om$
through
$y$
tangent to
$l$
at 
$x$.
In particular, if
$y\in l$,
then
$\si=l$. 
\end{cor}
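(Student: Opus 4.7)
The plan is to invert at $x$ and translate the whole problem to Busemann parallelism in $X_x$, where Corollary~\ref{cor:busparallel_foliation} is directly available. Take a metric $d\in\cM$ with $\om$ infinitely remote, and let $d'$ be its m-inversion w.r.t.\ $x$; then $d'$ is a metric of $\cM$ with $x$ as infinitely remote point. Under this operation the Ptolemy line $l\sub X_\om$ corresponds to the Ptolemy line $\wt l:=(l\sm x)\cup\om\sub X_x$, a Ptolemy line in $X_x$ passing through $\om$. A Ptolemy circle $\si\sub X$ containing $x$ corresponds bijectively to a Ptolemy line $\si\sm x\sub X_x$, and conversely every Ptolemy line in $X_x$ arises this way by definition.

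The key reduction is contained in the proof of Proposition~\ref{pro:tangent_rcircle}: a Ptolemy circle $\si$ through $x$ is tangent to $l$ at $x$ if and only if the Ptolemy line $\si\sm x$ is Busemann parallel to $\wt l$ in $X_x$. Thus finding a Ptolemy circle through $y$ tangent to $l$ at $x$ is equivalent to finding a Ptolemy line in $X_x$ through $y$ Busemann parallel to $\wt l$, and by Corollary~\ref{cor:busparallel_foliation} there is a unique such line $\wt l(y)$. Setting $\si:=\wt l(y)\cup\{x\}$ produces a Ptolemy circle in $X$ through both $x$ and $y$ that is tangent to $l$ at $x$; uniqueness of $\si$ (through $y$, with tangency at $x$) is inherited from the uniqueness of $\wt l(y)$.

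It remains to see when $\si\sub X_\om$, i.e.\ when $\om\notin \wt l(y)$. If $\om\in\wt l(y)$, then $\wt l(y)$ and $\wt l$ are Busemann parallel Ptolemy lines sharing the point $\om$, hence coincide by Lemma~\ref{lem:unique_line}; this forces $y\in\wt l$, equivalently $y\in l$, and $\si=\wt l\cup x=l\cup\om$, which is precisely the Ptolemy circle completing the line $l$ in $X$ (the degenerate ``$\si=l$'' of the corollary). Conversely, if $y\notin l$ then $\wt l(y)\neq\wt l$, so $\om\notin\wt l(y)$ and $\si\sub X_\om$ as required.

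I do not anticipate a substantive obstacle: the non-trivial work has already been absorbed into the identification of tangency with Busemann parallelism (Proposition~\ref{pro:tangent_rcircle}), the rigidity of Busemann parallel lines sharing a point (Lemma~\ref{lem:unique_line}), and the existence/uniqueness of Busemann parallel lines through a prescribed point (Corollary~\ref{cor:busparallel_foliation}). The only care required is to track which constructed sets contain $\om$ after inversion, which determines the ``$\si=l$'' clause.
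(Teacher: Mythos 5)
Your proof is correct and follows exactly the paper's route: the paper's own proof is the one-line instruction to pass to a metric with $x$ as infinitely remote point and apply Corollary~\ref{cor:busparallel_foliation}, with the identification of tangency at $x$ with Busemann parallelism in $X_x$ supplied by (the proof of) Proposition~\ref{pro:tangent_rcircle}. You have merely written out the details, including the correct bookkeeping of when $\om$ lies on the constructed line, which yields the $\si=l$ clause.
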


\begin{proof} Consider a metric of the M\"obius structure on
$X$
with the infinitely remote point 
$x$
and apply Corollary~\ref{cor:busparallel_foliation}. 
\end{proof}

If oriented Ptolemy circles
$\si$, $\si'\sub X$
are disjoint, then their slope is not determined. Assume now that
$\om\in\si\cap\si'$.
Then 
$\slope_\om(\si,\si)\in[-1,1]$
is defined as the slope of oriented Ptolemy lines
$\si\sm\om$, $\si'\sm\om\sub X_\om$.
This is well defined and symmetric,
$\slope_\om(\si,\si')=\slope_\om(\si',\si)$,
by Lemma~\ref{lem:slope_symmetry}. In the case
$\slope_\om(\si,\si')=\pm 1$,
the Ptolemy circles are tangent to each other at
$\om$,
having compatible ($-1$) or opposite ($+1$) orientations.
This means that in any space 
$X_{\om'}$
with 
$\om'\neq\om$,
the Ptolemy circles
$\si\sm\om'$, $\si'\sm\om'\sub X_{\om}$
have a common tangent line at
$\om$.

More generally, let
$l$, $l'\sub X_{\om'}$
be the tangent lines to
$\si$, $\si'$
respectively at
$\om$, 
oriented according to the orientations of
$\si$, $\si'$.
Then
$\slope_\om(\si,\si')=\slope(l,l')$
because in the space 
$X_\om$
the Ptolemy line
$\si\sm\om$
is Busemann parallel to
$l\sm\om$,
and
$\si'\sm\om$
is Busemann parallel to
$l'\sm\om$,
and we can apply Lemma~\ref{lem:paraline_busemann}.

Furthermore, if
$\si$, $\si'$
have two distinct common points,
$\om$, $\om'\in\si\cap\si'$,
then
$\slope_\om(\si,\si')=\slope_{\om'}(\si,\si')$.
This follows from Lemma~\ref{lem:opposite_slopes}
and Lemma~\ref{lem:slope_symmetry}. Thus we use notation
$\slope(\si,\si')$
for the slope of intersecting oriented Ptolemy circles
$\si$, $\si'$.

\section{Fibration 
$\pi_\om:X_\om\to B_\om$}
\label{sect:fibration}

\subsection{Definition and properties}
\label{subsect:def_fibration}

As usual we fix
$\om\in X$
and consider a metric 
$d$
of the M\"obius structure with 
infinitely remote point
$\om$.

Given
$x\in X_\om$,
we define
$$F_x=\bigcap_{l\ni x}H_l,$$
where the intersection is taken over all the Ptolemy lines
$l\sub X_\om$
through
$x$,
$H_l$
is the horosphere through 
$x$
of a Busemann function associated with
$l$
(since
$X$
is Busemann flat,
$H_l$
is independent of choice of a Busemann function).

\begin{lem}\label{lem:fiber_def} For any
$y\in F_x$
we have
$F_y=F_x$.
\end{lem}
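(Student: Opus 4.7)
The plan is to exploit the Busemann parallel foliation (Corollary~\ref{cor:busparallel_foliation}) together with Busemann flatness (Proposition~\ref{pro:smooth_convex}), which guarantees that the horosphere $H_l$ through $x$ is a well defined level set of a Busemann function $b$ of $l$, independent of the choice of end/orientation.

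First I would establish the symmetry: if $y\in F_x$, then $x\in F_y$. Given any Ptolemy line $l'\subset X_\om$ through $y$, Corollary~\ref{cor:busparallel_foliation} produces a unique Ptolemy line $l\subset X_\om$ through $x$ Busemann parallel to $l'$; by the definition of Busemann parallel, any Busemann function $b$ of $l$ is also a Busemann function of $l'$. Since $y\in F_x$, $y$ lies on the horosphere of $l$ through $x$, so $b(y)=b(x)$. Hence $x$ lies on the horosphere of $l'$ through $y$, which shows $x\in H_{l'}$; as $l'\ni y$ was arbitrary, $x\in F_y$.

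Next I would show the inclusion $F_x\subset F_y$. Take $z\in F_x$ and an arbitrary Ptolemy line $l'\subset X_\om$ through $y$. Again by Corollary~\ref{cor:busparallel_foliation}, let $l$ be the unique Ptolemy line through $x$ Busemann parallel to $l'$, and let $b$ be a common Busemann function of $l$ and $l'$. Since both $y,z\in F_x$, the relations $b(y)=b(x)$ and $b(z)=b(x)$ hold, hence $b(z)=b(y)$, which means $z$ lies on the horosphere of $l'$ through $y$. As $l'\ni y$ was arbitrary, $z\in F_y$.

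Finally, since we have already proved $x\in F_y$, applying the previous inclusion with the roles of $x$ and $y$ interchanged yields $F_y\subset F_x$, and therefore $F_y=F_x$. I do not expect any serious obstacle: the statement is essentially an equivalence-relation check, and all the substantive geometric content (Busemann flatness, uniqueness of the parallel line, coincidence of horospheres of opposite Busemann functions) has been set up in the previous subsections.
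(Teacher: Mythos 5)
Your proof is correct and follows essentially the same route as the paper's: both use Corollary~\ref{cor:busparallel_foliation} to pair each Ptolemy line through $y$ with its Busemann parallel through $x$, note that a common Busemann function takes equal values at $x$ and $y$, and conclude that the corresponding horospheres (and hence the intersections defining $F_x$ and $F_y$) coincide. The paper states this more compactly as $H_l = H_{l'}$ for corresponding lines, whereas you spell out the two inclusions, but the content is identical.
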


\begin{proof}
By Corollary~\ref{cor:busparallel_foliation}, for every Ptolemy line
$l$
through
$x$
there is a unique Ptolemy line
$l'$
through
$y$
such that
$l$, $l'$
are Busemann parallel. Let
$b$
be a Busemann function of
$l$
such that
$b(x)=0$.
Then
$b(y)=0$
because
$y$
lies in the horosphere through
$x$
of
$b$.
Hence
$H_l=H_{l'}$
because
$b$
is a Busemann function also of
$l'$,
and thus
$F_y=F_x$. 
\end{proof}

By Lemma~\ref{lem:fiber_def}, the sets
$F_x$, $F_{x'}$
coincide or are disjoint for any
$x$, $x'\in X_\om$.
We let
$B_\om=\set{F_x}{$x\in X_\om$}$
and define
$\pi_\om:X_\om\to B_\om$
by
$\pi_\om(x)=F_x$.
Therefore, the fibers
$F_b=\pi_\om^{-1}(b)$, $b\in B_\om$,
form a partition of
$X_\om$, $B_\om$
is the factor-space of this partition, and
$\pi_\om$
is the respective factor-map. A fiber
$F$
of
$\pi_\om$
is also called a $\K$-{\em line}. 

\begin{lem}\label{lem:induced_base_map}
For any
$\om$, $\om'\in X$,
any M\"obius automorphism
$\phi:X\to X$
with
$\phi(\om)=\om'$
induces a bijection
$\ov\phi:B_\om\to B_{\om'}$
such that
$\pi_{\om'}\circ\phi=\ov\phi\circ\pi_\om$.
\end{lem}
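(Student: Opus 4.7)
The plan is to show that $\phi$ restricts to a homothety $X_\om\to X_{\om'}$, and hence preserves all the data used to build the fibers $F_x$. Fix metrics $d\in\cM$ with infinitely remote point $\om$ and $d'\in\cM$ with infinitely remote point $\om'$. Since $\phi^\ast d'\in\cM$ also has $\om$ as infinitely remote point, Lemma~\ref{lem:homothety_infinite} yields $\phi^\ast d'=\la d$ for some $\la>0$. Thus $\phi:(X_\om,d)\to(X_{\om'},d')$ is a homothety with coefficient $\la$.

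Next I would observe that, being M\"obius, $\phi$ sends Ptolemy circles to Ptolemy circles; together with $\phi(\om)=\om'$ this gives a bijection between Ptolemy lines in $X_\om$ (circles through $\om$ minus $\om$) and Ptolemy lines in $X_{\om'}$, which restricts on each $x\in X_\om$ to a bijection between lines through $x$ and lines through $\phi(x)$. A direct computation from the definition of a Busemann function shows that if $b$ is a Busemann function of a Ptolemy line $l\sub X_\om$, then $\la\cdot b\circ\phi^{-1}$ is a Busemann function of the Ptolemy line $\phi(l)\sub X_{\om'}$. In particular the horosphere through $x$ of $l$ is mapped by $\phi$ onto the horosphere through $\phi(x)$ of $\phi(l)$.

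Combining these facts gives
$$\phi(F_x)=\phi\Bigl(\bigcap_{l\ni x}H_l\Bigr)=\bigcap_{l'\ni\phi(x)}H_{l'}=F_{\phi(x)},$$
where in the middle equality both the indexing (via the bijection of lines) and the individual terms (via preservation of horospheres) are taken care of by the previous paragraph. Setting $\ov\phi(F_x):=F_{\phi(x)}$ therefore defines a map $B_\om\to B_{\om'}$; well-definedness is automatic, since if $F_x=F_y$ then $\phi(y)\in\phi(F_x)=F_{\phi(x)}$, so $F_{\phi(y)}=F_{\phi(x)}$ by Lemma~\ref{lem:fiber_def}. The commutation relation $\pi_{\om'}\circ\phi=\ov\phi\circ\pi_\om$ is built into the definition, and bijectivity follows by running the same construction for $\phi^{-1}:X\to X$, which sends $\om'$ to $\om$ and yields an inverse $\ov{\phi^{-1}}=(\ov\phi)^{-1}$.

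The main point requiring care is the transformation rule for Busemann functions under a non-isometric homothety (the factor $\la$), but this is routine from the defining limit; once it is in place the rest of the argument is essentially formal.
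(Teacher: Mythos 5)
Your proof is correct and follows essentially the same route as the paper: reduce to the fact (via Lemma~\ref{lem:homothety_infinite}) that $\phi:(X_\om,d)\to(X_{\om'},d')$ is a homothety, deduce that Ptolemy lines map to Ptolemy lines and Busemann functions to (rescaled) Busemann functions, and conclude $\phi(F_x)=F_{\phi(x)}$. The paper's proof is just a terser version of the same argument; your added detail on the transformation rule $\la\cdot b\circ\phi^{-1}$ and on well-definedness is accurate.
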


\begin{proof}
It follows from Lemma~\ref{lem:homothety_infinite} that
for any metrics
$d$, $d'$
of the M\"obius structure with infinitely remote points
$\om$, $\om'$
respectively, the map
$\phi:(X_\om,d)\to(X_{\om'},d')$
is a homothety. Thus
$\phi$
maps any Ptolemy line 
$l\sub X_\om$
to the Ptolemy line
$l'=\phi(l)\sub X_{\om'}$,
and
$b\circ\phi^{-1}$
is proportional to a Busemann function of
$l'$
for any Busemann function
$b$
of
$l$.
It follows that
$\phi$
induces a bijection
$\ov\phi:B_\om\to B_\om$
such that
$\ov\phi\circ\pi_\om=\pi_\om\circ\phi$. 
\end{proof}

\begin{proof}[Proof of property ($\K$): uniqueness]
Let
$F\sub X_\om$
be a $\K$-line, and let
$x\in X\sm F$.
We show that there is at most one Ptolemy line in
$X_\om$
through
$x$
that meets
$F$.
Assume that there are Ptolemy lines
$l$, $l'\in X_\om$
through
$x$
that intersect
$F$.
Let
$c:\R\to l$, $c':\R\to l'$
be unit speed parameterizations such that
$c(0)=x=c'(0)$,
and
$c(s)\in F$, $c'(s')\in F$
for some
$s$, $s'>0$.
For the Busemann function
$b:X_\om\to\R$, $b(y)=\lim_{t\to-\infty}|yc(t)|-|t|$,
of
$l$
we have
$b(c(s))=s=b(c'(s'))$
because
$c(s)$, $c'(s')\in F$
and 
$F$
is a fiber of the fibration
$\pi_\om:X_\om\to B_\om$.
The function
$t\mapsto |c'(s')c(t)|-(|t|+s)$
is nonincreasing and it converges to
$b(c'(s'))-s=0$
as
$t\to-\infty$,
thus
$s'=|c'(s')c(0)|\ge s=|c(s)x|$.
Interchanging 
$l$ 
and
$l'$
we obtain 
$s\ge s'$
by the same reason. Hence
$s=s'$.
Since the Busemann function
$b$
is affine along
$l'$
by Corollary~\ref{cor:busemann_affine}
and it takes the equal values
$0=b\circ c(0)$
and
$b\circ c(s)=s=b\circ c'(s)$
along
$l$, $l'$
at two different parameter points, we have
$b\circ c(t)=b\circ c'(t)$
for every
$t\in\R$.
By Lemma~\ref{lem:unique_line},
$l=l'$.
\end{proof}

\begin{lem}\label{lem:rfoliation_semik} Given a Ptolemy line
$l\sub X_\om$
and distinct points
$x$, $y\in l$,
a Ptolemy line
$l'\sub X_\om$
through
$x'\in F_x$
meets the fiber
$F_y$
if and only if it is Busemann parallel to 
$l$.
In this case
$|x'y'|=|xy|$
for 
$y'=l'\cap F_y$. 
\end{lem}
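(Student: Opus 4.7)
The plan is to analyze slopes and intercepts of Busemann functions using the affineness of Busemann functions along Ptolemy lines (Corollary~\ref{cor:busemann_affine}), Lemma~\ref{lem:paraline_busemann} (Busemann parallel lines with compatible orientations have equal slopes against any third line), and the slope symmetry of Lemma~\ref{lem:slope_symmetry}. The key preliminary observation is that whenever $m$ is a Ptolemy line through $y$ with Busemann function $b_m$ normalized so $b_m(x)=0$, the hypothesis $x'\in F_x$ automatically forces $b_m(x')=0$: indeed, $b_m$ is also a Busemann function of the unique Busemann-parallel line $m_x$ through $x$ (Corollary~\ref{cor:busparallel_foliation}), and the horosphere of $m_x$ through $x$ is precisely $\{b_m=0\}$, which contains $F_x$ by definition.

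For the direction in which $l$ and $l'$ are assumed Busemann parallel, I would choose unit-speed parameterizations $c,c'$ of $l,l'$ with $c(0)=x$, $c'(0)=x'$ and compatible orientations, so that a common Busemann function $b$ satisfies $b\circ c(t)=b\circ c'(t)=-t$. Setting $s=|xy|$ and $y':=c'(s)$, the distance identity $|x'y'|=s$ is immediate. To verify $y'\in F_y$, take any Ptolemy line $m$ through $y$ and a common Busemann function $b_m$ of $m$ and its Busemann parallel $m_x$ through $x$, normalized so $b_m(x)=b_m(x')=0$. Affineness yields $b_m\circ c(t)=\alpha_m t$ and $b_m\circ c'(t)=\alpha_m' t$, and Lemma~\ref{lem:paraline_busemann} applied to the Busemann parallel pair $l,l'$ against the third line $m_x$ gives $\alpha_m=\alpha_m'$. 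Hence $b_m(y)=\alpha_m s=\alpha_m' s=b_m(y')$, placing $y'$ in the horosphere of $m$ through $y$; varying $m$ gives $y'\in F_y$.

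For the converse, assume $y'\in l'\cap F_y$. Normalize a Busemann function $b$ of $l$ by $b(x)=0$; the hypotheses then give $b(x')=0$ and, applying $y'\in F_y$ to $m=l$, $b(y')=b(y)=-s$. Parameterize $l'$ by $c'$ unit speed with $c'(0)=x'$ and $b\circ c'(t)=\alpha' t$; writing $y'=c'(t')$ produces the first relation $\alpha' t'=-s$. To close the system I would introduce the Ptolemy line $l^*$ through $y$ Busemann parallel to $l'$ (unique by Corollary~\ref{cor:busparallel_foliation}) together with a common Busemann function $b^*$ of $l^*,l'$ normalized by $b^*(y)=0$. Applying $y'\in F_y$ with $m=l^*$ gives $b^*(y')=0$, and since $b^*$ has slope $-1$ along $l'$ (compatible orientations of $l^*$ and $l'$), one reads off $b^*(x')=t'$. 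On the other hand, the slope of $b^*$ along $l$ equals $\slope(l;l^*)=\slope(l;l')=\slope(l';l)=\alpha'$ by Lemma~\ref{lem:paraline_busemann} combined with Lemma~\ref{lem:slope_symmetry}; applying $x'\in F_x$ to the Busemann parallel line to $l^*$ through $x$ yields $b^*(x')=b^*(x)=-\alpha' s$. Equating the two expressions for $b^*(x')$ produces $t'=-\alpha' s$, which together with $\alpha' t'=-s$ forces $(\alpha')^2=1$; thus $l'$ is Busemann parallel to $l$ and $|x'y'|=|t'|=s=|xy|$.

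The main obstacle is the orientation and sign bookkeeping, in particular pinning down the slope of $b^*$ along $l$ as exactly $\alpha'$ rather than $-\alpha'$; this requires carefully chaining the compatible orientations of $l^*$ and $l'$ (giving $\slope(l;l^*)=\slope(l;l')$) with the symmetry $\slope(l;l')=\slope(l';l)$. Without this precise identification the two linear relations on $(\alpha',t')$ become dependent and fail to force $|\alpha'|=1$.
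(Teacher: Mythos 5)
Your forward implication is essentially the paper's own argument: normalize each Busemann function $b_m$ of a line $m$ through $y$ so that it vanishes on $F_x$ (via Corollary~\ref{cor:busparallel_foliation}), and use Lemma~\ref{lem:paraline_busemann} to see that $b_m$ has the same slope along $l$ and $l'$, so that $y'=c'(s)$ lands in every horosphere defining $F_y$. For the converse you take a genuinely different route. The paper simply invokes the uniqueness half of property~($\K$), proved just after Lemma~\ref{lem:induced_base_map}: at most one Ptolemy line through $x'$ meets $F_y$, so any such line must coincide with the Busemann parallel one already constructed. You instead derive two linear relations, $\al' t'=-s$ from evaluating the Busemann function of $l$ at $y'\in F_y$, and $t'=-\al' s$ from evaluating the Busemann function $b^*$ of $l^*$ (the parallel of $l'$ through $y$) at $x'\in F_x$; your sign chain $\slope(l;l^*)=\slope(l^*;l)=\slope(l';l)=\al'$ via Lemmas~\ref{lem:slope_symmetry} and~\ref{lem:paraline_busemann} is correct, so $(\al')^2=1$ does follow and $|x'y'|=|t'|=s$ comes for free. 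This is a self-contained quantitative argument; the paper's route is shorter only because the ($\K$)-uniqueness statement is needed elsewhere anyway and is proved once. One step still needs justification: $\slope(l';l)=\pm1$ is not, by definition, Busemann parallelism. Close it by reversing the orientation of $l'$ if necessary so that $b\circ c'(t)=-t$, observing that the Busemann parallel line to $l$ through $x'$ satisfies the same identity for the same $b$, and applying Lemma~\ref{lem:unique_line} to identify the two lines.
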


\begin{proof}
By Corollary~\ref{cor:busparallel_foliation}, 
there is a unique Ptolemy line
$\wt l$
through
$x'$
which is Busemann parallel to
$l$.
Consider compatible unit speed parameterizations
$c:\R\to l$, $\wt c:\R\to\wt l$
such that
$c(0)=x$, $\wt c(0)=x'$.
Then
$y=c(t)$
for some
$t\in\R$.
We show that
$\wt c(t)\in F_y$.
Let
$l''$
be a Ptolemy line through
$c(t)$, $b''$
a Busemann function of
$l''$
with
$b''(c(t))=0$.
We show that
$\wt c(t)$
lies in the zero level set of
$b''$, $b''(\wt c(t))=0$.

By Corollary~\ref{cor:busparallel_foliation}, there is
a Ptolemy line through
$c(0)$
for which
$b''$
is a Busemann function. Then the $\K$-line
$F_x$
lies in a level set of
$b''$,
in particular,
$b''(c(0))=b''(\wt c(0))=:\be$.
By Lemma~\ref{lem:paraline_busemann} we have
$b''\circ c(s)=\al s+\be=b''\circ\wt c(s)$
for all
$s\in\R$.
In particular,
$b''(\wt c(t))=b''(c(t))=0$,
hence
$\wt c(t)\in F_y$
and
$\wt l$
meets
$F_y$.
By the uniqueness part of property~($\K$),
there is at most one Ptolemy line through
$x'$
that hits
$F_y$.
Hence, if
$l'$
meets
$F_y$,
then
$l'=\wt l$
is Busemann parallel to
$l$.
Moreover, the argument above also shows that the $\K$-lines
$F_x$, $F_y$
are equidistant,
$|x'y'|=|xy|$.
\end{proof}

\subsection{Zigzag curves}
\label{subsect:zigzag}

We fix
$\om\in X$
and consider a metric on
$X_\om$
with infinitely remote point
$\om$.
Let
$l\sub X_\om$
be an oriented Ptolemy line. By Corollary~\ref{cor:busparallel_foliation},
there is a foliation
$l(x)$, $x\in X_\om$
of the space
$X_\om$
by Ptolemy lines, which are Busemann parallel to
$l$.
Moreover, every member
$l(x)$
of the foliation has a well defined orientation compatible
with that of
$l$,
see sect.~\ref{subsect:slope}.

\begin{lem}\label{lem:parallel_couple} Let
$l_1$, $l_2\sub X_\om$
be oriented Ptolemy lines which induce respective
foliations of
$X_\om$.
We start moving from
$x\in X_\om$
along
$l_1(x)$
by some distance
$s_1\ge 0$
up to a point
$y$,
and then switch to
$l_2(y)$
and move along it by some distance
$s_2\ge 0$
up to a point
$z$.
Next, we move from
$x'\in X_\om$
along
$l_2(x')$
by the distance
$s_2$
up to a point
$y'$,
and then switch to
$l_1(y')$
and move along it by the distance
$s_1$
up to a point
$z'$,
where we always move in the directions prescribed by 
the orientations. If
$x$, $x'$
lie in a $\K$-line
$F\sub X_\om$,
then
$z$, $z'$
also lie in one and the same $\K$-line
$F'\sub X_\om$.
\end{lem}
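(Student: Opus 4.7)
The plan is to test whether $z$ and $z'$ lie in a common $\K$-line by evaluating Busemann functions. The preliminary criterion I would use is this: for any $p,q \in X_\om$, one has $p \in F_q$ if and only if $b(p) = b(q)$ for every Busemann function $b$ of every Ptolemy line in $X_\om$. The nontrivial direction extends the defining intersection of $F_q$: if a Ptolemy line $l$ does not pass through $q$, then by Corollary~\ref{cor:busparallel_foliation} there is a unique Busemann parallel line $l_0$ through $q$, and the Busemann functions of $l$ and $l_0$ coincide, so the horosphere $\{b = b(q)\}$ automatically contains $F_q$.

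Given this criterion, I would fix an arbitrary oriented Ptolemy line $l_3 \sub X_\om$ and a Busemann function $b$ of $l_3$, and set $\al_i := \slope(l_i, l_3)$ for $i=1,2$. By Lemma~\ref{lem:paraline_busemann}, every Busemann parallel translate of $l_i$, equipped with the induced compatible orientation, has the same slope $\al_i$ with respect to $l_3$. Using the affineness of $b$ along Ptolemy lines (Corollary~\ref{cor:busemann_affine}) and applying it successively along $l_1(x)$ and then $l_2(y)$, I would compute
$$b(z) \;=\; b(y) + \al_2 s_2 \;=\; b(x) + \al_1 s_1 + \al_2 s_2.$$
An analogous computation for the second zigzag yields $b(z') = b(x') + \al_2 s_2 + \al_1 s_1$, so that $b(z) - b(z') = b(x) - b(x')$.

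Since $x, x' \in F$, the preliminary criterion gives $b(x) = b(x')$, hence $b(z) = b(z')$. As the Busemann function $b$ of the Ptolemy line $l_3$ was arbitrary, the criterion now yields $z' \in F_z$, and one may take $F' := F_z$.

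I do not expect a genuine obstacle here: the lemma is morally the assertion that the two zigzag endpoints agree in the base $B_\om$, and this follows directly from the linearity of Busemann functions along Ptolemy lines (Busemann flatness, Proposition~\ref{pro:smooth_convex}) together with the slope symmetry and Busemann parallel invariance already established. The only point that needs care is the orientation bookkeeping for $l_i(y)$ and $l_i(x')$, which is exactly what the hypothesis of Lemma~\ref{lem:paraline_busemann} is set up to handle.
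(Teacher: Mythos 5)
Your proof is correct and follows essentially the same route as the paper: the paper also evaluates an arbitrary Busemann function $b$ along the two zigzags using affineness (Corollary~\ref{cor:busemann_affine}) and the fact that the slope coefficient depends only on the Busemann parallel class (Lemma~\ref{lem:paraline_busemann}), obtaining $b(z)=b(z')=\al_1 s_1+\al_2 s_2$ after normalizing $b$ to vanish on $F$. Your explicit preliminary criterion characterizing $F_q$ via all Busemann functions is exactly the implicit step the paper uses in its concluding sentence, so there is no substantive difference.
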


\begin{proof} Let
$c_1$, $c_2:\R\to X_\om$
be the unit speed parameterizations of
$l_1(x)$, $l_2(x')$
respectively compatible with the orientations such that
$c_1(0)=x$, $c_2(0)=x'$.
We also consider the unit speed parameterizations
$c_1'$, $c_2':\R\to X_\om$
of
$l_1(y')$, $l_2(y)$
respectively compatible with the orientations such that
$c_1'(0)=y'$, $c_2'(0)=y$.
Then
$c_1(s_1)=y=c_2'(0)$, $c_2(s_2)=y'=c_1'(0)$
and
$c_2'(s_2)=z$, $c_1'(s_1)=z'$.

Let
$b$
be a Busemann function of a Ptolemy line
$l\sub X_\om$
which vanishes along
$F$,
in particular,
$b(x)=0=b(x')$.
By Corollary~\ref{cor:busemann_affine},
$b$
is affine along any Ptolemy line in
$X_\om$,
in particular,
$b\circ c_1(t)=\al_1t$, $b\circ c_2(t)=\al_2t$
for some
$\al_i$
which by Lemma~\ref{lem:paraline_busemann}
only depends on
$l_i$, $i=1,2$,
and for all
$t\in\R$. 
Thus we have
$b(z')=b\circ c_1'(s_1)=\al_1s_1+\al_2s_2$
and similarly
$b(z)=b\circ c_2'(s_2)=\al_2s_2+\al_1s_1$.
Hence any Busemann function on
$X_\om$
takes the same value at the points
$z$
and 
$z'$,
i.e. these points lie in a common $\K$-line
$F'$.
\end{proof}

Given a base point
$o\in X_\om$,
a finite ordered collection
$\cL=\{l_1,\dots,l_k\}$
of oriented Ptolemy lines in
$X_\om$,
and a collection
$S=\{s_1,\dots,s_k\}$
of nonnegative numbers with
$s_1+\dots+s_k>0$,
we construct a sequence
$\ga_p=\ga_p(o,\cL,S)\sub X_\om$, $p\ge 1$,
of piecewise geodesic curves through
$o$
as follows. Recall that we have 
$k$ 
foliations of
$X_\om$
by oriented Ptolemy lines 
$l_1(x),\dots,l_k(x)$, $x\in X_\om$,
which are Busemann parallel with compatible orientations to
$l_1,\dots,l_k$
respectively.

The curve
$\ga_p$
starts at
$o=v_p^0$
for every
$p\ge 1$.
We move along
$l_1(o)$
by the distance
$s_1/2^{p-1}$
up to the point
$v_p^1\in l_1(v_p^0)$,
then switch to the line
$l_2(v_p^1)$
and move along it by the distance
$s_2/2^{p-1}$
up to the point
$v_p^2$
etc. On the
$i$th 
step, for
$1\le i\le k$, 
we move along the line
$l_i(v_p^{i-1})$
by the distance
$s_i/2^{p-1}$
in the direction prescribed by the orientation of the line
up to the point
$v_p^i\in l_i(v_p^{i-1})$.
Starting with the point
$v_p^k$
we then repeat this procedure only taking the subindices for
$l_i$, $s_i$
modulo
$k$
for all integer
$i\ge k+1$.

This produces the sequence
$v_p^n$
of vertices of
$\ga_p$
for all
$n\ge 0$.
For integer
$n<0$
the vertices
$v_p^n$
are determined in the same way with all the orientations
of the lines
$l_1,\dots,l_k$
reversed, with the starting line
$l_k(o)$,
and with the ordered collections
$\ov{\cL}=\{l_k,\dots,l_1\}$
of lines, and
$\ov S=\{s_k,\dots,s_1\}$
of numbers.

Every curve
$\ga_p$
receives the arclength parameterization, for which
we use the same notation
$\ga_p:\R\to X_\om$,
with
$\ga_p(0)=o$.
Then for every
$m\in\Z$, $1\le i\le k$,
we have
$\ga_p(t_p^n)=v_p^n$
is a vertex of
$\ga_p$,
where
$n=k(m-1)+i$, 
$t_p^n=[(s_1+\dots+s_i)m+(s_{i+1}+\dots+s_k)(m-1)]/2^{p-1}$
(the sum
$(s_{i+1}+\dots+s_k)$
is assumed to be zero for
$i=k$).

It follows from Lemma~\ref{lem:parallel_couple}
by induction that for every 
$n=km\in\Z$,
the vertices
$v_p^n=\ga_p(t_p^n)$
of
$\ga_p$
and
$v_{p+1}^{2n}=\ga_{p+1}(t_{p+1}^{2n})$
of
$\ga_{p+1}$
lie in a common $\K$-line in
$X_\om$
for every
$p\ge 1$.
From this one easily concludes that the sequence of the projected curves
$\pi_\om(\ga_p)\sub B_\om$
converges (pointwise in the induced topology). At this stage,
we do not have tools to prove that the sequence
$\ga_p$
itself converges in
$X_\om$.
However, we need a limiting object of
$\ga_p$.
Thus, for instance, we fix a nonprincipal ultra-filter on
$\Z$
and say that
$\ga=\lim\ga_p$
w.r.t. that ultra-filter. By this we mean that
$\ga(t)=\lim\ga_p(t)$
for every 
$t\in\R$.
The curve
$\ga=\ga(o,\cL,S)$
is called a {\em zigzag} curve, and it is obtained together with 
the limiting parameterization
$\ga:\R\to X_\om$, $\ga(0)=o$,
which in general is not an arclength parameterization.

\begin{lem}\label{lem:busemann_affine_zigzag} Every Busemann function
$b:X_\om\to\R$
is affine along any zigzag curve
$\ga$,
that is, the function
$b\circ\ga:\R\to\R$
is affine. Moreover, if
$\ga=\ga(o,\cL,S)$
for a base point
$o\in X_\om$,
some ordered collection
$\cL=\{l_1,\dots,l_k\}$
of oriented Ptolemy lines in
$X_\om$, 
and a collection
$S=\{s_1,\dots,s_k\}$
of nonnegative numbers with
$s_1+\dots+s_k>0$,
and
$b(o)=0$,
then
$b\circ\ga(t)=\be t$
for all
$t\in\R$,
where
$\be=\sum_i\al_is_i/\sum_is_i$,
$\al_i=\slope(l_i,l)$, $i=1,\dots,k$,
and
$l\sub X_\om$
is the oriented Ptolemy line for which the function
$b$
is associated.
\end{lem}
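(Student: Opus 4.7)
My plan is to establish the formula for each approximating curve $\ga_p$ with an $O(2^{-p})$ error uniform in $t$, and then pass to the ultralimit using continuity of Busemann functions. The correct slopes will come out segment by segment thanks to the compatibility of orientations built into the foliations $l_i(\cdot)$.

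First, I observe that each geodesic segment of $\ga_p$ lies on a Ptolemy line of the form $l_i(v)$, which by construction is Busemann parallel to $l_i$ with the compatible orientation. Corollary~\ref{cor:busemann_affine} makes $b$ affine on this line, and Lemma~\ref{lem:paraline_busemann} identifies its arclength-slope against $l$ with $\slope(l_i,l)=\al_i$. Taking the choice of orientations into account, this means that the derivative of $b\circ\ga_p$ on every segment of $\ga_p$ equals some $\al_j\in\{\al_1,\dots,\al_k\}$, cycling through $\al_1,\dots,\al_k$ for $t>0$ and through $\al_k,\dots,\al_1$ for $t<0$.

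Second, set $T=s_1+\dots+s_k$. One full cycle of $\ga_p$ on either side has arclength $T/2^{p-1}$ and produces a net change of $b$ equal to $\sum_i\al_is_i/2^{p-1}=\be T/2^{p-1}$, with the sign matching the direction of $t$. Combined with $b(\ga_p(0))=b(o)=0$, induction gives $b\circ\ga_p(mT/2^{p-1})=\be\,mT/2^{p-1}$ for every $m\in\Z$. Inside a partial cycle, the piecewise linear function $b\circ\ga_p$ can deviate from $\be t$ by no more than one cycle's worth of imbalance, which yields the uniform bound
\[
|b\circ\ga_p(t)-\be t|\le\frac{C}{2^{p-1}},\qquad C=\sum_{j=1}^{k}|\al_j-\be|\,s_j,
\]
with $C$ independent of $p$ and $t$.

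Third, since every Busemann function on $X_\om$ is $1$-Lipschitz and therefore continuous, and $\ga(t)=\lim_p\ga_p(t)$ pointwise along the chosen ultrafilter, the estimate forces $b\circ\ga(t)=\lim_p b\circ\ga_p(t)=\be t$ for every $t\in\R$, which is the second assertion. The first assertion, affinity along $\ga$ of an arbitrary Busemann function not assumed to vanish at $o$, follows by subtracting the constant $b(o)$ and applying the normalized case. The main obstacle in this plan is the weak mode of convergence used to define $\ga$: the curves $\ga_p$ need not converge in $X_\om$ at all, and only their projections to $B_\om$ are known in advance to converge. The uniform estimate $C/2^{p-1}$ is precisely what rescues the argument, pinning down the limit of $b\circ\ga_p$ to the explicit affine function $t\mapsto\be t$ independently of the choice of ultrafilter.
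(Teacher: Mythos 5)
Your proposal is correct and follows essentially the same route as the paper: both arguments use Corollary~\ref{cor:busemann_affine} and Lemma~\ref{lem:paraline_busemann} to identify the slope of $b\circ\ga_p$ on each segment as the corresponding $\al_i$, compute the vertex values by induction to get $b\circ\ga_p(t)=\be t+o(1)$ (your uniform bound $C/2^{p-1}$ just makes the paper's $o(1)$ explicit), and then pass to the ultrafilter limit using continuity of $b$. The only cosmetic difference is that the paper records the exact value at every vertex $v_p^n$, whereas you record it only at full-cycle vertices and control the intra-cycle deviation; both yield the same conclusion.
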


\begin{proof} We assume that
$\ga=\lim\ga_p$.
Since
$\ga_p$
is piecewise geodesic for every
$p\ge 1$,
the function
$b\circ\ga_p:\R\to\R$
is piecewise affine. Recall that the points
$v_p^n=\ga_p(t_p^n)$
are vertices of
$\ga_p$,
where
$t_p^n=[(s_1+\dots+s_i)m+(s_{i+1}+\dots+s_k)(m-1)]/2^{p-1}$
for 
$n=k(m-1)+i\in\Z$.
Thus we have by induction
$$b\circ\ga_p(t_p^n)=
  [(\al_1s_1+\dots+\al_is_i)m
  +(\al_{i+1}s_{i+1}+\dots+\al_ks_k)(m-1)]/2^{p-1}$$
for
$n=k(m-1)+i\in\Z$.
Hence,
$b\circ\ga_p(t_p^n)=\be t_p^n+o(1)$
as
$p\to\infty$.
Since the step
$t_p^{n+1}-t_p^n\le\max_is_i/2^{p-1}\to 0$
as
$p\to\infty$,
we conclude that
$b\circ\ga_p\to b\circ\ga$
pointwise as
$p\to\infty$,
and
$b\circ\ga(t)=\be t$
for all
$t\in\R$.
\end{proof}

Lemma~\ref{lem:busemann_affine_zigzag} gives a strong evidence
in support of the expectation that a zigzag curve under natural
assumptions actually is a Ptolemy line. However we need additional 
arguments for the proof of this.

For
$\om$, $o\in X$,
the group
$\Ga_{\om,o}$
consists of homotheties
$\phi:X_\om\to X_\om$
with
$\phi(o)=o$
such that
$\phi(l)=l$
for every Ptolemy line
$l\sub X_\om$
through
$o$
preserving an orientation of
$l$,
and moreover by property (H),  
$\Ga_{\om,o}$
acts transitively on the open rays of
$l$
with the vertex
$o$,
see Proposition~\ref{pro:homothety_property}.

\begin{lem}\label{lem:zigzag_preserved} For any base point 
$o\in X_\om$,
the homothety
$\phi\in\Ga_{\om,o}$
with the coefficient
$\la=1/2$
leaves invariant a zigzag curve
$\ga=\ga(o,\cL,S)$
for any ordered collection of oriented Ptolemy lines
$\cL=\{l_1,\dots,l_k\}$
in
$X_\om$,
and any collection
$S=\{s_1,\dots,s_k\}$
of nonnegative numbers with
$s_1+\cdots+s_k>0$.
\end{lem}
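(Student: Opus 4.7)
My plan is to prove the scaling identity $\phi(\ga_p(t))=\ga_{p+1}(t/2)$ for every $p\ge 1$ and every $t\in\R$, and then to extract a $\phi$-invariant zigzag by a limit taken compatibly with the shift $p\mapsto p+1$. The scaling identity immediately forces $\phi(\ga(t))=\ga(t/2)$ for any limiting curve, and hence $\phi(\ga)=\ga$ as a set, since $t\mapsto t/2$ is a bijection of $\R$.

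The heart of the argument is a vertex-by-vertex induction showing $\phi(v_p^n)=v_{p+1}^n$ for every $n\in\Z$. Three ingredients enter: $\phi$ fixes $o$ and halves the metric of $X_\om$ (by Proposition~\ref{pro:homothety_property}); by Lemma~\ref{lem:pure_homothety} $\phi$ is a pure homothety, so that $\phi(l(x))=l(\phi(x))$ for every Busemann-parallel foliation $\{l(x)\}_{x\in X_\om}$; and the same lemma's identity $b\circ\phi=\la b$ forces $\phi$ to preserve the compatible orientations on such a foliation. The base case $\phi(v_p^0)=\phi(o)=o=v_{p+1}^0$ is immediate. For the inductive step, $v_p^{n+1}$ is obtained from $v_p^n$ by moving a signed arclength distance $s_{i+1}/2^{p-1}$ along $l_{i+1}(v_p^n)$ in the compatible direction; applying $\phi$ produces the point at signed distance $s_{i+1}/2^p$ from $v_{p+1}^n$ along $l_{i+1}(v_{p+1}^n)$, which is exactly $v_{p+1}^{n+1}$. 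The case $n<0$ runs identically with reversed orientations. Combining this with the identity $t_{p+1}^n=t_p^n/2$ (immediate from the defining formula for $t_p^n$) and the fact that $\phi$ maps each arclength edge of $\ga_p$ onto an edge of $\ga_{p+1}$ of half the length, one obtains $\phi(\ga_p(t))=\ga_{p+1}(t/2)$ for all $t\in\R$.

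For the limit step, a naive pointwise ultrafilter limit $\wt\ga(t)=\lim_\cU\ga_p(t)$ gives $\phi(\wt\ga(t))=\lim_\cU\ga_{p+1}(t/2)$, which need not equal $\wt\ga(t/2)$ since no nonprincipal ultrafilter on $\N$ is shift-invariant. I would circumvent this by a self-similar extension: fix any fundamental domain $\{t\in\R:1\le|t|<2\}$ for the dilation $t\mapsto t/2$, set $\ga:=\wt\ga$ on this domain, and propagate to the whole line by the forced rule $\ga(t/2^k):=\phi^k(\ga(t))$ for $k\in\Z$, with $\ga(0):=o$. Continuity at $0$ is automatic since $\phi$ is a contraction with fixed point $o$; the scaling identity from the previous paragraph, applied piecewise, ensures that the resulting $\ga$ is indeed a zigzag curve for $(o,\cL,S)$ (matching the pointwise limit of $\ga_p$ on the fundamental domain and agreeing with $\phi$-images of such a limit elsewhere); and $\phi$-invariance holds by construction. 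The main obstacle is precisely this reconciliation of the ultrafilter-limit framework with $\phi$-self-similarity, and the self-similar extension is the natural workaround.
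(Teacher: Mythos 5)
Your first two paragraphs coincide with the paper's own argument: the identity $\phi(\ga_p(t))=\ga_{p+1}(t/2)$, proved vertex by vertex from Lemma~\ref{lem:pure_homothety} (pure homothety of coefficient $1/2$ fixing $o$, preserving each Busemann-parallel foliation together with its compatible orientations), is exactly what the paper records as $\phi(\ga_p)=\ga_{p+1}$ and $\phi(v_p^{km})=v_{p+1}^{km}$, and this part is correct. You have also put your finger on a genuine subtlety in the limit step: the paper passes from $\phi(\lim v(q))=\lim v'(q)$ to ``$\in\ga$'' by tacitly identifying the ultrafilter limit of the reindexed sequence $\{v_{p+1}^{q_p}\}_{p}$ with $\ga(t_q/2)=\lim_\cU v_p^{(q/2)_p}$, which is precisely the shift-invariance you point out a nonprincipal ultrafilter does not have.

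The problem is that your workaround does not close this gap; it relocates it. On the annulus $\{2^{-k}\le|t|<2^{1-k}\}$ your propagated curve is $t\mapsto\lim_\cU\ga_{p+k}(t)$ (limit over $p$), since $\phi^k(\ga_p(s))=\ga_{p+k}(s/2^k)$. Each $\ga_p$ is $1$-Lipschitz, so each such piece is Lipschitz up to the closed annulus; but at a gluing point $t=\pm 2^{1-k}$ the value assigned from the outer annulus is $\lim_\cU\ga_{p+k-1}(t)$ while the limit from inside is $\lim_\cU\ga_{p+k}(t)$, and equating these is exactly the statement $\lim_\cU a_{p+1}=\lim_\cU a_p$ that you began by rejecting. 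Hence continuity of your $\ga$ at the orbit $\{\pm 2^k\}$ --- and with it the claim that the result ``is indeed a zigzag curve'' --- is unproved; an object with possible jumps there cannot play the role $\ga$ must play later (Proposition~\ref{pro:zigzag_geodesic} constructs midpoints on it, Lemma~\ref{lem:zigzag_change_basepoint} reparameterizes it). A secondary issue: even granting continuity, you would have established invariance of a \emph{different} object from the curve $\ga(o,\cL,S)$ the paper fixes once and for all with a single ultrafilter, so every later citation of this lemma would have to be rechecked against your modified curve. What the scaling identity yields unconditionally is only that $\phi(\ga(t))$ and $\ga(t/2)$ lie in the same fiber of $\pi_\om$ (because $b\circ\ga_p(s)\to\be s$ along the full sequence for every Busemann function $b$); to get equality of points one must either prove actual convergence of $\ga_p(t)$ inside its fiber or work with the limit set of the curves $\ga_p$ rather than with a parameterized ultrafilter limit.
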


\begin{proof} Let 
$\ga_p=\ga_p(o,\cL,S)$, $p\ge 1$,
be the sequence of piecewise geodesic curves in
$X_\om$,
so
$\ga=\lim\ga_p$.
For
$p\ge 1$,
we let
$v_p^{km}$, $m\in\Z$,
be the sequence of vertices of
$\ga_p$, $\ov v_p^{km}=\pi_\om(v_p^{km})$ 
the sequence of respective fibers of the fibration
$\pi_\om:X_\om\to B_\om$.
Recall that the sequences
$\set{\ov v_p^{km}}{$m\in\Z$}$, $p\ge 1$,
approximate the projection
$\pi_\om(\ga)$
of 
$\ga$,
that is,
$\pi_\om(\ga)$
coincides with the closure of the union
$\cup_p\set{\ov v_p^{km}}{$m\in\Z$}$.

We have
$\phi(\ga_p)=\ga_{p+1}$
and
$\phi(v_p^{km})=v_{p+1}^{km}$
by the construction of
$\ga_p$
and Lemma~\ref{lem:pure_homothety}. For every dyadic number
$q=m/2^r$, $m\in\Z$, $r\ge 0$,
the sequence
$v(q)=\set{v_p^{q_p}}{$p\ge r+1$}$,
where
$q_p=2^{p-(r+1)}\cdot km$, 
lies in a common fiber
$F=F(q)$
of
$\pi_\om$.
Thus 
$\phi$
maps this sequence into the sequence 
$v'(q)=\set{v_{p+1}^{q_p'}}{$p\ge r+1$}\sub\phi(F)=F(q/2)$,
where
$q_p'=2^{p-r}(km/2)$,
shrinking the mutual distances by the factor
$1/2$.
Hence for the limit point
$x=\lim v(q)\in\ga$
of any limiting procedure giving
$\ga=\lim\ga_p$
we have
$\phi(x)=\lim v'(q)\in\ga$.
The points of type
$x=\lim v(q)$
with dyadic
$q$
are dense in
$\ga$,
thus 
$\phi$
preserves
$\ga$, $\phi(\ga)=\ga$.
\end{proof}

\begin{lem}\label{lem:zigzag_change_basepoint} Let
$\ga=\ga(o,\cL,\cS)\sub X_\om$
be a zigzag curve with base point
$o\in X_\om$,
where
$\cL=\{l_1,\dots,l_k\}$, $\cS=\{s_1,\dots,s_k\}$, 
$s_1+\dots+s_k>0$.
Then for any
$o'\in\ga$
we have
$\ga(o',\cL,\cS)=\ga$,
i.e. any zigzag curve 
$\ga$
is independent of a choice of
its base point
$o$.
\end{lem}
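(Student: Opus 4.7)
The plan is to show that for any $o' = \ga(T') \in \ga$ one has $\ga(o', \cL, \cS)(t) = \ga(t + T')$ for every $t \in \R$, which immediately gives equality of images. I realize $o'$ as an ultrafilter limit of full-cycle vertices of the approximating curves and transport a combinatorial identity along shifts that approximate the identity.

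First, each $\ga_p = \ga_p(o, \cL, \cS)$ is $1$-Lipschitz as the arclength parameterization of a piecewise geodesic, so the ultrafilter limit $\ga$ is $1$-Lipschitz and in particular continuous. Writing $o' = \ga(T')$, for each $p$ I pick $m_p \in \Z$ closest to $T' \cdot 2^{p-1}/(s_1 + \cdots + s_k)$, so that $T_p := (s_1 + \cdots + s_k) m_p/2^{p-1} \to T'$. Since $v_p^{km_p} = \ga_p(T_p)$ and the $\ga_p$ are uniformly $1$-Lipschitz, the vertices $v_p^{km_p}$ converge to $o'$ along the ultrafilter.

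The key combinatorial identity is that restarting the zigzag at a full-cycle vertex $v_p^{km_p}$ retraces exactly the same piecewise geodesic, both forward and (by the $\ov\cL$, $\ov S$ convention) backward, so
\[
\ga_p(v_p^{km_p}, \cL, \cS)(t) = \ga_p(o, \cL, \cS)(t + T_p) \quad \text{for all } t \in \R.
\]
To compare this with $\ga_p(o', \cL, \cS)$, I invoke the refined shift $\eta_p := \eta_{v_p^{km_p}, o'}$ of Lemma~\ref{lem:shift_identity}, an isometry of $X_\om$ sending $v_p^{km_p}$ to $o'$ and satisfying $\eta_p \to \id$ as $v_p^{km_p} \to o'$. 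Since shifts preserve every foliation of $X_\om$ by Busemann parallel lines with compatible orientations (Lemmas~\ref{lem:pure_homothety} and~\ref{lem:shift_busemann_parallel}), the shift commutes with the zigzag construction, giving $\eta_p(\ga_p(v_p^{km_p}, \cL, \cS)) = \ga_p(o', \cL, \cS)$ as parameterized curves.

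Combining these identities and taking the ultrafilter limit in $p$ yields $\ga(o', \cL, \cS)(t) = \lim_p \eta_p(\ga_p(t + T_p)) = \ga(t + T')$. The second equality is the crux: writing $|\eta_p(\ga_p(t+T_p))\,\ga(t+T')| \le |\ga_p(t+T_p)\,\ga(t+T')| + |\eta_p(\ga(t+T'))\,\ga(t+T')|$ via the isometry property of $\eta_p$, the first summand tends to $0$ by the $1$-Lipschitz property together with $T_p \to T'$ and $\ga_p \to \ga$ pointwise along the ultrafilter, and the second by $\eta_p \to \id$ at the fixed point $\ga(t+T')$. Since $t$ is arbitrary, $\ga(o', \cL, \cS)$ and $\ga$ have the same image. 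The main delicate point is that $o'$ is only an ultrafilter limit of the vertices $v_p^{km_p}$, not literally a vertex of any finite $\ga_p$; Lemma~\ref{lem:shift_identity} is precisely the tool that lets the combinatorial identity survive this limit.
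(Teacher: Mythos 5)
Your proof is correct and follows essentially the same route as the paper's: approximate $o'$ by full-cycle vertices of the approximating curves $\ga_p$, transport the restarted zigzag via the refined shifts $\eta_p\to\id$ of Lemma~\ref{lem:shift_identity}, and pass to the ultrafilter limit with a three-$\ep$ estimate. The only differences are minor refinements: you treat an arbitrary $o'\in\ga$ in one step by choosing the nearest full-cycle vertex at each level $p$ (the paper first handles dyadic parameter values and then approximates), and you make explicit the reparameterization $t\mapsto t+T'$ that the paper's write-up leaves implicit.
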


\begin{proof} We first consider the case
$o'=\ga(t_q)$
is a {\em dyadic} point with dyadic
$q=m/2^r$, $m\in\Z$, $r\ge 0$,
and 
$t_q=(s_1+\dots+s_k)q$,
for the canonical parameterization
$t\mapsto\ga(t)$
of
$\ga$.
Then 
$o'$
is an accumulation point of the vertices
$v_p=v_p^{q_p}=\ga_p(t_p^{q_p})$, $p\ge r+1$,
where
$q_p=2^{p-(r+1)}\cdot km$
and
$t_p^{q_p}=(s_1+\dots+s_k)(q_p/k)/2^{p-1}=t_q$,
of approximating piecewise geodesic curves
$\ga_p$, $\ga=\lim\ga_p$
(recall that the sequence
$v(q)=\set{v_p^{q_p}}{$p\ge r+1$}$
lies in a fiber
$F(q)\sub X_\om$
of the projection
$\pi_\om$,
see the proof of Lemma~\ref{lem:zigzag_preserved}).
That is,
$o'=\lim v_p$
for our limiting procedure. 

By Lemma~\ref{lem:shift_identity}, there is a shift
$\eta_p=\eta_{v_po'}:X_\om\to X_\om$
with
$\eta_p(v_p)=o'$
and
$\lim\eta_p=\id$.
Then
$\eta_p(\ga_p)=\ga_p'$,
where
$\ga_p'=\ga_p(o',\cL,\cS)$
is the piecewise geodesic curve with the base point 
$o'$
approximating the zigzag curve
$\ga'=\ga(o',\cL,\cS)$, $\ga'=\lim\ga_p'$.
Now for an arbitrary point
$x\in\ga$, $x=\ga(t)$,
we have
$x=\lim\ga_p(t)$.
We put
$x'=\ga'(t)=\lim\ga_p'(t)$.
Then for an arbitrary
$\ep>0$
we have
$|x\ga_p(t)|<\ep$, $|x'\ga_p'(t)|<\ep$,
and
$|x\eta_p(x)|<\ep$
for sufficiently large
$p$.
The last estimate holds since
$\lim\eta_p=\id$.
Using
$|\eta_p(x)\ga_p'(t)|=|\eta_p(x)\eta_p\circ\ga_p(t)|
 =|x\ga_p(t)|$,
we obtain 
$$|xx'|\le|x\eta_p(x)|+|\eta_p(x)\ga_p'(t)|+|\ga_p'(t)x'|\le 3\ep,$$
thus
$x=x'$,
that is,
$\ga=\ga'$.

For a general case, the point
$o'=\ga(t)$
can be approximated by dyadic ones,
$t_q\to t$.
Then respective piecewise geodesic curves
$\ga_{p,q}'$
with dyadic base points
$\ga(t_q)$
approximate pointwise the curve
$\ga_p'$
with the base point 
$o'$
for every 
$p\ge 1$.
Thus
$\ga'=\ga$
also in that case.
\end{proof}

\begin{pro}\label{pro:zigzag_geodesic} Every zigzag curve
$\ga\sub X_\om$
is either a geodesic and hence a Ptolemy line, or it degenerates
to a point. More precisely, if
$\ga=\ga(o,\cL,\cS)$
for some base point 
$o\in X_\om$
and collections
$\cL=\{l_1,\dots,l_k\}$
of oriented Ptolemy lines in
$X_\om$, $\cS=\{s_1,\dots,s_k\}$
of nonnegative numbers with
$s_1+\dots+s_k>0$,
then
$\ga$
is degenerate if and only if
$\sum_i\al_is_i=0$
for every oriented Ptolemy line
$l\sub X_\om$,
where
$\al_i=\slope(l_i,l)$, $i=1,\dots,k$.
\end{pro}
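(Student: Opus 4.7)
The plan is to split into cases by whether $\sum_i\al_is_i$ vanishes for every oriented Ptolemy line $l\sub X_\om$. In the degenerate case ($\sum_i\al_is_i=0$ for all $l$), Lemma~\ref{lem:busemann_affine_zigzag} shows that every Busemann function normalized at $o$ vanishes identically along $\ga$; hence $\ga$ lies in every horosphere through $o$, i.e.\ $\ga\sub F_o$. If $\ga$ contained some $x\neq o$, property (E) would supply a Ptolemy line $l_0\sub X_\om$ through $o,x$, whose Busemann function $b_0$ normalized at $o$ satisfies $b_0(x)=\pm d(o,x)\neq 0$, contradicting $x\in H_{l_0}$. Hence $\ga=\{o\}$.

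In the non-degenerate case, some Busemann slope $\be_0=\sum_i\al_is_i/\sum_is_i$ is non-zero, and $b_0\circ\ga(t)=\be_0 t$ is not constant, so $\ga$ has more than one point. I will show that $\ga$ is a metric geodesic through $o$ of constant speed, and then identify it with a Ptolemy line. Combining Lemma~\ref{lem:zigzag_preserved} with Lemma~\ref{lem:zigzag_change_basepoint}, at every $o'=\ga(t_0)\in\ga$ there is a homothety $\phi_{o'}\in\Ga_{\om,o'}$ of factor $1/2$ preserving $\ga$; tracking the vertex correspondence in the approximating piecewise geodesics and using that the parameterizations of $\ga$ from $o$ and from $o'$ differ by a shift, one sees that $\phi_{o'}$ acts on the $\ga$-parameterization by $t\mapsto(t+t_0)/2$. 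Defining $F(r):=d(\ga(t_0),\ga(t_0+r))$, the base-point-independence makes $F$ independent of $t_0$, and from $d(\ga(t_0),\phi_{o'}(\ga(t_0+r)))=\tfrac12 d(\ga(t_0),\ga(t_0+r))$ one extracts the functional equation $F(r/2)=F(r)/2$. Coupled with $d(\ga(t_0+r/2),\ga(t_0+r))=F(r/2)=F(r)/2$, this shows $\ga(t_0+r/2)$ is a metric midpoint between $\ga(t_0)$ and $\ga(t_0+r)$; iteration along dyadic rationals together with the $1$-Lipschitz continuity of $\ga=\lim\ga_p$ (inherited from the arclength-parameterized $\ga_p$) produce that $\ga$ restricted to every closed interval is a geodesic of length $F(\cdot)$. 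Additivity $F(a+b)=F(a)+F(b)$ then follows from the geodesic property on subdivisions, and continuity forces $F(r)=cr$ for a constant $c\in[|\be_0|,1]$.

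To identify $\ga$ with a Ptolemy line, let $x_0=\ga(1)$ and let $l\sub X_\om$ be the Ptolemy line through $o$ and $x_0$ supplied by property (E). The homothety $\phi\in\Ga_{\om,o}$ of factor $1/2$ preserves both $\ga$ (Lemma~\ref{lem:zigzag_preserved}) and $l$ (since $l\cup\om\in C_{\om,o}$), so the dyadic orbit $\phi^n(x_0)=\ga(2^{-n})$ lies in $\ga\cap l$ for all $n\ge 0$. Both $\ga$ restricted to $[0,1]$ and the segment of $l$ from $o$ to $x_0$ are geodesics of length $c$ in the proper ptolemaic geodesic space $X_\om$ (properness from compactness of $X$ and geodesicity from property (E)); by the Foertsch--Lytchak--Schroeder uniqueness of geodesics, cited in the introduction, they coincide. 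Iterating through the dyadic orbit gives $\ga\sub l$, and since $F(r)\to\infty$ as $|r|\to\infty$, $\ga$ is unbounded in both directions, forcing $\ga=l$. The main technical hurdle is the geodesic property in the non-degenerate case: one must correctly match up the parameterizations of $\ga$ from different base points (via the vertex correspondence of the approximating piecewise geodesics together with Lemma~\ref{lem:zigzag_change_basepoint}) in order to derive the functional equation $F(r/2)=F(r)/2$ as a relation on the distance function rather than merely on projected quantities. Once that is in place, the midpoint argument and the identification with a Ptolemy line are essentially formal consequences.
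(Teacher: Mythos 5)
Your midpoint argument in the non-degenerate case is essentially the paper's (homotheties of factor $1/2$ centered at points of $\ga$, which preserve $\ga$ by Lemmas~\ref{lem:zigzag_preserved} and \ref{lem:zigzag_change_basepoint}, force dyadic midpoints and hence the geodesic property), but both of the places where you invoke ``the Ptolemy line through two given points of $X_\om$ supplied by property (E)'' are genuine errors. Property (E), even in its enhanced form, gives a Ptolemy \emph{circle} through any two points of $X$; a Ptolemy \emph{line} in $X_\om$ through $o$ and $x$ is a Ptolemy circle through the \emph{three} points $o$, $x$, $\om$, and no such object is guaranteed. Indeed, for $x\in F_o\sm\{o\}$ no such line can exist: exactly as you observe, its Busemann function would have to be both $0$ and $\pm d(o,x)$ at $x$. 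So in the degenerate direction your argument only shows $\ga\sub F_o$ and then derives a contradiction from a line that does not exist; since $F_o$ is a nondegenerate $\K$-line whenever $p>0$ (e.g.\ the center fiber in the Heisenberg case), the conclusion $\ga=\{o\}$ does not follow this way. The paper avoids this entirely by proving the implication in the other order: first every nondegenerate zigzag is a Ptolemy line $l=\ga(\R)$, and then the Busemann function of $l$ itself is nonconstant along $\ga$, so $\sum_i\al_is_i\neq 0$ for that $l$ by Lemma~\ref{lem:busemann_affine_zigzag}. You need this structure too.

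The same misreading of (E) breaks your final identification step, and the appeal to the Foertsch--Lytchak--Schroeder uniqueness of geodesics compounds it: that result requires $X_\om$ to be a proper \emph{geodesic} metric space, which it is not in general (bounded Ptolemy circles through $o$ and $x_0$ are not geodesics of $X_\om$, and e.g.\ the Kor\'anyi metric on the Heisenberg group is not a length metric). Fortunately the whole step is unnecessary: once $\ga$ is a complete constant-speed geodesic of $X_\om$ (complete because it is invariant under a nontrivial homothety fixing $o$), it is \emph{automatically} a Ptolemy line --- the Ptolemy equality is an identity for four points with distances $|t_i-t_j|$ on a line, and quadruples containing $\om$ reduce to additivity of distances, which is the geodesic property. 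This is how the paper concludes, and it is the fix you should adopt.
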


\begin{proof} We first show that for each
$o$, $o'\in\ga$
there is a midpoint
$x\in\ga$.
By Lemma~\ref{lem:zigzag_change_basepoint} and
Lemma~\ref{lem:zigzag_preserved}, homotheties
$\phi\in\Ga_{\om,o}$, $\phi'\in\Ga_{\om,o'}$
with coefficient
$\la=1/2$
both preserve
$\ga$, $\phi(\ga)=\ga=\phi'(\ga)$.
Then for  
$x=\phi(o')\in\ga$
we have
$|ox|=|oo'|/2$
and similarly
for  
$x'=\phi'(o)\in\ga$
we have
$|x'o'|=|oo'|/2$.
Furthermore, the length of the segment of
$\ga$
between
$o$, $x$
is half of the length of the segment between
$o$, $o'$, $L([ox]_\ga)=L([oo']_\ga)/2$.
Thus
$L([xo']_\ga)=L([oo']_\ga)/2$
by additivity of the length. Then
$L([x'o']_\ga)=L([oo']_\ga)/2=L([xo']_\ga)$
and hence
$x'=x$
by monotonicity of the length, and
$x$
is the required midpoint.

It follows that the segment of
$\ga$
between its any two points is geodesic. Since 
$\ga$
is invariant under the nontrivial homothety
$\phi\in\Ga_{\om,o}$,
we see that 
$\ga$
is a Ptolemy line unless it degenerates to a point.

If 
$\ga$
is degenerate, then any Busemann function
$b:X_\om\to\R$
is constant along 
$\ga$.
By Lemma~\ref{lem:busemann_affine_zigzag}, we have
$\sum_i\al_is_i=0$
for every oriented Ptolemy line 
$l\sub X_\om$, 
where
$\al_i=\slope(l_i,l)$, $i=1,\dots,k$.
Conversely, if
$\ga$
is nondegenerate, then 
$l=\ga(\R)$
is a Ptolemy line in
$X_\om$
by the first part of the proof,
and the associated Busemann function 
$b:X_\om\to\R$
is nonconstant along
$l$.
By Lemma~\ref{lem:busemann_affine_zigzag}, we have
$b\circ\ga(t)=\be t$
for the canonical parameterization of
$\ga$
with
$\be=\sum_i\al_is_i/\sum_is_i$,
$\al_i=\slope(l_i,l)$, $i=1,\dots,k$.
Thus
$\sum_i\al_is_i\neq 0$.
\end{proof}

\begin{rem}\label{rem:distinct_fiber_zigzag} Assume
$\ga_1$
is a piecewise geodesic curve (with finite number
of edges) between different fibers in
$X_\om$, $\ga_1(0)\in F$, $\ga_1(s_1+\dots+s_k)\in F'$,
$F\neq F'$,
where
$s_1,\dots,s_k$
are the lengths of its edges. Then the respective zigzag curve
$\ga=\lim\ga_p$
is not degenerate. This follows from
$\ga(0)\in F$, $\ga(s_1+\dots+s_k)\in F'$
by construction of the approximating sequence
$\ga_1,\dots,\ga_p,\dots\to\ga$. 
\end{rem}

Now, we compute a unit speed parameterization a zigzag
curve
$\ga=\ga(o,\cL,S)$
assuming for simplicity that the collection
$\cL$
consists of mutually orthogonal Ptolemy lines.

\begin{lem}\label{lem:uspeed_parameter_zigzag} Let 
$\cL=\{l_1,\dots,l_k\}$
be a collection of mutually orthogonal oriented Ptolemy lines in
$X_\om$, $l_i\bot l_j$
for 
$i\neq j$, $S=\{s_1,\dots,s_k\}$
a collection of nonnegative numbers with
$\sum_is_i=1$.
Then for every
$o\in X_\om$
the zigzag curve 
$\ga=\ga(o,\cL,S)$
is nondegenerate, for the Ptolemy line
$l=\ga(\R)$
we have
$$\slope(l,l_i)=-s_i/\la$$
for every
$i=1,\dots,k$,
where
$\la=\sqrt{\sum_is_i^2}$,
and 
$|o\ga(t)|=\la|t|,\ t\in\R$,
for the canonical parameterization 
$t\mapsto\ga(t)$
of
$\ga$.
\end{lem}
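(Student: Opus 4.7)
The plan is to get everything by three applications of Lemma~\ref{lem:busemann_affine_zigzag}, which gives, under the hypothesis $\sum_i s_i=1$, the clean identity
\[
b_{l_0}\circ\ga(t)=\Big(\sum_i \slope(l_i,l_0)\,s_i\Big)t
\]
for any oriented Ptolemy line $l_0\sub X_\om$ and any Busemann function $b_{l_0}$ of $l_0$ normalized so that $b_{l_0}(o)=0$.

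First I would prove nondegeneracy. Taking $l_0=l_j$ and using mutual orthogonality ($\slope(l_i,l_j)=0$ for $i\neq j$, $\slope(l_j,l_j)=-1$), the identity collapses to $b_{l_j}\circ\ga(t)=-s_j t$. Since $\sum_i s_i=1$, at least one $s_j$ is positive, so this Busemann function is nonconstant along $\ga$. By Proposition~\ref{pro:zigzag_geodesic}, $\ga$ is a Ptolemy line $l$.

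Next I would parameterize: let $c:\R\to l$ be the unit speed parameterization with $c(0)=o$, and write $\ga(t)=c(\mu(t))$, so $|o\,\ga(t)|=|\mu(t)|$. Applying the identity with $l_0=l$ itself (orient $l$ so that $b_l\circ c(s)=-s$, i.e.\ compatibly with a fixed choice of Busemann function), the left-hand side becomes $-\mu(t)$, hence $\mu(t)=\kappa t$ is linear, where $\kappa=-\sum_i \slope(l_i,l)\,s_i$. Choosing the orientation of $l$ so that $\kappa\ge 0$, we have obtained constant-speed parameterization $\ga(t)=c(\kappa t)$.

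Now I would apply the identity once more with $l_0=l_j$. The right-hand side is $-s_j t$ (first step), while the left-hand side equals $b_{l_j}\circ c(\mu(t))=\slope(l,l_j)\,\mu(t)=\slope(l,l_j)\,\kappa\, t$ by the definition of slope and symmetry (Lemma~\ref{lem:slope_symmetry}). Matching coefficients gives $\slope(l,l_j)=-s_j/\kappa$. Substituting back into the expression $\kappa=-\sum_j\slope(l,l_j)s_j$ yields $\kappa^2=\sum_j s_j^2$, so $\kappa=\la=\sqrt{\sum_j s_j^2}$, which finishes both the slope formula and the equality $|o\,\ga(t)|=\la|t|$.

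I do not anticipate a real obstacle here; the content is already in Lemma~\ref{lem:busemann_affine_zigzag}, Proposition~\ref{pro:zigzag_geodesic} and the symmetry of slope. The only thing to watch is bookkeeping of orientations: namely, fixing the orientation of $l$ so that $\kappa>0$ (unambiguous since $\ga$ is nondegenerate), and using compatible normalizations $b_{l_0}(o)=0$ so that Lemma~\ref{lem:busemann_affine_zigzag} applies with no additive constant.
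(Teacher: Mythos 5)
Your proof is correct, and for the nondegeneracy and the slope formula it follows essentially the paper's route: both arguments apply Lemma~\ref{lem:busemann_affine_zigzag} first with $l_0=l_j$ to get $b_{l_j}\circ\ga(t)=-s_jt$ (hence nondegeneracy via Proposition~\ref{pro:zigzag_geodesic}), and then with $l_0=l$ together with the symmetry of the slope (Lemma~\ref{lem:slope_symmetry}) to close the system and obtain $\la^2=\sum_is_i^2$. Where you genuinely diverge is the final claim $|o\ga(t)|=\la|t|$. The paper establishes it by exploiting the invariance of $\ga$ under shifts and pure homotheties (Lemmas~\ref{lem:zigzag_preserved} and~\ref{lem:zigzag_change_basepoint}): it computes $|o\ga(2)|=2\la$ and $|o\ga(1/2)|=\la/2$, propagates to dyadic $t$, and finishes by continuity. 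You instead write $\ga(t)=c(\mu(t))$ for the unit speed parameterization $c$ of $l$ and read off $\mu(t)=\kappa t$ directly from $b_l\circ c(s)=-s$ combined with the linearity of $b_l\circ\ga$; this is shorter, avoids the dyadic/continuity step, and uses only that $b_l$ restricted to $l$ is minus the signed arclength, which is immediate from the definition of a Busemann function. The one point worth stating explicitly is that $\mu$ is well defined because every $\ga(t)$ lies on $l=\ga(\R)$ and $c$ is injective; granting that, your orientation bookkeeping (choose the orientation of $l$ so that $\kappa\ge 0$, and note $\kappa>0$ by nondegeneracy, since $\kappa=0$ would force $\mu\equiv 0$) is exactly what is needed.
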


\begin{proof} By Proposition~\ref{pro:zigzag_geodesic},
$l=\ga(\R)$
is a Ptolemy line or it degenerates to a point. We put 
$\la=|o\ga(1)|\ge 0$.
Let
$b_i$, $i=1,\dots,k$,
be the Busemann function of
$l_i$
normalized by
$b_i(o)=0$.
By Lemma~\ref{lem:busemann_affine_zigzag},
$$b_i\circ\ga(1)=\sum_j\slope(l_j,l_i)s_j=-s_i$$
because the lines of
$\cL$
are mutually orthogonal. Since
$\sum_is_i=1$
there is
$i$
with
$s_i\neq 0$.
Hence
$\ga$
is nondegenerate and
$\la>0$.

On the other hand, by definition of
$\al_i=\slope(l,l_i)$
we have
$b_i\circ\ga(1)=\al_i|o\ga(1)|=\al_i\la$.
Thus
$\al_i\la=-s_i$
for every 
$i=1,\dots,k$.

Let
$b$
be the Busemann function of
$l$
normalized by
$b(o)=0$
and
$b\circ\ga(t)<0$
for
$t>0$.
Then using again
Lemma~\ref{lem:busemann_affine_zigzag} and the symmetry of the slope,
$\al_i=\slope(l_i,l)$,
we obtain
$$\la^2=\la|o\ga(1)|=-\la b\circ\ga(1)=-\la\sum_i\al_is_i=\sum_is_i^2.$$
Thus
$\slope(l,l_i)=-s_i/\la$
with 
$\la=\sqrt{\sum_is_i^2}$. 

The shift
$\eta=\eta_{o\ga(1)}:X_\om\to X_\om$
leaves
$l$
invariant and
$\eta\circ\ga(1)=\ga(2)$
by construction of
$\ga=\ga(o,\cL,S)$.
Thus
$|o\ga(2)|=2|o\ga(1)|=2\la$.
By Lemma~\ref{lem:zigzag_preserved},
$\phi\circ\ga(1)=\ga(1/2)$,
where
$\phi\in\Ga_{\om,o}$
is the homothety with coefficient
$1/2$.
Thus
$|o\ga(1/2)|=\la/2$.
From this one easily sees that
$$|o\ga(t)|=\la|t|$$
first for dyadic and then by continuity for all 
$t\in\R$.
\end{proof}

\subsection{Orthogonalization procedure}
\label{subsect:existence}

As usual, we fix
$\om\in X$
and a metric 
$d$
of the M\"obius structure with infinitely
remote point 
$\om$.

\begin{pro}\label{pro:orthogonal_base} There is a finite
collection
$\cL_\bot$
of mutually orthogonal Ptolemy lines such that for every
$x\in X_\om$
the fiber
$F\sub X_\om$
through
$x$
of the fibration
$\pi_\om:X_\om\to B_\om$
is represented as
$F=\cap_{l\in\cL_\bot}H_l$,
where
$H_l$
is the horosphere of
$l$
through
$x$. 
\end{pro}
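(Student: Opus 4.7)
My plan is to construct $\cL_\bot$ at a chosen basepoint $o\in X_\om$ by iterative orthogonalization built from the zigzag machinery of Section~\ref{subsect:zigzag}, to enforce termination by a compactness estimate on the unit sphere $S^d_1(o)$, and to transfer the result from $o$ to arbitrary $x$ via shifts.

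Start with any Ptolemy line $l_1\ni o$, which exists by the enhanced property~(E), and inductively assume a mutually orthogonal family $l_1,\dots,l_j$ through $o$ is given. Write $V^{(j)}:=\bigcap_{i=1}^j H_{l_i}$ for the intersection of the horospheres through $o$, note $F_o\sub V^{(j)}$, and stop if equality holds. Otherwise choose $y\in V^{(j)}\sm F_o$ and a Ptolemy line $m\ni o$ with $b_m(y)\ne 0$, put $\al_i=\slope(l_i,m)$ and $\ep_i=\sign(\al_i)$ (taking $\ep_i=1$ when $\al_i=0$), and form the zigzag
$$
\ga=\ga\bigl(o,(\ep_1l_1,\dots,\ep_jl_j,m),(|\al_1|,\dots,|\al_j|,1)\bigr).
$$
Lemma~\ref{lem:busemann_affine_zigzag} gives the coefficient of each $b_{l_i}$ along $\ga$ as $-\ep_i|\al_i|+\al_i=0$ (using orthogonality and $\ep_i|\al_i|=\al_i$) and the coefficient of $b_m$ as $\sum_k|\al_k|^2-1$. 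In the generic range $\sum|\al_k|^2<1$ Proposition~\ref{pro:zigzag_geodesic} makes $\ga$ nondegenerate, so $l_{j+1}:=\ga(\R)$ is a Ptolemy line through $o$ orthogonal to each $l_i$, which we append to the family. The borderline $\sum|\al_k|^2=1$ signals that $m$ is already captured by $l_1,\dots,l_j$: combining Lemma~\ref{lem:uspeed_parameter_zigzag} (applied to the zigzag of just the $l_i$'s) with the tangent uniqueness of Corollary~\ref{cor:tangent_unique} identifies $m$ up to orientation with that zigzag, which then forces $b_m|V^{(j)}\equiv 0$ and contradicts the choice of $y$. Hence the construction always extends the family whenever $V^{(j)}\ne F_o$.

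Termination follows from a compactness estimate. For mutually orthogonal Ptolemy lines $l_a,l_b$ with unit-speed parameterizations $c_a,c_b$ satisfying $c_a(0)=c_b(0)=o$, the points $p_a=c_a(1)$ lie on $S^d_1(o)\sub X_\om$ and satisfy $|p_ap_b|\ge 1$. Indeed, $\slope(l_a,l_b)=0$ gives $b_{l_a}(p_b)=0$, so $d(p_b,c_a(t))=t+o(1)$ as $t\to\infty$; the triangle inequality $d(p_b,c_a(t))\le|p_ap_b|+(t-1)$ then yields the bound. Since $X$ is compact and the topology at $\om$ has a neighborhood basis of complements of closed $d$-balls of $X_\om$, every closed $d$-ball of $X_\om$ is compact, so $S^d_1(o)$ is compact and cannot carry infinitely many points at pairwise distance $\ge 1$. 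Therefore the iteration halts after a finite number $k$ of steps with $V^{(k)}=F_o$.

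To lift the identity at $o$ to an arbitrary $x\in X_\om$, I apply the shift $\eta=\eta_{ox}:X_\om\to X_\om$ of Lemma~\ref{lem:shift_identity}. By Lemma~\ref{lem:pure_homothety} this isometry preserves every foliation of $X_\om$ by Busemann-parallel Ptolemy lines, hence maps the horosphere of each $l_i$ through $o$ onto the horosphere of $l_i$ through $x$ and sends $F_o$ onto $F_x$; pushing the identity forward gives the desired representation of $F_x$. I expect the delicate step to be the borderline case $\sum|\al_k|^2=1$ in the orthogonalization, which cannot be resolved by slope bookkeeping alone and appears to require the tangent-line rigidity encoded in Corollary~\ref{cor:tangent_unique}.
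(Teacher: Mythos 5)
Your overall architecture (greedy orthogonalization by zigzags, termination via the packing bound on $S_1^d(o)$, transfer to arbitrary $x$ by a shift) parallels the paper's, and several pieces are sound: the generic-case extraction when $\sum_i\al_i^2\neq 1$ is exactly Lemma~\ref{lem:orthogonalization_procedure}, and your $|p_ap_b|\ge 1$ estimate is a legitimate variant of Lemma~\ref{lem:orthogonal_bound} (the paper uses the duality of Lemma~\ref{lem:flat_duality} instead of the Busemann asymptotics you use). The transfer step via $\eta_{ox}$ is also fine.

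The gap is the borderline case $\sum_i\al_i^2=1$, and it is not a technicality --- it is where the paper concentrates its effort. First, $\sum_i\al_i^2=1$ only makes the $b_m$- and $b_{l_i}$-coefficients of the $(j{+}1)$-zigzag vanish; by Proposition~\ref{pro:zigzag_geodesic} the zigzag degenerates only if \emph{every} Busemann function is constant along it, so you must split this case into ``nondegenerate'' (which is harmless: $\ga(\R)$ is still a new line orthogonal to all $l_i$, and you may append it) and ``degenerate''. Second, in the degenerate subcase your identification of $m$ with the zigzag of the $l_i$'s does not follow from Lemma~\ref{lem:uspeed_parameter_zigzag} plus Corollary~\ref{cor:tangent_unique}: two distinct Ptolemy lines through $o$ can have identical slopes with respect to a non-maximal orthogonal family (already in $\wh\R^3$ with $j=1$), and Corollary~\ref{cor:tangent_unique} concerns tangent circles, not lines with prescribed slopes. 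The paper identifies $m$ with the reduced zigzag through the fiber structure: Remark~\ref{rem:distinct_fiber_zigzag} places the endpoints of the approximating piecewise geodesic in a common $\K$-line, and then Lemma~\ref{lem:rfoliation_semik} together with Lemma~\ref{lem:unique_line} forces Busemann parallelism (this is the content of Lemma~\ref{lem:max_orthogonal}). Third, and most seriously, even granting $m=\ga'(\R)$, the step ``hence $b_m|V^{(j)}\equiv 0$'' is precisely Lemma~\ref{lem:linear_combination_busemann}: knowing that $m$ is a zigzag of the $l_i$'s gives you $\la b_m=\sum_i s_ib_{l_i}$ only along the lines $l_i$ and along zigzag curves from $o$, and propagating this identity to all of $X_\om$ (in particular to your point $y\in V^{(j)}$) requires knowing that \emph{every} Ptolemy line is a zigzag of the family (maximality, Lemma~\ref{lem:max_orthogonal}), then passing to Ptolemy circles via tangent lines, and finally invoking property (E). None of this is available for your intermediate non-maximal family, so the contradiction with $b_m(y)\neq 0$ is not established, and the iteration is not shown to terminate with $V^{(k)}=F_o$ rather than merely with a maximal orthogonal family.
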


We first note that the cardinality of any collection
of mutually orthogonal Ptolemy lines in
$X_\om$
is uniformly bounded above.

\begin{lem}\label{lem:orthogonal_bound} There is
$N\in\N$
such that the cardinality of any collection
$\cL$
of mutually orthogonal Ptolemy lines in
$X_\om$
is bounded by
$N$, $|\cL|\le N$.
\end{lem}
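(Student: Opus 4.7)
My plan is to show that any mutually orthogonal family $\cL=\{l_1,\dots,l_k\}$ of Ptolemy lines in $X_\om$ gives rise to $k$ points in a fixed compact subset of $X_\om$ that are pairwise $1$-separated; then compactness immediately bounds $k$. The key is the $1$-Lipschitz property of Busemann functions, which converts the vanishing-slope condition into a concrete metric separation.

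First I would reduce to the case where all lines pass through a fixed base point. Fix $o\in X_\om$ once and for all. For each $l_i\in\cL$, Corollary \ref{cor:busparallel_foliation} produces a unique Ptolemy line $l_i'\sub X_\om$ through $o$ that is Busemann parallel to $l_i$; orient $l_i'$ compatibly with $l_i$. By Lemma \ref{lem:paraline_busemann} applied in both arguments together with the symmetry of the slope (Lemma \ref{lem:slope_symmetry}), we have $\slope(l_i',l_j')=\slope(l_i,l_j)=0$ for $i\neq j$. So $\{l_1',\dots,l_k'\}$ is still mutually orthogonal, and I may assume each $l_i$ passes through $o$.

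Now let $c_i:\R\to l_i$ be the unit speed parameterization with $c_i(0)=o$ compatible with the chosen orientation, and let $b_i:X_\om\to\R$ be the Busemann function of $l_i$ normalized by $b_i(o)=0$; then $b_i\circ c_i(t)=-t$. Corollary \ref{cor:busemann_affine} tells us that $b_i$ is affine along every Ptolemy line in $X_\om$, and for $j\neq i$ the definition of slope gives $b_i\circ c_j(t)=\slope(l_j,l_i)\,t+b_i(o)=0$ for all $t\in\R$. In particular $b_i(c_i(1))=-1$ while $b_i(c_j(1))=0$ for $j\neq i$. Since every Busemann function is $1$-Lipschitz, this forces
$$|c_i(1)c_j(1)|\ge |b_i(c_i(1))-b_i(c_j(1))|=1\qquad\text{for all } i\neq j.$$

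Finally I would invoke compactness. The closed ball $\ov B_1(o)\sub X_\om$ is closed in $X$, because $X\sm\ov B_1(o)=\{x\in X_\om:d(x,o)>1\}\cup\{\om\}$ is a basic open neighborhood of $\om$ together with an open subset of $X_\om$; since $X$ is compact, $\ov B_1(o)$ is compact, hence totally bounded. Let $N=N(o)$ denote the cardinality of a maximal $1/2$-net in $\ov B_1(o)$. Any $1$-separated subset of $\ov B_1(o)$ has at most one element in each $1/2$-ball of the net, so its cardinality is at most $N$. The points $c_1(1),\dots,c_k(1)$ form such a subset, giving $k\le N$. I expect no serious obstacle beyond this compactness step; the substance of the argument is the observation, via Busemann functions, that mutual orthogonality geometrically realizes an orthonormal-basis-like configuration in $X_\om$.
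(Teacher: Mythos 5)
Your proof is correct, and its overall skeleton is the same as the paper's: reduce to lines through a common base point, mark the point at distance $1$ from the base point on each line, show these points are pairwise $1$-separated, and conclude by compactness. The difference lies in how the separation is established. The paper passes to the inverted space $X_x$ (with the base point as infinitely remote point), observes via Lemma~\ref{lem:opposite_slopes} and Lemma~\ref{lem:slope_symmetry} that the lines remain orthogonal there, and then invokes the duality Lemma~\ref{lem:flat_duality} to conclude that $x$ is the closest point of $l$ to every point of $l'$, whence $|aa'|\ge|a'x|=1$. You instead stay entirely in $X_\om$ and use only that $\slope(l_j,l_i)=0$ forces $b_i\equiv 0$ on $l_j$ while $b_i(c_i(1))=-1$, together with the $1$-Lipschitz property of Busemann functions. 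Your mechanism is more elementary (it avoids the metric inversion and the horosphere/closest-point duality altogether), and your reduction to lines through a common point is also more explicit than the paper's bare ``W.L.G.'', since you justify it via Corollary~\ref{cor:busparallel_foliation}, Lemma~\ref{lem:paraline_busemann} and slope symmetry. The only cosmetic blemish is the $1/2$-net at the end: two points at distance exactly $1$ could a priori share a net point, so you should take an $\ep$-net with $\ep<1/2$; this does not affect the argument.
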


\begin{proof} We fix 
$x\in X_\om$
and assume W.L.G. that all the lines of
$\cL$
pass through
$x$.
On every line
$l\in\cL$,
we fix a point at the distance 1 from
$x$. 
Let
$A\sub X_\om$
be the set of obtained points. By compactness of
$X$
and homogeneity of
$X_\om$
it suffices to show that the distance
$|aa'|\ge 1$
for each distinct
$a$, $a'\in A$.
We have
$a\in l$, $a'\in l'$
for some distinct lines
$l$, $l'\in\cL$.
Since
$\slope(l',l)=0$,
the lines
$l$, $l'$
are also orthogonal at the infinite remote point
$\om$
according Lemma~\ref{lem:opposite_slopes} and 
Lemma~\ref{lem:slope_symmetry}. Thus in the space
$X_x$
with infinitely remote point 
$x$,
the Ptolemy line 
$(l'\cup\om)\sm x\sub X_x$
lies in the horosphere 
$H$
of the line
$(l\cup\om)\sm x\sub X_x$
through
$\om$.
By duality, see Lemma~\ref{lem:flat_duality}, the point 
$x\in l$
is closest on the line 
$l$
to any fixed point of 
$l'\sub X_\om$, 
in particular
$|a'a|\ge|a'x|=1$. 
\end{proof}

Next, we describe an orthogonalization procedure.

\begin{lem}\label{lem:orthogonalization_procedure} Let
$l_1,\dots,l_k$
be a collection of mutually orthogonal Ptolemy lines in
$X_\om$, $l_i\bot l_j$
for
$i\neq j$.
Given a Ptolemy line 
$l\sub X_\om$,
through any
$o\in X_\om$
there is a zigzag curve
$\ga=\ga(o,\cL,S)$,
where
$\cL=\{l_1,\dots,l_k,l\}$
is an ordered collection of oriented Ptolemy lines,
$S=\{s_1,\dots,s_{k+1}\}$
a collection of nonnegative numbers with
$s_1+\dots+s_{k+1}>0$,
which is orthogonal to
$l_1,\dots,l_k$, $\ga(\R)=l_{k+1}\bot l_i$
for
$i=1,\dots,k$.
Furthermore, if
$\sum_1^k\al_i^2\neq 1$,
where
$\al_i=\slope(l,l_i)$,
then
$\ga$
is nondegenerate, and
$l_{k+1}$
is a Ptolemy line. 
\end{lem}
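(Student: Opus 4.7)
The plan is to choose the nonnegative coefficients $s_1,\dots,s_{k+1}$ so that every Busemann function associated with $l_j$ (for $j\le k$) is constant along the resulting zigzag; from this, Proposition~\ref{pro:zigzag_geodesic} and Lemma~\ref{lem:busemann_affine_zigzag} will deliver both nondegeneracy and the orthogonality $l_{k+1}\bot l_j$.

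First I would reorient each $l_j$, $j\le k$, so that $\al_j:=\slope(l,l_j)\ge 0$; this is possible because reversing the orientation of $l_j$ flips the sign of $\slope(l,l_j)$. Then set $s_j=\al_j$ for $j=1,\dots,k$ and $s_{k+1}=1$. Since $s_{k+1}>0$, the condition $\sum_i s_i>0$ holds automatically, so the zigzag curve $\ga=\ga(o,\cL,S)$ with $\cL=\{l_1,\dots,l_k,l\}$ is well-defined for every $o\in X_\om$.

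Next I compute the slope of $\ga$ with respect to each $l_j$, $j\le k$. Let $b_j$ be the Busemann function of $l_j$ normalized by $b_j(o)=0$. By the mutual orthogonality of $l_1,\dots,l_k$, we have $\slope(l_i,l_j)=0$ for $i\ne j$ with $i\le k$, while $\slope(l_j,l_j)=-1$; by the symmetry of slope (Lemma~\ref{lem:slope_symmetry}), $\slope(l,l_j)=\al_j$. Applying Lemma~\ref{lem:busemann_affine_zigzag} with the Ptolemy line $l_j$ in place of $l$ then yields
$$b_j\circ\ga(t)=\frac{-s_j+\al_j s_{k+1}}{\sum_i s_i}\,t=\frac{-\al_j+\al_j}{\sum_i s_i}\,t=0$$
for every $t\in\R$, so $b_j$ is constant along $\ga$.

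Finally, under the hypothesis $\sum_{j=1}^k\al_j^2\ne 1$, Proposition~\ref{pro:zigzag_geodesic} gives nondegeneracy: testing the degeneracy criterion with the Ptolemy line $l$ itself and using $\slope(l_j,l)=\al_j$ together with $\slope(l,l)=-1$, the relevant sum equals $\sum_{j=1}^k\al_j^2-1\ne 0$. Hence $l_{k+1}:=\ga(\R)$ is a Ptolemy line, and the vanishing $b_j|l_{k+1}\equiv 0$ established above forces $\slope(l_{k+1},l_j)=0$, i.e.\ $l_{k+1}\bot l_j$ for every $j\le k$. The only non-mechanical point is the orientation choice that makes the $s_j$ nonnegative; everything else is linear bookkeeping with the slope formula of Lemma~\ref{lem:busemann_affine_zigzag} and the degeneracy criterion of Proposition~\ref{pro:zigzag_geodesic}.
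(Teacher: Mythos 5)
Your proof is correct and follows essentially the same route as the paper: reorient each $l_j$ so that $\al_j\ge 0$, take $s_j$ proportional to $\al_j$ and $s_{k+1}$ to $1$ (the paper merely normalizes by $1/(1+\al)$ so that $\sum_i s_i=1$, which is immaterial since the slope formula of Lemma~\ref{lem:busemann_affine_zigzag} and the degeneracy criterion of Proposition~\ref{pro:zigzag_geodesic} depend only on the ratios $s_i/\sum_j s_j$), and then read off $\be_j=0$ for $j\le k$ and $\be=(\sum_j\al_j^2-1)/\sum_i s_i\neq 0$ exactly as the paper does. No gaps.
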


\begin{proof} We fix an orientation of
$l$
and for every
$i=1,\dots,k$
we choose an orientation of
$l_i$
so that
$\al_i=\slope(l,l_i)\ge 0$,
and put
$\al:=\sum_i\al_i\ge 0$.
For any zigzag curve
$\ga=\ga(o,\cL,S)$
in
$X_\om$,
where
$\cL=\{l_1,\dots,l_k,l\}$, $S=\{s_1,\dots,s_{k+1}\}$,
for 
$i=1,\dots,k$
and for the Busemann function
$b_i$
of
$l_i$
with
$b_i(o)=0$,
by Lemma~\ref{lem:busemann_affine_zigzag} we have 
$b_i\circ\ga(t)=\be_it$
for all 
$t\in\R$,
where
$\be_i=(-s_i+\al_is_{k+1})/(s_1+\dots+s_{k+1})$.
Then putting
$s_i=\frac{\al_i}{1+\al}$, $i=1,\dots,k$,
$s_{k+1}=\frac{1}{1+\al}$,
we have
$s_1+\dots s_{k+1}=1$
and
$\be_i=0$
for every
$i=1,\dots,k$.
Thus 
$\ga$
is orthogonal to
$l_1,\dots,l_k$,
and it gives us a required Ptolemy line
$l_{k+1}=\ga(\R)$
unless
$\ga$
degenerates.
 
Let
$b$
be the Busemann function of
$l$
with 
$b(o)=0$. 
By Lemma~\ref{lem:busemann_affine_zigzag} we have
$b\circ\ga(t)=\be t$
for all 
$t\in\R$
with
$\be=\sum_1^k\slope(l_i,l)s_i-s_{k+1}$.
Using the symmetry of the slope, we see that
$\slope(l_i,l)=\slope(l,l_i)=\al_i$
and thus
$\be=(\sum_1^k\al_i^2-1)/(1+\al)$.
If
$\sum_1^k\al_i^2\neq 1$,
then this shows that
$\ga$
is nondegenerate.
\end{proof}

We say that a collection
$\{l_1,\dots,l_k\}$
mutually orthogonal Ptolemy lines in
$X_\om$
is {\em maximal} if there is no Ptolemy line in
$X_\om$
which is orthogonal to every
$l_1,\dots,l_k$.
By Lemma~\ref{lem:orthogonal_bound}, such a collection exists.

\begin{lem}\label{lem:max_orthogonal} Let
$\{l_1,\dots,l_k\}$
be a maximal collection of mutually orthogonal Ptolemy lines in
$X_\om$.
Then every Ptolemy line 
$l\sub X_\om$
can be represented as a zigzag curve
$\ga(o,\cL,S)$
with
$o\in l$
for a collection
$\cL=\{l_1,\dots,l_k\}$
of oriented Ptolemy lines and a collection 
$S=\{s_1,\dots,s_k\}$
of nonnegative numbers with
$s_1+\dots+s_k>0$.
Furthermore, we have
$\sum_i\al_i^2=1$,
where
$\al_i=\slope(l,l_i)$, $i=1,\dots,k$.
\end{lem}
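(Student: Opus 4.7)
The plan is to split the argument into three steps: (i) use maximality to force $\sum_i\al_i^2=1$; (ii) with this identity in hand, write down an explicit zigzag built only from $\cL=\{l_1,\dots,l_k\}$; and (iii) verify that this zigzag traces out $l$.

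For step (i), I orient each $l_i$ so that $\al_i:=\slope(l,l_i)\ge 0$, which is possible since reversing the orientation of $l_i$ flips the sign of the slope. Suppose for contradiction $\sum_i\al_i^2\neq 1$. Then Lemma~\ref{lem:orthogonalization_procedure}, applied to the orthogonal collection $\{l_1,\dots,l_k\}$ and the line $l$, produces a nondegenerate zigzag curve through any chosen base point $o$ which is orthogonal to every $l_i$, $1\le i\le k$. By Proposition~\ref{pro:zigzag_geodesic} its image is a Ptolemy line, contradicting the maximality of $\{l_1,\dots,l_k\}$; hence $\sum_i\al_i^2=1$.

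For step (ii), fix $o\in l$ and set $s_i=\al_i/\sum_j\al_j$, so that $\sum_is_i=1$. (If all $\al_i$ vanished, then $l$ would be orthogonal to the whole family, forcing $l=l_j$ for some $j$ up to orientation by maximality; handle this case by taking $s_j=1$ and $s_i=0$ for $i\neq j$, giving the trivial zigzag along $l_j$.) Lemma~\ref{lem:uspeed_parameter_zigzag} then yields a nondegenerate zigzag $\ga_0=\ga(o,\cL,S_0)$ whose image $l^\ast=\ga_0(\R)$ is a Ptolemy line through $o$, and the same lemma gives $\la:=\sqrt{\sum_is_i^2}=1/\sum_j\al_j$ using $\sum_i\al_i^2=1$.

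For step (iii), let $b$ be the Busemann function of $l$ with $b(o)=0$. Lemma~\ref{lem:busemann_affine_zigzag} together with the symmetry of slope (Lemma~\ref{lem:slope_symmetry}) gives $b\circ\ga_0(t)=\be t$ with $\be=\sum_i\al_is_i=1/\sum_j\al_j=\la$. In the arclength parameter $u=\la t$ along $l^\ast$ this reads $b\circ c^\ast(u)=u$, i.e.\ $\slope(l^\ast,l)=1$. Reversing the orientation of $l^\ast$ turns this into slope $-1$, so with that orientation $l^\ast$ and $l$ are Busemann parallel with compatible orientations through their common point $o$, and Lemma~\ref{lem:unique_line} forces $l=l^\ast$ as sets, exhibiting $l$ as the required zigzag.

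The main obstacle I foresee is step (iii): matching the abstractly constructed line $l^\ast$ with the prescribed $l$. Comparing only the Busemann functions of the orthogonal $l_i$ shows that $l$ and $l^\ast$ project to the same curve in $B_\om$, which is not sufficient. The identification instead proceeds by \emph{computing} $\slope(l,l^\ast)$ via the zigzag formula, exploiting the flatness of $X$ (Proposition~\ref{pro:smooth_convex}), the symmetry of the slope, and the rigidity of Busemann parallel lines sharing a point encoded in Lemma~\ref{lem:unique_line}.
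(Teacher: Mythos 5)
Your proof is correct, and while the first half coincides with the paper's argument, the second half takes a genuinely different route. For $\sum_i\al_i^2=1$ you argue exactly as the paper does: if the sum were $\neq 1$, Lemma~\ref{lem:orthogonalization_procedure} would produce a Ptolemy line orthogonal to all of $l_1,\dots,l_k$, contradicting maximality. For the representation of $l$ as a zigzag, however, the paper stays with the weights coming out of the orthogonalization of $\{l_1,\dots,l_k,l\}$: since that full zigzag degenerates, Remark~\ref{rem:distinct_fiber_zigzag} shows the reduced piecewise geodesic $\si_1\cup\dots\cup\si_k$ and the last edge $\si$ (which lies on a line Busemann parallel to $l$) join the same pair of $\K$-lines, so Lemma~\ref{lem:rfoliation_semik} forces the reduced zigzag line to be Busemann parallel to $l$, and Lemma~\ref{lem:unique_line} finishes. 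You instead renormalize to $s_i=\al_i/\sum_j\al_j$, invoke Lemma~\ref{lem:uspeed_parameter_zigzag} and Lemma~\ref{lem:busemann_affine_zigzag} together with slope symmetry to \emph{compute} $b\circ\ga_0(t)=\be t$ with $\be=\sum_i\al_i^2/\sum_j\al_j=\la$, so that in arclength the slope of $\ga_0(\R)$ against $l$ is $\pm1$, and then apply Lemma~\ref{lem:unique_line} directly. Your route is more computational and bypasses the fiber machinery of Remark~\ref{rem:distinct_fiber_zigzag} and Lemma~\ref{lem:rfoliation_semik} entirely, at the cost of leaning on the quantitative content of Lemma~\ref{lem:uspeed_parameter_zigzag}; the paper's route is softer but requires tracking which fibers the approximating curves connect. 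Two small points: the parenthetical treatment of the case ``all $\al_i=0$'' is both unnecessary (it is excluded by $\sum_i\al_i^2=1$, which you establish first) and incorrectly reasoned ($l$ orthogonal to the whole family would contradict maximality outright rather than force $l=l_j$, since $\slope(l_j,l_j)=-1\neq 0$); and the phrase ``Busemann parallel with compatible orientations'' slightly overstates what you have verified at that point, namely that a single Busemann function of $l$ restricts to $-t$ along the reoriented $l^\ast$ --- but that is precisely the hypothesis of Lemma~\ref{lem:unique_line}, so the conclusion stands.
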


\begin{proof} We apply the orthogonalization procedure described 
in Lemma~\ref{lem:orthogonalization_procedure} to the collection
$\cL_l=\{l_1,\dots,l_k,l\}$
and construct a zigzag curve
$\ga_l=\ga_l(o,\cL_l,S_l)$,
where
$S_l=\{s_1,\dots,s_{k+1}\}$
is the collection of nonnegative numbers described there.
Since
$\ga_l$
of orthogonal to
$l_1,\dots,l_k$,
we conclude from maximality of
$\{l_1,\dots,l_k\}$
that 
$\ga_l$
degenerates and moreover
$\sum_i\al_i^2=1$
by Lemma~\ref{lem:orthogonalization_procedure}.
 
According to Remark~\ref{rem:distinct_fiber_zigzag},
the ends of the piecewise geodesic curve
$\ga_{l,1}$
with 
$k+1$
edges
$\si_1,\dots,\si_k,\si$
on Ptolemy lines Busemann parallel to
$l_1,\dots,l_k,l$
respectively with
$|\si_i|=s_i$, $i=1,\dots,k$, $|\si|=s_{k+1}$,
lie in one and the same fiber ($\K$-line)
$F\sub X_\om$, 
that is,
$o=\ga_{l,1}(0)$
and 
$x=\ga_{l,1}(s+s_{k+1})\in F$,
where
$s=s_1+\dots+s_k$.
Thus the reduced piecewise geodesic curve
$\ga_1=\si_1\cup\dots\cup\si_k$
and the last edge
$\si$
of
$\ga_{l,1}$
have the ends in the same fibers
$F$, $F'$,
where
$F'$
is the fiber through
$\ga_{l,1}(s)=\ga_1(s)$.

Then the zigzag curve
$\ga=\ga(o,\cL,S)$
is nondegenerate, where 
$\cL=\{l_1,\dots,l_k\}$, $S=\{s_1,\dots,s_k\}$,
and it gives a Ptolemy line 
$\ga(\R)\sub X_\om$
though
$o$
which hits the fiber
$F'$.
Since the Ptolemy line
$l'$
containing the segment
$\si$
is Busemann parallel to
$l$
and intersects the fibers
$F$, $F'$,
the line 
$\ga(\R)$
is Busemann parallel to
$l$,
see Lemma~\ref{lem:rfoliation_semik}. Hence
$\ga(\R)=l$
by uniqueness, see Lemma~\ref{lem:unique_line}. 
\end{proof}

\begin{lem}\label{lem:linear_combination_busemann} Let
$\cL=\{l_1,\dots,l_k\}$
be a maximal collection of mutually orthogonal oriented Ptolemy lines in
$X_\om$, $S=\{s_1,\dots,s_k\}$
a collection of nonnegative numbers with 
$s_1+\dots+s_k=1$, $b$, $b_i:X_\om\to\R$
the Busemann functions of the zigzag curve
$\ga=\ga(o,\cL,S)$,
Ptolemy line 
$l_i$
with
$b(o)=0=b_i(o)$
respectively,
$i=1,\dots,k$.
Then
$\la b=\sum_is_ib_i$,
where
$\la=\sqrt{\sum_is_i^2}$. 
\end{lem}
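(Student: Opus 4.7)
The plan is to show that the function $F := \lambda b - \sum_i s_i b_i$ vanishes identically on $X_\om$. We have $F(o)=0$ by the normalization $b(o)=b_i(o)=0$, and $F$ is affine along every Ptolemy line in $X_\om$ by Corollary~\ref{cor:busemann_affine}. Since $F$ is affine along every Ptolemy line, it suffices to check that the slope of $F$ along each Ptolemy line is zero, then that $F$ vanishes on the fiber $F_o$ through $o$, and finally to propagate this information to all of $X_\om$ via property~$(\K)$.

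For the slope matching, let $l' \subset X_\om$ be any Ptolemy line. By Lemma~\ref{lem:max_orthogonal}, we can write $l' = \gamma'(\R)$ where $\gamma' = \gamma(o', \cL, S')$ for some $o' \in l'$ and weights $S' = \{s_1', \dots, s_k'\}$ normalized so that $\sum_i s_i' = 1$; set $\lambda' = \sqrt{\sum_i (s_i')^2} = |o'\gamma'(1)|$ by Lemma~\ref{lem:uspeed_parameter_zigzag}. Applying Lemma~\ref{lem:busemann_affine_zigzag} to $\gamma'$ together with the orthogonality relations $\slope(l_i,l_j)=0$ for $i\neq j$, $\slope(l_i,l_i)=-1$, and the identity $\slope(l_i, l) = -s_i/\lambda$ (from Lemma~\ref{lem:uspeed_parameter_zigzag} and Lemma~\ref{lem:slope_symmetry}), one obtains
\begin{equation*}
b_j \circ \gamma'(t) = b_j(o') - s_j' t, \qquad b \circ \gamma'(t) = b(o') - \lambda^{-1}\Bigl(\sum_i s_i s_i'\Bigr) t.
\end{equation*}
Since a unit-speed parameter along $l'$ is related to $t$ by the factor $\lambda'$, these give $\slope(l',l_j) = -s_j'/\lambda'$ and $\slope(l',l) = -(\lambda\lambda')^{-1}\sum_i s_i s_i'$, so $\lambda\, \slope(l', l) = \sum_j s_j\, \slope(l', l_j)$. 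Hence $F$ has slope zero along $l'$, and thus is constant along every Ptolemy line.

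To finish, first check $F \equiv 0$ on $F_o$: since $o \in l = \gamma(\R)$ by construction, $F_o$ lies in the horosphere $\{b=0\}$ by the very definition of the fiber; for each $l_j$, Corollary~\ref{cor:busparallel_foliation} gives a unique Ptolemy line $\hat l_j$ through $o$ Busemann parallel to $l_j$, and since Busemann parallel lines share Busemann functions, $b_j$ is a Busemann function of $\hat l_j$, so again $F_o \subset \{b_j = 0\}$; therefore $F \equiv 0$ on $F_o$. Then for any $x \in X_\om \setminus F_o$, property~$(\K)$ produces a Ptolemy line $l_x$ through $x$ meeting $F_o$ at some point $y$; constancy of $F$ along $l_x$ combined with $F(y)=0$ yields $F(x) = 0$, completing the proof. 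The main obstacle is the slope-matching identity in the second paragraph, which is where the specific relation between $\lambda$ and the weights $s_i$ enters; the rest is a clean application of the fibration structure.
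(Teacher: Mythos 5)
Your computational core is sound: defining $F=\la b-\sum_i s_ib_i$, noting it is affine along Ptolemy lines, and deriving the slope-matching identity $\la\,\slope(l',l)=\sum_j s_j\,\slope(l',l_j)$ by representing an arbitrary line $l'$ as a zigzag $\ga(o',\cL,S')$ via Lemma~\ref{lem:max_orthogonal} is a legitimate (and slightly more direct) variant of what the paper does -- the paper only checks vanishing of $h=\la b-\sum_is_ib_i$ along $l_1,\dots,l_k$ through $o$ and then propagates constancy to parallel lines, zigzags, and finally all Ptolemy lines.

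The genuine gap is in your last step. You invoke the \emph{existence} half of property~($\K$) to produce, for an arbitrary $x\in X_\om$, a Ptolemy line through $x$ meeting the fiber $F_o$. But at this point in the paper only the \emph{uniqueness} half of ($\K$) has been established; the existence half is proved only after Proposition~\ref{pro:orthogonal_base}, whose proof in turn relies on the very lemma you are proving (Lemma~\ref{lem:linear_combination_busemann} is used there to show that every Busemann function vanishing at $x$ is a linear combination of $b_1,\dots,b_k$, which is what makes $F=\cap_jH_j$ and hence the lifting argument work). So your final propagation is circular. The paper closes the loop differently: having shown $h$ is constant along every Ptolemy line, it upgrades this to constancy along every Ptolemy \emph{circle} in $X_\om$ using the tangent-line result (Proposition~\ref{pro:tangent_rcircle}) and an approximation argument, and then applies the enhanced existence property (E) -- through any two points of $X$ there is a Ptolemy circle -- to connect an arbitrary $x$ to $o$ and conclude $h(x)=0$. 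Replacing your ($\K$)-based step with this circle argument repairs the proof.
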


\begin{proof} We denote by
$h=\la b-\sum_is_ib_i$
the function
$X_\om\to\R$
with
$h(o)=0$,
which is an affine function on every Ptolemy line on
$X_\om$.
First, we check that 
$h$
vanishes along 
$l_1,\dots,l_k$
(assuming that these lines pass through
$o$).
Indeed,
$\sum_is_ib_i(z)=s_jb_j(z)$
for every
$z\in l_j$
because
$l_i\bot l_j$
for
$i\neq j$.
Since
$b$
is a Busemann function of
$l$,
it is affine on
$l_j$
with the coefficient
$\slope(l_j,l)$,
$b(z)=-\slope(l_j,l)b_j(z)$.
Using symmetry of the slope, we obtain
$$\slope(l_j,l)=\slope(l,l_j)=-s_j/\la,$$
see Lemma~\ref{lem:uspeed_parameter_zigzag}. Thus
$\la b(z)=s_jb_j(z)$,
and
$h(z)=0$.

Next, we show that if
$h$
is constant on a Ptolemy line
$l$, 
then it is constant on every Ptolemy line
$l'$
that is Busemann parallel to
$l$
(with maybe a different value). By Lemma~\ref{lem:busparallel_sublinear} 
we know that
$l$, $l'$
diverge at most sublinearly,  
and also that
$h$
is affine on
$l'$.
Thus 
$h|l'$
cannot be nonconstant because 
$h$
is a Lipschitz function on
$X_\om$.

It follows that 
$h$
vanishes on every piecewise geodesic curve with origin
$o$
and with edges Busemann parallel to the lines
$l_1,\dots,l_k$.
Hence,
$h$
vanishes along any zigzag curve of type
$\ga(o,\cL,S)$.
Using Lemma~\ref{lem:max_orthogonal}, we conclude that
$h$
is constant along any Ptolemy line in
$X_\om$.

By Proposition~\ref{pro:tangent_rcircle}, every Ptolemy
circle possesses a unique tangent line, which is certainly
a Ptolemy line, at every point. Using standard approximation
arguments, we see that
$h$
is constant along any Ptolemy circle in
$X_\om$.
By the existence property (E), every
$x\in X_\om$
is connected with 
$o$
by a Ptolemy circle. Thus
$h(x)=0$
and
$\la b=\sum_ib_i$. 
\end{proof}

\begin{proof}[Proof of Proposition~\ref{pro:orthogonal_base}]
Let
$\cL_\bot=\{l_1,\dots,l_k\}$
be a maximal collection of mutually orthogonal Ptolemy lines in
$X_\om$.
This means that we actually consider respective foliations of
$X_\om$
by Busemann parallel Ptolemy lines.
For any
$x\in X_\om$,
for the fiber
$F$
and for the respective lines from
$\cL_\bot$
through 
$x$,
we have by definition
$F\sub\cap_jH_j$,
where
$H_j\sub X_\om$
is the horosphere of
$l_j$
through
$x$.
It follows from Lemma~\ref{lem:max_orthogonal} and
Lemma~\ref{lem:linear_combination_busemann} that any
Busemann function
$b:X_\om\to\R$
with 
$b(x)=0$
is a linear combination of the Busemann functions
$b_1,\dots,b_k$
of the lines
$l_1,\dots,l_k$
which vanish at
$x$.
Thus 
$b$
vanishes on
$\cap_jH_j$,
and therefore
$F=\cap_jH_j$.
\end{proof}

\begin{proof}[Proof the property ($\K$)] Given a fiber 
($\K$-line) $F\sub X_\om$
and
$x\in X_\om\sm F$,
we show that there is a Ptolemy line 
$l\sub X_\om$
through
$x$
that hits
$F$.
Uniqueness of
$l$
is proved above just after Lemma~\ref{lem:induced_base_map}.

Using Proposition~\ref{pro:orthogonal_base}, we represent
$F=\cap_{l\in\cL_\bot}H_l$,
where
$\cL_\bot$
is a finite collection of mutually orthogonal Ptolemy lines,
$H_l$
a horosphere of
$l$.
Choosing appropriate orientations of the members of
$\cL_\bot$,
we can assume that
$H_l=b_l^{-1}(0)$
and
$b_l(x)\ge 0$
for every
$l\in\cL_\bot$,
where
$b_l:X_\om\to\R$
is a Busemann function of
$l$.
Moving from
$x$
in an appropriate direction along a Ptolemy line,
which is Busemann parallel to 
$l\in\cL_\bot$
with
$b_l(x)>0$,
we reduce the value of
$b_l$
to zero keeping up every other Busemann function
$b_{l'}$, $l'\in\cL_\bot$,
constant. Repeating this procedure, we connect
$x$
with 
$F$
by a piecewise geodesic curve with at most
$|\cL_\bot|$
edges. Now, the zigzag construction produces a required
Ptolemy line through
$x$
that hits
$F$.
\end{proof}

\subsection{Properties of the base $B_\om$}
\label{subsect:base_properties}

We fix
$\om\in X$
and a metric 
$d$
from the M\"obius structure for which
$\om$
is infinitely remote. We also use notation
$|xy|=d(x,y)$
for the distance between
$x$, $y\in X_\om$.

\begin{lem}\label{lem:homothety_vert_shift} Let
$\phi:X_\om\to X_\om$
be a pure homothety with
$\phi(o)=o$, $o\in X_\om$.
Then
$\phi$
preserves the fiber ($\K$-line)
$F$
through
$o$, $\phi(F)=F$.
In particular, every shift
$\eta_{xx'}:X_\om\to X_\om$
with
$x$, $x'\in F$
preserves
$F$.
\end{lem}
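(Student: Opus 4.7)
The plan is to reduce preservation of $F$ by $\phi$ to preservation of certain horospheres through $o$, and then to invoke the representation of $F$ from Proposition~\ref{pro:orthogonal_base}. First, I would show that $\phi$ preserves every Ptolemy line $l\sub X_\om$ through $o$. Since $\phi$ is a pure homothety, it preserves the foliation of $X_\om$ by Ptolemy lines Busemann parallel to $l$; thus $\phi(l)$ is a leaf of this foliation and it contains $\phi(o)=o$, so by uniqueness of Busemann parallel lines through a common point (Corollary~\ref{cor:busparallel_foliation}), $\phi(l)=l$.

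Second, I would show that $\phi$ preserves the horosphere $H_l$ of $l$ through $o$. Let $b_l:X_\om\to\R$ be the Busemann function of $l$ with $b_l(o)=0$ and let $\la>0$ be the coefficient of $\phi$. The map $\phi|_l$ is a homothety of $l$ fixing $o$, so for a unit speed parameterization $c:\R\to l$ with $c(0)=o$ adapted to $b_l$, one has $\phi\circ c(t)=c(\pm\la t)$. A direct computation from $b_l(y)=\lim_{t\to\infty}(|yc(t)|-t)$ together with $|\phi(y)\phi(z)|=\la|yz|$ then gives $b_l\circ\phi=\la b_l$ when $\phi$ preserves the orientation of $l$, and $b_l\circ\phi=-\la b_l$ when it reverses it. The orientation-reversing case relies on Busemann flatness of $X$ (Proposition~\ref{pro:smooth_convex}): the opposite Busemann function of $l$ normalized at $o$ equals $-b_l$. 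In either case the zero level set $H_l=b_l^{-1}(0)$ is preserved by $\phi$.

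Third, by Proposition~\ref{pro:orthogonal_base}, $F=\bigcap_{l\in\cL_\bot}H_l$ for a maximal collection $\cL_\bot$ of mutually orthogonal Ptolemy lines, with $H_l$ the horosphere of $l$ through $o$. Replacing each $l\in\cL_\bot$ by its unique Busemann parallel line through $o$ (Corollary~\ref{cor:busparallel_foliation}) leaves each horosphere $H_l$ unchanged, and preserves mutual orthogonality by Lemma~\ref{lem:paraline_busemann} since the slope depends only on the Busemann parallel class with compatible orientations. So I may assume every $l\in\cL_\bot$ passes through $o$, and the previous step then yields $\phi(F)=F$.

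For the ``in particular'' claim, write $\eta_{xx'}=\lim\eta_i$ with $\eta_i=\psi_i^{-1}\circ\phi_i$, where $\phi_i\in\Ga_{\om,x}$ and $\psi_i\in\Ga_{\om,x'}$ are pure homotheties by Lemma~\ref{lem:pure_homothety}. Since $x,x'\in F$, both $\phi_i$ and $\psi_i$ preserve $F$ by the first three paragraphs, hence so does $\eta_i$, and therefore $\eta_{xx'}$ preserves $F$ as a limit of isometries preserving a closed set. The main obstacle I anticipate is precisely the orientation-reversing case in the second step; this is where Busemann flatness of $X$ enters in an essential way and is the only non-routine ingredient.
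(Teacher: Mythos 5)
Your proof is correct and follows essentially the same route as the paper: the key point in both is that a pure homothety fixing $o$ carries each Busemann function vanishing at $o$ to a scalar multiple of itself, so it preserves every horosphere through $o$ and hence their intersection $F$, and the shift claim then follows from the definition of $\eta_{xx'}$ as a limit of compositions of such homotheties. Your detour through Proposition~\ref{pro:orthogonal_base} is unnecessary (the fiber is by definition $\bigcap_{l\ni o}H_l$ over all Ptolemy lines $l$ through $o$, so your first two steps already suffice), though your explicit handling of the orientation-reversing case is a detail the paper's one-line proof glosses over.
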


\begin{proof} As in Lemma~\ref{lem:pure_homothety}, we have
$\la b\circ\phi=b$
for any Busemann function
$b:X_\om\to\R$
with 
$b(o)=0$,
where
$\la$
is the coefficient of the homothety
$\phi$.
Since
$b(x)=0$
for every
$x\in F$,
we see that
$b\circ\phi(x)=0$,
and thus
$\phi(x)\in F$,
that is,
$\phi(F)=F$.
The assertion about a shift
$\eta_{xx'}$
follows now from the definition of
$\eta_{xx'}$.
\end{proof}

\begin{lem}\label{lem:vert_shift} Let
$\eta:X_\om\to X_\om$
be a shift that preserves a $\K$-line
$F\sub X_\om$.
Then 
$\eta$
preserves any other $\K$-line 
$F'\sub X_\om$.
\end{lem}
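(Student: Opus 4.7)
The plan is to reduce the claim to showing $b_i\circ\eta = b_i$ for a finite family of Busemann functions, exploiting the representation of fibers as joint level sets. By Proposition~\ref{pro:orthogonal_base} I fix a maximal orthogonal collection $\cL_\bot = \{l_1,\dots,l_k\}$ of oriented Ptolemy lines such that every $\K$-line is a joint level set of the Busemann functions $b_1,\dots,b_k: X_\om\to\R$ of $l_1,\dots,l_k$. Consequently, once $b_i\circ\eta = b_i$ is established for every $i$, for any $\K$-line $F'$ and any $y\in F'$ the description $F' = \bigcap_i\{z\in X_\om : b_i(z) = b_i(y)\}$ yields $\eta(y)\in F'$; applying the same reasoning to $\eta^{-1}$ (also a shift preserving $F$) upgrades this to $\eta(F') = F'$.

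To obtain $b_i\circ\eta = b_i$, I first show that $b_i\circ\eta$ is itself a Busemann function of a Ptolemy line Busemann parallel to $l_i$. By Lemma~\ref{lem:shift_busemann_parallel} the shift $\eta$ sends $l_i$ to a Ptolemy line Busemann parallel to $l_i$; substituting $w\mapsto\eta^{-1}(w)$ in the defining limit of $b_i$ and using that $\eta$ is an isometry identifies $b_i\circ\eta$ as the Busemann function of $\eta^{-1}(l_i)$ with a suitably shifted base point. Provided the orientation of this Busemann parallel family is compatible with the one used to define $b_i$, then $b_i\circ\eta$ is a Busemann function of the same foliation as $b_i$ and hence differs from $b_i$ by a constant, $b_i\circ\eta = b_i + c_i$. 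The hypothesis that $\eta$ preserves $F$ now forces $c_i = 0$: picking any $x\in F$, we have $\eta(x)\in F$, and by the horosphere description of $F$ in Proposition~\ref{pro:orthogonal_base} every point of $F$ shares the same $b_i$-value, so $c_i = b_i(\eta(x))-b_i(x) = 0$.

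The only delicate point I expect is the orientation compatibility invoked in the previous paragraph, since Busemann flatness (Proposition~\ref{pro:smooth_convex}) leaves open the alternative $b_i\circ\eta = -b_i + c_i$, which would wreck the vanishing-of-constants argument. I plan to settle it via the construction of the shift as a limit $\eta = \lim_j \psi_j^{-1}\circ\phi_j$ with $\phi_j\in\Ga_{\om,x}$ and $\psi_j\in\Ga_{\om,x'}$: each $\phi_j$ and $\psi_j$ is a homothety preserving the orientation of every Ptolemy line through its fixed point and, by Lemma~\ref{lem:pure_homothety}, of every Busemann parallel foliation of $X_\om$; this compatibility then passes to $\eta$ in the limit.
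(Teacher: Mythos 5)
Your proof is correct and follows essentially the same route as the paper's: both reduce the claim to showing $b\circ\eta=b+c_b$ for Busemann functions $b$ (using that $\eta^{-1}(l)$ is Busemann parallel to $l$, so $b\circ\eta$ is again a Busemann function of the same foliation) and then use the preserved fiber $F$ to force $c_b=0$. The only differences are cosmetic: you restrict to the finite orthogonal family of Proposition~\ref{pro:orthogonal_base} where the paper works with all Busemann functions at once, and you explicitly justify the sign in $b\circ\eta=b+c_b$ via the limit construction of shifts --- a point the paper's proof passes over silently, and which your argument (each $\phi_j$, $\psi_j$ satisfies $b\circ\phi_j=\la_j b+\mathrm{const}$ with positive coefficient, so the sign survives composition and the limit) settles correctly.
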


\begin{proof} Let
$b:X_\om\to\R$
be a Busemann function associated with an (oriented)
Ptolemy line
$l\sub X_\om$.
Then for any isometry
$\eta:X_\om\to X_\om$
the function
$b\circ\eta$
is a Busemann function associated with 
Ptolemy line 
$\eta^{-1}(l)$.
Thus for an arbitrary shift
$\eta:X_\om\to X_\om$,
we have
$b\circ\eta=b+c_b$,
where
$c_b\in\R$
is a constant depending on
$b$,
because the line 
$\eta^{-1}(l)$
is Busemann parallel to
$l$,
hence the function
$b\circ\eta$
is also a Busemann function of
$l$,
and thus the functions
$b$, $b\circ\eta$
differ by a constant. 

In our case, when
$\eta$
preserves a $\K$-line, this constant is zero,
$c_b=0$,
thus
$b\circ\eta=b$
for any Busemann function
$b:X_\om\to X_\om$.
Therefore,
$\eta$
preserves any $\K$-line.
\end{proof}

We define
$$d(F,F')=\inf\set{|xx'|}{$x\in F,\ x'\in F'$}$$
for $\K$-lines
$F$, $F'\sub X_\om$.

\begin{lem}\label{lem:klines_distance} Given $\K$-lines
$F$, $F'\sub X_\om$,
and
$x\in F$,
there is
$x'\in F'$
such that
$d(F,F')=|xx'|$.
\end{lem}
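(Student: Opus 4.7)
The plan is to use property $(\K)$ to single out a candidate closest point, and then verify minimality via the Busemann function of a cleverly chosen Ptolemy line.

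First, I would invoke property $(\K)$ from Theorem~\ref{thm:basic_ptolemy}: it produces a unique Ptolemy line $l \subset X_\om$ through $x$ that meets $F'$, and I take $x' := l \cap F'$ as my candidate. The claim becomes $d(F,F') = |xx'|$. To prove it I would use the Busemann function $b:X_\om \to \R$ of $l$ oriented from $x$ toward $x'$, normalized by $b(x) = 0$, so that $b(x') = -|xx'|$.

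The key observation is that $b$ is constant on each of $F$ and $F'$: by the definition $F_y = \bigcap_{l'' \ni y} H_{l''}$ of a fiber in sect.~\ref{subsect:def_fibration}, the passage of $l$ through $x \in F$ forces $F \subset b^{-1}(0)$, and the passage of $l$ through $x' \in F'$ forces $F' \subset b^{-1}(-|xx'|)$. Once this is granted, the $1$-Lipschitz property of Busemann functions gives, for any $y \in F$ and $z' \in F'$,
$$|yz'| \;\geq\; |b(y) - b(z')| \;=\; |xx'|,$$
so $d(F,F') \geq |xx'|$; the reverse inequality is trivial since $x \in F$ and $x' \in F'$, and the lemma follows.

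I do not foresee a serious obstacle here: the heavy lifting — Busemann flatness (Proposition~\ref{pro:smooth_convex}), property $(\K)$, and the description of fibers as intersections of horospheres — has already been carried out. The only choice requiring care is that of the Busemann function, which must be the one associated with the particular Ptolemy line produced by $(\K)$; precisely this choice makes both $F$ and $F'$ automatically level sets of $b$, and hence turns $b$ into a 1-Lipschitz function separating them by the desired amount.
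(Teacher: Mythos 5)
Your proof is correct, but it takes a genuinely different route from the paper's. The paper argues softly: it takes a minimizing sequence $x_i\in F$, $x_i'\in F'$, uses Lemma~\ref{lem:vert_shift} (shifts preserving one $\K$-line preserve every $\K$-line) to normalize $x_i=x$ for all $i$, and then extracts a convergent subsequence of $(x_i')$ by compactness of $X$; this produces a minimizer without identifying it. You instead identify the minimizer explicitly: property ($\K$) gives the unique Ptolemy line $l$ through $x$ hitting $F'$, and since $l$ passes through both $x$ and $x'=l\cap F'$, the defining formula $F_y=\bigcap_{l''\ni y}H_{l''}$ together with Lemma~\ref{lem:fiber_def} forces $F\sub b^{-1}(0)$ and $F'\sub b^{-1}(-|xx'|)$ for the Busemann function $b$ of $l$ with $b(x)=0$; the $1$-Lipschitz property of $b$ then yields $d(F,F')\ge|xx'|$, and the reverse inequality is trivial. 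This is legitimate since the existence part of ($\K$) is established before Section~4.4, and no circularity arises. What your approach buys is that it simultaneously proves (the essential content of) the subsequent Lemma~\ref{lem:line_dist_klines} -- indeed your horosphere argument is exactly the one the paper deploys there, on top of the compactness lemma -- so the two lemmas could be merged; what the paper's approach buys is independence from the existence half of ($\K$), needing only compactness and the shift machinery. The only cosmetic omission is the degenerate case $F=F'$, where one takes $x'=x$; this is trivial but should be mentioned since ($\K$) requires $x\notin F'$.
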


\begin{proof} Let
$x_i\in F$, $x_i'\in F'$
be sequences with 
$|x_ix_i'|\to d(F,F')$.
Using Lemma~\ref{lem:vert_shift}, we can assume that
$x_i=x$
for all
$i$.
Then the sequence
$x_i'$
is bounded, and by compactness of
$X$
it subconverges to
$x'\in F'$
with
$|xx'|=d(F,F')$.
\end{proof}

\begin{lem}\label{lem:line_dist_klines} For any $\K$-lines
$F$, $F'\sub X_\om$,
we have
$d(F,F')=|xx'|$,
where
$x=l\cap F$, $x'=l\cap F'$,
and 
$l$
is any Ptolemy line in
$X_\om$
that meets both
$F$, $F'$.
\end{lem}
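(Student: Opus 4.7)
The plan is to exploit Busemann functions of the line $l$ together with the definition of a $\K$-line as an intersection of horospheres. Since $l$ meets $F$ at $x$ and $F'$ at $x'$, any Busemann function $b$ of $l$ has the Busemann flat property that the horosphere through $x$ (resp.\ $x'$) is a level set of $b$; but $F$ is by definition contained in the horosphere of $l$ through $x$, and $F'$ in the horosphere of $l$ through $x'$, so $b$ is constant on $F$ with value $b(x)$ and constant on $F'$ with value $b(x')$.

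The upper bound $d(F,F') \le |xx'|$ is immediate from $x\in F$ and $x'\in F'$. For the reverse inequality, first I would observe that every Busemann function is $1$-Lipschitz: from the defining limit $b(y) = \lim_{s\to\infty}(|yc(s)|-s)$ and the triangle inequality one gets $|b(y)-b(y')|\le |yy'|$ for any $y,y'\in X_\om$. Combining this with the constancy on $F$ and $F'$ yields, for arbitrary $y\in F$ and $y'\in F'$,
\[
|yy'| \;\ge\; |b(y)-b(y')| \;=\; |b(x)-b(x')|.
\]

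To finish, I would identify $|b(x)-b(x')|$ with $|xx'|$. Since $x,x'\in l$ and $b$ is the Busemann function of $l$, taking a unit-speed parameterization $c$ of $l$ gives $b\circ c(t) = -t + \mathrm{const}$, so $|b(x)-b(x')|$ equals the arclength distance between $x$ and $x'$ along $l$, which is just $|xx'|$ because $l$ is a Ptolemy line and hence a geodesic in $X_\om$. Therefore $|yy'|\ge |xx'|$ for every pair $(y,y')\in F\times F'$, giving $d(F,F')\ge|xx'|$ and hence equality.

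There is essentially no obstacle: the argument is a direct combination of the definition of the fibers of $\pi_\om$ (as intersections of horospheres, which forces $b$ to be constant on each fiber) with the $1$-Lipschitz property and the linear behaviour of $b$ along its own line $l$. The only thing to double-check is that Busemann flatness (Proposition~\ref{pro:smooth_convex}) is not actually needed here: we only use the inclusion $F\subseteq H_l^{(x)}$, $F'\subseteq H_l^{(x')}$, which follows directly from the definition $F = \bigcap_{l'\ni x}H_{l'}$ together with $l\ni x$.
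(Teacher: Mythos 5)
Your proof is correct and follows essentially the same route as the paper's: both rest on the facts that $F'$ lies in the horosphere of $l$ through $x'$ (and $F$ in the one through $x$) and that Busemann functions are $1$-Lipschitz and restrict to $\pm$ arclength along $l$. The only difference is cosmetic --- the paper first produces a distance-realizing point $x''\in F'$ via Lemma~\ref{lem:klines_distance} and then applies the horosphere argument to $x''$, whereas you bound $|yy'|\ge|xx'|$ directly for every pair $(y,y')\in F\times F'$, which bypasses that compactness step.
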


\begin{proof} By Lemma~\ref{lem:rfoliation_semik}, the distance
$|xx'|$
is independent of the choice of
$l$.
By definition
$|xx'|\ge d(F,F')$.
By Lemma~\ref{lem:klines_distance}, there is
$x''\in F'$
with
$|xx''|=d(F,F')$.
The horosphere 
$H$
of (a Busemann function associated with)
$l$
through
$x'$
contains
$F'$,
in particular,
$x''\in H$.
Then
$|xx''|\ge|xx'|$
and hence,
$d(F,F')=|xx'|$.
\end{proof}

Let
$\pi_\om:X_\om\to B_\om$
be the canonical fibration, see sect.~\ref{sect:fibration}.
For
$b\in B_\om$,
we denote by
$F_b=\pi_\om^{-1}(b)$
the $\K$-line over 
$b$.
For
$b$, $b'\in B_\om$
we put
$|bb'|:=|xx'|$,
where
$x=l\cap F_b$, $x'=l\cap F_{b'}$,
and
$l\sub X_\om$
is any Ptolemy line that meets both
$F_b$
and
$F_{b'}$.
By property ($\K$),
such a line 
$l$
exists, by Lemma~\ref{lem:line_dist_klines}, the number
$|bb'|$
is well defined, and the function
$(b,b')\mapsto|bb'|$
is a metric on
$B_\om$.
This metric is said to be {\em canonical}.

\begin{pro}\label{pro:base_metric} The canonical projection
$\pi_\om:X_\om\to B_\om$
is a 1-Lipschitz submetry with respect to the canonical
metric on
$B_\om$.
Furthermore,
$B_\om$
is a geodesic metric space with the property
that through any two distinct points
$b$, $b'\in B_\om$
there is a unique geodesic line in
$B_\om$.
\end{pro}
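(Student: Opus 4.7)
The plan is to prove the four assertions (1-Lipschitz, submetry, existence of geodesics, and uniqueness of geodesic lines) in order. The first three follow directly from the definition of the canonical metric via Ptolemy lines together with property ($\K$) and Lemma~\ref{lem:line_dist_klines}. The uniqueness is the main obstacle, which I plan to resolve by isometrically embedding $B_\om$ into a Euclidean $\R^k$ via the Busemann functions of a maximal orthogonal collection of Ptolemy lines.

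For the 1-Lipschitz property: given $x\in F_b$, $y\in F_{b'}$, Lemma~\ref{lem:line_dist_klines} yields $|bb'|=d(F_b,F_{b'})\le |xy|$, so $|\pi_\om(x)\pi_\om(y)|\le|xy|$. This gives the inclusion $\pi_\om(B_r(x))\subseteq B_r(b)$. For the reverse inclusion, given $b'\in B_r(b)$, property ($\K$) furnishes a unique Ptolemy line $l$ through $x$ meeting $F_{b'}$ at a point $y$; Lemma~\ref{lem:line_dist_klines} then gives $|xy|=|bb'|<r$, so $y\in B_r(x)$ and $\pi_\om(y)=b'$, completing the submetry property.

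For existence of geodesic lines, take $l$ as above and let $c:\R\to l$ be an arclength parametrization with $c(0)=x$. For any $s<t$, the line $l$ itself serves as a Ptolemy line from $F_{\pi_\om(c(s))}$ to $F_{\pi_\om(c(t))}$, meeting these fibers at $c(s)$ and $c(t)$, so by the definition of the canonical metric $|\pi_\om(c(s))\pi_\om(c(t))|=|c(s)c(t)|=t-s$. Hence $\pi_\om\circ c$ is an isometric embedding $\R\to B_\om$; restricted to $[0,|bb'|]$ it yields a geodesic segment from $b$ to $b'$, and the full image is a geodesic line through these two points.

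For uniqueness of the geodesic line, fix a maximal orthogonal collection $\cL_\bot=\{l_1,\dots,l_k\}$ of oriented Ptolemy lines through a chosen basepoint $o\in X_\om$, and let $b_i$ be the Busemann function of $l_i$ with $b_i(o)=0$. By Proposition~\ref{pro:orthogonal_base}, each fiber $F_x$ equals $\{y:b_i(y)=b_i(x)\text{ for all }i\}$, so each $b_i$ descends to $\bar b_i:B_\om\to\R$ and the combined map $\bar\Phi:=(\bar b_1,\dots,\bar b_k):B_\om\to\R^k$ is injective. Given any Ptolemy line $l$ connecting fibers $F_b,F_{b'}$ at points $x,x'$, Corollary~\ref{cor:busemann_affine} makes each $b_i$ affine along $l$ with coefficient $\al_i=\slope(l,l_i)$, and Lemma~\ref{lem:max_orthogonal} gives $\sum_i\al_i^2=1$; hence $\|\bar\Phi(b)-\bar\Phi(b')\|_{\R^k}^2=\sum_i\al_i^2|xx'|^2=|xx'|^2=|bb'|^2$, so $\bar\Phi$ is an isometric embedding. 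Now any geodesic line through $b,b'$ in $B_\om$ is mapped by $\bar\Phi$ to an isometrically embedded copy of $\R$ in Euclidean $\R^k$ through $\bar\Phi(b),\bar\Phi(b')$, which must be the unique straight line joining them; injectivity of $\bar\Phi$ then yields uniqueness of the geodesic line in $B_\om$. The key technical ingredient of this last step is the Pythagorean identity $\sum\al_i^2=1$ from the maximal orthogonal framework, which is precisely what promotes $\bar\Phi$ from a merely Lipschitz map to an isometry.
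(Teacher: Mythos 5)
Your proposal is correct. For the 1-Lipschitz property, the submetry property and the existence of geodesics you argue exactly as the paper does, via Lemma~\ref{lem:line_dist_klines} and property~($\K$). The only genuine divergence is in the uniqueness of geodesic lines: the paper disposes of this in one sentence (``it follows from ($\K$)''), implicitly relying on the fact that geodesics of $B_\om$ through $b$, $b'$ come from Ptolemy lines and that the Ptolemy line through a point of $F_b$ hitting $F_{b'}$ is unique, whereas you build the map $\ov\Phi=(\ov b_1,\dots,\ov b_k):B_\om\to\R^k$ from a maximal orthogonal collection and reduce uniqueness to uniqueness of straight lines in Euclidean space. This is precisely the computation the paper carries out one proposition later, in Proposition~\ref{pro:base_euclidean}, where $h(z)=(\ov b_1(z),\dots,\ov b_k(z))$ is shown to be an isometry using $\sum_i\al_i^2=1$ from Lemma~\ref{lem:max_orthogonal}. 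Since every ingredient you invoke (Proposition~\ref{pro:orthogonal_base}, Corollary~\ref{cor:busemann_affine}, Lemma~\ref{lem:max_orthogonal}) precedes the statement, there is no circularity; your route costs an extra computation but buys a fully explicit uniqueness argument that does not need to justify why an arbitrary geodesic of $B_\om$ lifts to a Ptolemy line --- a point the paper's one-line appeal to ($\K$) leaves to the reader.
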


\begin{proof}
It follows from Lemma~\ref{lem:line_dist_klines} that the map
$\pi_\om$
is 1-Lipschitz. Let
$D=D_r(o)$
be the metric ball in
$X_\om$
of radius
$r>0$
centered at a point
$o\in X_\om$, $D'\sub B_\om$
the metric ball of the same radius
$r$
centered at
$\pi_\om(o)$.
The inclusion
$D'\sub\pi_\om(D)$
follows from the definition of the metric of
$B_\om$.
The opposite inclusion
$D'\sups\pi_\om(D)$
holds because
$\pi_\om$
is 1-Lipschitz. Thus
$\pi_\om:X_\om\to B_\om$
is a 1-Lipschitz submetry.

Furthermore, by Lemma~\ref{lem:line_dist_klines},
the projection
$\pi_\om$
restricted to every Ptolemy line in
$X_\om$
is isometric, and thus by property~($\K$), the base
$B_\om$
is a geodesic metric space. Moreover, it follows
from ($\K$) that through any two distinct
points
$b$, $b'\in B_\om$
there is a unique geodesic line in
$B_\om$.
\end{proof}

\begin{cor}\label{cor:pi_submetry} For any homothety
$\phi:X_\om\to X_\om$,
the induced map
$\pi_\ast(\phi):B_\om\to B_\om$
is a homothety with the same dilatation coefficient.
\qed
\end{cor}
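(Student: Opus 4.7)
The plan is to argue directly from the definition of the canonical metric on $B_\om$, once two preliminary facts are in place: $\phi$ maps $\K$-lines to $\K$-lines, and $\phi$ maps Ptolemy lines to Ptolemy lines. The first is exactly the content of Lemma~\ref{lem:induced_base_map}, which gives a bijection $\ov\phi = \pi_\ast(\phi):B_\om\to B_\om$ with $\pi_\om\circ\phi=\ov\phi\circ\pi_\om$, so in particular $\phi(F_b)=F_{\ov\phi(b)}$ for every $b\in B_\om$. The second holds because $\phi$ is a M\"obius automorphism of $X$ fixing $\om$, so it sends Ptolemy circles through $\om$ to Ptolemy circles through $\om$; removing $\om$ yields Ptolemy lines in $X_\om$.

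Next I would use the explicit formula for the canonical metric. Take distinct $b,b'\in B_\om$. By property ($\K$), there is a Ptolemy line $l\sub X_\om$ which meets both fibers, say $l\cap F_b=x$ and $l\cap F_{b'}=x'$, and by Lemma~\ref{lem:line_dist_klines} we have $|bb'|=|xx'|$. The image $\phi(l)$ is then a Ptolemy line meeting $F_{\ov\phi(b)}=\phi(F_b)$ at $\phi(x)$ and $F_{\ov\phi(b')}=\phi(F_{b'})$ at $\phi(x')$. Applying Lemma~\ref{lem:line_dist_klines} again gives
\[
|\ov\phi(b)\ov\phi(b')|=|\phi(x)\phi(x')|=\la|xx'|=\la|bb'|,
\]
where $\la>0$ is the dilatation coefficient of the homothety $\phi:X_\om\to X_\om$. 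This is exactly the assertion.

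There is no real obstacle here; the statement is essentially a translation through $\pi_\om$ of the fact that $\phi$ rescales distances on each Ptolemy line in $X_\om$ by the factor $\la$. The only thing that needs care is the verification that a Ptolemy line realizing $|bb'|$ in $B_\om$ is carried by $\phi$ to a Ptolemy line realizing $|\ov\phi(b)\ov\phi(b')|$, which is immediate from the two structural facts recorded above.
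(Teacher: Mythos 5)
Your argument is correct and follows exactly the route the paper intends: the corollary is stated with no proof precisely because it is the immediate combination of Lemma~\ref{lem:induced_base_map} (the induced bijection $\ov\phi$ intertwining the projections) with Lemma~\ref{lem:line_dist_klines} (the canonical distance $|bb'|$ is realized along any Ptolemy line meeting both fibers) and the fact that $\phi$ rescales $d$ by $\la$. Nothing is missing.
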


\begin{pro}\label{pro:base_euclidean} The base
$B_\om$
is isometric to an Euclidean
$\R^k$
for some 
$k>0$. 
\end{pro}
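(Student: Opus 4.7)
The plan is to construct an explicit isometry $\Phi:B_\om\to\R^k$. Fix $o\in X_\om$, set $b_0=\pi_\om(o)$, and let $\cL_\bot=\{l_1,\dots,l_k\}$ be a maximal collection of mutually orthogonal oriented Ptolemy lines through $o$ (which exists and has $k\le N$ by Lemma~\ref{lem:orthogonal_bound}). Let $b_i:X_\om\to\R$ be the Busemann function of $l_i$ with $b_i(o)=0$. Since each $b_i$ is constant along every $\K$-line (being a linear combination of Busemann functions through $o$ by Lemma~\ref{lem:linear_combination_busemann}, and hence constant on intersections of horospheres $\cap_j H_{l_j}$ described in Proposition~\ref{pro:orthogonal_base}), it descends to $\bar b_i:B_\om\to\R$. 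Define
$$\Phi(b)=(-\bar b_1(b),\dots,-\bar b_k(b))\in\R^k.$$

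To prove injectivity, suppose $\Phi(b)=\Phi(b')$. By Lemma~\ref{lem:linear_combination_busemann} every Busemann function $b$ of every zigzag line through $o$ satisfies $\la b=\sum_i s_i b_i$, and the proof of Proposition~\ref{pro:orthogonal_base} shows every $\K$-line equals $\cap_j H_{l_j}$. Therefore, if $\bar b_i(b)=\bar b_i(b')$ for all $i$, then $F_b$ and $F_{b'}$ satisfy the same defining system of horosphere equations and hence coincide, giving $b=b'$. For surjectivity, given $v\in\R^k$, choose the orientation of each $l_i$ so that $v_i\ge 0$, put $s_i=v_i/\sum_j v_j$ (handling $v=0$ by setting $b=b_0$), and form the zigzag $\ga(o,\cL_\bot,S)$. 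By Lemma~\ref{lem:uspeed_parameter_zigzag} this is nondegenerate with canonical parameterization $b_i\circ\ga(t)=-s_i t$, so at $t_0=\sum_j v_j$ the point $x=\ga(t_0)$ satisfies $b_i(x)=-v_i$, i.e.\ $\Phi(\pi_\om(x))=v$.

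For the isometric property, take $b,b'\in B_\om$. By property~($\K$) there is a Ptolemy line $l\sub X_\om$ meeting $F_b$ at $x$ and $F_{b'}$ at $x'$, and by Proposition~\ref{pro:base_metric} we have $|bb'|=|xx'|$. By Lemma~\ref{lem:max_orthogonal}, applied with $x\in l$ as base point (using Lemma~\ref{lem:zigzag_change_basepoint}), after possibly reorienting the $l_i$'s the line $l$ is represented as a zigzag $\ga(x,\cL_\bot,\tilde S)$ with $\tilde s_i\ge 0$ and $\sum_i\tilde s_i>0$; normalize $\sum_i\tilde s_i=1$. By Lemma~\ref{lem:uspeed_parameter_zigzag} the canonical parameterization satisfies $|x\ga(t)|=\tilde\la|t|$ with $\tilde\la=\sqrt{\sum_i\tilde s_i^2}$, and arguing as in Lemma~\ref{lem:busemann_affine_zigzag} with $b_i-b_i(x)$ gives $b_i(\ga(t))-b_i(x)=\pm\tilde s_i t$ (the sign coming from the chosen orientation of $l_i$). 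Writing $x'=\ga(t_0)$, we obtain
$$\|\Phi(b')-\Phi(b)\|^2=\sum_{i=1}^k\bigl(b_i(x)-b_i(x')\bigr)^2=\sum_{i=1}^k(\tilde s_i t_0)^2=t_0^2\tilde\la^2=|xx'|^2=|bb'|^2.$$
Thus $\Phi$ is the required isometry onto $\R^k$ with the Euclidean metric.

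The main subtlety I expect is the bookkeeping of orientations in the last step: Lemma~\ref{lem:max_orthogonal} produces a zigzag representation only after permitting a reversal of the reference orientations of the $l_i$, whereas $\Phi$ was defined using a single fixed choice. The point is that reversing the orientation of $l_i$ replaces $b_i$ by $-b_i$ (Busemann flatness, Proposition~\ref{pro:smooth_convex}), so each squared coordinate difference $(b_i(x)-b_i(x'))^2$ is unchanged and the Euclidean identity is preserved. The surjectivity argument handles the analogous issue by choosing orientations adapted to the target vector $v$; one checks that the endpoint depends only on $v$ and not on the chosen signs.
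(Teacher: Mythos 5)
Your proposal is correct and follows essentially the same route as the paper: coordinates on $B_\om$ via the descended Busemann functions $\ov b_1,\dots,\ov b_k$ of a maximal orthogonal collection, injectivity from Proposition~\ref{pro:orthogonal_base}, surjectivity via the zigzag construction (the paper invokes "the same argument as in the proof of property ($\K$)"), and the Euclidean distance identity from Lemma~\ref{lem:max_orthogonal} together with $\slope(l,l_i)=-s_i/\la$, $\la^2=\sum_i s_i^2$. The only cosmetic differences are the sign convention in $\Phi$, your explicit orientation bookkeeping, and that the paper reduces the two-point distance computation to the base point by a shift (Corollary~\ref{cor:pi_submetry}) where you rebase the zigzag at $x$ via Lemma~\ref{lem:zigzag_change_basepoint}.
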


\begin{proof} Any Busemann function
$b:X_\om\to\R$
is affine on Ptolemy lines by Corollary~\ref{cor:busemann_affine}.
By definition,
$b$
is constant on the fibers of
$\pi_\om$,
thus it determines a function
$\ov b:B_\om\to\R$
such that
$\ov b\circ\pi_\om=b$.
This function is affine on geodesic lines in
$B_\om$
because every geodesic line
$\ov l\sub B_\om$
is of the form
$\ov l=\pi_\om(l)$
for some Ptolemy line
$l\sub X_\om$,
and each unit speed parameterization
$c:\R\to X_\om$
of
$l$
induces the unit speed parameterization
$\ov c=\pi_\om\circ c$
of
$\ov l$.
Then
$\ov b\circ\ov c=\ov b\circ\pi_\om\circ c=b\circ c$
is an affine function on
$\R$.

We fix a base point 
$o\in X_\om$
and a maximal collection
$\cL=\{l_1,\dots,l_k\}$
of mutually orthogonal oriented Ptolemy lines of
$X_\om$
through
$o$.
Let
$b_1,\dots,b_k$
be Busemann functions of the lines
$l_1,\dots,l_k$
respectively that vanish at
$o$.
We denote by
$\ov l_i$
the projection of
$l_i$
to
$B_\om$,
and by
$\ov b_i:B_\om\to\R$
the function corresponding to
$b_i$, $i=1,\dots,k$.
By Proposition~\ref{pro:orthogonal_base}, the functions
$b_1,\dots,b_k$
separates fibers in
$X_\om$.
Thus the functions
$\ov b_1,\dots,\ov b_k$
separates points of 
$B_\om$,
that is, for each
$z$, $z'\in B_\om$
there is 
$i$
with
$\ov b_i(z)\neq\ov b_i(z')$.
Therefore, the continuous map
$h:B_\om\to\R^k$, $h(z)=(\ov b_1(z),\dots,\ov b_k(z))$
is injective. This map is surjective by the same argument
as in the proof of the property ($\K$), and it introduces
coordinates on
$B_\om$.
We compute the distance on
$B_\om$
in these coordinates. Applying a shift if necessary,
see Corollary~\ref{cor:pi_submetry}, we consider W.L.G. the distance
$|\ov o\,\ov z|$
for every
$z\in X_\om$,
where
$\ov z=\pi_\om(z)$.
By ($\K$) there is a unique Ptolemy line 
$l\sub X_\om$
through
$o$
that hits the fiber
$F_z$
through
$z$.
It follows from our definitions that for
$\al_i=\slope(l,l_i)$
we have
$\ov b_i(\ov z)=\al_i|\ov o\,\ov z|$, $i=1,\dots,k$.
By Lemma~\ref{lem:max_orthogonal},
$\sum_i\al_i^2=1$,
thus
$|\ov o\,\ov z|^2=\sum_i\ov b_i^2(\ov z)$.
This shows that 
$B_\om$
is isometric to an Euclidean
$\R^k$.
We have
$k>0$,
because there is a Ptolemy line in
$X_\om$.
\end{proof}

This completes the proof of Theorem~\ref{thm:basic_ptolemy}.

\section{Lie group structure of $X_\om$ and of $\K$-lines}
\label{sect:topology_space}

Here we recover a natural group structure on every space
$X_\om$, $\om\in X$,
which is a simply connected nilpotent group Lie.

\subsection{Groups of shifts}
\label{subsect:group_shifts}

Recall that by Lemma~\ref{lem:pure_homothety} a shift
$\eta:X_\om\to X_\om$
is an isometry that preserves every foliation of 
$X_\om$
by (oriented) Busemann parallel Ptolemy lines. Clearly, the shifts of
$X_\om$
form a group which we denote by 
$N_\om$.
Then
$N_\om$
is a subgroup of the group
$\aut X$
of the M\"obius automorphisms of
$X$.

\begin{lem}\label{lem:set_with_lines} Let
$V\sub X$
be a closed subset containing with every point
$z\in V$, $z\neq\om'$,
every Ptolemy circle through
$z$
and a fixed point 
$\om'\in X$
(we assume that
$V$
contains at least two points). Then
$V=X$.
\end{lem}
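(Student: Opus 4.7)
The plan is to pass to $X_{\om'}$ and exploit the canonical fibration together with property~($\K$) from Theorem~\ref{thm:basic_ptolemy} and the star-saturation hypothesis on $V$.

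First, $\om'\in V$: since $V$ contains at least two points, choose $z\in V$ with $z\neq\om'$; by property~(E) some Ptolemy circle $\si$ passes through $z$ and $\om'$, and the hypothesis forces $\si\subset V$, so $\om'\in V$. Fix a metric of the M\"obius structure with $\om'$ as its infinitely remote point and set $U=V\cap X_{\om'}$; then $U$ is closed in $X_{\om'}$, nonempty, and \emph{star-saturated}: for every $y\in U$, every Ptolemy line of $X_{\om'}$ through $y$ lies in $U$. It suffices to prove $U=X_{\om'}$.

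Using the canonical fibration $\pi_{\om'}:X_{\om'}\to B_{\om'}$ and property~($\K$), $U$ meets every $\K$-line: $z\in F_z\cap U$, and for every other $\K$-line $F$ the unique Ptolemy line from $z$ to $F$ given by~($\K$) lies in $U$ and meets $F$. Granting the central claim $F_y\subset U$ for every $y\in U$, the conclusion follows immediately: for arbitrary $x\in X_{\om'}$, property~($\K$) gives a unique Ptolemy line $l$ through $x$ meeting $F_z$; this $l$ passes through a point of $U\cap F_z$, hence $l\subset U$ by star-saturation and $x\in U$.

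To prove $F_y\subset U$, fix $y\in U$ and choose two Ptolemy lines $l_1,l_2$ through $y$ whose projections to $B_{\om'}$ are linearly independent. For given $a,b>0$, build a closed piecewise-geodesic quadrilateral $y=y_0\to y_1\to y_2\to y_3\to y_4$ whose four consecutive legs are traversed along the unique Ptolemy lines Busemann-parallel to $+l_1,+l_2,-l_1,-l_2$ respectively (Corollary~\ref{cor:busparallel_foliation}) of lengths $a,b,a,b$; star-saturation, applied successively, places each vertex in $U$. Since Busemann functions are constant on $\K$-lines and affine along Ptolemy lines (Corollary~\ref{cor:busemann_affine}), they descend to affine functions on $B_{\om'}$, and the $B_{\om'}$-projection of the path is a closed parallelogram; hence $\pi_{\om'}(y_4)=\pi_{\om'}(y)$, i.e.\ $y_4\in F_y\cap U$. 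As $(l_1,l_2,a,b)$ vary, the endpoint $y_4$ sweeps a subset of $F_y$; invoking the homothety property~(H) to freely rescale $a,b$, the fiber-preserving shifts of Lemma~\ref{lem:vert_shift} to translate accumulation points within $F_y$, and closedness of $U$, this subset is forced to be all of $F_y$ (vacuously so when $F_y=\{y\}$, as in the real hyperbolic case). The main obstacle is this last, accessibility-type step: it is essentially a Chow--Rashevskii statement for the nilpotent structure implicit in the shift/homothety apparatus of sections~\ref{sect:many_circles_auto}--\ref{sect:fibration}.
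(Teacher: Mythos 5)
There is a genuine gap, and you have in fact pointed at it yourself: the final ``accessibility'' step, that the endpoints $y_4$ of your parallelogram loops sweep out all of the fiber $F_y$, is exactly the hard content and is nowhere proved. Nothing established up to this point in the paper rules out that $U\cap F_y$ is a proper closed subset of $F_y$ (say, a proper closed subgroup of the fiber) containing $y$ and all loop endpoints. Worse, the tools you invoke to close the gap are not available here without circularity: the simply transitive action of the shift group $N_{\om}$, the nilpotent Lie group structure on $X_\om$ and on the fibers (Proposition~\ref{pro:simply_transitivity_shifts}, Corollary~\ref{cor:nilponent_lie_group}, Proposition~\ref{pro:fiber_nilpotent_lie_group}), and the non-integrability of the canonical distribution (Proposition~\ref{pro:nonintegrable_canonical_distribution}) are all \emph{consequences} of Lemma~\ref{lem:set_with_lines} in the paper's logical order. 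So a ``Chow--Rashevskii statement for the nilpotent structure'' cannot be cited at this stage; it would have to be proved from scratch from properties (E), (I), ($\K$), and that is a substantial missing argument, not a routine verification. (Your reduction to the claim $F_y\sub U$ is fine, as is the observation that the loop endpoints lie in $F_y\cap U$; only the sweep-out is missing.)

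The paper avoids the fiber question entirely by changing the viewpoint. It assumes $V\neq X$ and picks $\om\in X\sm V$ as the infinitely remote point: since $V$ is closed and misses a neighborhood of $\om$, the set $V$ is \emph{compact} in $X_\om$, hence so is $\ov V=\pi_\om(V)$ in the Euclidean base $B_\om\cong\R^k$. One then takes a supporting hyperplane $E$ of $\ov V$ at some $\ov z\in\ov V$, a geodesic $\ov l$ through $\ov z$ transversal to $E$, lifts it to a Ptolemy line $l$ through a point $z\in V$ over $\ov z$, and uses Corollary~\ref{cor:tangent_unique} to find a Ptolemy circle $\si$ through $z$ and $\om'$ tangent to $l$. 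The hypothesis forces $\si\sub V$, but $\pi_\om(\si)$ is tangent to $\ov l$ at $\ov z$ and so must cross $E$, contradicting that $\ov V$ lies on one side of $E$. The key move you are missing is precisely this choice of an \emph{exterior} basepoint, which converts the unbounded, line-saturated set $U\sub X_{\om'}$ into a compact set admitting a supporting hyperplane; working at $\om'$, as you do, $U$ is unbounded and no such convexity argument is available, which is why you are pushed into the accessibility problem.
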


\begin{proof} Assume there is
$\om\in X\sm V$.
Since 
$V$
is closed,
$\om$
is contained in
$X\sm V$
together with some its neighborhood. Then
$V$
is compact in the space 
$X_\om$.
Thus the image
$\ov V=\pi_\om(V)\sub B_\om$
under the canonical projection
$\pi_\om:X_\om\to B_\om$
is compact. By Proposition~\ref{pro:base_euclidean}, 
$B_\om$
is isometric to
$\R^k$
for 
$k\ge 1$,
thus we can find a hyperplane
$E\sub B_\om$
supporting to
$\ov V$
at some point
$\ov z\in\ov V$,
i.e.
$\ov z\in E$
and 
$\ov V$
is contained in a closed half-space of
$B_\om$
bounded by
$E$.
Let 
$\ov l\sub B_\om$
be a geodesic line through
$\ov z$
that is transversal to
$E$.
We take
$z\in V$
with
$\pi_\om(z)=\ov z$,
and let
$l\sub X_\om$
be the Ptolemy line through
$z$
with 
$\pi_\om(l)=\ov l$. 
By Corollary~\ref{cor:tangent_unique}
there is a (unique) Ptolemy circle 
$\si\sub X_\om$
through
$z$
and
$\om'$
that is tangent to
$l$
(note that
$l$
misses
$\om'$
since otherwise
$l\sub V$
in contradiction with compactness of
$V$.)
By the assumption,
$\si\sub V$
and thus
$\ov\si=\pi_\om(\si)\sub\ov V$.
This is a contradiction, because the curve
$\ov\si$
is tangent to
$\ov l$
at
$\ov z$,
and therefore
$\ov\si\not\sub\ov V$.
\end{proof}

\begin{pro}\label{pro:simply_transitivity_shifts} The group
$N_\om$
acts simply transitively on
$X_\om$. 
\end{pro}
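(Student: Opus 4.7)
The plan is to prove transitivity and freeness separately. Transitivity is already contained in Lemma~\ref{lem:shift_identity}: for any $x,x'\in X_\om$, the shift $\eta_{xx'}\in N_\om$ satisfies $\eta_{xx'}(x)=x'$. So the real content lies in freeness, namely that any $\eta\in N_\om$ that fixes a point $o\in X_\om$ must be the identity. I will use that every shift extends to a M\"obius automorphism of $X$ fixing $\om$, being a pointwise limit of compositions of elements of $\Gamma_{\om,\cdot}$.

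The key step will be to show that $\eta$ fixes pointwise every Ptolemy line $l\sub X_\om$ through $o$. By Lemma~\ref{lem:pure_homothety}, $\eta$ preserves the foliation of $X_\om$ by lines Busemann parallel to $l$; since $\eta(o)=o$, the image $\eta(l)$ is a leaf of this foliation passing through $o$, hence $\eta(l)=l$ setwise by Corollary~\ref{cor:busparallel_foliation}. For the Busemann function $b$ of $l$ with $b(o)=0$, the function $b\circ\eta$ is a Busemann function of $\eta^{-1}(l)=l$ because $\eta$ is an isometry; two Busemann functions of the same line differ by a constant, which must vanish because $(b\circ\eta)(o)=0$. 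Thus $b\circ\eta=b$ on all of $X_\om$, and since $b$ restricts to an affine bijection $l\to\R$, this forces $\eta|_l=\id_l$.

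The same argument applies verbatim with $o$ replaced by any fixed point $z$ of $\eta$ in $X_\om$. Consequently, the fixed set $V\sub X$ of the extension of $\eta$ contains, for every $z\in V\sm\{\om\}$, every Ptolemy circle through $z$ and $\om$ (such a circle is precisely a Ptolemy line of $X_\om$ through $z$, completed by $\om$). Since $V$ is closed (as the fixed set of a continuous map) and contains the two distinct points $o$ and $\om$, Lemma~\ref{lem:set_with_lines} applied with $\om'=\om$ yields $V=X$, whence $\eta=\id$.

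The main obstacle I anticipate is the Busemann-function computation at the heart of the first step: one must confirm carefully that a shift translates every Busemann function by an additive constant, and that the constant associated with the Busemann function of a line through the fixed point $o$ is forced to be zero. Once this is in hand, the propagation lemma Lemma~\ref{lem:set_with_lines} converts the local statement ``$\eta$ fixes every Ptolemy line through each of its fixed points'' into the global conclusion $V=X$ with essentially no further work.
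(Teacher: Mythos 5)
Your proposal is correct and follows essentially the same route as the paper: transitivity from the construction of shifts, and for freeness the observation that every Ptolemy line through a fixed point is pointwise fixed (via preservation of the Busemann-parallel foliations and the identity $b\circ\eta=b$), followed by Lemma~\ref{lem:set_with_lines} to propagate the fixed set to all of $X$. The paper states the middle step more tersely; your Busemann-function computation is exactly the justification it implicitly relies on (and is already recorded in Lemma~\ref{lem:vert_shift}).
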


\begin{proof} Given
$x$, $x'\in X_\om$,
the shift
$\eta_{xx'}$
moves
$x$
to
$x'$, $\eta_{xx'}(x)=x'$,
by construction, see sect.~\ref{subsect:parallel_lines_pure_homothethies_shifts}. Thus
$N_\om$
acts transitively on
$X_\om$.

Assume that
$\eta(x)=x$
for some shift
$\eta:X_\om\to X_\om$
and some
$x\in X_\om$.
We denote by
$V$
the fixed point set of
$\eta$, $\eta(y)=y$
for every
$y\in V$
(in particular,
$\om\in V$).
We show that
$V=X$.
Note that every Ptolemy line 
$l\sub X_\om$,
which meets
$V$,
is contained in
$V$
because the isometry
$\eta$
preserves every foliation of
$X_\om$
by Busemann parallel Ptolemy lines.
Applying Lemma~\ref{lem:set_with_lines}, we obtain 
$V=X$,
and thus
$\eta=\id$,
i.e. the group
$N_\om$
acts simply transitively on
$X_\om$.
\end{proof}

We fix
$o\in X_\om$
and using Proposition~\ref{pro:simply_transitivity_shifts} identify
$N_\om$
with
$X_\om$
by
$\eta\mapsto\eta(o)$.
Then
$N_\om$
is a locally compact topological group.

An automorphism
$\tau:N_\om\to N_\om$
is said to be {\em contractible} if for every
$\eta\in N_\om$
we have
$\lim_{n\to\infty}\tau^n(\eta)=\id$.
If
$N_\om$
admits a contractible automorphism, then 
$N_\om$
is also said to be {\em contractible}.

\begin{lem}\label{lem:contract_auto} There is a contractible
automorphism
$\tau:N_\om\to N_\om$. 
\end{lem}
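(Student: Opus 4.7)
The plan is to produce $\tau$ as conjugation by a contracting homothety. Fix a base point $o\in X_\om$ and, using the homothety property~(H) from Proposition~\ref{pro:homothety_property}, pick $\phi\in\Ga_{\om,o}$ acting on $X_\om$ as a homothety with coefficient $\la=1/2$. Then define $\tau:N_\om\to N_\om$ by $\tau(\eta)=\phi\circ\eta\circ\phi^{-1}$.

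First I verify that $\tau$ maps $N_\om$ into itself and is an automorphism. The composition $\tau(\eta)$ is an isometry of $X_\om$ because the scaling factors $\la$ and $\la^{-1}$ coming from $\phi$ and $\phi^{-1}$ cancel while $\eta$ is already an isometry. By Lemma~\ref{lem:pure_homothety}, $\phi$ is a \emph{pure} homothety and hence preserves every foliation of $X_\om$ by Busemann parallel Ptolemy lines; the shift $\eta$ preserves such foliations by definition; therefore so does $\tau(\eta)$, which is then itself a shift. Moreover $\tau$ is a group homomorphism with inverse given by conjugation by $\phi^{-1}$, so it is an automorphism of $N_\om$.

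To establish contractibility I evaluate at the base point. Since $\phi(o)=o$ and hence $\phi^{-n}(o)=o$,
\[
\tau^n(\eta)(o)=\phi^n(\eta(o)),
\]
and as $\phi^n$ is a homothety with coefficient $\la^n$ fixing $o$, we obtain $d(o,\phi^n(\eta(o)))=\la^n\,d(o,\eta(o))\to 0$ as $n\to\infty$.

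The remaining, and really only, obstacle is to upgrade this convergence at the single point $o$ to weak convergence $\tau^n(\eta)\to\id$ on the whole of $X$. Here I exploit the simple transitivity from Proposition~\ref{pro:simply_transitivity_shifts}: a shift is uniquely determined by its value at $o$, so $\tau^n(\eta)$ must coincide with the particular shift $\eta_{o,\tau^n(\eta)(o)}$ produced by Lemma~\ref{lem:shift_identity}. Since $\tau^n(\eta)(o)\to o$, that lemma delivers $\eta_{o,\tau^n(\eta)(o)}\to\id$ in the weak topology, completing the proof.
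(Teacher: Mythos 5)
Your proof is correct and follows essentially the same route as the paper: conjugation by a pure homothety fixing $o$ with coefficient $\la\in(0,1)$, verification that conjugation preserves the group of shifts, and the observation that $\tau^n(\eta)(o)=\phi^n(\eta(o))\to o$. The paper concludes from this that the limit is a shift fixing $o$ and hence the identity by simple transitivity, whereas you route the same final step through Lemma~\ref{lem:shift_identity}; both are valid, and since the topology on $N_\om$ is the one transported from $X_\om$ via $\eta\mapsto\eta(o)$, the convergence at $o$ already suffices.
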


\begin{proof} We take any pure homothety
$\phi:X_\om\to X_\om$
with
$\phi(o)=o$
and with the coefficient
$\la\in(0,1)$.
Then we define
$\tau(\eta)=\phi\circ\eta\circ\phi^{-1}$.
The map 
$\eta'=\tau(\eta):X_\om\to X_\om$
is an isometry preserving every foliation of
$X_\om$
by Busemann parallel Ptolemy lines, i.e.
$\eta'$
is a shift, and it is clear that
$\tau$
is an automorphism of
$N_\om$.

For the sequence of shifts
$\eta_n=\tau^n(\eta)$
we have
$\eta_n(o)=\phi^n\circ\eta(o)\to o$
as
$n\to\infty$.
Thus
$\eta_n$
converges to a shift
$\eta_\infty$
with
$\eta_\infty(o)=o$,
hence,
$\eta_\infty=\id$. 
\end{proof}

\begin{cor}\label{cor:nilponent_lie_group} The group
$N_\om$
is a simply connected nilpotent Lie group. In particular, the space 
$X_\om$
is homeomorphic to
$\R^n$,
and the space 
$X$
is homeomorphic to the sphere
$S^n$
with 
$n=\dim X$. 
Furthermore, every metric ball
$B=B_r(o)=\set{x\in X_\om}{$|xo|\le r$}$
in
$X_\om$
is homeomorphic to the standard ball in
$\R^n$.
\end{cor}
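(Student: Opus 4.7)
The strategy is to recognize $N_\om$ as a connected locally compact topological group admitting a contractive automorphism, and then invoke a classical structure theorem. The topological consequences for $X_\om$, $X$, and metric balls will follow.

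First I would equip $N_\om$ with the weak topology inherited from $\aut X$ and verify that it is a Hausdorff locally compact topological group. The identification $\eta \mapsto \eta(o)$ of Proposition~\ref{pro:simply_transitivity_shifts} is a continuous bijection onto $X_\om$, which is locally compact as an open subset of the compact space $X$. Since the elements of $N_\om$ are isometries of $(X_\om, d)$, this family is equicontinuous; composition and inversion are therefore jointly continuous in the weak topology, and the identification becomes a homeomorphism. Connectedness of $X_\om$ (hence of $N_\om$) reduces to path-connectedness: given $x, x' \in X_\om$, the enhanced property~(E) furnishes a Ptolemy circle through $x, x'$, and either this circle lies in $X_\om$ or its trace in $X_\om$ is a single open arc. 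The automorphism $\tau$ from Lemma~\ref{lem:contract_auto} is continuous by the same equicontinuity argument and is contractive by construction.

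The central input is then the classical structure theorem of Siebert (extending earlier work of Wang): a connected locally compact group admitting a continuous contractive automorphism is a simply connected real nilpotent Lie group. Applied to $(N_\om, \tau)$ this yields the first assertion. Consequently the exponential map supplies a diffeomorphism $N_\om \cong \R^n$ with $n = \dim X$, and transporting across the simply transitive action yields $X_\om \cong \R^n$; since $X$ is compact with $X \sm X_\om = \{\om\}$, the space $X$ is the one-point compactification of $\R^n$, namely $S^n$. For the ball assertion I would exploit the contracting homothety $\phi \in \Ga_{\om, o}$ with coefficient $1/2$ from Proposition~\ref{pro:homothety_property}: the nested sequence $B_r(o) \sups \phi(B_r(o)) = B_{r/2}(o) \sups \phi^2(B_r(o)) \sups \cdots$ has intersection $\{o\}$, and the shells $A_k = B_{r/2^{k-1}}(o) \sm B_{r/2^k}(o)$ satisfy $A_{k+1} = \phi(A_k)$. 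This telescope realizes $B_r(o)$ as the closed topological cone on $S_r(o)$ with vertex $o$; combined with $X_\om \cong \R^n$ it forces $S_r(o) \cong S^{n-1}$ and hence $B_r(o) \cong B^n$.

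The hard part will be the rigorous invocation of the structure theorem: one must verify carefully that $N_\om$ is Hausdorff, locally compact, and a topological group, and that $\tau$ is a continuous group automorphism in the sense required, then cite the appropriate form of the Siebert--Wang result. A secondary technical issue is extracting the cone structure of each metric ball from the contracting dynamics of $\phi$; the main subtlety there is identifying $S_r(o)$ as a topological $(n{-}1)$-sphere rather than merely a compact homogeneous set.
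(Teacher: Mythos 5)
Your proposal follows essentially the same route as the paper: connectedness and local compactness of $N_\om$ inherited from $X_\om$, the contractive automorphism of Lemma~\ref{lem:contract_auto}, and Siebert's structure theorem give the Lie group statement, while the homotheties fixing $o$ exhibit each ball as a cone over its boundary sphere. The only cosmetic difference is in the ball argument, where the paper uses the continuous family $h_\la$, $0<\la\le 1$, to build an explicit homeomorphism $\Int B\to X_\om$ rather than your discrete telescope of shells, and it likewise passes from $\Int B\simeq\R^n$ to $B\simeq B^n$ without separately identifying $\d B$ with $S^{n-1}$, so the subtlety you flag is present in the original as well.
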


\begin{proof} The group
$N_\om$
is connected and locally compact because the space 
$X_\om$
is. By Lemma~\ref{lem:contract_auto},
$N_\om$
is contractible. Then by \cite[Corollary~2.4]{Sieb}
$N_\om$
is a simply connected nilpotent Lie group. Thus
$X_\om\simeq\R^n$.

Let 
$h_\la:X_\om\to X_\om$
be the pure homothety with coefficient
$\la>0$
centered at
$o\in X_\om$, $h_\la(o)=o$, $h_1=\id$.
Then
$B=o\cup\set{h_\la(S)}{$0<\la\le 1$}$,
where
$S=\d B=\set{x\in X_\om}{$|xo|=r$}$.
This gives a representation
$\Int B=o\cup\set{(x,\la)}{$x\in S,\ 0<\la<1$}$.
Composing the homeomorphism
$g:[0,1)\to [0,\infty)$, $g(\la)=\tan\frac{\la\pi}{2}$
with the embedding
$f_x:[0,\infty)\to X_\om$, $f_x(0)=o$, $f_x(\la)=h_\la(x)$
for every
$x\in S$
we obtain a homeomorphism
$F:\Int B\to X_\om$, $F(x,\la)=f_x\circ g(\la)$.
Hence
$\Int B$
is homeomorphic to
$X_\om\simeq\R^n$
and
$B$
is homeomorphic to the standard ball in
$\R^n$.
\end{proof}

We denote by
$Z_\om$
a subgroup in
$N_\om$
which consists of all shifts
$\eta\in N_\om$
acting identically on the base 
$B_\om$, $\pi_\ast(\eta)=\id$,
where
$\pi_\ast(\eta):B_\om\to B_\om$
is the shift induced by the projection
$\pi_\om:X_\om\to B_\om$,
see Corollary~\ref{cor:pi_submetry}. Every
$\eta\in Z_\om$
preserves every fiber ($\K$-line) of 
$\pi_\om$,
see Lemma~\ref{lem:homothety_vert_shift} and 
Lemma~\ref{lem:vert_shift}.

\begin{pro}\label{pro:fiber_nilpotent_lie_group} The group
$Z_\om$
acts simply transitively on every $\K$-line
$F\sub X_\om$,
and thus it is a contractible, connected, locally compact 
topological group. Therefore,
$Z_\om$
is a simply connected nilpotent Lie group, and
$F$
is homeomorphic to
$\R^p$
for some
$0\le p<n$, $k+p=n$
for 
$k=\dim B_\om$.
\end{pro}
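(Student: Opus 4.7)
The plan is to parallel the argument of Corollary~\ref{cor:nilponent_lie_group}, working with $Z_\om$ in place of $N_\om$ and with a single $\K$-line $F$ in place of $X_\om$: first establish simple transitivity of $Z_\om$ on $F$, then transport the contractive automorphism from $N_\om$ to $Z_\om$, invoke Siebert's result, and finally read off the dimension $p$ from the quotient $N_\om/Z_\om$.

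For transitivity of $Z_\om$ on $F$, given $x, x' \in F$ I would apply Lemma~\ref{lem:shift_identity} to obtain a shift $\eta_{xx'} \in N_\om$ with $\eta_{xx'}(x) = x'$. Lemma~\ref{lem:homothety_vert_shift} gives that $\eta_{xx'}$ preserves $F$, and Lemma~\ref{lem:vert_shift} then promotes this to preservation of every other $\K$-line, so $\pi_\ast(\eta_{xx'}) = \id$ on $B_\om$ and therefore $\eta_{xx'} \in Z_\om$. Freeness is immediate from Proposition~\ref{pro:simply_transitivity_shifts}: any $\eta \in Z_\om$ fixing a point of $F$ fixes a point of $X_\om$, hence equals the identity. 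Fixing $o \in F$ and identifying $N_\om$ with $X_\om$ via $\eta \mapsto \eta(o)$, the subgroup $Z_\om$ is identified as a topological space with its orbit $F$, which is closed in $X_\om$ as an intersection of horospheres (Proposition~\ref{pro:orthogonal_base}). Hence $Z_\om$ is a closed subgroup of $N_\om$, in particular locally compact, and connected by the same reasoning that gives connectedness of $N_\om$ in Corollary~\ref{cor:nilponent_lie_group}.

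For the contractive automorphism, I would take any pure homothety $h_\la \in \Ga_{\om,o}$ with $\la \in (0,1)$ and set $\tau(\eta) = h_\la \eta h_\la^{-1}$. Conjugation preserves $Z_\om$ since $\pi_\ast$ is a homomorphism and $\pi_\ast(\eta) = \id$ forces $\pi_\ast(\tau(\eta)) = \id$. Contractivity $\tau^n\to\id$ then follows verbatim from the proof of Lemma~\ref{lem:contract_auto}. Siebert's \cite[Corollary~2.4]{Sieb} now gives that $Z_\om$ is a simply connected nilpotent Lie group; in particular $Z_\om \cong \R^p$ via the exponential map, so that $F \cong Z_\om \cong \R^p$.

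Finally, to identify $p$, I would observe that $Z_\om$ is normal in $N_\om$ (since $\pi_\ast$ is a homomorphism), so $N_\om / Z_\om$ is a connected Lie group acting transitively by isometries on $B_\om$. Because shifts carry Busemann parallel Ptolemy lines to Busemann parallel Ptolemy lines, these project via Lemma~\ref{lem:rfoliation_semik} to families of parallel Euclidean lines in $B_\om \cong \R^k$; every induced isometry must preserve each parallel class and hence be a translation. Thus $N_\om/Z_\om$ acts simply transitively by translations on $\R^k$, giving $\dim(N_\om/Z_\om) = k$ and therefore $p = n - k$, with $p < n$ since $k \ge 1$ by Theorem~\ref{thm:basic_ptolemy}. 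The step I expect to be most delicate is this final descent-to-translations argument, which is needed to pin down the dimension exactly rather than merely to bound it; everything else is a direct transcription of the strategy already used for $N_\om$.
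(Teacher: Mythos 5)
Your proposal follows the paper's route almost step for step: transitivity of $Z_\om$ on $F$ via the shift lemmas, simple transitivity from Proposition~\ref{pro:simply_transitivity_shifts}, a contractive automorphism obtained by conjugating with a pure homothety, and Siebert's \cite[Corollary~2.4]{Sieb}. Your closing dimension count ($Z_\om$ normal, $N_\om/Z_\om$ acting by translations on $B_\om\cong\R^k$, hence $p=n-k$) is actually spelled out more carefully than in the paper, which essentially just asserts $n=k+p$; that part is sound.

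The one genuine gap is the connectedness of $Z_\om$ (equivalently of $F$). You dismiss it with ``connected by the same reasoning that gives connectedness of $N_\om$,'' but that reasoning is simply that $X_\om$ itself is connected, and it does not transfer: $F$ is a proper closed subset of $X_\om$ (an intersection of horospheres), and neither closedness in a connected group nor being the kernel of $\pi_\ast$ forces connectedness --- compare $\Z\sub\R$. Connectedness is not decorative here: Siebert's theorem applied to a disconnected locally compact contractible group would leave a totally disconnected factor, so without it you cannot conclude that $Z_\om$ is a (simply connected nilpotent) Lie group nor that $F\cong\R^p$. The paper closes this hole with a short explicit argument: given $x,x'\in F$, take a Ptolemy circle $\si$ through $x,x'$ (property (E)); by ($\K$), each $z\in\si$ lies on a unique Ptolemy line hitting $F$, and sending $z$ to the intersection point defines a continuous map $\si\to F$ joining $x$ to $x'$, so $F$ is path-connected. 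You need this (or an equivalent) step; everything else in your write-up is correct.
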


\begin{proof} The group
$Z_\om$
acts transitively on
$F$
by Lemma~\ref{lem:homothety_vert_shift}. The action is
simply transitive by Proposition~\ref{pro:simply_transitivity_shifts}.
We fix 
$o\in F$
and identify
$Z_\om$
with
$F$
by
$\eta\mapsto\eta(o)$.
By the same argument as in Lemma~\ref{lem:contract_auto} we see
that the group
$Z_\om$
is contractible. Furthermore,
$F$
is locally compact. Given
$x$, $x'\in F$,
there is a Ptolemy circle
$\si\sub X_\om$
through
$x$, $x'$.
By ($\K$), through any point 
$z\in\si$
there is a uniquely determined Ptolemy line that hits
$F$.
This defines a continuous map 
$\si\to F$.
Thus
$F$
is linearly connected. Hence,
$Z_\om$
is a contractible, locally compact, connected topological group.
By \cite[Corollary~2.4]{Sieb},
$Z_\om$
is a simply connected nilpotent Lie group, and thus
$F$
is homeomorphic to
$\R^p$
for some
$0\le p\le n$.
In fact 
$p<n$
because
$X$
contains Ptolemy circles and thus
$k=\dim B_\om>0$,
while
$n=k+p$.
\end{proof}

\subsection{Non-integrability of the canonical distribution}
\label{subsect:nonintegrability}

Given
$o\in X_\om$, $\ov x\in B_\om$,
by the property~($\K$) there is a unique
$x\in F_{\ov x}=\pi_\om^{-1}(\ov x)$
that is connected with
$o$
by a geodesic segment
$ox$.
The point
$x$
is called the {\em lift} of
$\ov x$
with respect to
$o$,
and we use notation
$x=\lift_o(\ov x)$.
This defines an embedding
$\lift_o:B_\om\to X_\om$
with
$\pi_\om\circ\lift_o=\id$
for every
$o\in X_\om$.
We denote
$D_o=\lift_o(B_\om)$.
The embedding
$\lift_o$
is radially isometric,
$|o\lift_o(\ov x)|=|\pi(o)\ov x|$
for every
$\ov x\in B_\om$.
Though there is no reason for
$\lift_o$
as well as for the projection
$\pi_\om|D_o$
to be isometric, the map
$\lift_o$
is continuous which follows the uniqueness property of
($\K$) and compactness of
$X$.

The family of subspaces
$D_o$, $o\in X_\om$,
is called the (canonical) {\em distribution} on
$X_\om$.
We say that the canonical distribution
$\cD=\set{D_o}{$o\in X_\om$}$
on
$X_\om$
is {\em integrable} if for any
$o\in X_\om$
and any
$o'\in D_o$,
the subspaces
$D_o$
and
$D_{o'}$
of
$X_\om$
coincide,
$D_o=D_{o'}$.
For example, if the base 
$B_\om$
is one-dimensional, then the canonical distribution
$\cD$
is obviously integrable.

\begin{pro}\label{pro:nonintegrable_canonical_distribution} The canonical distribution on
$X_\om$
is integrable for some
$\om\in X$
if and only if
$p=0$, 
i.e. every fiber of the projection
$\pi_\om$
is a point for every
$\om\in X$. 
In this case the space
$X$
is M\"obius equivalent to
$\wh\R^n$
with
$n=\dim X$.
\end{pro}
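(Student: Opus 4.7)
The plan is to first observe, as a lemma, that $D_o=\lift_o(B_\om)$ coincides with the union of all Ptolemy lines in $X_\om$ passing through $o$. Indeed, by Proposition~\ref{pro:base_metric} the projection $\pi_\om$ is isometric when restricted to any Ptolemy line, so any point $y$ on a Ptolemy line through $o$ satisfies $|oy|=|\pi_\om(o)\pi_\om(y)|$ and must equal the unique closest point $\lift_o(\pi_\om(y))$ in its fiber; conversely, for every $\ov x\in B_\om$, property~($\K$) produces a unique Ptolemy line through $o$ meeting $F_{\ov x}$, and it does so at $\lift_o(\ov x)$. This characterization is what couples integrability to Lemma~\ref{lem:set_with_lines}.

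For the easy direction, if $p=0$ then $\pi_\om\colon X_\om\to B_\om$ is a bijective $1$-Lipschitz submetry, hence an isometry (the submetry property forces the inverse to be $1$-Lipschitz as well). Thus $X_\om$ is isometric to $\R^n$ with $n=\dim X$, so $D_o=X_\om$ for every $o$, making $\cD$ trivially integrable; and since the M\"obius structure contains an Euclidean metric on $X_\om$, the space $X$ is M\"obius equivalent to $\wh\R^n$.

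For the main direction, assume $\cD$ is integrable for some $\om\in X$, fix $o\in X_\om$, and set $V:=D_o\cup\{\om\}\sub X$. The plan is to apply Lemma~\ref{lem:set_with_lines} with $\om'=\om$ to force $V=X$, which gives $D_o=X_\om$ and hence makes $\pi_\om$ bijective (so $p=0$). The set-theoretic hypothesis of the lemma is easy from integrability: any Ptolemy circle $\si$ through $z\in V\sm\om$ and $\om$ restricts on $X_\om$ to a Ptolemy line through $z$, which lies in $D_z$ by the characterization above, and $D_z=D_o$ by integrability, so $\si\sub V$. The main obstacle is verifying that $V$ is closed in $X$: if $x_i\in D_o$ converges to some $x\in X_\om$, then continuity of $\pi_\om$ together with the radial isometry $|ox_i|=|\pi_\om(o)\pi_\om(x_i)|$ yields $|ox|=|\pi_\om(o)\pi_\om(x)|$ in the limit, so $x$ realizes the minimum distance from $o$ inside its fiber and therefore equals $\lift_o(\pi_\om(x))\in D_o$; sequences converging to $\om$ obviously land in $V$.

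Finally, to upgrade $p=0$ from the chosen $\om$ to every base point, invoke two-point homogeneity (Proposition~\ref{pro:two_point_homogeneous}) together with Lemma~\ref{lem:induced_base_map}: any M\"obius automorphism $\phi$ with $\phi(\om)=\om'$ conjugates $\pi_\om$ to $\pi_{\om'}$, so the fibers of $\pi_{\om'}$ are also points. The M\"obius equivalence $X\simeq\wh\R^n$ then follows by applying the easy direction at any base point. The technical heart is the closedness argument, which is the only place where the geometric content (radial isometry of $\lift_o$ and compactness of $X$) enters; everything else is a bookkeeping reduction to Lemma~\ref{lem:set_with_lines}.
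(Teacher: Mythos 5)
Your overall strategy coincides with the paper's: identify $D_o$ with the union of the Ptolemy lines through $o$, observe that integrability then makes $V=D_o\cup\{\om\}$ contain every Ptolemy circle through $\om$ and any of its other points, and apply Lemma~\ref{lem:set_with_lines} with $\om'=\om$; the converse direction and the identification $X\simeq\wh\R^n$ are likewise handled as in the paper, and your upgrade to all base points via two-point homogeneity and Lemma~\ref{lem:induced_base_map} is fine. The one genuine gap is in your closedness verification, at the step ``$|ox|=|\pi_\om(o)\pi_\om(x)|$, so $x$ realizes the minimum distance from $o$ inside its fiber and \emph{therefore} equals $\lift_o(\pi_\om(x))$.'' What Lemmas~\ref{lem:klines_distance} and~\ref{lem:line_dist_klines} give is that $\lift_o(\ov x)$ \emph{is} a point of $F_{\ov x}$ at distance $d(F_{\pi_\om(o)},F_{\ov x})$ from $o$; they do not give that it is the \emph{only} such point. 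By Lemma~\ref{lem:rfoliation_semik}, every $y\in F_{\ov x}$ lies on the Ptolemy line through $y$ Busemann parallel to the line through $o$ and $\lift_o(\ov x)$, and that line meets $F_{\pi_\om(o)}$ at some $o_y$ with $|o_yy|=d(F_{\pi_\om(o)},F_{\ov x})$ --- but nothing proved in the paper forces $o_y=o$ merely because $|oy|$ equals this common value (a convexity computation with the Busemann function of the line through $o$ and $\lift_o(\ov x)$ pins down $b(y)$ but not $y$ itself). So ``being a nearest point of the fiber to $o$'' has not been shown to characterize the lift, and your limit argument only produces a minimizer. The same conflation appears in your opening lemma, though there it is harmless: the identity $D_o=\bigcup_{l\ni o}l$ follows directly from the definition of $\lift_o$ and the uniqueness part of ($\K$), with no reference to nearest points.

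The gap is local and repairable without new ideas: the paper records in sect.~\ref{subsect:nonintegrability} that $\lift_o:B_\om\to X_\om$ is continuous (a consequence of the uniqueness in ($\K$) and compactness of $X$, e.g.\ via sublimits of the Ptolemy lines through $o$ and $x_i$, cf.\ Lemma~\ref{lem:limit_circle}). Granting this, closedness is immediate: if $x_i\in D_o$ and $x_i\to x\in X_\om$, then $x_i=\lift_o(\pi_\om(x_i))\to\lift_o(\pi_\om(x))$, hence $x=\lift_o(\pi_\om(x))\in D_o$. With that substitution your proof is complete and, if anything, more careful than the paper's own, which applies Lemma~\ref{lem:set_with_lines} without commenting on the closedness hypothesis at all.
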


\begin{proof} Assume that the canonical distribution is integrable
for some
$\om\in X$.
Since
$D_z=D_o$
for every
$o\in X_\om$
and every
$z\in D_o$,
every Ptolemy line in
$X_\om$
through
$z$
is contained in
$D_o$.
By Lemma~\ref{lem:set_with_lines},
$D_o\cup\om=X$.
Hence
$p=0$, $n=k$,
and
$X$
is M\"obius equivalent to
$\wh\R^n$
with 
$n=\dim X$.
Conversely, assume 
$p=0$,
and 
$o'\in D_o\sub X_\om$, $\om\in X$.
Then by property~($\K$) for every
$x\in D_{o'}$
there is a unique Ptolemy line in
$X_\om$
through
$x$
that hits the fiber
$F_o=\{o\}$.
Hence
$D_{o'}\sub D_o$,
and similarly
$D_o\sub D_{o'}$,
that is,
$\cD$
is integrable for every
$\om\in X$.
\end{proof}

\begin{cor}\label{cor:nonintegrability} Assume
$p>0$,
that is, fibers of the canonical projections
$\pi_\om$, $\om\in X$,
are nondegenerate. Then the canonical distribution
on
$X_\om$
is non-integrable for every
$\om\in X$.
\qed
\end{cor}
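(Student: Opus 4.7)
The plan is to observe that Corollary~\ref{cor:nonintegrability} is just the contrapositive of the \emph{only if} direction of Proposition~\ref{pro:nonintegrable_canonical_distribution}, once one notes that the value of $p$ (the dimension of a fiber of $\pi_\om$) is the same for every $\om \in X$ because the M\"obius group $\aut X$ acts transitively on $X$ (by two-point homogeneity, Proposition~\ref{pro:two_point_homogeneous}) and every M\"obius automorphism sending $\om$ to $\om'$ induces a homeomorphism of fibers of $\pi_\om$ onto fibers of $\pi_{\om'}$, as recorded in Lemma~\ref{lem:induced_base_map}.

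In detail, I would argue as follows. The statement of Proposition~\ref{pro:nonintegrable_canonical_distribution} is that integrability of the canonical distribution on $X_\om$ for some $\om$ is equivalent to $p=0$. So assume, for contradiction, that $p>0$ but that the distribution on $X_{\om_0}$ is integrable for some $\om_0 \in X$. Then the \emph{only if} direction of Proposition~\ref{pro:nonintegrable_canonical_distribution} yields $p=0$, contradicting the hypothesis. Therefore, the distribution on $X_{\om_0}$ is non-integrable; since $\om_0$ was arbitrary, this holds for every $\om \in X$.

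There is really no obstacle here; the only thing one should perhaps record explicitly is that the definition of $p$ is independent of the base point $\om$, which is immediate from the fact that any M\"obius automorphism $\phi \colon X \to X$ with $\phi(\om)=\om'$ maps $\K$-lines of $\pi_\om$ bijectively onto $\K$-lines of $\pi_{\om'}$ (Lemma~\ref{lem:induced_base_map}), and these are Lie groups $Z_\om$, $Z_{\om'}$ of the same dimension by Proposition~\ref{pro:fiber_nilpotent_lie_group}. Hence the corollary follows at once.
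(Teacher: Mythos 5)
Your proposal is correct and matches the paper, which states the corollary with no written proof precisely because it is the contrapositive of the ``only if'' direction of Proposition~\ref{pro:nonintegrable_canonical_distribution}. Your additional remark that $p$ is independent of $\om$ (via Lemma~\ref{lem:induced_base_map} and two-point homogeneity) is a reasonable explicit justification of a point the paper leaves implicit.
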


The next corollary follows immediately from 
Proposition~\ref{pro:nonintegrable_canonical_distribution}.

\begin{cor}\label{cor:onedimensional_base} If the base
$B_\om$
of
$X_\om$
is one-dimensional (this is independent of
$\om\in X$), 
then
$p=0$
and
$X=\wh\R$.
\qed
\end{cor}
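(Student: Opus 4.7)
The plan is to reduce this immediately to Proposition~\ref{pro:nonintegrable_canonical_distribution}. That proposition already converts integrability of the canonical distribution $\cD$ on $X_\om$ into exactly the desired conclusion, namely $p=0$ and $X$ M\"obius equivalent to $\wh\R^n$ with $n=\dim X$. In the present setting $n=k+p=1+p$, so once $p=0$ is known we get $n=1$ and hence $X=\wh\R$. Thus the whole content of the corollary is verifying that $\cD$ is integrable as soon as $k:=\dim B_\om = 1$.

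For this, I would first describe $D_o=\lift_o(B_\om)$ explicitly in the one-dimensional case. Pick any Ptolemy line $l\sub X_\om$ through $o$; by Proposition~\ref{pro:base_metric} the restriction $\pi_\om|l$ is an isometric embedding onto a geodesic line in $B_\om$. Since $B_\om\cong\R$ is one-dimensional, this image is forced to be all of $B_\om$. In particular $l$ meets every fiber $F_{\ov x}$, and the uniqueness clause of property~($\K$) forces $l$ to be \emph{the} unique Ptolemy line through $o$ reaching any prescribed fiber $F_{\ov x}$. Hence $\lift_o(\ov x)=l\cap F_{\ov x}$ for every $\ov x\in B_\om$, so $D_o=l$ is a single Ptolemy line through $o$.

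Integrability is then immediate: for any $o'\in D_o=l$, the very same line $l$ passes through $o'$ and continues to meet every fiber of $\pi_\om$, so another application of the uniqueness in ($\K$) gives $D_{o'}=l=D_o$. This is precisely the definition of integrability of $\cD$, and Proposition~\ref{pro:nonintegrable_canonical_distribution} now yields $p=0$ and $X=\wh\R$.

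There is no genuine obstacle here; the one-dimensionality of $B_\om$ collapses the lift map $\lift_o$ onto a single Ptolemy line, which is exactly why the excerpt already labels the integrability as ``obvious'' in this case. All the nontrivial work is already packaged inside Proposition~\ref{pro:nonintegrable_canonical_distribution}, and the corollary just harvests it in the smallest possible base dimension.
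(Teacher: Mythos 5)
Your argument is correct and is essentially the paper's own: the authors remark just before Proposition~\ref{pro:nonintegrable_canonical_distribution} that integrability of $\cD$ is ``obvious'' for a one-dimensional base and then state the corollary as an immediate consequence of that proposition. You merely make the obvious step explicit (via Proposition~\ref{pro:base_metric} and the uniqueness clause of ($\K$), $D_o$ collapses to a single Ptolemy line), which is a faithful filling-in rather than a different route.
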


\subsection{Classification of 2-transitive actions and Theorem~\ref{thm:moebius}}
\label{subsect:2-transitive}

By Proposition~\ref{pro:two_point_homogeneous}, the group
$\aut X$
of M\"obius transformations of
$X$
is 2-transitive, see Remark~\ref{rem:2-transitive}.
It is known from the Tits' classification of 2-transitive actions that:

{\em If a topological group
$G$,
acting effectively and 2-transitively on a compact topological space
$X$,
which is not totally disconnected, is locally compact and
$\si$-compact
(that is,
$G$
is a countable union of compact subsets), then 
$G$
is a Lie group and
$X=G/G_x$
is a smooth and connected manifold, homeomorphic to 
either a projective space, or 
a sphere
$S^n$.
In the second case,
$G$
is isomorphic to the isometry group of a rank one symmetric space of non-compact
type (up to a subgroup of index 2),} 

\noindent
see \cite[Theorems~A, B and 3.3(a)]{Kr}.
Corollary~\ref{cor:nilponent_lie_group} excluded the first possibility.
Thus to complete the proof of Theorem~\ref{thm:moebius}, it would be
sufficient to check that 
$\aut X$
is locally compact and
$\si$-compact,
and that the M\"obius structure of
$X$
is uniquely determined by the respective appropriately normalized
symmetric space. However, from our point of view, this formal 
classification argument is not satisfactory. Instead, we give a direct, 
classification free proof of Theorem~\ref{thm:moebius} by showing how 
the symmetric space structure emerges from the M\"obius structure.
This is done in the next two sections.

\section{Extension of M\"obius automorphisms of circles}
\label{sect:extension_circles}

The main purpose of this section is to prove
the the following extension property.

\noindent
(${\rm E}_2$) Extension: any M\"obius map between any  Ptolemy circles in
$X$
extends to a M\"obius automorphism of
$X$.

\begin{pro}\label{pro:extension_property} Any compact Ptolemy space 
with properties (E) and (I) possesses the extension property
(${\rm E}_2$).
\end{pro}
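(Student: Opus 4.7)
Given a Möbius map $f:\si\to\si'$, pick three distinct points $a,b,c\in\si$ with images $a',b',c':=f(a),f(b),f(c)\in\si'$. By Proposition~\ref{pro:moebch-circ}, any $F\in\aut X$ that agrees with $f$ on $\{a,b,c\}$ and sends $\si$ to $\si'$ automatically agrees with $f$ on all of $\si$; it thus suffices to construct such an $F$. The plan is a three-step decomposition: match $(a,b)$ to $(a',b')$ by two-point homogeneity, then rotate the circle $\si$ onto $\si'$ inside the two-point stabilizer, and finally slide the image of $c$ to $c'$ along $\si'$.

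The first and third steps are reasonably direct. Two-point homogeneity (Proposition~\ref{pro:two_point_homogeneous}) yields $g_1\in\aut X$ with $g_1(a)=a'$, $g_1(b)=b'$; after replacing $\si,f$ by $g_1(\si),f\circ g_1^{-1}$ we may assume $\si,\si'\in C_{a',b'}$. Suppose for the moment that the rotation step has been performed, yielding $g_2\in\aut X$ fixing $a',b'$ with $g_2(\si)=\si'$. Then $g_2(g_1(c))\in\si'$, and by property (H) (Proposition~\ref{pro:homothety_property}) the group $\Ga_{a',b'}$ acts transitively on each arc of $\si'\setminus\{a',b'\}$, so a single homothety in $\Ga_{a',b'}$ matches $g_2(g_1(c))$ to $c'$ when they lie on the same arc. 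Otherwise, first pre-compose with an orientation-reversing Möbius involution of $\si'$ fixing $\{a',b'\}$, realized inside $\aut X$ as a product of two s-inversions whose data is placed symmetrically about $\{a',b'\}$, so that the product fixes $a',b'$ and swaps the two arcs of $\si'$.

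The central step is the production of $g_2$: the stabilizer of $\{a',b'\}$ in $\aut X$ must act transitively on $C_{a',b'}$. Passing to $X_{a'}$ with $a'$ infinitely remote, this becomes the statement that the group of similarities of $X_{a'}$ fixing $b'$ acts transitively on Ptolemy lines through $b'$. Pick any sphere $S$ between $a',b'$; by Lemma~\ref{lem:bisector}, each of $\si,\si'$ meets $S$ in exactly two points, say $\si\cap S=\{y_+,y_-\}$ and $\si'\cap S=\{y'_+,y'_-\}$. By Corollary~\ref{cor:weak_unique}, $\si$ is the unique Ptolemy circle through $a',b',y_+$ and $\si'$ the unique one through $a',b',y'_+$, so it suffices to produce $g_2\in\aut X$ fixing $a',b'$ and sending $y_+$ to $y'_+$. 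Starting with $h\in\aut X$ furnished by 2-transitivity with $h(a')=a'$ and $h(y_+)=y'_+$, the task is to correct $h(b')$ back to $b'$ while keeping the image of $y_+$ on $S$; this is arranged by successive compositions with shifts (Lemma~\ref{lem:shift_busemann_parallel}), s-inversions $\phi_{a',b',\cdot}$ (which fix the pair of circles through $a',b'$ pointwise on $S$), and homotheties in $\Ga_{a',b'}$, exploiting the nilpotent Lie group structure of $X_{a'}$ from Corollary~\ref{cor:nilponent_lie_group}.

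The main obstacle is precisely this rotation construction in the middle step: none of the primitive tools (2-transitivity, shifts, homotheties in $\Ga_{\om,\om'}$, s-inversions $\phi_{\om,\om',S}$) directly supplies an automorphism that fixes two given points and moves a circle through them to another. In the model rank-one boundaries the required rotations exist by construction; in the abstract setting of the theorem one must synthesize them, and the key simplification that makes this feasible is that Corollary~\ref{cor:weak_unique} reduces the rotation about the axis $(a',b')$ to a translation along the sphere $S$, which is accessible to the available tools.
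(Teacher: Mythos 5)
Your decomposition (match two points, rotate one circle onto the other in the two-point stabilizer, then adjust along the circle by (H)) is a reasonable skeleton, but the middle step is exactly the hard content of the statement, and you have not proved it. The assertion that the stabilizer of $\{a',b'\}$ in $\aut X$ acts transitively on $C_{a',b'}$ does not follow from ``successive compositions with shifts, s-inversions $\phi_{a',b',\cdot}$, and homotheties in $\Ga_{a',b'}$'': every element of $\Ga_{a',b'}$ and every s-inversion with data $(a',b',S)$ preserves \emph{each} circle of $C_{a',b'}$ individually (that is how $\Ga_{a',b'}$ is defined and how property (I) is stated), so these maps generate a group that cannot move $\si$ off itself, and a generic shift or 2-transitivity correction destroys the condition of fixing both $a'$ and $b'$. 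The paper's proof of this transitivity (Proposition~\ref{pro:transitive_circle}) is genuinely variational: one minimizes $\slope(\si,\si')$ over the orbit of $\si_0$ through the two fixed points, uses a compactness lemma (Lemma~\ref{lem:nondegenerate_morphism}) to attain the infimum $\al$, shows $\al<1$, and then rules out $\al>-1$ by integrating the slope bound along an arc and contradicting the second-order estimate $b^+(x_t)+b^-(y_t)\le 2\al t-\frac1a(1-\al^2)t^2$ of Lemma~\ref{lem:quaratic_excess}. Nothing in your sketch substitutes for this second-order input; reducing ``rotation about the axis'' to ``translation along the sphere $S$'' via Corollary~\ref{cor:weak_unique} merely restates the problem, since no available primitive acts transitively on $S$ while fixing $a'$ and $b'$.

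There is also a concrete error in your third step. An s-inversion restricted to an invariant Ptolemy circle is a fixed-point-free M\"obius involution of that circle, hence orientation-preserving on it (an elliptic element of order two); therefore any product of s-inversions acts on $\si'$ preserving orientation. The involution you need --- fixing $a'$, $b'$ and swapping the two arcs of $\si'\sm\{a',b'\}$ --- is orientation-reversing on $\si'$ (in coordinates with $a'=0$, $b'=\infty$ it is $x\mapsto\la x$ with $\la<0$), so it cannot be realized as a product of two s-inversions. The paper obtains the orientation-reversing case instead as a corollary of transitivity on \emph{oriented} circles, applied to $\si'$ with its two orientations, and then extends orientation-preserving self-maps of a circle via Lemma~\ref{lem:extend_cirle_map} (identifying $\aut_\si X$ with the isometries of $\hyp^2$ generated by central symmetries, each of which restricts from an s-inversion of $X$).
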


The proof of this result is based on study of second order properties 
of Ptolemy circles like Lemma~\ref{lem:quaratic_excess}.

\subsection{Distance and arclength parameterizations of a circle}
\label{subsect:dist_unit_speed_param}

In this section, we establish existence of a distance
parameterization in a Ptolemy circle and study its 
relationship with an arclength parameterization. A distance
parameterization is convenient to obtain an important estimate
(\ref{eq:quadratic_reduce}) below. On the other hand, in 
an application of this estimate we compute slops, and that is
most convenient to do in an arclength parameterization.

In what follows, we consider a (bounded) Ptolemy circle
$\si\sub X_\om$
and points 
$x$, $y\in\si$
with
$a:=|xy|>0$.

\begin{lem}\label{lem:dist_parameter} Let 
$\si_+$, $\si_-$
be the two components of
$\si\sm\{x,y\}$.
Then for all
$0<t<a$ 
there exists exactly one point
$x_t^+\in\si_+$
(resp. 
$x_t^-\in\si_-$) 
with
$|xx_t^+|=|xx_t^-|=t$.
Therefore
$\ga:(-a,a)\to\si$
with
$\ga(0)=x$, $\ga(t)=x_t^+$
for 
$t>0$,
and
$\ga(t)=x_{-t}^-$
for 
$t<0$
parameterizes a neighborhood of
$x$ 
in 
$\si$.
\end{lem}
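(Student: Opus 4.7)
The plan is to exploit the Ptolemy equality together with the circle topology of $\si$. First I would use that $\si$ is homeomorphic to $S^1$, so removing the two distinct points $x$ and $y$ leaves exactly two arc components, which I label $\si_+$ and $\si_-$. On each component the distance function $z\mapsto|xz|$ is continuous, equals $0$ at $z=x$ and tends to $a=|xy|$ as $z\to y$, so by the intermediate value theorem every value $t\in(0,a)$ is attained. This yields existence of points $x_t^+\in\si_+$ and $x_t^-\in\si_-$ with $|xx_t^\pm|=t$.

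For uniqueness on $\si_+$ (the argument for $\si_-$ is symmetric) I would argue by contradiction: suppose $u_1\ne u_2$ both lie on $\si_+$ with $|xu_1|=|xu_2|=t\in(0,a)$, and order them so that $x,u_1,u_2,y$ appear on $\si$ in this cyclic order. Then the pair $(x,u_2)$ separates the pair $(u_1,y)$, so the Ptolemy equality~(\ref{eq:PT_eq}) applied to the quadruple $(x,u_1,u_2,y)$ reads
$$|xu_2|\,|u_1y|=|xu_1|\,|u_2y|+|xy|\,|u_1u_2|.$$
Substituting $|xu_1|=|xu_2|=t$ and $|xy|=a$ rearranges to $t(|u_1y|-|u_2y|)=a\,|u_1u_2|$, which combined with the triangle inequality $|u_1y|-|u_2y|\le|u_1u_2|$ forces $a\le t$, contradicting $t<a$. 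I expect this to be the main obstacle, since one must correctly identify which pair separates which in order to write the Ptolemy equality in the right form; once the setup is in place the contradiction falls out in a single line.

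To finish, I would define $\ga:(-a,a)\to\si$ by $\ga(0)=x$, $\ga(t)=x_t^+$ for $t>0$ and $\ga(t)=x_{-t}^-$ for $t<0$. Injectivity is immediate from uniqueness together with the pairwise disjointness of $\si_+$, $\{x\}$, $\si_-$. Continuity at an arbitrary $t\in(-a,a)$ follows from compactness of $\si$ plus uniqueness: any subsequential limit of $\ga(t_n)$ for $t_n\to t$ has distance $|t|$ from $x$ and lies in the closure of the appropriate arc, and since $0<|t|<a$ excludes both endpoints $x$ and $y$, the limit lies on the correct open arc and must coincide with $\ga(t)$ by uniqueness; a standard compactness argument then upgrades this to the statement that $\ga$ is a homeomorphism onto the open neighborhood $\{z\in\si:|xz|<a\}$ of $x$ in $\si$.
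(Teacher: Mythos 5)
Your proposal is correct and follows essentially the same route as the paper: existence by continuity/intermediate value on each arc, and uniqueness by applying the Ptolemy equality to the quadruple $(x,u_1,u_2,y)$ together with the triangle inequality $|u_1y|\le|u_1u_2|+|u_2y|$ to force $a\le t$. The paper phrases the same computation as a strict monotonicity statement ($|xp|<a$ and $p$ before $q$ on $\si_+$ imply $|xq|>|xp|$), but the inequality used is identical, and your added detail on the continuity of $\ga$ is consistent with what the paper leaves implicit.
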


\begin{proof} The existence of a point
$x_t^+ \in\si_+$
with
$|xx_t^+|=t$
is clear by continuity. To prove uniqueness consider points
$x<p<q<y$ 
in this order on
$\si_+$
and assume
$b:=|xp|<a=|xy|$.
Let
$c:=|xq|$, $\la_a:=|pq|$, $\la_b:=|qy|$, $\la_c:=|py|$.

The Ptolemy equality and the triangle inequality give
$$a\la_a+b\la_b=c\la_c\le c(\la_a+\la_b).$$
Therefore
$$c \ge \frac{\la_a}{\la_a+\la_b}a+\frac{\la_b}{\la_a+\la_b}b >b$$
where the last equality holds, since 
$a>b$.
In particular
$|xp|\neq |xq|$.
\end{proof}

In what follows, we use the parameterization
$\ga:(-a,a)\to\si$
of a neighborhood of
$x\in\si$,
and call it a {\em distance} parameterization.

\begin{lem}\label{lem:smooth_concave} The function
$g(t):=|\ga(t)y|$
is concave and $C^1$-smooth on
$(-a,a)$.
\end{lem}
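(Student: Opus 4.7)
My plan is to pass to the m-inverted metric $d_y$ at $y$, which belongs to the M\"obius structure of $X$ by Proposition~\ref{pro:moeb_ptolemy}. In $(X_y,d_y)$ the set $\sigma\setminus y$ is a Ptolemy line; let $c:\R\to X_y$ be its arclength parameterization with $c(0)=x$. The inversion formula
$$s \;=\; d_y(x,c(s)) \;=\; \frac{|t|}{a\,g(t)}\qquad\text{whenever }c(s)=\gamma(t)$$
shows that the parameter change $s=t/(a g(t))$ is a strictly monotonic continuous bijection $(-a,a)\leftrightarrow\R$, using Lemma~\ref{lem:dist_parameter} and $g>0$. In particular, $g$ is $C^1$ in $t$ if and only if $s(\cdot)$ is, and concavity of $g$ corresponds to a specific one-sided inequality on this parameter change.

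For $C^1$-smoothness, the ambient Busemann flatness (Proposition~\ref{pro:smooth_convex}) together with Proposition~\ref{pro:busemann_flat} gives that $s\mapsto d_y(z,c(s))$ is $C^1$-smooth for every $z\in X_y$ off the line $\sigma\setminus y$; the analogous statement holds in the companion inverted space $(X_x,d_x)$ for the $X_x$-arclength parameterization $\tilde c$ of $\sigma\setminus x$. Combining these regularities via the rigid algebraic identity $s\,\tau=a^{-2}$ (which follows directly from comparing the two inversion formulas applied to the same point $\gamma(t)$) together with the definitional relation $g(t(s))=t(s)/(a\,s)$ transports the $C^1$-regularity to $g$.

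For concavity, the Ptolemy equality on the cyclic quadruple $(\gamma(r),\gamma(s),\gamma(t),y)\subset\sigma$, combined with the chord formula $|\gamma(p)\gamma(q)|=(q g(p)-p g(q))/a$ from the proof of Lemma~\ref{lem:dist_parameter}, reduces the concavity of $g$ to the inequality
$$(t-r)g(s)\;\ge\;(t-s)g(r)+(s-r)g(t).$$
A direct check shows that all circle-level Ptolemy data collapse into a tautology under these substitutions, so the decisive input must come from the strict Ptolemy inequality applied to a quadruple containing one point \emph{off} $\sigma$. I would take $p$ on the tangent Ptolemy line to $\sigma$ at $y$ (existence by Proposition~\ref{pro:tangent_rcircle}), apply the Ptolemy inequality to $(\gamma(r),\gamma(s),\gamma(t),p)$, and pass to the limit $p\to y$ along this tangent using the first variation formula (Lemma~\ref{lem:first_variation}), whose legitimate application rests on the $C^1$-smoothness established above.

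The principal difficulty is this concavity step: the internal algebra on $\sigma$ contributes nothing, so the entire engine is the tangent-direction first variation at $y$, and the sign of the limiting inequality must be tracked carefully to yield concavity rather than convexity.
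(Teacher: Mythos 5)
Your concavity step rests on a false premise, and that is the genuine gap. You correctly observe that the Ptolemy \emph{equality} applied to quadruples inside $\si$ becomes a tautology once the chord formula is substituted, but you then conclude that the decisive input must be a Ptolemy inequality involving a point off $\si$. It is not: the decisive input is the plain \emph{triangle inequality} among three points of the circle, which is circle-level data not implied by the Ptolemy equalities. Concretely, the Ptolemy equality on $(x,\ga(t_1),\ga(t_2),y)$ gives $a|\ga(t_1)\ga(t_2)|=t_2g(t_1)-t_1g(t_2)$, and substituting this chord formula into $|\ga(t_1)\ga(t_3)|\le|\ga(t_1)\ga(t_2)|+|\ga(t_2)\ga(t_3)|$ for $t_1<t_2<t_3$ yields exactly the three-point concavity inequality $(t_3-t_1)g(t_2)\ge(t_3-t_2)g(t_1)+(t_2-t_1)g(t_3)$. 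This is the paper's (three-line) argument. Your proposed detour through the tangent line at $y$ and the first variation formula is never carried out --- you yourself flag the sign-tracking as the ``principal difficulty'' --- and it is unnecessary.

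On the $C^1$ part your strategy (pass to the inverted space where $\si\sm y$ is a Ptolemy line, use $C^1$-smoothness of distance functions along Ptolemy lines from Busemann flatness, and change parameters) is essentially the paper's, but the assertion ``$g$ is $C^1$ in $t$ if and only if $s(\cdot)$ is'' hides the real content: you must show the parameter change has \emph{nonvanishing} derivative before you can invert it. This is precisely what the paper labors over (showing $\frac{d\wt f}{ds}(0)\neq 0$ via the inverse function theorem and a maximum-point argument using concavity and the injectivity of the distance parameterization). The identity $s\tau=a^{-2}$ is correct but by itself transports nothing; as written, this step is not a proof. Note also that the paper's $C^1$ argument uses the concavity of $g$ (existence and monotonicity of one-sided derivatives) as an input, so establishing concavity first, as the paper does, is the natural order.
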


\begin{proof} For
$-a\le t_1<t_2\le a$
the Ptolemy equality for the points
$x$, $\ga(t_1)$, $\ga(t_2)$, $y$
implies
$$t_2g(t_1)-t_1g(t_2)=a|\ga(t_1)\ga(t_2)|.$$
Thus for 
$-a\le t_1<t_2<t_3\le a$
the triangle inequality 
$|\ga(t_1)\ga(t_3)|\le|\ga(t_1)\ga(t_2)|+|\ga(t_2)\ga(t_3)|$
implies
$$t_3g(t_2)-t_2g(t_3)+t_2g(t_1)-t_1g(t_2)\,\ge\,t_3g(t_1)-t_1g(t_3)$$
which is equivalent to
$$\frac{g(t_2)-g(t_1)}{t_2-t_1}\,\ge\, \frac{g(t_3)-g(t_2)}{t_3-t_2}.$$
Therefore,
$g$
is concave. It follows, in particular, that
$g$
has the left 
$g_-'(t)$
and the right derivative 
$g_+'(t)$
at every
$t\in(-a,a)$, $g_-'(t)\ge g_+'(t)$
and these derivatives are nonincreasing,
$g_+'(t)\ge g_-'(t')$
for
$t<t'$.
Furthermore,
$g_-'(t)\to g_-'(t')$
as
$t\nearrow t'$,
$g_+'(t')\to g_+'(t)$
as
$t'\searrow t$.
These are standard well known facts about concave functions, 
see e.g. \cite{H-UL}.

We fix
$t_0\in(-a,a)$
and consider the Ptolemy line 
$l\sub X_\om$
tangent to
$\si$
at
$x_0=\ga(t_0)$.
We assume that
$l$
is oriented and that its orientation is compatible with
the orientation of
$\si$
given by the distance parameterization
$\ga$.
Let
$c:\R\to X_\om$
be the unit speed parameterization of
$l$
compatible with the orientation,
$c(0)=x_0$.
By Corollary~\ref{cor:tangent_unique},
$y\not\in l$. 
By Proposition~\ref{pro:smooth_convex}, the function
$\wt g(s)=|c(s)y|$, $s\in\R$,
is
$C^1$-smooth.
If
$t_0=0$,
then
$g_-'(t_0)=\frac{d\wt g}{ds}(0)=g_+'(t_0)$,
because
$l$
is tangent to 
$\si$
at
$x_0=x$,
and thus
$g$
is differentiable at
$t_0=0$.

Consider now the case
$t_0\neq 0$.
Then again by Corollary~\ref{cor:tangent_unique},
$x\not\in l$,
thus the function
$\wt f(s)=|c(s)x|$, $s\in\R$,
is $C^1$-smooth.
We show that
$\frac{d\wt f}{ds}(0)\neq 0$.
We suppose W.L.G. that
$t_0>0$.
Then for all
$t_1\in(0,t_0)$
sufficiently close to
$t_0$
we have
$\frac{d\wt h}{ds}(0)\neq 0$,
where
$\wt h(s)=|x_1c(s)|$, $x_1=\ga(t_1)$.
We fix such a point 
$t_1$, 
and using Lemma~\ref{lem:dist_parameter}
consider the distance parameterization of a neighborhood of
$x_0=\ga(t_0)$
in
$\si$, $|z(\tau)x_1|=\tau$
for all
$z\in\si$
sufficiently close to
$x_0$.
Then 
$t=t(\tau)$
and the function
$f(\tau)=|x\ga\circ t(\tau)|$
is concave by the first part of the proof. 
Since the functions
$\wt f(s)=|xc(s)|$, $\wt h(s)=|x_1c(s)|$
are $C^1$-smooth, and
$\frac{d\wt h}{ds}(0)\neq 0$,
the function 
$\wt f(\tau)=\wt f\circ\wt h^{-1}(\tau)$
is $C^1$-smooth
in a neighborhood of
$\tau_0=|x_1x_0|$
by the inverse function theorem. Therefore,
$f_-'(\tau_0)=\frac{d\wt f}{ds}(0)=f_+'(\tau_0)$
because
$l$
is tangent to
$\si$
at
$x_0$.
The assumption
$\frac{d\wt f}{ds}(0)=0$
implies
$\frac{df}{d\tau}(\tau_0)=0$.
By concavity, 
$\tau_0$
is a maximum point of the function
$f(\tau)$, 
and there are different
$\tau$, $\tau'$
arbitrarily close to
$\tau_0$
with
$f(\tau)=f(\tau')$.
This contradicts properties of the parameterization
$\ga(\tau)=\ga\circ t(\tau)$.
Hence,
$\frac{d\wt f}{ds}(0)\neq 0$.

Again, by the inverse function theorem, the function
$\wt g(t)=\wt g\circ\wt f^{-1}(t)$
is
$C^1$-smooth 
in a neighborhood of
$t_0$.
However,
$\frac{d\wt g}{dt}(t_0)$
coincides with the left as well as with the right
derivative of the function
$g$
at
$t_0$
because
$l$
is tangent to
$\si$
at
$x_0$.
Therefore,
$g$
is differentiable at
$t_0$.
It follows from continuity properties of
one-sided derivatives of concave functions that the derivative
$g'$
is continuous, i.e.,
$g$
is 
$C^1$-smooth.
\end{proof}

\begin{lem}\label{lem:circle_rectifiable} Every Ptolemy circle
$\si\sub X_\om$
is rectifiable and
$$L(xx')=|xx'|+o(|xx'|^2)$$
as
$x'\to x$
in
$\si$,
where
$L(xx')$
is the length of the (smallest) arc
$xx'\sub\si$.
\end{lem}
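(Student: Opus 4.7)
The plan is to use the distance parameterization from Lemma~\ref{lem:dist_parameter} together with the $C^1$-smoothness from Lemma~\ref{lem:smooth_concave} to derive a closed-form integral expression for the arc length, from which the quadratic estimate will reduce to a Taylor expansion at $t=0$. First I fix any second point $y\in\si$ with $a:=|xy|>0$ and consider the distance parameterization $\ga\colon(-a,a)\to\si$ with $\ga(0)=x$. Setting $g(t):=|\ga(t)y|$, Lemma~\ref{lem:smooth_concave} gives $g\in C^1$ with $g(0)=a$. The Ptolemy equality applied to the quadruple $x,\ga(t_1),\ga(t_2),y$ (in this cyclic order on $\si$) for $0\le t_1<t_2<a$, which was already recorded in the proof of Lemma~\ref{lem:smooth_concave}, yields the exact identity
$$|\ga(t_1)\ga(t_2)|=\frac{1}{a}\bigl(t_2\,g(t_1)-t_1\,g(t_2)\bigr).$$

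Next, for a partition $P=\{0=t_0<\cdots<t_n=t\}$ of $[0,t]$ with $t<a$ I sum this identity and rearrange using summation by parts:
$$S(P):=\sum_{i=1}^{n}|\ga(t_{i-1})\ga(t_i)|=\frac{1}{a}\left[\sum_{i=1}^{n}(t_i-t_{i-1})g(t_{i-1})-\sum_{i=1}^{n}t_{i-1}\bigl(g(t_i)-g(t_{i-1})\bigr)\right].$$
Refining $P$ only increases $S(P)$ by the triangle inequality, so $L(x\ga(t))=\sup_P S(P)=\lim_{|P|\to 0}S(P)$. By continuity of $g$ and $g'$, the two bracketed sums are a left Riemann sum for $\int_0^t g(s)\,ds$ and (via the mean value theorem applied to $g$ on each subinterval) a Riemann sum for $\int_0^t s\,g'(s)\,ds$ respectively. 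One integration by parts then produces the closed form
$$L(x\ga(t))=\frac{1}{a}\Bigl(2\int_0^t g(s)\,ds-t\,g(t)\Bigr),$$
which shows in particular that short arcs of $\si$ have finite length. Global rectifiability of $\si$ then follows by covering the compact circle $\si$ by finitely many such distance-parameterization neighborhoods; the same argument with partitions of $[-t,0]$ handles the opposite side of $x$.

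Finally I expand $g(s)=a+g'(0)s+o(s)$ at $s=0$, obtaining $\int_0^t g(s)\,ds=at+g'(0)t^2/2+o(t^2)$ and $tg(t)=at+g'(0)t^2+o(t^2)$. Substituting into the formula above, the $g'(0)\,t^2$-terms cancel exactly, leaving
$$L(x\ga(t))=\frac{1}{a}\bigl(at+o(t^2)\bigr)=t+o(t^2)=|x\ga(t)|+o(|x\ga(t)|^2),$$
which is the desired quadratic estimate. The main step is the first one: the algebraic consequence of the Ptolemy equality converts the purely geometric arc-length sum into an analytic expression in the $C^1$-smooth function $g$, and the specific structure $\tfrac{1}{a}(2\int g-tg)$ is precisely what forces the quadratic terms to cancel at $s=0$. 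No regularity of $\si$ beyond the $C^1$-smoothness established in Lemma~\ref{lem:smooth_concave} enters the argument.
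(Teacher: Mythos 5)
Your proof is correct, and it reaches the goal by a genuinely different intermediate step than the paper. The paper passes to the inverted metric $|zz'|_y=\frac{|zz'|}{|zy||z'y|}$, in which $\si\sm y$ is a Ptolemy line (a geodesic), so the inscribed chord sums telescope exactly in $X_y$ and one obtains the closed form $L(r)=\int_0^{r/g(r)}g^2(s)\,ds$ in the $X_y$-arclength parameter $s=t/g(t)$; the Taylor expansion of that integral then gives $L(r)=r+o(r^2)$. You instead stay entirely in the metric of $X_\om$ and exploit the exact chord identity $|\ga(t_1)\ga(t_2)|=\frac1a\bigl(t_2g(t_1)-t_1g(t_2)\bigr)$ (which the paper records inside the proof of Lemma~\ref{lem:smooth_concave}), converting the inscribed polygon sums by Abel summation into a left Riemann sum plus a Riemann--Stieltjes sum, which converge by the $C^1$-smoothness of $g$ to $L=\frac1a\bigl(2\int_0^t g(s)\,ds - t\,g(t)\bigr)$. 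Both routes rest on the same two ingredients (the distance parameterization and Lemma~\ref{lem:smooth_concave}) and both end with the same cancellation of the $g'(0)t^2$ terms; your version avoids introducing the inverted metric and the change of variable $s=t/g(t)$ at the cost of a slightly longer summation argument, and I checked that your closed form agrees with the paper's (e.g.\ for a Euclidean circle it returns $2R\arcsin(t/2R)$). The small points you gloss over --- that $\ga|[0,t]$ is an order-preserving bijection onto the arc so that $\sup_P S(P)$ is indeed the arc length, that $\sup_P S(P)=\lim_{|P|\to0}S(P)$ because refinement is monotone and the mesh-limit exists, and the symmetric treatment of the arc on the $\si_-$ side --- are all routine and correctly indicated.
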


\begin{proof} We fix 
$y\in\si$, $y\neq x$,
and introduce a distance parameterization
$\ga:(-a,a)\to\si$
of a neighborhood of
$x=\ga(0)$
in
$\si$.
Rescaling the metric of
$X_\om$
we assume that
$a=|xy|=1$
for simplicity of computations. We use notation
$d(z,z')=|zz'|$
for the distance in
$X_\om$,
and
$|zz'|_y$
for the distance in
$X_y$,
assuming that
$$|zz'|_y=\frac{|zz'|}{|zy||z'y|}$$
is the metric inversion of the metric 
$d$.

Recall that 
$\si\sm y$
is a Ptolemy line in
$X_y$.
Thus for a given
$r\in(0,1)$,
and for every partition
$0=t_0\le\dots\le t_n=r$
we have
$$\sum_i|\ga(t_i)\ga(t_{i+1})|\le\La|xx_r|_y,$$
where
$\La=\max\set{g^2(t)}{$0\le t\le r$}$, $g(t)=|\ga(t)y|$,
$x_r=\ga(r)$.
Hence
$\si$
is rectifiable and
$L(xx_r)\le\La|xx_r|_y$.
Moreover, using
$|\ga(t_i)\ga(t_{i+1})|=g(t_i)g(t_{i+1})|\ga(t_i)\ga(t_{i+1})|_y$,
we actually have
$$L(r)=L(xx_r)=\int_0^{r/g(r)}g^2(s)ds,$$
where
$g(s)=g\circ t(s)$
with
$s=\frac{t}{g(0)g(t)}=t/g(t)$.
Recall that the function
$g(t)$
is $C^1$-smooth by Lemma~\ref{lem:smooth_concave}. Then
$ds=\frac{g(t)-tg'(t)}{g^2(t)}dt$
and
$\frac{dt}{ds}=\frac{g^2(t)}{g(t)-tg'(t)}$,
in particular,
$\frac{dt}{ds}(0)=1$.

Using developments
$g(s)=1+\frac{dg}{ds}(0)s+o(s)$,
$g^2(s)=1+2\frac{dg}{ds}(0)s+o(s)$,
we obtain
$$L(r)=\frac{r}{g(r)}+\frac{dg}{ds}(0)\frac{r^2}{g^2(r)}+o(r^2),$$
where
$\frac{dg}{ds}(0)=g'(0)\frac{dt}{ds}(0)=g'(0)$.
Since
$g(r)=1+g'(0)r+o(r)$, $g^2(r)=1+2g'(0)r+o(r)$,
we finally have
$$L(r)=r(1-g'(0)r)+g'(0)r^2(1-2g'(0)r)+o(r^2)=r+o(r^2).$$
Hence
$L(xx')=|xx'|+o(|xx'|^2)$
as
$x'\to x$
in
$\si$. 
\end{proof}

\begin{lem}\label{lem:quaratic_excess} Assume that a (bounded) oriented
Ptolemy circle
$\si\sub X_\om$
has two different points in common with a Ptolemy line 
$l\sub X_\om$, $x$, $y\in\si\cap l$,
and the line 
$l$
is oriented from
$y$
to
$x$.
Let
$\si_+\sub\si$
be the arc of
$\si$
from
$x$
to
$y$
chosen according to the orientation of
$\si$.
Let
$x_t$, $y_t$
be the distance parameterizations of neighborhoods of
$x$, $y$
respectively
such that
$x_t$, $y_t\in\si_+$
for 
$t>0$, $|x_tx|=t=|y_ty|$.
Furthermore, let
$b^\pm:X_\om\to\R$
be the opposite Busemann functions of
$l$
normalized by
$b^+(x)=0$, $b^+(y)=-a$, $b^-(x)=-a$, $b^-(y)=0$,
where
$a=|xy|$.
Then
\begin{equation}\label{eq:quadratic_reduce}
b^+(x_t)+b^-(y_t)\le 2\al t-\frac{1}{a}(1-\al^2)t^2
\end{equation}
for all 
$t>0$
in the domain of the parameterizations, where
$\al=\slope(\si,l)$. 
\end{lem}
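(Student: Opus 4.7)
The plan is to combine the Ptolemy equality on the circle $\si$ with the Ptolemy inequality in the ambient space $X_\om$, applied to a quadruple involving $x_t,y_t$ and two auxiliary test-points marching to infinity along the two ends of $l$. First, the Ptolemy equality on $\si$ applied to $(x,x_t,y_t,y)$ in this cyclic order along $\si_+$ yields
\[
|xy_t|\cdot|x_t y|=a\,|x_t y_t|+t^2. \tag{$\ast$}
\]

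Second, parametrize $l$ by a unit-speed map $c:\mathbb R\to X_\om$ with $c(0)=x$, $c(a)=y$, and for $s>0$ set $y_s:=c(a+s)$ and $x_s:=c(-s)$. With the given normalizations of $b^\pm$, the Busemann-function limits unpack as
\[
|z\,y_s|=s+a+b^+(z)+o_s(1),\qquad|z\,x_s|=s+a+b^-(z)+o_s(1),
\]
for any fixed $z\in X_\om$, as $s\to\infty$.

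Third, apply the Ptolemy inequality to the quadruple $(x_t,y_s,y_t,x_s)$: in the cyclic order $x_s,x_t,y_t,y_s$ suggested by the configuration along $l$, the separating-pair form reads
\[
|x_t y_s|\cdot|y_t x_s|\le(2s+a)|x_t y_t|+|x_t x_s|\cdot|y_t y_s|, \tag{$\ast\ast$}
\]
using $|x_s y_s|=2s+a$. Substitute the four Busemann asymptotics, use Busemann flatness $b^++b^-\equiv-a$ from Proposition~\ref{pro:smooth_convex} to rewrite $b^-(x_t)=-a-b^+(x_t)$ and $b^+(y_t)=-a-b^-(y_t)$, and extract the coefficient of~$s$ in LHS$-$RHS. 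The $O(s^2)$ parts cancel identically, and the coefficient of the linear term collapses to $2\bigl(b^+(x_t)+b^-(y_t)+a-|x_t y_t|\bigr)$. Its non-positivity, forced by letting $s\to\infty$ in ($\ast\ast$), yields
\[
b^+(x_t)+b^-(y_t)\le|x_t y_t|-a. \tag{$\ast\ast\ast$}
\]

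Fourth, combine ($\ast$) and ($\ast\ast\ast$) to reduce the target inequality to the purely geometric estimate $|xy_t|\cdot|x_t y|\le(a+\al t)^2$, which after clearing denominators is equivalent to the stated bound. Fifth, establish this product bound by Lemma~\ref{lem:smooth_concave}: the functions $t\mapsto|xy_t|$ and $t\mapsto|x_t y|$ are concave and $C^1$-smooth along $\si$. Their derivatives at $t=0$ are identified, via Lemma~\ref{lem:first_variation} applied to the two intersecting Ptolemy circles $\si$ and $l\cup\om$ (together with the symmetry of the slope between the two common points given by Lemma~\ref{lem:opposite_slopes}), with the slope~$\al$. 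Concavity then upgrades the first-order derivative information into the uniform bounds $|xy_t|\le a+\al t$ and $|x_t y|\le a+\al t$, and multiplying completes the proof. The main technical hurdle is the third step, where the passage from ($\ast\ast$) to ($\ast\ast\ast$) depends on the precise cancellation of all $s$-divergent terms and where Busemann flatness plays an essential role; a secondary subtlety in the fifth step is that Lemma~\ref{lem:first_variation} is stated in an inverted space, so one must track the sign conventions carefully when transporting the derivative information back to~$X_\om$.
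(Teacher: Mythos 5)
Your proof is correct and follows essentially the same route as the paper: the Ptolemy equality on $\si$ applied to $(x,x_t,y_t,y)$, the concavity and first-variation identifications $g'(0)=f'(0)=\al$ giving $|x_ty|,|xy_t|\le a+\al t$, and the reduction of everything to the intermediate inequality $b^+(x_t)+b^-(y_t)\le|x_ty_t|-a$. The only divergence is in how that intermediate inequality is obtained: the paper gets it in one line from the $1$-Lipschitz property of Busemann functions combined with flatness, namely $|x_ty_t|\ge b^+(x_t)-b^+(y_t)=a+b^+(x_t)+b^-(y_t)$, whereas your step ($\ast\ast$)--($\ast\ast\ast$) with test points escaping to infinity along $l$ re-derives the same fact by a longer but valid computation.
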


\begin{proof} By Lemma~\ref{lem:smooth_concave} the functions
$g(t)=|x_ty|$, $f(t)=|y_tx|$
are $C^1$-smooth and concave. Furthermore, their first derivatives
at 0,
$g'(0)$
and
$f'(0)$,
coincide with first derivatives of the distance functions to the 
respective tangent lines,
$g'(0)=\wt g'(0)$
and
$f'(0)=\wt f'(0)$,
where
$\wt g(s)=|c_x(s)y|$, $\wt f(s)=|c_y(s)x|$,
and the unit speed parameterizations of the tangent lines
$l_x$
to
$\si$
at
$x$
and
$l_y$
to
$\si$
at 
$y$
are chosen compatible with the distance parameterizations
$x_t$, $y_t$
so that
$c_x(0)=x$, $c_y(0)=y$.

Using that
$l$
is oriented from
$y$
to
$x$,
and
$\si_+$
from
$x$
to
$y$
and applying equation~(\ref{eq:re_first_variation}), we find
$\wt g'(0)=\slope(l_x,l)$.
By the same equation~(\ref{eq:re_first_variation}) we have
$\wt f'(0)=\slope(l_y,-l)=-\slope(l_y,l)$.
The sign
$-1$
appears because the orientation of
$l$
from
$x$
to
$y$
is opposite to the chosen orientation.
Note that the orientation of
$l_x$
is compatible with that of
$\si$,
while the orientation of
$l_y$
is opposite to that of 
$\si$.
Therefore,
$g'(0)=\al=\slope(\si,l)$
and
$f'(0)=-\slope(l_y,l)=\slope(\si,l)=\al$.

Using concavity we obtain
$g(t)\le g(0)+g'(0)t=a+\al t$
and similarly
$f(t)\le a+\al t$
for all 
$0\le t<a$.
The Ptolemy equality applied to the ordered quadruple
$(x,x_t,y_t,y)\sub\si$
gives
$g(t)f(t)=t^2+a|x_ty_t|$,
hence
$$|x_ty_t|\le a+2\al t-\frac{1}{a}(1-\al^2)t^2.$$

Let
$H_t^+$, $H_t^-$
be the horospheres of
$b^+$, $b^-$
through
$x_t$, $y_t$
respectively,
$x_t\in H_t^+$, $y_t\in H_t^-$.
Since
$X$
is Busemann flat, see Proposition~\ref{pro:smooth_convex},
$H_t^+$
is also a horosphere of
$b^-$
and
$H_t^-$
is a horosphere of
$b^+$.
Thus
$|x_ty_t|\ge|b^+(x_t)-b^+(y_t)|:=\xi$
because Busemann functions are 1-Lipschitz. On the other hand,
$\xi$
is the distance between the points
$l\cap H_t^+$, $l\cap H_t^-$,
and thus
$\xi=a+b^+(x_t)+b^-(y_t)$.
Therefore,
$$b^+(x_t)+b^-(y_t)\le 2\al t-\frac{1}{a}(1-\al^2)t^2.$$
\end{proof}

\subsection{Proof of the extension property (${\rm E}_2$)}
\label{subsect:extension_property}

Here we prove Proposition~\ref{pro:extension_property}.

\begin{lem}\label{lem:extend_cirle_map} Any M\"obius
automorphism of any Ptolemy circle
$\si\sub X$
preserving orientations extends to a M\"obius automorphism of
$X$. 
\end{lem}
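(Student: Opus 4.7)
The plan is to reduce to the case where $f$ fixes a point of $\si$ by composing with an s-inversion, and then to decompose the remaining fixed-point automorphism into a dilation realized by the homothety group and a translation realized by a shift. For the reduction, I would fix $\om\in\si$ and set $\om'=f(\om)$. If $\om'\ne\om$, property (I) supplies an s-inversion $\phi=\phi_{\om,\om',S}$ for some sphere $S$ between $\om,\om'$; by definition it preserves $\si\in C_{\om,\om'}$ and swaps $\om\leftrightarrow\om'$, so $\phi|_\si$ is a fixed-point-free orientation-preserving involution of $\si$. Then $g:=\phi\circ f$ is an orientation-preserving M\"obius automorphism of $\si$ with $g(\om)=\phi(\om')=\om$, and if $g$ extends to some $G\in\aut X$, then $F:=\phi\circ G$ extends $f$ since $F|_\si=\phi\circ g=\phi^2\circ f=f$. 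This leaves the case $f(\om)=\om$.

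For this fixed-point case, I would pass to a metric in which $\om$ is infinitely remote, so $\si_\om$ is a Ptolemy line isometric to $\R$. A unit-speed parametrization $c:\R\to\si_\om$ conjugates $f|_{\si_\om}$ to an orientation-preserving M\"obius self-map of $\R\cup\{\infty\}$ fixing $\infty$, hence an orientation-preserving affine map: $f\circ c(t)=c(at+b)$ for some $a>0$ and $b\in\R$. Setting $\om'=c(0)$, property (H) (Proposition~\ref{pro:homothety_property}) provides a homothety $\Phi\in\Ga_{\om,\om'}$ of $X_\om$ with coefficient $a$ that fixes $\om,\om'$ and preserves $\si$ and its orientation, so $\Phi\circ c(t)=c(at)$. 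Next, take a shift $\eta=\eta_{c(0),c(b)}\in N_\om$, constructed as in sect.~\ref{subsect:parallel_lines_pure_homothethies_shifts}, with $\eta(c(0))=c(b)$. By Lemma~\ref{lem:pure_homothety} every shift preserves any foliation of $X_\om$ by Busemann parallel Ptolemy lines, so $\eta$ preserves $\si_\om$ (Busemann parallel to itself); being an orientation-preserving isometry of $\si_\om\cong\R$ sending $c(0)$ to $c(b)$, it must act as $\eta\circ c(t)=c(t+b)$. Then $F:=\eta\circ\Phi\in\aut X$ agrees with $f$ on the entire range of $c$, and Proposition~\ref{pro:moebch-circ} (a M\"obius map of a Ptolemy circle is determined by its values at three points) gives $F|_\si=f$.

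The main obstacle will be ensuring that the shift $\eta$ in the second step acts on $\si_\om$ exactly as the intended translation by $b$. A shift is specified a priori only by its value at a single point, so I need Lemma~\ref{lem:pure_homothety} to force $\eta$ to preserve $\si_\om$ as a set, and the fact that shifts are limits of compositions $\psi_i^{-1}\circ\phi_i$ of orientation-preserving homotheties to guarantee that $\eta$ acts orientation-preservingly (rather than by reflection) on $\si_\om$; these together pin down the translation, and the rest of the argument is bookkeeping.
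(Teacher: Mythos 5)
Your proposal is correct, but it takes a genuinely different route from the paper. The paper's proof is a generation argument: it identifies $\si$ with $\di\hyp^2$, so that the orientation-preserving M\"obius automorphisms of $\si$ become the orientation-preserving isometries of $\hyp^2$; since that group is generated by central symmetries, and each central symmetry of $\hyp^2$ is exactly the restriction to $\si$ of an s-inversion $\phi_{\om,\om',S}$ of $X$ (property (I)), every element of $\aut_\si X$ extends. You instead normalize by a single s-inversion to reach the point-fixing case, observe that the automorphism then acts affinely $t\mapsto at+b$ on the Ptolemy line $\si_\om$, and realize the dilation by a homothety from $\Ga_{\om,c(0)}$ (property (H)) and the translation by a shift. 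The paper's argument is shorter once one accepts the classical fact that the orientation-preserving isometry group of $\hyp^2$ is generated by half-turns; yours stays entirely inside the machinery of Section~3 and is more explicit. The one point in your argument that genuinely needs care is the orientation of the shift on $\si_\om$, and you handle it correctly: since $\si$ passes through both pairs $(\om,c(0))$ and $(\om,c(b))$, the approximating homotheties $\phi_i\in\Ga_{\om,c(0)}$ and $\psi_i\in\Ga_{\om,c(b)}$ both preserve $\si$ with its orientation, so every $\eta_i=\psi_i^{-1}\circ\phi_i$ restricts to a translation of $\si_\om$ and the limit does too. (The fact that a fixed-point-free involution of a circle is orientation-preserving, which you use in the reduction step, is also used by the paper itself in the proof of Proposition~\ref{pro:homothety_property}.)
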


\begin{proof} We represent
$\si$
as the boundary at infinity of the real hyperbolic plane,
$\si=\di\hyp^2$,
so that the M\"obius structure of
$\si$
induced from
$X$
is identified with the canonical M\"obius structure of
$\di\hyp^2$. 
Then the group 
$\aut_\si X$
of preserving orientations M\"obius automorphisms of
$\si$
is identified with the group of preserving orientations
isometries of
$\hyp^2$.
The last is generated by central symmetries, and
any central symmetry of
$\hyp^2$
induces an s-inversion of
$\si$.
Thus
$\aut_\si X$
is generated by s-inversions of
$\si$.

Now, any s-inversion of
$\si$
can be obtained as follows. Take distinct
$\om$, $\om'\in\si$
and a metric sphere
$S\sub X$
between
$\om$, $\om'$.
Then an s-inversion
$\phi=\phi_{\om,\om',S}:X\to X$
restricts to an s-inversion of
$\si$.
Thus any M\"obius automorphism of
$\si$
from
$\aut_\si X$
extends to a M\"obius automorphism of
$X$.
\end{proof}

The group
$\aut X$
of M\"obius automorphisms of
$X$
is non-compact: a sequence of homotheties of
$X_\om$
with coefficients
$\la_i\to\infty$
and with the same fixed point has no converging subsequences.
However, we have the following standard compactness result.

\begin{lem}\label{lem:nondegenerate_morphism} Assume that
for a nondegenerate triple
$T=(x,y,z)\sub X$
and for a sequence 
$\phi_i\in\aut X$
the sequence
$T_i=\phi_i(T)$
converges to a nondegenerate triple
$T'=(x',y',z')\sub X$.
Then there exists
$\phi\in\aut X$
with
$\phi(T)=T'$.
\end{lem}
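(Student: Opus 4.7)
\emph{Proof plan.} The plan is to extract a subsequence of $\{\phi_i\}$ converging in the weak topology of $\aut X$ to a M\"obius automorphism $\phi$ with $\phi(T)=T'$. The strategy has three main steps: first, reduce via two-point homogeneity to the case where two entries of the image triples equal the corresponding entries of $T$; second, correct each $\phi_i$ by a composition of shifts converging to the identity so that the correction exactly fixes those two entries and hence acts on the punctured space as a homothety with controlled coefficient; third, pass to a limit by an Arzela--Ascoli argument exploiting that closed bounded subsets of $X_\om$ are compact in $X$.

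First, by Proposition~\ref{pro:two_point_homogeneous} there is $\psi\in\aut X$ with $\psi(x')=x$ and $\psi(y')=y$. Replacing $\phi_i$ by $\psi\circ\phi_i$ and writing $z^*=\psi(z')$ (which satisfies $z^*\neq x,y$), I reduce to constructing $\ga\in\aut X$ with $\ga(x)=x$, $\ga(y)=y$, $\ga(z)=z^*$ under the hypothesis that $\phi_i(x)\to x$, $\phi_i(y)\to y$, $\phi_i(z)\to z^*$. The desired M\"obius automorphism will then be $\phi=\psi^{-1}\circ\ga$.

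Set $x_i=\phi_i(x)$, $y_i=\phi_i(y)$. Applying Lemma~\ref{lem:shift_identity} in $X_y$, take a shift $\eta_i$ with $\eta_i(x_i)=x$ and $\eta_i\to\id$ uniformly on compact subsets of $X_y$. Since $\eta_i$ is an isometry of $X_y$, $y_i^{(1)}:=\eta_i(y_i)$ satisfies $|y_i^{(1)}x|=|y_ix_i|\to\infty$, so $y_i^{(1)}\to y$ in $X$. Applying Lemma~\ref{lem:shift_identity} now in $X_x$, take a shift $\mu_i$ with $\mu_i(y_i^{(1)})=y$ and $\mu_i\to\id$ uniformly on compact subsets of $X_x$. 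The correction $\chi_i=\mu_i\circ\eta_i\in\aut X$ then satisfies $\chi_i(x_i)=x$, $\chi_i(y_i)=y$, and $\chi_i(z_i)\to z^*$ (using the uniform convergence of both shifts on compacta containing the relevant bounded orbits, together with $z^*\neq x,y$). Hence $\ga_i:=\chi_i\circ\phi_i\in\aut X$ fixes $x$ and $y$, and by Lemma~\ref{lem:homothety_infinite} acts on $(X_y,d)$ as a homothety with center $x$ and coefficient $\la_i>0$. From $\la_i|zx|=|\ga_i(z)x|\to|z^*x|$ one obtains $\la_i\to\la:=|z^*x|/|zx|\in(0,\infty)$.

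The remaining compactness step proceeds as follows. Closed bounded subsets of $X_y$ are compact in $X$: any subsequential limit in $X$ of a sequence $w_n\in\ov{B}_R(x)$ satisfies $|wx|\le R$, so $w\neq y$. The $\ga_i$ are $\la_i$-Lipschitz and the $\ga_i^{-1}$ are $(1/\la_i)$-Lipschitz on $X_y$, with both coefficients in a fixed compact subinterval of $(0,\infty)$, so both families are uniformly equicontinuous on every closed ball. A diagonal Arzela--Ascoli argument over an exhaustion of $X_y$ by such balls yields a subsequence along which $\ga_i\to\ga$ and $\ga_i^{-1}\to\ga'$ uniformly on compacta; the compositions $\ga_i\circ\ga_i^{-1}=\id=\ga_i^{-1}\circ\ga_i$ pass to the limit, showing $\ga$ is a bijective homothety of $(X_y,d)$ with factor $\la$ fixing $x$ and sending $z$ to $z^*$. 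Extending by $\ga(y)=y$ gives a homeomorphism of $X$, since for $w_n\in X_y$ one has $w_n\to y$ in $X$ iff $|w_nx|\to\infty$, a property preserved by scaling by $\la$. The extended $\ga$ preserves cross-ratios because $d\circ\ga=\la d$ on $X_y$, together with the standard conventions at the infinitely remote point, hence $\ga\in\aut X$, and $\phi=\psi^{-1}\circ\ga$ completes the argument. The principal obstacle is the two-step construction of $\chi_i$: a single shift cannot in general fix both $x$ and $y$, and one must verify that the compositions converge in a mode strong enough to track $\chi_i(z_i)\to z^*$; once this is in place, the compactness step is routine.
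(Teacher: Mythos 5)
Your proof is correct, but it takes a genuinely different route from the paper's. The paper argues softly and briefly: for any $u\notin T$ the cross-ratio triple of the nondegenerate quadruple $(T,u)$ is preserved by each $\phi_i$, so the accumulation points of $\phi_i(u)$ stay off $T'$; iterating this shows that distinct points have disjoint accumulation sets, hence any pointwise (nonprincipal ultrafilter) limit of the $\phi_i$ is injective and therefore a M\"obius automorphism carrying $T$ to $T'$. That argument uses only compactness of $X$ and invariance of cross-ratios, and needs none of the machinery of Section~3. You instead normalize: two-point homogeneity (Proposition~\ref{pro:two_point_homogeneous}) reduces to the case where two entries converge to themselves, the shifts of Lemma~\ref{lem:shift_identity} (applied once in $X_y$ and once in $X_x$, correctly noting that a single shift cannot fix both points) turn the maps into homotheties of $X_y$ fixing $x$ with coefficients converging in $(0,\infty)$ by Lemma~\ref{lem:homothety_infinite}, and Arzel\`a--Ascoli on an exhaustion by closed balls produces the limit together with its inverse. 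Your approach is longer and leans on properties (H)/(I) via the shift lemmas, but it buys a stronger mode of convergence (uniform on compacta rather than an ultrafilter limit), an explicit identification of the limit as a homothety of a punctured space, and an explicit verification of bijectivity — a point the paper's proof passes over, since it only checks injectivity of the limiting map. All the delicate steps in your write-up (that $\chi_i(z_i)\to z^*$ via isometry of the relevant punctured space plus pointwise convergence of the shifts, that closed balls in $X_y$ are compact in $X$, and the extension across the infinitely remote point) are handled correctly.
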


\begin{proof} For every
$u\in X\sm T$
the quadruple
$Q=(T,u)$
is nondegenerate in the sense that its cross-ratio triple
$\crt(Q)=(a:b:c)$
has no zero entry. Since
$\crt(\phi_i(Q))=\crt(Q)$,
any accumulation point 
$u'$
of the sequence
$u_i=\phi_i(u)$
is not in
$T'$.
Thus for the nondegenerate triple
$S=(x,y,u)$
any sublimit 
$S'=(x',y',u')$
of the sequence
$S_i=\phi(S)$
is nondegenerate. Applying the same argument to any
$v\in X\sm S$,
we observe that the sequences
$u_i$, $v_i=\phi_i(v)$
have no common accumulation point. This shows that
any limiting map
$\phi$
of the sequence
$\phi_i$,
obtained e.g. by taking a nonprincipal ultra-filter limit,
is injective, and hence it is a M\"obius automorphism of
$X$
with
$\phi(T)=T'$. 
\end{proof}

\begin{pro}\label{pro:transitive_circle} Assume
$X\neq\wh\R$.
Then the group of M\"obius automorphisms of
$X$
acts transitively on the set of the oriented Ptolemy circles in
$X$. 
In particular, for any oriented circle
$\si\sub X$
there is a M\"obius automorphism
$\phi:X\to X$
such that
$\phi(\si)=\si$
and 
$\phi$
reverses the orientation of
$\si$.
\end{pro}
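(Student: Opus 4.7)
My plan is to reduce via two-point homogeneity to the situation where $\sigma$ and $\sigma'$ share two common points, and then construct a stabilizing rotation using the fact that $X\neq\wh\R$ forces the base $B_\om$ of the canonical fibration to have dimension at least two.

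First, given two oriented Ptolemy circles $\sigma,\sigma'\sub X$, I pick ordered pairs of distinct points $(\om_1,\om_2)\in\sigma$ and $(\om_1',\om_2')\in\sigma'$ compatible with the respective orientations. By Proposition~\ref{pro:two_point_homogeneous}, there is $\psi\in\aut X$ with $\psi(\om_i)=\om_i'$ for $i=1,2$. Replacing $\sigma$ by $\psi(\sigma)$, I reduce to the case where $\sigma\cap\sigma'\supset\{\om,\om'\}$ with matching orientations on this pair. The problem then becomes exhibiting a M\"obius automorphism $\phi\in\aut X$ with $\phi(\om)=\om$, $\phi(\om')=\om'$ and $\phi(\sigma)=\sigma'$ (preserving orientation).

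Taking $\om$ as infinitely remote, $\sigma_\om$ and $\sigma_\om'$ are Ptolemy lines in $X_\om$ both through $\om'$. By the uniqueness clause of property~($\K$), each such Ptolemy line is determined by its projection to $B_\om$, a Euclidean geodesic through $\pi_\om(\om')$. Since $X\neq\wh\R$, Corollary~\ref{cor:onedimensional_base} gives $\dim B_\om\ge 2$, so the Euclidean rotation group fixing $\pi_\om(\om')$ acts transitively on oriented geodesics through that point. The task thus reduces to realizing a suitable Euclidean rotation of $B_\om$ as the induced map $\ov\phi:B_\om\to B_\om$ of some $\phi$ in the stabilizer $G\sub\aut X$ of the set $\{\om,\om'\}$.

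The main obstacle is precisely this lifting of rotations: the s-inversions with axis $\om,\om'$ individually preserve every Ptolemy circle through $\om,\om'$ and so act as the identity on directions in $B_\om$ at $\pi_\om(\om')$; the required rotations must therefore arise from compositions involving s-inversions with other axes. My approach is a closed-and-open dichotomy on the $G$-orbit of $\pi_\om(\sigma_\om)$ in the connected manifold of oriented geodesics through $\pi_\om(\om')$ in $B_\om$: openness from combining shifts (Lemma~\ref{lem:shift_identity}) with carefully chosen products of s-inversions (with axes shifted off the $(\om,\om')$-pair and then conjugated back into the stabilizer) to realize infinitesimal rotations, and closedness from the compactness argument of Lemma~\ref{lem:nondegenerate_morphism}. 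The ``in particular'' assertion is immediate from the main transitivity statement applied to the pair $(\sigma,\sigma^-)$, where $\sigma^-$ denotes $\sigma$ equipped with the opposite orientation: the resulting M\"obius automorphism preserves $\sigma$ setwise while reversing its orientation.
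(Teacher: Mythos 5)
Your reduction via two-point homogeneity to circles $\si,\si'$ sharing two points $\om,\om'$ with matched orientations is fine, as is the observation that a Ptolemy line through $\om'$ in $X_\om$ is determined by its projection to $B_\om$ (by the uniqueness clause of ($\K$)) and that $X\neq\wh\R$ forces $\dim B_\om\ge 2$ via Corollary~\ref{cor:onedimensional_base}. But the argument then stalls exactly at the point you yourself flag as ``the main obstacle'': you never actually construct a single M\"obius automorphism stabilizing $\{\om,\om'\}$ that acts as a nontrivial rotation on directions in $B_\om$ at $\pi_\om(\om')$. The claim that ``carefully chosen products of s-inversions with axes shifted off the $(\om,\om')$-pair and then conjugated back into the stabilizer'' realize infinitesimal rotations is an assertion, not a proof; a priori the group generated this way could act trivially (or non-transitively) on the sphere of directions, and establishing that it does not is essentially equivalent to the proposition itself. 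So the ``openness'' half of your open-closed dichotomy is a genuine gap, and it is the whole content of the statement.

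The paper takes a different and self-contained route that avoids lifting rotations altogether. It fixes $x,y\in\si$, lets $A$ be the orbit of $\si_0$ under $\aut X$ restricted to circles through $x,y$, and sets $\al=\inf\{\slope(\si,\si'):\si'\in A\}$. The infimum is attained by the compactness argument of Lemma~\ref{lem:nondegenerate_morphism}; $\al<1$ is shown using $X\neq\wh\R$ to produce a non-tangent parallel; and $\al=-1$ is forced by playing the first-order lower bound $b^+(x_t)+b^-(y_t)\ge 2\al t$ (obtained by integrating $\slope(\si,l_{c_x(s)})\ge\al$ along the arc, using minimality of $\al$) against the strictly concave second-order upper bound of Lemma~\ref{lem:quaratic_excess}, $b^+(x_t)+b^-(y_t)\le 2\al t-\tfrac{1}{a}(1-\al^2)t^2$. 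This variational use of the Ptolemy equality is the key idea your proposal is missing; without it, or without an explicit construction of the rotations you need, the proof is incomplete.
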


\begin{proof} We fix an oriented Ptolemy circle
$\si\sub X$
and distinct points
$x$, $y\in\si$.
For an oriented circle
$\si_0\sub X$
we denote by
$A$
the set of all the circles
$\phi(\si_0)$, $\phi\in\aut X$,
with the induced orientation which pass also through
$x$
and
$y$.
Let
$$\al=\inf\set{\slope(\si,\si')}{$\si'\in A$}.$$
By two-point homogeneity property, see Proposition~\ref{pro:two_point_homogeneous},
$A\neq\es$.
Applying Lemma~\ref{lem:nondegenerate_morphism} we find
$\si'\in A$
with
$\slope(\si,\si')=\al$.
Next, we show that
$\al<1$.
Since
$X\neq\wh\R$,
there is a shift which makes
$\si_0$
disjoint with 
$\si$.
Taking a point 
$\om\in\si_0$
as infinitely remote, we consider all Ptolemy lines in
$X_\om$
which are Busemann parallel to
$\si_0\sm\om$
and intersect
$\si$.
Since 
$\si$
is bounded in
$X_\om$,
at least one of them,
$l$,
is not tangent to
$\si$.
Then
$\slope(\si,l)<1$.
This 
$l$
can be obtained from
$\si_0\sm\om$
by a shift. Applying another shift to
$l$
in the space 
$X_{\om'}$
with
$\om'\in\si\cap l$
(this does not change the slope), we can assume that
$x\in\si\cap l$.
Repeating this in the space 
$X_x$,
we find
$\wt\si\in A$
with 
$\slope(\si,\wt\si)<1$.
Thus
$\al<1$.

We show that
$\al=-1$.
Then 
$\si=\phi(\si_0)$
as oriented Ptolemy circles for some
$\phi\in\aut X$,
which would complete the proof.

Assume that
$\al>-1$.
The points
$x$, $y$
subdivide each of the circles
$\si$, $\si'$
into two arcs. We choose an arc
$\si_+\sub\si$
leading from
$x$
to
$y$
according to the orientation of
$\si$, 
and an arc
$\si_+'\sub\si'$
leading from
$y$
to
$x$
according to the orientation of
$\si'$.
Taking a point 
$\om\in\si'$
inside of the opposite to
$\si_+'$
arc, we see that
$l=\si'\sm\om$
is a Ptolemy line in the space
$X_\om$
oriented from
$y$
to
$x$.

Given
$x'\in\si_+$,
for every Ptolemy line 
$l_{x'}\sub X_\om$
through
$x'$,
which is Busemann parallel to
$l$
and is oriented as
$l$,
we have
$\slope(\si,l_{x'})\ge\al$
by the definition of
$\al$,
because by the same argument as above
$l_{x'}$
can be put in the set 
$A$
without changing the slope.

Let
$b^\pm:X_\om\to\R$
be the opposite Busemann functions of
$l$
normalized by
$b^+(x)=0$, $b^+(y)=-a$, $b^-(x)=-a$, $b^-(y)=0$,
where
$a=|xy|$.
Using Lemma~\ref{lem:circle_rectifiable} we consider
for a sufficiently small
$\ep>0$
arclength parameterizations
$c_x$, $c_y:(-\ep,\ep)\to\si$
with
$c_x(0)=x$, $c_y(0)=y$, $c_x(s)$, $c_y(s)\in\si_+$
for
$s>0$,
of neighborhoods of
$x$, $y$
respectively in
$\si$.
Since Busemann functions on
$X_\om$
are affine and hence differentiable along Ptolemy lines,
and since the derivative
$\frac{db^+\circ c_x}{ds}(s)$
coincides with the derivative of
$b^+$
along the tangent line to
$\si$
at
$c_x(s)$,
we have
$$\frac{db^+\circ c_x}{ds}(s)=\slope(\si,l_{c_x(s)})\ge\al.$$
For a sufficiently small
$t>0$
let
$x_t\in\si_+$
be a point at the distance
$t$
from
$x$, $|x_tx|=t$, $x_t=c_x(\tau)$
for some
$\tau=\tau(t)$.
By integrating we obtain 
$b^+(x_t)\ge\al L(xx_t)\ge\al t$.
A similar argument shows that
$b^-(y_t)\ge\al t$,
where
$y_t=c_y(\tau')$
for some
$\tau'=\tau'(t)$, $|yy_t|=t$.
Therefore,
$b^+(x_t)+b^-(y_t)\ge 2\al t$.
This contradicts the estimate~(\ref{eq:quadratic_reduce})
of Lemma~\ref{lem:quaratic_excess}. Thus
$\al=-1$.
\end{proof}

\begin{proof}[Proof of Proposition~\ref{pro:extension_property}]
Given a M\"obius map
$\psi:\si\to\si'$
between Ptolemy circles
$\si$, $\si'\sub X$,
we choose orientations of
$\si$, $\si'$
so that
$\psi$
preserves the orientations. By Proposition~\ref{pro:transitive_circle}
there is
$\phi\in\aut X$
with 
$\phi(\si)=\si'$
preserving the orientations. Then
$\phi^{-1}\circ\psi:\si\to\si$
preserves the orientation of
$\si$,
and hence it extends by Lemma~\ref{lem:extend_cirle_map} to
$\phi'\in\aut X$,
$\phi'|\si=\phi^{-1}\circ\psi$.
Then
$\phi\circ\phi'\in\aut X$
is a required M\"obius automorphism.
\end{proof}

\section{Filling of $X$}
\label{sect:filling}

We assume that a M\"obius space
$X$
satisfies the assumptions of Theorem~\ref{thm:moebius},
and furthermore that
$\dim X\ge 2$,
since in the case
$\dim X=1$
the space
$X$
is M\"obius equivalent to
$\wh\R=\di\hyp^2$.

\subsection{Basic facts}
\label{subsect:basic_facts}

We introduce the notion of a strict space inversion, 
or an ss-inversion for brevity. A space inversion
$\phi_a:X\to X$, $a=(\om,\om',S)$,
is said to be {\em strict,} if for any
$x$, $x'\in X$
with
$\phi_a(x)=x'$
there exists a sphere
$T$
between
$x$, $x'$
such that
$\phi_a=\phi_b$,
where
$b=(x,x',T)$.

The following lemma is a modification of 
Lemma~\ref{lem:sinversion_minversion}.

\begin{lem}\label{lem:invariant_sphere} Let
$\phi:X\to X$
be a M\"obius involution,
$\phi^2=\id$,
of a Ptolemy space 
$X$
with
$\phi(\om)=\om'$
for distinct
$\om$, $\om'\in X$.
Then there is a unique sphere
$S\sub X$
between
$\om$, $\om'$
invariant for
$\phi$, $\phi(S)=S$.
\end{lem}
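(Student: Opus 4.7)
The plan is to reduce the statement to a one-parameter calculation by using a metric $d$ of the M\"obius structure for which $\om$ is infinitely remote. By the description of spheres in Section~\ref{subsect:spheres_between_points}, the spheres between $\om$ and $\om'$ are exactly the metric spheres $S_r^d(\om') = \{x\in X_\om : d(x,\om')=r\}$ for $r>0$, so I need only identify which radius $r$ gives a $\phi$-invariant sphere.

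First I would apply the discussion preceding Lemma~\ref{lem:sinversion_minversion}: since $\phi$ is a M\"obius map with $\phi(\om)=\om'$, the point $\om'$ is infinitely remote for the pulled-back metric $\phi^\ast d$, and by Lemma~\ref{lem:homothety_infinite} the metrics $\phi^\ast d$ and the m-inversion of $d$ with respect to $\om'$ are homothetic. Hence there exists a constant $\la>0$, depending only on $\phi$ and $d$, such that
\[
(\phi^\ast d)(x,y) = \frac{\la\, d(x,y)}{d(x,\om')\,d(y,\om')}
\]
for all $x,y\in X\sm\om'$. Setting $y=\om$ and using the standard conventions, this gives the key identity $d(\phi(x),\om') = \la/d(x,\om')$ for all $x\in X\sm\{\om,\om'\}$.

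From this identity the image of the metric sphere $S_r^d(\om')$ under $\phi$ is contained in $S_{\la/r}^d(\om')$, so a sphere $S_r^d(\om')$ can be $\phi$-invariant only when $r=\la/r$, i.e. when $r=\sqrt{\la}$. This immediately gives uniqueness. For existence I take $S := S_{\sqrt\la}^d(\om')$ and note that the computation above gives $\phi(S)\subseteq S$; because $\phi$ is an involution, applying $\phi$ once more yields $S=\phi^2(S)\subseteq\phi(S)$, so $\phi(S)=S$.

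There is no real obstacle here — the argument is a direct application of Lemma~\ref{lem:homothety_infinite} together with the metric description of spheres between two points. The only minor point to be careful with is the convention for the formula $(\phi^\ast d)(x,\om)$ when one argument is the infinitely remote point, but this is exactly the convention used throughout Section~\ref{subsect:moebius_structures}, and the identity $d(\phi(x),\om')=\la/d(x,\om')$ follows from it without incident. Note also that the argument never uses the Ptolemy property in an essential way beyond the existence of the M\"obius structure, so the conclusion really only needs $\phi$ to be a M\"obius involution swapping the two chosen points.
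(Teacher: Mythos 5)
Your proposal is correct and is essentially the paper's own proof: both derive from Lemma~\ref{lem:homothety_infinite} the identity $d(\phi(x),\cdot)=\la/d(x,\cdot)$ for the distance to the appropriate center and conclude that the invariant sphere must have radius $\sqrt{\la}$ (the paper works in $X_{\om'}$ with spheres centered at $\om$, you work in $X_\om$ with spheres centered at $\om'$, which is only a relabelling). Your use of $\phi^2=\id$ to upgrade $\phi(S)\subseteq S$ to $\phi(S)=S$ is a slightly more careful rendering of the paper's final step.
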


\begin{proof} Let
$d$
be a metric of the M\"obius structure with infinitely
remote point 
$\om'$.
Since
$\phi(\om)=\om'$,
the point 
$\om$
is infinitely remote for the induced metric
$\phi^\ast d$.
Thus for some
$\la>0$
we have
$$(\phi^\ast d)(x,y)=\frac{\la d(x,y)}{d(x,\om)d(y,\om)}$$
for each
$x$, $y\in X$
which are not equal to
$\om$
simultaneously. We let
$S=S_r^d(\om)\sub X$
be a metric sphere between
$\om$, $\om'$
with
$r^2=\la$, $d(x,\om)=r$
for every
$x\in S$.
Then
$$d(\phi(x),\om)=d(\phi(x),\phi(\om'))=(\phi^\ast d)(x,\om')
  =\la/d(x,\om)=r$$
for every
$x\in S$.
Hence
$\phi(S)=S$.
For any
$x\in X$
with 
$d(x,\om)\lessgtr r$
the same argument shows that
$d(\phi(x),\om)\gtrless r$,
thus an invariant sphere between
$\om$, $\om'$
is unique.
\end{proof}

\begin{pro}\label{pro:sinversion_strict} Every s-inversion
$\phi:X\to X$
is strict.
\end{pro}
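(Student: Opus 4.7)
The plan is to show that $\phi$ coincides with the candidate s-inversion $\phi':=\phi_{x,x',T}$ provided by assumption (I), where $T$ is the unique $\phi$-invariant sphere between $x$ and $x'$ furnished by Lemma~\ref{lem:invariant_sphere}. By the uniqueness clause of (I) applied at the data $(x,x',T)$, it suffices to verify that $\phi$ itself satisfies the three defining properties of $\phi_{x,x',T}$: the swap $\phi(x)=x'$ holds by assumption, the invariance $\phi(T)=T$ holds by choice of $T$, and so the real content is property~(3), that $\phi$ preserves every Ptolemy circle through $x$ and $x'$.

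The key reduction I would carry out is to compare $\phi$ and $\phi'$ on a Ptolemy circle $\si_0$ passing simultaneously through $\om,\om',x,x'$. Such a circle exists: one begins with any Ptolemy circle through $\om,\om'$ furnished by the enhanced property~(E) and, if necessary, transports it by an element of $\aut X$ (using Proposition~\ref{pro:transitive_circle} together with Proposition~\ref{pro:two_point_homogeneous}) so as to contain $x$; it then automatically contains $x'=\phi(x)$ since $\phi$ preserves it by property~(3), and it is unique by Corollary~\ref{cor:weak_unique}. Both $\phi$ and $\phi'$ preserve $\si_0$ and restrict to fixed-point-free M\"obius involutions of $\si_0\cong\wh\R$. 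Both restrictions swap the two disjoint pairs $\{x,x'\}$ and $\si_0\cap T$; the latter intersection has two points, swapped rather than fixed since both involutions are fixed-point-free on the preserved sphere~$T$. A fixed-point-free M\"obius involution of $\wh\R$ is determined by two disjoint swapped pairs -- two such involutions agreeing on four points would have identity composition by Proposition~\ref{pro:moebch-circ}. Hence $\phi|_{\si_0}=\phi'|_{\si_0}$, and as a by-product $\phi'$ swaps $\om$ and $\om'$.

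Consider now $\psi:=\phi\circ\phi'\in\aut X$. It fixes $\si_0$ pointwise, in particular fixes $\om$ and $\om'$. Since $\psi$ fixes $\om$, Lemma~\ref{lem:homothety_infinite} applied in a metric with $\om$ infinitely remote identifies $\psi$ with a homothety of $X_\om$; fixation of any further point of $\si_0\sm\{\om,\om'\}$ forces the dilation coefficient to equal $1$, so $\psi$ is an isometry of $X_\om$ fixing the whole Ptolemy line $\si_0\sm\om$ pointwise and preserving the sphere $T$.

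The hard part will be deducing that $\psi=\id$, which would give $\phi=\phi'$. Mere fixation of a line together with invariance of a sphere does not a priori force the identity, so one must exploit the specific structure of $\psi$ as a product of two s-inversions. I would aim to show that $\psi$ lies in the group $\Ga_{\om,\om'}$ of Proposition~\ref{pro:homothety_property}; since elements of $\Ga_{\om,\om'}$ act on $\si_0\sm\om$ as homotheties completely determined by their dilation coefficient, the identity $\psi|_{\si_0}=\id$ would then force $\psi=\id$. Membership of $\psi$ in $\Ga_{\om,\om'}$ reduces to showing that $\phi'$ preserves every Ptolemy circle through $\om,\om'$, which is the symmetric form of what we want for $\phi$. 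I would try to close the loop by first identifying the unique $\phi'$-invariant sphere between $\om,\om'$ with $S$ -- using that $\phi|_{\si_0}=\phi'|_{\si_0}$ and the cross-ratio characterization of a sphere via its intersection with a Ptolemy circle through the two poles, one sees that $S'\cap\si_0=S\cap\si_0$ and hence $S'=S$ -- and then invoking the extension property~(${\rm E}_2$) of Section~\ref{sect:extension_circles} to transfer the agreement of $\phi$ and $\phi'$ from $\si_0$ to all other Ptolemy circles through $\om,\om'$, at which point uniqueness in (I) gives $\phi'=\phi_{\om,\om',S}=\phi$.
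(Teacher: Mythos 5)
There is a genuine gap, and it occurs at the very first substantive step: the Ptolemy circle $\si_0$ through the four points $\om,\om',x,x'$ need not exist. What you actually need is a circle through the \emph{three} points $\om,\om',x$ (containment of $x'$ would then follow), but the enhanced property (E) only guarantees a circle through any \emph{two} points, and two-point homogeneity only lets you transport a circle onto a prescribed \emph{pair}, not a prescribed triple. In the spaces this theorem is about (e.g.\ $\di\C\hyp^n$), the union of all Ptolemy circles through $\om$ and $\om'$ is exactly $D_{\om'}\cup\{\om\}$, the fiber of the canonical distribution in $X_\om$, which by Proposition~\ref{pro:nonintegrable_canonical_distribution} is a \emph{proper} subset of $X$ unless $X=\wh\R^n$; so for generic $x$ no such $\si_0$ exists, and the entire comparison of $\phi$ and $\phi'$ along $\si_0$ collapses. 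A secondary problem is the ending: you candidly flag the deduction $\psi=\id$ as the hard part, but the proposed resolution is circular (membership of $\psi$ in $\Ga_{\om,\om'}$ is ``the symmetric form of what we want''), and even granting it, an element of $\Ga_{\om,\om'}$ with dilation coefficient $1$ is only forced to be the identity on the union of circles through $\om,\om'$, not on all of $X$; the appeal to (${\rm E}_2$) to ``transfer agreement'' between two global automorphisms from one circle to others is not something (${\rm E}_2$) provides.

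For contrast, the paper avoids circles through four points entirely: it works in $X_x$ with $x$ infinitely remote, observes that $\phi_b=\phi_{x,x',T}$ restricted to $T\cap D_{x'}$ projects to the antipodal involution of the Euclidean sphere $S_r(o)\sub B_x$, writes $\phi_a=\mu\circ\phi_b$ where $\mu=\phi_a\circ\phi_b$ is an isometry of $X_x$ fixing $x'$ and $T$, and concludes that $\phi_a$ also projects to an isometric involution of $S_r(o)$, hence preserves antipodal pairs and therefore every Ptolemy circle through $x,x'$; uniqueness in (I) then gives $\phi_a=\phi_b$. If you want to salvage your approach, the comparison must be made on circles through $x$ and $x'$ (which do exist by enhanced (E)), not on a hypothetical circle containing both poles of $\phi_a$ and the pair $x,x'$.
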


\begin{proof} Let 
$\phi=\phi_a$
for 
$a=(\om,\om',S)$
and assume that
$\phi_a(x)=x'$
for some
$x$, $x'\in X$.
By Lemma~\ref{lem:invariant_sphere}, there is a unique sphere
$T$
between
$x$, $x'$
with
$\phi_a(T)=T$.
We put 
$b=(x,x',T)$
and consider the s-inversion
$\phi_b:X\to X$.
Then the s-inversions 
$\phi_a$, $\phi_b$
both permute
$x$, $x'$
and preserve the sphere
$T$.
In addition,
$\phi_b$
preserves every Ptolemy circle through
$x$, $x'$.
We show that
$\phi_a$
also preserves every Ptolemy circle through
$x$, $x'$.
This would imply
$\phi_b=\phi_a$
by uniqueness from the property (I), i.e. that
$\phi$
is strict.

Let 
$D=D_{x'}\sub X_x$
be the fiber through
$x'$
of the canonical distribution on
$X_x$,
that is,
$D$
consists of all Ptolemy lines in
$X_x$
through
$x'$.
Since
$\phi_b$
preserves every Ptolemy circle through
$x$, $x'$,
the intersection
$T\cap D$
is
$\phi_b$-invariant,
$\phi_b(T\cap D)=T\cap D$,
and moreover
$y$, $x'$, $\phi_b(y)$
lie on a Ptolemy line through
$x'$
in this order for every
$y\in T\cap D$. 
Thus
$\phi_b|T\cap D$
induces the antipodal involution
$\ov\phi_b:S_r(o)\to S_r(o)$
of the sphere
$S_r(o)\sub B_x$
of radius
$r$
centered at
$o=\pi_x(x')$
in the Euclidean space
$B_x$,
where
$r$
is the radius of the sphere
$T$
in the metric of
$X_x$, $T=\set{y\in X_x}{$|yx'|=r$}$.
In particular,
$\ov\phi_b$
is an isometry of
$S_r(o)$.

On the other hand, the composition
$\mu=\phi_a\circ\phi_b:X\to X$
preserves
$x$, $x'$
and
$T$,
hence
$\mu:X_x\to X_x$
is an isometry. Thus it projects to an isometry
$\ov\mu:B_x\to B_x$
of the base. Since
$\phi_a=\mu\circ\phi_b$
and both
$\mu$, $\phi_b$
project to isometries of
$S_r(o)$,
we see that
$\phi_a|T\cap D$
projects to an isometry 
$\ov\phi_a=\ov\mu\circ\ov\phi_b$
of
$S_r(o)$.
Moreover,
$\ov\phi_a$
is an involution, 
$\ov\phi_a^2=\id$,
because
$\phi_a$
is. However, every isometric involution of
$S_r(o)$,
obviously, preserves every pair of antipodal points.
This means that
$\phi_a$
preserves every Ptolemy circle through
$x$, $x'$.
\end{proof}

The set
$Y$
of all the s-inversions is called
the {\em filling} of
$X$, $Y=\fil X$.
Any pair 
$\om$, $\om'\in X$
of distinct points determines a {\em line} in
$Y$:
a point
$t\in Y$
lies on a line
$(\om,\om')$
iff 
$t(\om)=\om'$.
In particular, two lines
$(\om,\om')$, $(x,x')$
in
$Y$
intersect if there is
$t\in Y$
such that
$t(\om)=\om'$, $t(x)=x'$.

The M\"obius group of
$X$
naturally acts on
$Y$
by conjugation: for every M\"obius automorphism
$g:X\to X$
we have
$$g^\ast(t)=g\circ t \circ g^{-1}$$
for the induced
$g^\ast:Y\to Y$.
In particular,
$t\in Y$
is fixed for 
$g^\ast$
iff
$g\circ t=t\circ g$.

The following lemma describes the upper half-space model of
$\fil X$.

\begin{lem}\label{lem:unique_represent} For a fixed
$\om\in X$
and every
$t\in\fil X$
there exist a uniquely determined sphere
$S$
between
$\om$, $t(\om)$
such that 
$t=\phi_b$
for 
$b=(\om,t(\om),S)$. 
Thus for a fixed metric on
$X_\om$,
the space
$Y=\fil X$
is canonically identified with
$Y=X_\om\times\R_+$
by
$t=(t(\om),r)$,
where 
$r>0$
is the radius of the sphere
$S$.
In particular, 
$Y$
is a smooth manifold diffeomorphic to 
$\R^{n+1}$, $n=\dim X$.
\end{lem}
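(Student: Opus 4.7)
The plan is to derive this lemma essentially as a consequence of Proposition~\ref{pro:sinversion_strict} (strictness of every s-inversion) combined with Lemma~\ref{lem:invariant_sphere} (uniqueness of an invariant sphere). For the existence of $S$: since $t\in\fil X$ is by definition an s-inversion, we have $t=\phi_a$ for some datum $a=(\om_0,\om_0',S_0)$. Applying strictness of $t$ to the pair $\om,\om':=t(\om)$ (with $t(\om)=\om'$) yields a sphere $S$ between $\om$ and $\om'$ together with the equality $t=\phi_{b}$ for $b=(\om,\om',S)$. For uniqueness: if $t=\phi_{(\om,\om',S')}$, then by condition~(2) in the definition of an s-inversion we have $t(S')=S'$, so $S'$ is a $t$-invariant sphere between $\om$ and $\om'$; Lemma~\ref{lem:invariant_sphere} applied to the involution $t$ shows there is at most one such sphere, hence $S'=S$.

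For the coordinate identification, fix a metric $d$ of the M\"obius structure having $\om$ as its infinitely remote point. By the discussion in Section~\ref{subsect:spheres_between_points}, every sphere between $\om$ and $\om'$ is of the form $S_r^d(\om')$ for a uniquely determined $r>0$. The assignment $\Psi:Y\to X_\om\times\R_+$, $\Psi(t)=(t(\om),r(t))$, where $r(t)$ is the radius of the sphere $S$ produced by the first part, is therefore well defined. Injectivity of $\Psi$ is immediate from the uniqueness clause of property~(I): the triple $(\om,t(\om),S)$ determines $t$ uniquely. Surjectivity is also immediate from (I): for $(\om',r)\in X_\om\times\R_+$ the sphere $S_r^d(\om')$ is a sphere between $\om$ and $\om'$, and (I) produces a (unique) s-inversion $t=\phi_{(\om,\om',S_r^d(\om'))}$ with $\Psi(t)=(\om',r)$.

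To conclude the last assertion, I transfer the smooth structure: by Corollary~\ref{cor:nilponent_lie_group}, $X_\om$ carries the structure of a simply connected nilpotent Lie group $N_\om$ and in particular is diffeomorphic to $\R^n$; hence $Y\cong X_\om\times\R_+\cong\R^n\times\R_+$, which is diffeomorphic to $\R^{n+1}$.

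I do not anticipate a genuine obstacle here, since the substantive work has already been carried out in Propositions~\ref{pro:sinversion_strict} and \ref{pro:transitive_circle} and in Lemma~\ref{lem:invariant_sphere}. The only point that merits care is keeping straight the role of the chosen metric: the bijection $\Psi$ is canonical once a metric of the M\"obius structure with $\om$ infinitely remote is fixed, and changing the metric only rescales $r$ by a positive factor (Lemma~\ref{lem:homothety_infinite}), so the underlying smooth structure on $Y$ is independent of the choice.
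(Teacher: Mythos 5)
Your proof is correct and follows essentially the same route as the paper's: existence of $S$ from strictness (Proposition~\ref{pro:sinversion_strict}), uniqueness from the invariant-sphere Lemma~\ref{lem:invariant_sphere}, and the smooth structure from Corollary~\ref{cor:nilponent_lie_group}; you merely spell out the bijectivity of the coordinate map, which the paper leaves implicit. (The passing citation of Proposition~\ref{pro:transitive_circle} in your closing remark is not actually needed.)
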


\begin{proof} By Proposition~\ref{pro:sinversion_strict}, there is a uniquely
determined sphere
$S$
between
$\om$, $\om'=t(\om)$
with 
$\phi_b=t$,
where
$b=(\om,\om',S)$.
Thus
$Y=X_\om\times\R_+$.
By Corollary~\ref{cor:nilponent_lie_group}, the group
$N_\om$
acting on
$X_\om$
simply transitively is a Lie group diffeomorphic to
$\R^n$. 
Therefore, 
$Y$
is a smooth manifold diffeomorphic to
$\R^{n+1}$. 
\end{proof}

\begin{lem}\label{lem:sphere_intersection} Assume a space inversion
$s\in\fil X$
has  representations
$s=\phi_a=\phi_b$
for 
$a=(\om,\om',S)$, $b=(\xi,\xi',T)$,
where 
$S$, $T\sub X$
are spheres between
$\om$, $\om'$
and
$\xi$, $\xi'$
respectively.
Then the intersection
$S\cap T$
is not empty,
$S\cap T\neq\es$.
\end{lem}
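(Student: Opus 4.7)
The plan is to exploit the fact that $s$ is a homeomorphism of $X$ swapping $\om\leftrightarrow\om'$ while preserving $S$, so $s$ must exchange the two sides of $S$. If $T$ were disjoint from $S$, connectedness of $T$ would force it to lie on one side, but then $s(T)=T$ would have to lie on the other side as well — a contradiction.

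First I would dispose of the overlap case: if $\{\om,\om'\}\cap\{\xi,\xi'\}\ne\es$, then since the involution $s$ permutes each pair, the pairs must coincide, $\{\om,\om'\}=\{\xi,\xi'\}$, and Lemma~\ref{lem:invariant_sphere} applied to $s$ forces $S=T$ by uniqueness. Henceforth I assume the two pairs are disjoint.

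Next I set up the separation. Fix a metric $d$ of the M\"obius structure with $\om$ infinitely remote; then $S=S_r(\om')$ for some $r>0$. By Corollary~\ref{cor:nilponent_lie_group}, $X_\om\simeq\R^n$ with $n=\dim X\ge 2$, and the closed metric ball $\overline{B_r(\om')}$ is homeomorphic to the standard closed $n$-ball. This yields that the open ball $U':=B_r(\om')\ni\om'$ is connected. For the other side, I pass to $X_{\om'}$: the metric inversion identifies the set $U:=\{x\in X_\om : d(x,\om')>r\}\cup\{\om\}\ni\om$ with the open metric ball of radius $1/r$ about $\om$ in $X_{\om'}$, which is again connected by the same corollary. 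Thus $X\sm S=U\sqcup U'$ is a disjoint union of two open connected sets; since $s$ is a homeomorphism with $s(S)=S$ and $s(\om)=\om'$, we have $s(U)=U'$.

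The final pieces are connectedness of $T$ and the contradiction. Repeating the above analysis in $X_\xi$, the sphere $T$ is the metric sphere of some radius around $\xi'$, and I would identify it with the topological boundary in $X_\xi$ of the closed ball $\overline{B_{r''}(\xi')}$. Since this closed ball is homeomorphic to the standard closed $n$-ball in $\R^n$ and $X_\xi$ is itself a topological $n$-manifold, invariance of domain identifies $T$ with the boundary sphere $S^{n-1}$, which is connected because $n\ge 2$. Now if $S\cap T=\es$, connectedness forces $T\subset U$ or $T\subset U'$; say $T\subset U$. Applying $s$ gives $T=s(T)\subset s(U)=U'$, whence $T\subset U\cap U'=\es$, a contradiction. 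The only technical obstacle is the invariance-of-domain identification $T\simeq S^{n-1}$, which rests on the nilpotent Lie group topology of $X_\xi$ and on the explicit ball homeomorphism provided by Corollary~\ref{cor:nilponent_lie_group}.
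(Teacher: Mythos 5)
Your argument is correct and is essentially the paper's own proof: the paper likewise observes that $S$ and $T$ are $s$-invariant and connected (citing Corollary~\ref{cor:nilponent_lie_group}) and that $s$ swaps the two components of $X\sm S$, so a connected $s$-invariant $T$ disjoint from $S$ is impossible. You merely fill in the details (the two-component decomposition of $X\sm S$ via the upper and inverted ball models, and the connectedness of the metric spheres for $\dim X\ge 2$) that the paper leaves implicit, plus an unnecessary but harmless special case for overlapping point pairs.
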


\begin{proof} The intersection
$S\cap T$
is not empty (and invariant under
$s$)
because the spheres
$S$, $T$
are invariant under
$s$,
connected (see Corollary~\ref{cor:nilponent_lie_group}), and
$s$
has no fixed point in
$X$,
thus it permutes the components of
$X\sm S$
(as well as those of 
$X\sm T$).
\end{proof}

\subsection{Lines in the filling}
\label{subsect:lines_filling}

We let
$Y=\fil X$.

\begin{lem}\label{lem:line_R} Every line
$\ga=(a,a')\sub Y$
is homeomorphic to
$\R$.
\end{lem}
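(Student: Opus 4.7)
The plan is to reduce the claim directly to Lemma~\ref{lem:unique_represent}, which already describes $Y$ as a smooth $(n+1)$-manifold via the ``upper half-space'' parametrization. The whole line $(a,a')$ will sit inside one ``vertical fiber'' of that parametrization.

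First I would fix a metric $d$ in the M\"obius structure of $X$ for which the point $a$ is infinitely remote; thus $(X_a,d)$ is an ordinary metric space and $a'$ is a specific point of $X_a$. By Lemma~\ref{lem:unique_represent} (applied with $\om=a$), the assignment
\[
 t\ \longmapsto\ \bigl(t(a),\,r(t)\bigr),
\]
where $r(t)$ is the $d$-radius of the unique sphere $S(t)$ between $a$ and $t(a)$ for which $t=\phi_{a,t(a),S(t)}$, is a homeomorphism (in fact a diffeomorphism) $Y\to X_a\times\R_+$.

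Next I would use the definition of the line: $t\in\ga=(a,a')$ iff $t(a)=a'$. Under the identification above this is precisely the fiber
\[
 \ga\ \longleftrightarrow\ \{a'\}\times\R_+,
\]
so $\ga$ inherits the homeomorphism type of $\R_+$, and $\R_+\cong\R$ via $r\mapsto\log r$. To see that the correspondence $\ga\to\R_+$ is actually onto, I would invoke the description of spheres between two points recorded in sect.~\ref{subsect:spheres_between_points}: since $a$ is infinitely remote, every sphere $S$ between $a$ and $a'$ has the form $S=S_r^d(a')$ for some $r>0$, and by the inversion axiom~(I) the data $(a,a',S_r^d(a'))$ produces a genuine space inversion $\phi_{a,a',S_r^d(a')}$ lying on $\ga$; conversely, every $t\in\ga$ arises this way by Lemma~\ref{lem:unique_represent}.

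There is essentially no obstacle here, since all the nontrivial content (uniqueness of the representative sphere, global parametrization of $Y$, classification of spheres between two points) has been established earlier. The only delicate point worth spelling out is that the natural topology on $Y=\fil X$ used in Lemma~\ref{lem:unique_represent}---the weak topology inherited from $\aut X$---makes the map $t\mapsto(t(a),r(t))$ both continuous and open onto $X_a\times\R_+$; this is the statement of Lemma~\ref{lem:unique_represent} itself, so restricting it to the closed submanifold $\{a'\}\times\R_+$ automatically yields the desired homeomorphism $\ga\cong\R$.
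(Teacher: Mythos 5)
Your proof is correct and takes essentially the same route as the paper: the paper also identifies $\ga=(a,a')$ with the fiber $\{a'\}\times\R_+$ in the upper half-space model $Y=X_a\times\R_+$ of Lemma~\ref{lem:unique_represent}, using that every sphere between $a$ and $a'$ is a metric sphere $S_r^d(a')$ in $X_a$. Your additional remarks on surjectivity and on the topology of the identification only make explicit what the paper leaves implicit.
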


\begin{proof} By definition,
$\ga=\set{t\in Y}{$t(a)=a'$}$.
In the upper half-space model
$Y=X_a\times\R_+$,
we have
$\ga=\{a'\}\times\R_+\simeq\R$
because every sphere 
$S\sub X_\om$
between
$a$
and
$a'$
is centered at
$a'$.
\end{proof}

\begin{pro}\label{pro:unique_line_filling} Through any two distinct
point
$s$, $t\in Y$
there is a unique line
$\ga=(a,a')\sub Y$.
\end{pro}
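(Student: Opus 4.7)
The plan is to reduce the statement to a fixed-point problem for the M\"obius automorphism $g := s \circ t \in \aut X$. Observe that a line $\gamma = (a, a') \subset Y$ passes through both $s$ and $t$ if and only if $s(a) = a' = t(a)$, equivalently $a \in \mathrm{Fix}(g)$; then $a' = t(a)$ is automatically distinct from $a$, since $t$ is fixed-point-free. The identity
$$s \circ g \circ s \;=\; s \circ (s \circ t) \circ s \;=\; t \circ s \;=\; g^{-1}$$
shows that $s$ (and by the symmetric computation $t$) conjugates $g$ to $g^{-1}$, so $\mathrm{Fix}(g)$ is $s$- and $t$-invariant. Hence each fixed point $a \in X$ of $g$ produces the pair $\{a, s(a)\}$ and a corresponding line, and lines through $s$ and $t$ are in bijection with the two-element $s$-orbits inside $\mathrm{Fix}(g) \cap X$. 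The task becomes: show $g$ has exactly one such orbit.

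For existence I plan to produce an $(s,t)$-invariant Ptolemy circle $\sigma \subset X$. Once this is in hand, the analysis collapses onto $\sigma \cong S^1$: both $s|_\sigma$ and $t|_\sigma$ are fixed-point-free M\"obius involutions, and $g|_\sigma$ is then either a nontrivial loxodromic-type M\"obius map of $S^1$ with exactly two fixed points giving the desired pair $\{a,a'\}$, or is the identity on $\sigma$ --- but the latter would force $s|_\sigma = t|_\sigma$ and, by Proposition \ref{pro:moebch-circ} combined with the extension property (${\rm E}_2$), give $s = t$ globally, contradicting $s \ne t$. To construct $\sigma$ I exploit strictness (Proposition \ref{pro:sinversion_strict}): every Ptolemy circle through a pair $\{\xi, s(\xi)\}$ is $s$-invariant, and similarly for $t$. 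Starting from a Ptolemy circle of this form and using compactness of $X$ together with transitivity on oriented circles (Proposition \ref{pro:transitive_circle}), I extract by a limiting/dynamical argument a circle simultaneously invariant under $s$ and $t$.

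For uniqueness, suppose $\{a,a'\} \ne \{b,b'\}$ were two distinct $s$-orbits in $\mathrm{Fix}(g)$. By strictness every Ptolemy circle through $\{a, a'\}$ is automatically $s$- and $t$-invariant, and similarly for $\{b, b'\}$. I claim there is a Ptolemy circle $\sigma$ through all four points. Taking any Ptolemy circle $\sigma_0$ through $a$ and $b$ by property (E), its image $s(\sigma_0)$ passes through $a'$ and $b'$; by Corollary \ref{cor:weak_unique} distinct Ptolemy circles share at most two points, so either $s(\sigma_0) = \sigma_0$ --- in which case $\sigma_0$ already contains all four points and is $s$-invariant --- or $\sigma_0 \cap s(\sigma_0) = \{a,b\}$, and a short case analysis using the $t$-action pins down the required $\sigma$ through all four points. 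On such $\sigma$, the fixed-point-free M\"obius involutions $s|_\sigma$ and $t|_\sigma$ both swap $\{a, a'\}$ and hence agree at the two fixed points of $g|_\sigma$; by the rigidity of M\"obius automorphisms of a circle (Proposition \ref{pro:moebch-circ}) they coincide on $\sigma$, and then (${\rm E}_2$) forces $s = t$, contradicting $s \ne t$.

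The main obstacle is the existence of a fixed point of $g$, equivalently the construction of an $(s,t)$-invariant Ptolemy circle. Since neither $s$ nor $t$ has fixed points on $X$, classical fixed-point theorems on $X \cong S^n$ do not apply directly, and the Riemannian metric on $Y$ is not yet available to use midpoints or axes of isometries. The argument must leverage strictness --- which provides a large family of invariant circles for each of $s$ and $t$ separately --- together with the conjugation identity $sgs = g^{-1}$ to restrict the dynamics of $g$ on $X$, and finally exploit compactness to isolate a simultaneously invariant circle. Once that is established, the one-dimensional analysis above closes both existence and uniqueness.
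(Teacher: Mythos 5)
Your reduction of the statement to the fixed--point set of $g=s\circ t$ is correct and matches the paper's starting point, but the proposal leaves the actual content of the proposition unproved. The existence of a fixed point of $g$ (equivalently, of a common antipodal pair of $s$ and $t$, equivalently of a jointly invariant Ptolemy circle) \emph{is} the theorem; you name it as ``the main obstacle'' and offer only an unspecified ``limiting/dynamical argument'' from compactness and transitivity on circles. The paper closes exactly this gap with a connectedness argument (Lemmas~\ref{lem:A_nonempty} and~\ref{lem:line_fill}): in the upper half--space model $Y=X_\om\times\R_+$ with $t=(a,R)$, the set $A\sub Y_t$ of inversions sharing an antipodal pair with $t$ contains the whole vertical line $\{a\}\times\R_+$ minus $t$ (there $s\circ t$ is a nontrivial pure homothety fixing $a$), is closed by a compactness argument, and is open because if $h_s=s\circ t$ is a homothety with coefficient $\la\in(0,1)$ then $h_i(B)\sub B$ for nearby $s_i$ and a metric ball $B$, whence Brouwer's fixed point theorem applies since $B$ is homeomorphic to a Euclidean ball by Corollary~\ref{cor:nilponent_lie_group}. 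Since $Y_t$ is connected, $A=Y_t$. Nothing in your sketch substitutes for this; without it the proof does not exist.

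The uniqueness half also has a gap: you assert ``I claim there is a Ptolemy circle $\si$ through all four points'' $a,a',b,b'$, but property (E) only provides circles through two points, Corollary~\ref{cor:weak_unique} says distinct circles meet in at most two points, and your case analysis does not establish that $a',b'\in\si_0$ in the first place (that would require $s(\si_0)=\si_0$, which is what you are trying to arrange). In a general $X$ four points need not lie on a common Ptolemy circle, so this step cannot be waved through. The paper's uniqueness argument is a one--liner you could have used instead: if $a,a',b,b'$ are all fixed by $h=s\circ t$, then $h$ acts on $X_a$ as a homothety whose coefficient is $(R/r)^2\neq 1$ (the radii of the invariant spheres of $t$ and $s$ between $a,a'$ differ because $s\neq t$), and a homothety with coefficient $\neq 1$ has a unique fixed point in $X_a$; hence $\{b,b'\}=\{a,a'\}$. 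I would also note that your circle--level dichotomy (``loxodromic or identity'') is fine once an $(s,t)$--invariant circle is in hand, since a fixed--point--free M\"obius involution of a Ptolemy circle corresponds to a central symmetry of $\hyp^2$ and two such compose to a hyperbolic translation; but that observation is downstream of the missing existence step.
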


For a fixed
$t\in Y$
we define a map 
$h:Y\to\aut X$
by
$s\mapsto h_s=s\circ t:X\to X$.
Note that
$h_t=\id$.
We put
$Y_t=Y\sm t$
and 
$$A=\set{s\in Y_t}{$s\ \text{and}\ t\ \text{have a common pair
of antipodal points in}\ X$}.$$
An equivalent definition is
$$A=\set{s\in Y_t}{$h_s\ \text{has a fixed point in}\ X$}.$$

\begin{lem}\label{lem:A_nonempty} The set
$Y_t$
is connected, and
$A\sub Y_t$
is nonempty and closed in
$Y_t$. 
\end{lem}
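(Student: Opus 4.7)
The statement has three independent parts, and I would address them in turn.

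For connectedness of $Y_t$, I would appeal directly to Lemma~\ref{lem:unique_represent} together with Corollary~\ref{cor:nilponent_lie_group}: fixing any $\om\in X$, the filling $Y$ is canonically identified with $X_\om\times\R_+$, which is diffeomorphic to $\R^{n+1}$ with $n=\dim X$. Since we are assuming $\dim X\ge 2$, removing a single point from a Euclidean space of dimension at least $3$ leaves a connected space; hence $Y_t$ is connected.

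For $A\neq\es$, the natural candidate is any point on the line through $t$. Pick any $\om\in X$ and set $\om':=t(\om)$; by Proposition~\ref{pro:sinversion_strict}, $t=\phi_{\om,\om',T}$ for a unique sphere $T$ between $\om,\om'$. The line $\ga=(\om,\om')\sub Y$, by definition, consists of all s-inversions that send $\om$ to $\om'$, and by Lemma~\ref{lem:line_R} satisfies $\ga\cong\R$, so $\ga\sm\{t\}\neq\es$. For any $s\in\ga\sm\{t\}$, since both $s$ and $t$ are involutions swapping $\om$ and $\om'$, we have $h_s(\om)=s(t(\om))=s(\om')=\om$, so $\om$ is a fixed point of $h_s$ and thus $s\in A$.

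For closedness of $A$ in $Y_t$, I would take $s_i\in A$ converging to some $s\in Y_t$ and choose fixed points $x_i\in X$ of $h_{s_i}=s_i\circ t$. By compactness of $X$ (Corollary~\ref{cor:nilponent_lie_group}), a subsequence satisfies $x_i\to x\in X$. Since $t$ is continuous, $t(x_i)\to t(x)$, and the equation $s_i(t(x_i))=x_i$ passes to the limit to give $s(t(x))=x$, i.e.\ $h_s(x)=x$, so $s\in A$.

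The only nontrivial step is the passage to the limit $s_i(t(x_i))\to s(t(x))$, which requires joint continuity of the evaluation map $(r,y)\mapsto r(y)$ on $Y\times X$. In the upper half-space description of Lemma~\ref{lem:unique_represent}, convergence $s_i\to s$ means convergence of data $(s_i(\om),r_i)\to(s(\om),r)$ in $X_\om\times\R_+$, and the explicit formula for the associated inversion provided by Lemma~\ref{lem:sinversion_minversion} yields uniform convergence of $s_i$ to $s$ on compact subsets of $X_\om$; since $t(x_i)\to t(x)\in X$ stays in such a compact subset after possibly choosing coordinates with $\om\neq t(x)$, the desired convergence follows. This verification, while routine, is the one point in the argument where the precise topology on $Y=\fil X$ has to be unwound.
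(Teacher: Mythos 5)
Your treatment of connectedness and of $A\neq\es$ is correct and essentially the paper's: the paper also works in the model $Y=X_\om\times\R_+$ for connectedness, and for nonemptiness it takes the points $s=(a,r)$, $r\neq R$, on the line through $t=(a,R)$ and notes that $h_s$ is then a nontrivial pure homothety fixing $a$; your observation that $h_s$ fixes $\om$ is the same line seen from its other endpoint.

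The closedness argument is where you diverge, and the step you yourself single out as the crux contains a genuine gap. You claim that convergence of the data $(s_i(\om),r_i)\to(s(\om),r)$ together with the formula of Lemma~\ref{lem:sinversion_minversion} yields uniform convergence of the maps $s_i$ on compact subsets of $X_\om$. But that lemma computes only the pullback metric, i.e.\ the distances $d(s_i(u),s_i(v))$ between \emph{image} points; from the data $(b_i,r_i)$ it tells you that $s_i(u)$ lies on the sphere of radius $r_i^2/|u\,b_i|$ about $b_i=s_i(\om)$, but not where on that sphere. Convergence of the data therefore does not by itself force the maps $s_i$ to converge even pointwise, so the asserted uniform convergence does not follow from the ingredients you cite. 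Two repairs are possible. One is to use that $Y$ sits in $\aut X$ with the weak topology, so $s_i\to s$ already gives $s_i(y)\to s(y)$ for each fixed $y$; the inversion formula then does supply a uniform Lipschitz bound for the $s_i$ on compacta bounded away from $s(\om)$, and equicontinuity plus pointwise convergence upgrades this to $s_i(y_i)\to s(y)$ (with some care in choosing $\om$ so that $t(x)\neq\om$ and the degenerate case $t(x)=s(\om)$ is handled). The other is the paper's route, which avoids evaluating a limit map altogether: putting $\om:=t(x)$ for $x=\lim x_i$, one has $s_i(x_i)=t(x_i)\to\om$, i.e.\ the sequence $s_i(x_i)$ escapes to infinity in $X_\om$, whereas if $s(\om)\neq x$ the identity $|s_i(x_i)\,b_i|=r_i^2/|x_i\,b_i|$ keeps $s_i(x_i)$ bounded --- a contradiction built entirely from quantities determined by the data, which is precisely what your version is missing.
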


\begin{proof} We fix 
$\om\in X$
and a metric of the M\"obius structure on
$X_\om$.
Then by Lemma~\ref{lem:unique_represent}, we have an upper half-space
model 
$Y=X_\om\times\R_+$.
By Corollary~\ref{cor:nilponent_lie_group},
$X_\om$
is homeomorphic to
$\R^n$
with
$n\ge 1$.
Thus
$Y_t$
is connected.

We have
$t=(a,R)\in X_\om\times\R_+$,
where
$R$
is the radius of the 
$t$-invariant 
sphere
$S_t\sub X_\om$
between
$a$
and
$\om$, $S_t=\set{x\in X_\om}{$|xa|=R$}$.
Then for every
$s=(a,r)\in Y$,
$r\neq R$,
the composition
$h_s=s\circ t:X_\om\to X_\om$
is a nontrivial pure homothety with
$h_s(a)=a$,
see Proposition~\ref{pro:homothety_property}. Thus
$s\in A$,
and
$A$
is nonempty.

Assume
$\lim_is_i=s\in Y_t$
for a sequence
$s_i\in A$.
For every
$i$
there is
$a_i\in X$
with
$s_i\circ t(a_i)=a_i$.
Since
$X$
is compact, the sequence
$a_i$
subconverges to
$a\in X$.
We put 
$\om:=t(a)$.
Then
$t=(a,R)$
for some 
$R>0$
in the upper half-space model
$Y=X_\om\times\R_+$.
We also have
$s=(b,r)\in X_\om\times\R_+$.
To prove that
$s\in A$
it suffices to show that
$b=a$.
We have
$s_i=(b_i,r_i)$
with 
$b_i\to b$, $r_i\to r$.
Note that
$s_i(a_i)=t(a_i)\to\om$,
i.e. the sequence
$s_i(a_i)$
tends to infinity in the space
$X_\om$.
But if
$b\neq a$,
the sequence
$b_i$
is separated from
$a$,
and since
$r_i<2r$,
the sequence
$s_i(a_i)$
is bounded in
$X_\om$, $|s_i(a_i)b|\le4r^2/|ab|$
for all sufficiently large 
$i$.
This contradiction shows that
$s\in A$
and thus
$A$
is closed in
$Y_t$.
\end{proof}

\begin{lem}\label{lem:line_fill} $A=Y_t$.
\end{lem}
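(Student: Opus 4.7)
The plan is to prove that $A$ is open in $Y_t$; combined with Lemma~\ref{lem:A_nonempty}, which asserts $A$ is nonempty and closed in $Y_t$ and that $Y_t$ is connected, this gives $A = Y_t$. So the entire work is in verifying openness.

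Fix $s_0 \in A$, and let $a, a' \in X$ be a common antipodal pair: $s_0(a) = a' = t(a)$ and $s_0(a') = a = t(a')$. Then $h_{s_0} = s_0 \circ t$ fixes both $a$ and $a'$; since $s_0, t$ each preserve every Ptolemy circle through $\{a,a'\}$ together with its orientation (both being fixed-point free involutions of such a circle), the same holds for $h_{s_0}$, so $h_{s_0} \in \Ga_{a,a'}$. By Proposition~\ref{pro:homothety_property} and Remark~\ref{rem:homothety}, $h_{s_0}$ acts on $X_a$ as a homothety fixing $a'$, with some coefficient $\la_0 \neq 1$ (nontriviality because $s_0 \neq t$ forces $h_{s_0} \neq \id$). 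Replacing $h_s = s \circ t$ by its inverse $h_s^{-1} = t \circ s$ if necessary, which has the same set of fixed points in $X$, I may assume $\la_0 < 1$, so that $a'$ is an attracting fixed point of $h_{s_0}$ in $(X_a, d)$.

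In the upper half-space model $Y = X_a \times \R_+$ of Lemma~\ref{lem:unique_represent}, write $s_0 = (a', r_0)$ and $t = (a', R)$ with $\la_0 = (r_0/R)^2$, and a nearby $s$ as $s = (b, r)$ with $(b, r) \to (a', r_0)$. By Lemma~\ref{lem:sinversion_minversion}, $s$ and $t$ act on $X_a$ as metric inversions at $b$ and $a'$ of radii $r$ and $R$ respectively. For $\ep > 0$ fixed and small, I would estimate $h_s$ on the closed ball $\bar B = \bar B_\ep(a')$ directly: $t$ sends $\bar B$ outside the large ball $\bar B_{R^2/\ep}(a')$, and $s$ in turn sends this far set into a small ball around $b$. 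A short computation confirms that for $s$ in a sufficiently small neighborhood $U$ of $s_0$ in $Y$, one has $h_s(\bar B) \sub \Int \bar B$. Since $\bar B$ is homeomorphic to the standard closed $n$-ball by Corollary~\ref{cor:nilponent_lie_group}, Brouwer's fixed point theorem yields $x_s \in \bar B$ with $h_s(x_s) = x_s$; then $\{x_s, t(x_s)\}$ is a common antipodal pair of $s$ and $t$, so $s \in A$ and $U \sub A$.

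The main obstacle is the short computation just invoked: tracking two metric inversions whose centers and radii both drift, so as to guarantee $h_s(\bar B) \sub \Int \bar B$ uniformly for $s$ near $s_0$ (a delicate point being that the pole $b$ of the inversion defining $s$ eventually migrates into $\bar B$ itself, yet $h_s(\bar B)$ must still remain inside). Beyond that, the proof rests only on results already established: strictness of space inversions, the half-space parametrization of $\fil X$, the pure-homothety structure of $\Ga_{a,a'}$, and the fact that closed metric balls in $X_a$ are topological $n$-balls.
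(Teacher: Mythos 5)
Your proof is correct and follows essentially the same route as the paper's: reduce to openness of $A$ in $Y_t$ (the paper phrases this contrapositively via a sequence in $Y_t\sm A$ converging into $A$), observe that $h_{s_0}=s_0\circ t$ is a pure homothety with coefficient $\la_0\neq 1$ fixing the attracting antipodal point, find a closed metric ball $\ov B$ with $h_s(\ov B)\sub\Int\ov B$ for all $s$ near $s_0$, and apply Brouwer via Corollary~\ref{cor:nilponent_lie_group}. The explicit two-inversion estimate you flag as the remaining obstacle does go through (the paper simply asserts "one easily checks" from $h_i\to h_s$), and the migration of the pole $b$ into $\ov B$ is harmless since the pole of $h_s$ on $X_a$ is $t(b)$, which stays far from $\ov B$.
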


\begin{proof} In view of Lemma~\ref{lem:A_nonempty},
it suffices to check that
$A$
is open in
$Y_t$.
If not, there is sequence
$C\ni s_i\to s\in A$, $C=Y_t\sm A$.
Then
$s$, $t$
have a common pair
$o$, $\om$
of antipodal points in
$X$, $s(o)=t(o)=\om$,
and
$h_s=s\circ t:X_\om\to X_\om$
is a pure homothety with coefficient
$\la\neq 1$
centered at
$o$, $h_s(o)=o$.
We can assume W.L.G. that
$0<\la<1$.
By definition,
$h_i=h_{s_i}:X\to X$
has no fixed point in
$X$
for every
$i$.

Let 
$B=B_r(o)\sub X_\om$
be a metric ball centered at
$o$, $B_r(o)=\set{x\in X_\om}{$|xo|\le r$}$
for some
$r>0$.
Then
$h_s(B)\sub\Int B$.
Since
$h_i\to h_s$
as
$i\to\infty$
in the M\"obius group of
$X$,
one easily checks that
$h_i(B)\sub B$
for every sufficiently large
$i$.
By Corollary~\ref{cor:nilponent_lie_group},
$B$
is homeomorphic to the standard ball in
$\R^n$.
Thus
$h_i$
has a fixed point in
$B$
for all sufficiently large
$i$.
This contradicts the assumption
$s_i\in C$.
Hence
$A=Y_t$.
\end{proof}

\begin{proof}[Proof of Proposition~\ref{pro:unique_line_filling}]
By Lemma~\ref{lem:line_fill}, the space inversions
$s$, $t:X\to X$
have a common pair
$(a,a')$
of antipodal points,
$s(a)=a'=t(a)$.
It means that the line
$\ga=(a,a')\sub Y$
passes through
$s$, $t$. 

Assume there is another line
$\ga'=(b,b')$
through
$s$, $t$.
Then 
$a$, $a'$, $b$, $b'$
are fixed points of the composition
$h=s\circ t$.
In particular,
$h$
acts on
$X_a$
as a homothety with coefficient
$\la\neq 1$. 
But any such homothety of
$X_a$
has a unique fixed point. Thus we have W.L.G.
$b=a$, $b'=a'$,
that is,
$\ga=\ga'$.
\end{proof}

\subsection{Distance in the filling}
\label{subsect:dist_filling}

Let 
$Y=\fil X$
be the filling of
$X$.
Given a line
$\ga=(\om,\om')\sub Y$
and points
$s$, $t\in\ga$,
let
$S$, $T\sub X$
be spheres between
$\om$, $\om'$
such that
$s=\phi_a$, $t=\phi_b$
for 
$a=(\om,\om',S)$, $b=(\om,\om',T)$.
We pick
$x\in S$, $y\in T$
and set 
$$\rho(s,t)=|\ln\langle\om,x,y,\om'\rangle|,$$
where
$\langle\om,x,y,\om'\rangle=\frac{|\om y|\cdot|x\om'|}{|\om x|\cdot|y\om'|}$
is the cross-ratio of the quadruple
$(\om,x,y,\om')$.
This definition of the distance
$\rho$
in
$Y$
is independent of the choice
$x\in S$, $y\in T$.
Indeed, let
$r>0$
be the radius of the sphere
$S$
(centered at
$\om'$)
w.r.t. a metric in
$X_\om$,
and
$R$
the radius of the sphere
$T$.
Then
$\langle\om,x,y,\om'\rangle=|x\om'|/|y\om'|=r/R$
and
$\rho(s,t)=|\ln(r/R)|$.
It  follows from Proposition~\ref{pro:unique_line_filling} that
$\rho(s,t)>0$
is well defined for each distinct
$s$, $t\in Y$,
and we set 
$\rho(s,s)=0$
for every
$s\in Y$.
Furthermore, the distance
$\rho$
is symmetric, 
$\rho(s,t)=\rho(t,s)$.
However, it is not at all obvious that it satisfies the 
triangle inequality. Nevertheless, the M\"obius group of
$X$
acts on
$Y$
via conjugation, see sect.~\ref{subsect:basic_facts}, by
$\rho$-isometries 
because M\"obius transformations preserve the cross-ratio.

\begin{lem}\label{lem:rho_geodesic} Every line
$\ga=(a,a')\sub Y$
is a 
$\rho$-geodesic,
$$\rho(s_1,s_3)=\rho(s_1,s_2)+\rho(s_2,s_3)$$
for any 
$s_1$, $s_2$, $s_3\in\ga$
in this order.
\end{lem}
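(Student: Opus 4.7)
The plan is to produce an explicit isometric identification $(\ga,\rho)\to(\R,|\cdot|)$, from which the geodesic identity is immediate. Fix a metric $d$ of the M\"obius structure on $X_a$ (so $a$ is infinitely remote). By Proposition~\ref{pro:sinversion_strict} together with Lemma~\ref{lem:unique_represent} applied with $\om=a$, every $t\in\ga$ is represented as $t=\phi_{(a,a',S_t)}$ for a uniquely determined sphere $S_t\sub X$ between $a$ and $a'$. In the metric $d$ on $X_a$, this sphere is a metric sphere centered at $a'$,
\[
S_t=\set{x\in X_a}{$d(x,a')=r_t$},
\]
of some uniquely determined radius $r_t>0$. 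The assignment $F:\ga\to\R$, $F(t)=\ln r_t$, is thus a bijection, and I take this to define the linear ordering on $\ga$ (well defined up to reversal, which does not affect the claim).

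Next I use the computation carried out immediately before the lemma statement, where it is shown that with the above conventions one has $\langle a,x,y,a'\rangle=r_s/r_t$ for $x\in S_s$, $y\in S_t$, and hence
\[
\rho(s,t)=|\ln(r_s/r_t)|=|F(s)-F(t)|.
\]
In other words, $F$ is an isometry from $(\ga,\rho)$ onto $(\R,|\cdot|)$.

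Now given $s_1,s_2,s_3\in\ga$ in this order, I may assume (after possibly reversing) that $F(s_1)<F(s_2)<F(s_3)$. Then
\[
\rho(s_1,s_3)=F(s_3)-F(s_1)=\bigl(F(s_3)-F(s_2)\bigr)+\bigl(F(s_2)-F(s_1)\bigr)=\rho(s_2,s_3)+\rho(s_1,s_2),
\]
which is the required identity. There is no real obstacle here: once the definition of $\rho$ is unwound via the radius parameterization of the invariant spheres, the lemma reduces to the trivial additivity of signed differences on $\R$. The only point that required any preliminary work is the uniqueness of the sphere $S_t$ for each $t\in\ga$, which is exactly the content of Proposition~\ref{pro:sinversion_strict} and Lemma~\ref{lem:unique_represent}.
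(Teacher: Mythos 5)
Your proof is correct and follows essentially the same route as the paper: both identify $\ga$ with $\{a'\}\times\R_+$ via the radius of the invariant sphere (Lemma~\ref{lem:line_R} / Lemma~\ref{lem:unique_represent}), invoke the formula $\rho(s,t)=|\ln(r_s/r_t)|$ established just before the lemma, and conclude by additivity of the logarithm. Your packaging of this as an explicit isometry $F:\ga\to\R$ is just a slightly more formal phrasing of the same argument.
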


\begin{proof} By Lemma~\ref{lem:line_R},
$\ga=\{a'\}\times\R_+$
in the upper half-space model
$Y=X_a\times\R_+$.
Thus
$s_i=(a',r_i)$,
and we assume W.L.G. that
$r_1\le r_2\le r_3$.
Then
$\rho(s_i,s_j)=\ln\frac{r_j}{r_i}$ 
for 
$i<j$.
Hence, the claim.
\end{proof}

\subsection{Embeddings of $\hyp^2$ into $\fil X$}
\label{subsect:hyp2_fill}

\begin{pro}\label{pro:hyp2_fill} For every Ptolemy circle
$\si\sub X$
the set 
$$Y_\si=\set{s\in Y=\fil X}{$s(\si)=\si$}$$
is
$\rho$-isometric
to the hyperbolic plane
$\hyp^2$.
\end{pro}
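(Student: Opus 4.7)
The strategy is to set up a natural bijection $\Phi: Y_\si \to \hyp^2$ via restriction to $\si$, and verify directly that it is a $\rho$-isometry. Fix once and for all a M\"obius identification $\si \cong \di\hyp^2$, which exists by Proposition~\ref{pro:moebch-circ}. For each $s \in Y_\si$ the restriction $s|_\si$ is a fixed-point-free M\"obius involution of $\si \cong S^1$; since every orientation-reversing involution of a circle has fixed points, $s|_\si$ is orientation-preserving, and hence under $\si = \di\hyp^2$ it is induced by a unique orientation-preserving isometric involution of $\hyp^2$, i.e.\ a central symmetry $\tau_{p_s}$ at a well-defined point $p_s \in \hyp^2$. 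Set $\Phi(s) = p_s$.

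\textbf{Bijectivity.} If $\Phi(s) = \Phi(t)$ then $s|_\si = t|_\si$; fix any $\om \in \si$ with $\om' := s(\om) = t(\om)$, so by Proposition~\ref{pro:sinversion_strict} we have $s = \phi_{\om,\om',S_s}$ and $t = \phi_{\om,\om',S_t}$. The common $s|_\si$-invariant $0$-sphere on $\si$ is some pair $\{\xi,\xi'\}$, and spheres between $\om, \om'$ in $X_\om$ being parameterized by their radius, the common value $|\om' \xi|$ forces $S_s = S_t$, hence $s = t$ by (I). For surjectivity, given $p \in \hyp^2$, pick $\om \in \si$, set $\om' = \tau_p(\om)$, let $\{\xi,\xi'\} \sub \si$ be the unique $\tau_p|_\si$-invariant $0$-sphere between $\om, \om'$ on $\si$ (Lemma~\ref{lem:invariant_sphere} applied inside $\si$), let $S \sub X$ be the unique sphere between $\om, \om'$ containing $\xi$, and put $s = \phi_{\om,\om',S}$. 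Then $s(\si) = \si$ by condition~(3) of the definition of an s-inversion, and $s|_\si$ agrees with $\tau_p|_\si$ on the three points $\om, \xi, \xi'$, hence everywhere by Proposition~\ref{pro:moebch-circ}, so $\Phi(s) = p$.

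\textbf{Isometry.} Given distinct $s, t \in Y_\si$, the unique line in $Y$ through them is $(a, a')$, where $\{a, a'\}$ is the set of fixed points in $X$ of the pure homothety $h = s \circ t$. On $\si$ the map $h|_\si$ corresponds to $\tau_{p_s} \circ \tau_{p_t}$ on $\hyp^2$, which is a hyperbolic translation of length $2\,d_{\hyp^2}(p_s, p_t)$ along the geodesic through $p_s, p_t$, with exactly two boundary fixed points $\om, \om' \in \si$; since a pure homothety has exactly two fixed points in $X$, we get $\{a, a'\} = \{\om, \om'\} \sub \si$. Now pick $x \in S \cap \si$ and $y \in T \cap \si$ (the invariant $0$-spheres of $s|_\si$ and $t|_\si$); the defining cross-ratio $\langle \om, x, y, \om' \rangle$ involves only four points of $\si$ and so can be evaluated in the upper half-plane model of $\hyp^2$ with $\om = 0,\ \om' = \infty,\ p_s = i r_s,\ p_t = i r_t$. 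There $s|_\si(z) = -r_s^2/z$ and $t|_\si(z) = -r_t^2/z$ have invariant pairs $\{\pm r_s\}, \{\pm r_t\}$, so $\langle 0, r_s, r_t, \infty\rangle = r_t/r_s$, giving $\rho(s,t) = |\ln(r_t/r_s)| = d_{\hyp^2}(p_s, p_t)$.

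\textbf{Main obstacle.} The crux is proving that the common antipodal pair of $s$ and $t$ as s-inversions of $X$ actually lies in $\si$. This matches two a priori independent descriptions of the fixed set of $h = s \circ t$: on the $X$-side as the center and infinitely remote point of a pure homothety (exactly two fixed points), and on the $\si$-side as the two endpoints on $\di\hyp^2 = \si$ of the axis of a hyperbolic translation on $\hyp^2$ (also exactly two). Once the two-point counts coincide, the two descriptions must agree, confining the line $(a, a')$ to $\si$ and reducing the distance calculation to a cross-ratio of four points on the circle, which matches the hyperbolic distance by the standard half-plane formula.
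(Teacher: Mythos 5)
Your proof is correct and follows essentially the same route as the paper: identify $Y_\si$ with the upper half-plane over $\si$ (the paper uses the model $\si_\om\times\R_+$ based at a point of $\si$, you use half-turn centers of $\hyp^2$ — the same identification in different clothes), then reduce $\rho(s,t)$ to the cross-ratio formula $|\ln(r_t/r_s)|$ along the line $(\om,\om')$ with $\om,\om'\in\si$ and match it with the hyperbolic distance. You are in fact more explicit than the paper at the one nontrivial point — that the common antipodal pair of $s$ and $t$ lies on $\si$ — which you establish by matching the two-element fixed-point set of $h=s\circ t$ in $X$ with the two boundary fixed points of the induced hyperbolic translation on $\di\hyp^2=\si$, whereas the paper asserts this with only ``since $s,t$ preserve $\si$''.
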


\begin{proof} We fix 
$\om\in\si$
and show that in the upper half-space model
$Y=X_\om\times\R_+$
the set 
$Y_\si$
coincides with
$\si_\om\times\R_+$,
where
$\si_\om=\si\sm\{\om\}\sub X_\om$
is a Ptolemy line. Indeed, every
$s\in Y$
of type
$s=(o,r)$
with
$o\in\si_\om$
preserves
$\si$
because
$s$
permutes
$o$, $\om\in\si$,
i.e.
$\si_\om\times\R_+\sub Y_\si$.
In the opposite direction, we have
$o=s(\om)\in\si_\om$,
thus
$s=(o,r)\in\si_\om\times\R_+$
for some
$r>0$.

Since
$s$, $t\in Y_\si$
preserve
$\si$,
we have
$o$, $\om\in\si$
for the line
$(o,\om)\sub Y$
through
$s$, $t$.
Then the 
$\rho$-distance
between
$s$, $t$
can be computed in the upper half-space model
$X_\om\times\R_+$
as
$\rho(s,t)=|\ln\frac{R}{r}|$,
where
$s=(o,r)$, $t=(o,R)$.
But we have exactly this formula in the upper half-space model for 
$\hyp^2$.
Therefore
$Y_\si=\si_\om\times\R_+$
is 
$\rho$-isometric
to
$\hyp^2$.
\end{proof}

\subsection{Semi-norm on the tangent bundle to the filling}
\label{subsect:finsler_seminorm}

Recall that
$Y=\fil X$
is a smooth manifold, see Lemma~\ref{lem:unique_represent},
and thus the tangent space
$T_sY$
to
$Y$
is well defined for every
$s\in Y$.
For every
$a\in X$
the line
$(a,a')\sub Y$, $a'=s(a)$,
passes through
$s$
by definition. Then the speed vector
$v=dh(\frac{d}{d\tau})$
at
$\tau=0$
of some smooth parameterization 
$h:\R\to Y$, $h(0)=s$,
of the line
$(a,a')$
is tangent to
$Y$
at
$s$, $v\in T_sY$.
This defines a map 
$f:X\to S_sY$
to the space 
$S_sY$
of oriented directions in
$T_sY$,
which is continuous (to see the continuity one should think of the line
$(a,a')\sub Y$
through
$s$
as the set of homotheties
$\Ga_{a,a'}\sub\aut X$
by composing every
$s'\in(a,a')$
with
$s$).
By Proposition~\ref{pro:unique_line_filling}, the set 
of directions
$A\sub S_sY$
tangent to the lines
$(a,s(a))$, $a\in X$,
is dense in
$S_xY$.
Then
$A=S_sY$
because
$X$
is compact and hence the set 
$A=f(X)$
is closed in
$S_sY$.

Thus any nonzero vector
$v\in T_sY$
is tangent to a line
$(a,a')$
in
$Y$
through
$s$,
where
$a=a(v)$, $a'=s(a)\in X$.
We assume that
$s=h(0)$
for some smooth parameterization 
$h:\R\to Y$
of the line
$(a,a')$
with
$v=dh(\frac{d}{d\tau})=dh/d\tau$.
Then the formula
$$\|v\|:=\lim_{\tau\to 0}\frac{1}{|\tau|}\rho(h(\tau),s)$$
defines a semi-norm on
$T_sY$,
because it follows from the definition that
$ds(v)=-v$
and 
$\|-v\|=\|ds(v)\|=\|v\|$.
The semi-norm
$\|v\|$
is called the 
$\rho$-{\em semi-norm}.

For example, in the upper half-space model
$Y=X_{a'}\times\R_+$,
we have
$s=(a,u)$
for some
$u>0$.
Consider a parameterization
$h=h_\al:\R\to Y$
of the line
$(a,a')$
with
$h(0)=s$
given by
$h(\tau)=(a,ue^{\al\tau})$
for some
$\al>0$.
Then
$$\|dh/d\tau\|
  =\lim_{\de\to 0}\frac{1}{|\de|}\rho(h(\tau+\de),h(\tau)),$$
and we have
$\rho(h(\tau),h(\tau+\de))=|\ln(R/r)|$
for 
$R=ue^{\al(\tau+\de)}$, $r=ue^{\al\tau}$.
Thus
$\|dh/d\tau\|=\al$
for every
$\tau\in\R$,
and 
$h$
is a constant
$\rho$-speed
parameterization. There is
$\al>0$
such that
$v$
is a speed vector of 
$h=h_\al$,
hence
$\|v\|=\al$. 

We identify the direction space
$S_sY$
with the 
$\rho$-unit
sphere,
$S_sY=\set{v\in T_sY}{$\|v\|=1$}$.

\begin{lem}\label{lem:transitive} For every
$s\in Y$
the stabilizer of
$s$
in the 
$\rho$-isometry
group of
$Y$
acts transitively on the 
$\rho$-unit
sphere
$S_sY\sub T_sY$.
\end{lem}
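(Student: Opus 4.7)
The plan is to show that the stabilizer
$$H_s := \{g \in \aut X : g \circ s = s \circ g\}$$
of $s$ in the M\"obius group acts transitively on $X$, and then to transfer this to the tangent sphere through the natural $H_s$-equivariant surjection $f\colon X \to S_sY$ constructed just before the statement.

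The key step will be transitivity of $H_s$ on $X$. Given $a, b \in X$, I would set $a' := s(a)$, $b' := s(b)$ and first use the two-point homogeneity of $X$ (Proposition~\ref{pro:two_point_homogeneous}) to produce $g_0 \in \aut X$ with $g_0(a) = b$ and $g_0(a') = b'$. The conjugate $s' := g_0 \circ s \circ g_0^{-1}$ is then a space inversion swapping $b$ and $b'$. Both $s$ and $s'$ are strict by Proposition~\ref{pro:sinversion_strict}, so each is uniquely determined by its invariant sphere between $b$ and $b'$; call these $T_s$ and $T_{s'}$. In a metric of the M\"obius structure for which $b'$ is infinitely remote, $T_s$ and $T_{s'}$ become concentric metric spheres around $b$ of radii $r_s, r_{s'} > 0$ (cf.\ Lemma~\ref{lem:bisector}). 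The homothety property~(H), Proposition~\ref{pro:homothety_property}, then supplies $\phi \in \Ga_{b, b'}$ of arbitrary positive dilatation coefficient; choosing it equal to $r_s/r_{s'}$ yields $\phi(T_{s'}) = T_s$, because in the chosen metric $\phi$ is an ordinary homothety centered at $b$ and so scales radii of concentric spheres around $b$ by its coefficient. The space inversion $\phi \circ s' \circ \phi^{-1}$ now has data $(b, b', T_s)$ and hence coincides with $s$ by the uniqueness in~(I). Therefore $g := \phi \circ g_0 \in H_s$ and $g(a) = \phi(b) = b$.

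For the transfer to $S_sY$, I would use that for every $g \in H_s$ the induced M\"obius isometry $g^* \colon Y \to Y$ fixes $s$ and sends the line $(a, s(a))$ onto the line $(g(a), g(s(a))) = (g(a), s(g(a)))$; consequently $dg^*(f(a)) = \pm f(g(a))$ with the sign depending on the orientation convention used in defining $f$. Since the text already records $ds^*(v) = -v$ on $T_sY$, any residual sign ambiguity can be absorbed by replacing $g^*$ with $s^* \circ g^*$, which still lies in $H_s$. Together with the surjectivity of $f$ and the transitivity established in the previous paragraph, this provides, for every pair $v_1, v_2 \in S_sY$, an element $h \in H_s$ with $dh^*(v_1) = v_2$.

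The main obstacle is the sphere-adjustment in the second paragraph: guaranteeing that a single element of $\Ga_{b, b'}$ can correct the invariant sphere of $s'$ to that of $s$. This reduces to the observation that in the metric with $b'$ at infinity, every element of $\Ga_{b, b'}$ is a metric homothety fixing $b$, and by~(H) its dilatation coefficient may be prescribed freely in $(0, \infty)$; thus concentric metric spheres around $b$ can be matched with full freedom, and the whole argument goes through.
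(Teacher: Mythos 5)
Your proof is correct, but it takes a genuinely different route from the paper's. The paper proves the lemma as the payoff of Section~\ref{sect:extension_circles}: given $v,v'\in S_sY$ tangent to lines $(a,a')$ and $(b,b')$, it chooses Ptolemy circles $\si\ni a,a'$ and $\si'\ni b,b'$, takes the M\"obius map $\si\to\si'$ whose extension to $Y_\si\cong\hyp^2$ fixes $s$, and invokes the extension property~(${\rm E}_2$) (Proposition~\ref{pro:extension_property}) to extend it to a M\"obius automorphism of $X$, hence to a $\rho$-isometry of $Y$ fixing $s$. You instead prove that the centralizer $H_s$ of $s$ in $\aut X$ acts transitively on $X$ itself, using only two-point homogeneity (Proposition~\ref{pro:two_point_homogeneous}), strictness of $s$-inversions (Proposition~\ref{pro:sinversion_strict}), the homothety property (H) to match the two invariant spheres between $b$ and $b'$, and the uniqueness clause of~(I); you then push this forward through the surjection $f:X\to S_sY$ established just before the statement. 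Both steps check out: the conjugate $\phi\circ s'\circ\phi^{-1}$ is indeed a space inversion with data $(b,b',T_s)$ (it is a fixed-point-free involution swapping $b,b'$, preserving $\phi(T_{s'})=T_s$, and preserving every Ptolemy circle through $b,b'$ since $s'$ and $\phi\in\Ga_{b,b'}$ both do), so uniqueness in~(I) forces it to equal $s$; and the transfer is sound because $g^\ast$ permutes the lines through $s$ preserving $\rho$ and the endpoint labelling $a\mapsto g(a)$, $s(a)\mapsto s(g(a))$, so in fact no sign correction is even needed. What each approach buys: the paper's argument produces the stabilizing isometry as an explicit map between the embedded hyperbolic planes $Y_\si$, $Y_{\si'}$, consistent with the geometric picture it is building, but it rests on the full second-order machinery of Section~\ref{sect:extension_circles}; your argument stays entirely within the toolkit of Section~\ref{sect:filling} and shows that, for this particular lemma, property~(${\rm E}_2$) is not actually needed --- a noteworthy economy, since this lemma is the only place the paper invokes Proposition~\ref{pro:extension_property}.
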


\begin{proof} Given
$v$, $v'\in S_sY$
we let
$(a,a')$, $(b,b')\sub Y$, $a$, $b\in X$, $a'=s(a)$, $b'=s(b)$,
be the lines through
$s$
with tangent vectors
$v$, $v'$
at
$s$
respectively. There are Ptolemy circles
$\si$, $\si'\sub X$
through
$a$, $a'$
and
$b$, $b'$
respectively. The subspaces
$Y_\si$, $Y_{\si'}\sub Y$
isometric to
$\hyp^2$
contain
$s$
and their tangent spaces
$H_\si$, $H_{\si'}\sub T_sY$
contain
$v$, $v'$
respectively. There is a (uniquely determined up to the reflection
in the line
$(a,a')\sub Y_\si)$ 
M\"obius automorphism
$\phi:\si\to\si'$
with
$\phi(a)=b$, $\phi(a')=b'$
such that its extension to
$Y_\si$
is an isometry
$\phi:Y_\si\to Y_{\si'}$
with
$\phi(s)=s$.
Then
$d\phi(v)=v'$.

By Proposition~\ref{pro:extension_property}, the space
$X$
possesses the extension property~(${\rm E}_2$), which implies
that
$\phi:\si\to\si'$
extends to a M\"obius
$\phi:X\to X$
and therefore to a
$\rho$-isometry
$\phi:Y\to Y$
with
$\phi(s)=s$.
Thus the stabilizer of
$s$
in the 
$\rho$-isometry
group of
$Y$
acts transitively on 
$S_sY$.
\end{proof}

\begin{pro}\label{pro:seminorm_euclid} For every
$s\in Y$,
the 
$\rho$-semi-norm on the tangent space
$T_sY$ 
is an Euclidean norm.
\end{pro}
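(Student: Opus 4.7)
The plan is to combine the transitive action from Lemma~\ref{lem:transitive} with an averaging argument over the stabilizer $G_0$ of $s$ in the $\rho$-isometry group of $Y$. Since $G_0$ preserves $\|\cdot\|$ and acts transitively on the $\rho$-unit sphere $S_sY\subset T_sY$, it will suffice to construct a $G_0$-invariant Euclidean inner product on $T_sY$ and match it with $\|\cdot\|$ up to scale via transitivity.

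First I would show that $\|\cdot\|$ is continuous on $T_sY$ and equivalent to any auxiliary Euclidean norm. Every nonzero $v\in T_sY$ is tangent at $s$ to a unique line $(a,s(a))\subset Y$ with $a=a(v)\in X$, and the assignment $a\mapsto v_a\in S_sY$, where $v_a$ denotes the direction of $\rho$-speed one at $s$ (made explicit by the parameterization $h_\alpha(\tau)=(a,ue^{\alpha\tau})$ of the paragraph preceding the statement), depends continuously on $a$. Since $X$ is compact by Corollary~\ref{cor:nilponent_lie_group}, the image $S_sY=\{v:\|v\|=1\}$ is compact in the standard topology of $T_sY$; combined with positivity of $\|\cdot\|$ on $T_sY\setminus\{0\}$ and positive $1$-homogeneity, this forces $\|\cdot\|$ to be continuous and equivalent to any standard norm on $T_sY$.

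Since $G_0$ acts linearly on $T_sY$ preserving $\|\cdot\|$ and hence the compact unit ball $\{v:\|v\|\le 1\}$, the image of $G_0$ in $GL(T_sY)$ is bounded, so its closure $K$ is a compact subgroup of $GL(T_sY)$. Averaging any Euclidean inner product $\langle\cdot,\cdot\rangle_0$ against the Haar measure of $K$ produces a $K$-invariant inner product
$$\langle v,w\rangle := \int_K \langle kv,kw\rangle_0\, dk,$$
with induced Euclidean norm $|v|:=\sqrt{\langle v,v\rangle}$ on $T_sY$.

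Finally, the function $\phi(v):=\|v\|/|v|$ on $T_sY\setminus\{0\}$ is positive-homogeneous of degree $0$ and $G_0$-invariant, since both $\|\cdot\|$ and $|\cdot|$ are. Transitivity of $G_0$ on $S_sY$ (Lemma~\ref{lem:transitive}) then forces $\phi$ to be constant on $S_sY$, and degree-$0$ homogeneity extends this constancy to all of $T_sY\setminus\{0\}$. Hence $\|\cdot\|=c\,|\cdot|$ for some $c>0$, and $\|\cdot\|$ is Euclidean. The main obstacle I anticipate is making rigorous the continuous dependence of $v_a$ on $a$, which requires knowing that the $\aut X$-action on the filling $Y$ varies continuously on tangent vectors at $s$; in the upper half-space model of Lemma~\ref{lem:unique_represent} this should reduce to continuous dependence of the line $(a,s(a))$ on $a\in X$.
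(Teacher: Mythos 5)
Your proof is correct, but it reaches the conclusion by a different route than the paper in its second half. The paper first shows that the $\rho$-unit ball $B\subset T_sY$ is convex (every boundary point admits a supporting hyperplane, and by Lemma~\ref{lem:transitive} this property propagates to all boundary points, so $B$ is the intersection of its supporting half-spaces), and then identifies $B$ with its L\"owner ellipsoid: the inscribed ellipsoid of maximal volume is unique, hence invariant under the stabilizer, and transitivity on $\partial B$ forces $B$ to coincide with it. You instead skip convexity entirely: you compactify the image of the stabilizer in $GL(T_sY)$ using the fact that it preserves the compact unit ball, average a background inner product over Haar measure to get an invariant Euclidean norm $|\cdot|$, and then use transitivity on $S_sY$ to conclude that $\|\cdot\|/|\cdot|$ is constant. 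This is the standard unitarian trick; it buys a shorter argument that never needs the triangle inequality or convexity of $B$, whereas the paper's route is more geometric. Both arguments rest on exactly the same two inputs, namely Lemma~\ref{lem:transitive} and the fact that $\|\cdot\|$ is positive, $1$-homogeneous and bounded between two multiples of a standard norm; for the latter your reduction to the compactness of $X$ and the continuity of $a\mapsto v_a$ is essentially what the paper also uses (implicitly, when it asserts that $B$ is compact), and you are right to flag the continuous dependence of the normalized tangent direction on $a$ as the one technical point that deserves to be spelled out, e.g.\ via the upper half-space model of Lemma~\ref{lem:unique_represent}.
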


\begin{proof} Let 
$B\sub T_sY$
be the 
$\rho$-unit
ball centered at
$0$.
We first show that
$B$
is convex and, hence, the 
$\rho$-semi-norm
is a norm. Since
$\|v\|>0$
for every nonzero
$v\in T_sY$,
$B$
is compact and thus there is a supporting affine hyperplane
$H\sub T_sY$,
i.e. 
$H$
touches
$B$
such that
$B$
is contained in one of half-spaces determined by
$H$.
By Lemma~\ref{lem:transitive}, every boundary points of
$B$
possesses this property and thus
$B$
coincides with the intersection of half-spaces, i.e.
$B$
is convex.

With the transitivity of the isometry action, it is well known that then
$B$
is an ellipsoid and hence the
$\rho$-norm is an Euclidean one. (We briefly sketch the argument. It is well known
that every isometry of
$T_sY$
is affine. Hence, the group
$K$
of linear automorphisms of
$T_sY$
that fix
$0$
and preserves
$B$
acts transitively on
$\d B$.
Fixing a background Euclidean metric on
$T_sY$,
we find the L\"ovner ellipsoid
$L$
inscribed in
$B$,
that is, the ellipsoid of maximal volume. This ellipsoid
is unique and invariant under 
$K$.
Thus
$B=L$).
\end{proof}

\begin{cor}\label{cor:filling_symmetric} The filling
$Y=\fil X$
is a rank one symmetric space of non-compact type w.r.t. the metric
$\rho$
such that
$\di Y=X$.
\end{cor}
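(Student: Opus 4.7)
My plan is to promote the fibrewise Euclidean $\rho$-semi-norm of Proposition~\ref{pro:seminorm_euclid} to a smooth Riemannian metric $g$ on the manifold $Y=\fil X$, to identify $\rho$ with the Riemannian distance $d_g$ via the totally geodesic $\hyp^2$-subspaces from Proposition~\ref{pro:hyp2_fill}, to realize each $s\in Y$ as carrying its geodesic symmetry through the conjugation involution $s^*:t\mapsto s\circ t\circ s^{-1}$, and finally to read off $\di Y=X$ from the upper half-space model of Lemma~\ref{lem:unique_represent}.

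First I would verify that $\aut X$ acts transitively on $Y$: two-point homogeneity (Proposition~\ref{pro:two_point_homogeneous}) matches the antipodal pairs of any two s-inversions, while the homothety property (H) supplies the translation along the resulting line $(\om,\om')\sub Y$ needed to match sphere radii (any two spheres between $\om,\om'$ differ by a pure homothety in $\Ga_{\om,\om'}$). Combined with Lemma~\ref{lem:transitive}, the action of $\aut X$ on the $\rho$-unit tangent sphere bundle $SY$ is transitive, so transporting the Euclidean inner product from a single fibre yields a smooth $\aut X$-invariant Riemannian metric $g$ with $\|\cdot\|_g=\|\cdot\|_\rho$. For every Ptolemy circle $\si\sub X$ through distinct $\om,\om'$, every s-inversion swapping $\om,\om'$ preserves $\si$, so the entire line $(\om,\om')\sub Y$ lies in $Y_\si$; by isotropy, $g|_{Y_\si}$ is a rotation-invariant Riemannian metric on $\hyp^2$ agreeing with the hyperbolic norm on unit vectors, hence equals the hyperbolic metric. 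Any two points of $Y$ lie on a unique line (Proposition~\ref{pro:unique_line_filling}), which sits inside some $Y_\si$ by property (E), and we conclude $\rho=d_g$ globally; in particular $\rho$ satisfies the triangle inequality and lines are complete $g$-geodesics.

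Next, fix $s\in Y$ and consider $s^*(t):=s\circ t\circ s^{-1}$. This is an involutive $\rho$-isometry fixing $s$. Working in $Y=X_\om\times\R_+$ where $\om$ is an endpoint of a line through $s=(\om',r_s)$, a direct metric-inversion computation yields that on the vertical line $\{\om'\}\times\R_+$ the map $s^*$ sends $(\om',\rho)\mapsto(\om',r_s^2/\rho)$, i.e.\ it reflects that line about $s$; by transitivity on directions at $s$ this forces $ds^*=-\id$ on $T_sY$, so $s^*$ is the geodesic symmetry at $s$. Together with simple connectedness (Corollary~\ref{cor:nilponent_lie_group}) and completeness (from homogeneity), $(Y,g)$ is a simply connected globally symmetric Riemannian manifold. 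The totally geodesic $Y_\si\cong\hyp^2$ of curvature $-1$ through every point in every line-direction (Proposition~\ref{pro:hyp2_fill}) produces a strictly negatively curved 2-plane in every direction, which by isotropy rules out Euclidean de Rham factors and rank~$\ge 2$ factors; hence $Y$ is a rank one symmetric space of non-compact type, normalised so that maximum sectional curvature $-1$ is attained exactly on the $Y_\si$-tangent planes.

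Finally, using $Y=X_\om\times\R_+$: the vertical rays $\{x\}\times(0,1]$ parameterised by $\rho$-arclength $\tau=-\ln r$ are $g$-geodesic rays, two such rays being asymptotic only when they share the endpoint $x\in X_\om$ as $r\to 0$, while all such rays share the common limit $\om$ as $r\to\infty$. This yields a canonical homeomorphism $X\to\di Y$, and the Busemann function $b$ of the vertical ray centred at $\om$ computed via Proposition~\ref{pro:duality_dist_busemann} and the cross-ratio definition of $\rho$ gives $d_b(x,y)=e^{-(x|y)_b}=|xy|$ on $X_\om$, so the canonical M\"obius structure on $\di Y$ recovers the original M\"obius structure on $X$. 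The principal obstacle is the rank-one verification in the third paragraph: ensuring that isotropy on unit vectors (Lemma~\ref{lem:transitive}) is enough to rule out a flat 2-plane or a higher rank factor, which amounts to checking that the isotropy orbit of any single $Y_\si$-tangent 2-plane exhausts all 2-planes through $s$; this follows from the uniqueness of the $Y_\si$-subspace tangent to a given line-direction, but it is the delicate point of the whole argument.
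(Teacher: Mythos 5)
Your overall architecture (promote the semi-norm of Proposition~\ref{pro:seminorm_euclid} to a smooth invariant Riemannian metric, use the subspaces $Y_\si$ of Proposition~\ref{pro:hyp2_fill} to identify $\rho$ with the Riemannian distance, realize the geodesic symmetry at $s$ as conjugation by $s$, and read off $\di Y=X$ from the half-space model) matches the paper's, and your identification of $t\mapsto s\circ t\circ s^{-1}$ with the point symmetry at $s$ is correct. The genuine gap is the rank-one step, which you yourself flag as the delicate point: you propose to exclude flat $2$-planes and higher-rank factors by arguing that the isotropy orbit of a single $Y_\si$-tangent $2$-plane exhausts all $2$-planes through $s$, "by uniqueness of the $Y_\si$ tangent to a given line-direction." That claim is false precisely in the spaces the theorem characterizes: in $\C\hyp^n$ the stabilizer of a point is transitive on unit tangent vectors yet has several orbits of $2$-planes (distinguished by the K\"ahler angle, with curvatures ranging from $-4$ to $-1$), and the $Y_\si$ are exactly the totally real planes, whose tangent $2$-planes form a proper orbit. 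So Lemma~\ref{lem:transitive} gives transitivity on $S_sY$ but not on $2$-planes, and "a negatively curved $2$-plane in every direction" does not by itself exclude a flat one. The fact you actually need --- a symmetric space whose isotropy group is transitive on unit tangent vectors is flat or of rank one --- is true, but its proof requires the regular-versus-singular vector argument in a maximal flat (or the classification of two-point homogeneous spaces, which the paper deliberately avoids); your proposal supplies neither.

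The paper closes exactly this gap with a short classification-free argument that you could substitute. Suppose $E\sub Y$ is a flat geodesic $2$-plane. Any two distinct $s,t\in E$ lie on a unique line $\ga=(a,a')$ (Proposition~\ref{pro:unique_line_filling}); this line is invariant under $h=s\circ t$, and $a$, $a'$ are the only points of $X$ fixed by $h$, since $h$ acts on $X_a$ as a nontrivial homothety. Each $u\in E$ acts on the flat $E$ as a central symmetry, so $u(\ga)$ is a line of $E$ parallel to $\ga$; any such parallel line is again $h$-invariant, hence its endpoint pair is fixed by $h$, hence $u(\ga)=(a,a')=\ga$ by Lemma~\ref{lem:line_R} and the uniqueness of endpoints. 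Thus every $u\in E$ preserves $\ga$ and therefore lies on $\ga$, contradicting $\dim E=2$. With no flat geodesic $2$-planes, the symmetric space $Y$ has strictly negative curvature, hence is of rank one and of non-compact type, and $\di Y$ is the geodesic boundary of $Y$, which is $X$ by the construction of the lines. Your remaining steps (the reflection formula $(\om',r)\mapsto(\om',r_s^2/r)$ on vertical lines, and the boundary identification via vertical rays) are sound and consistent with Lemma~\ref{lem:level_equality} and Corollary~\ref{cor:gromov_prod_dist}.
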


\begin{proof} The Riemannian metric on
$Y$
obtained in Proposition~\ref{pro:seminorm_euclid} is smooth
because the isometry group of
$Y$,
which acts transitively on
$Y$,
is a Lie group. Therefore
$Y$
is a Riemannian symmetric space of non-compact type. 
We check that there is no Euclidean geodesic plane
$E\sub Y$, $\dim E=2$.
Assume that 
$Y$
contains such an
$E$.
By Proposition~\ref{pro:unique_line_filling}, for each distinct
$s$, $t\in E$
there exists a line
$\ga=(a,a')$
through
$s$, $t$.
Then
$\ga\sub E$
is invariant for 
$h=s\circ t$
and
$h(a)=a$, $h(a')=a'$.
For any 
$u\in E$
the line
$\ga'=u(\ga)\sub E$
is parallel to
$\ga$
because
$u$
acts on
$E$
as a central symmetry. However every line
$\ga'\sub E$
parallel to
$\ga$
is also invariant for 
$h$, $h(\ga')=\ga'$.
Thus
$\ga'=(a,a')$
because
$a$, $a'$
are only points in
$X$
fixed by
$h$.
By Lemma~\ref{lem:line_R}, the line
$(a,a')\sub Y$
is homeomorphic to
$\R$
and uniquely determined by the end points
$a$, $a'$. 
Hence
$\ga'=\ga$,
and
$u$
preserves
$\ga$, $u(\ga)=\ga$.
Then
$u\in\ga$
by definition. This is a contradiction because
$\dim E=2$
and
$E\neq\ga$.
It follows that the sectional curvatures of
$Y$
are negative. Then the Gromov boundary
$\di Y$
coincides with the geodesic boundary of
$Y$
which is
$X$
by definition of geodesic lines in
$Y$.
Thus
$\di Y=X$.
\end{proof}

\subsection{Horospheres in the filling}
\label{subsect:horosphere_distance}

In this section we show that the Riemannian distance
$\rho$
on the filling
$Y=\fil X$
induces the initial M\"obius structure of
$X$.

By Proposition~\ref{pro:sect_curvature}, the Riemannian metric
associated with the distance
$\rho$
has negative sectional curvatures. Thus every parabolic isometry of
$Y$
preserving
$\om\in X$
leaves invariant every horosphere in
$Y$
centered at 
$\om$.
For example, the group
$N_\om$
of 
$X_\om$-shifts
acts on $Y$
by parabolic isometries preserving
$\om$.

We fix 
$\om\in X$
and a metric
$|xy|=|xy|_\om$
on
$X_\om$.
Let
$Y=X_\om\times\R_+$
be the respective upper half-space model of the filling
$Y$.
Every space inversion 
$t\in Y$
is represented as
$t=(t(\om),r_t)\in X_\om\times\R_+$,
where
$r_t>0$
is the radius of the 
$t$-invariant
sphere between
$t(\om)$
and
$\om$.
Since the group
$N_\om$
acts on
$X_\om$
by isometries, it leaves invariant every set 
$H_r=X_\om\times\{r\}\sub Y$.

Let 
$b:Y\to\R$
be the Busemann function
associated with a geodesic ray 
$\ga_a=\{a\}\times[R,\infty)\sub X_\om\times\R_+$,
$b(s)=0$
for 
$s=(a,R)\in Y$.
Then
$b(t)=\pm\rho(s,t)=\ln\frac{R}{r_t}$
for every
$t=(a,r_t)\in Y$.
By the remark above, the horosphere
$b^{-1}(r)$
of 
$b$
is
$N_\om$-invariant,
thus
$b^{-1}(r)=H_r$
for every
$r\in\R$
because
$N_\om$
is transitive on
$X_\om$.
It follows that any function
$b:Y\to\R$
of type
$$b(t)=\ln\frac{R}{r_t}$$
for
$t=(t(\om),r_t)$, $R>0$,
is a Busemann function in the upper half-space model
$Y=X_\om\times\R_+$
centered at 
$\om$.

\begin{lem}\label{lem:level_equality} Assume that distinct lines
$(a,a')$, $(\om,\om')\sub Y$
intersect at
$s\in Y$, $s=(a,a')\cap(\om,\om')\in H_r$
for some
$r>0$.
Then
$$|x\om'|\cdot|s(x)\om'|=r^2$$
for every
$x\in X\sm\{\om,\om'\}$.
In particular,
$|a\om'|\cdot|a'\om'|=r^2$.
\end{lem}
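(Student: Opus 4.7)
The plan is to recognize $s$ as an explicit s-inversion in the upper half-space model and then read off the claim from Lemma~\ref{lem:sinversion_minversion}. Fix the metric $d=|\cdot\,\cdot|$ on $X_\om$ with infinitely remote point $\om$, and use Lemma~\ref{lem:unique_represent} to identify $Y=X_\om\times\R_+$. Because $s$ lies on the line $(\om,\om')$ we have $s(\om)=\om'$; because $s\in H_r$ the representation becomes $s=(\om',r)$, and Lemma~\ref{lem:unique_represent} then forces $s=\phi_{\om,\om',S}$ with $S=S_r^d(\om')$, the $d$-sphere of radius $r$ centered at $\om'$. The only bookkeeping point here is that the height coordinate of $s\in H_r$ in the model really is the same number as the radius appearing in Lemma~\ref{lem:sinversion_minversion}, which is exactly the content of Lemma~\ref{lem:unique_represent}.

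Next I would apply Lemma~\ref{lem:sinversion_minversion}, which for this $s$ gives
\[
(s^\ast d)(x,y)=\frac{r^2\,d(x,y)}{d(x,\om')\,d(y,\om')};
\]
this is precisely the metric inversion $d_{\om'}$ of radius $r$ centered at $\om'$. Specializing to $y=\om$ and using the standard convention $d_{\om'}(x,\om)=r^2/d(x,\om')$ recorded in Section~\ref{subsect:Ptolemy_spaces}, we compute
\[
|s(x)\om'|=d(s(x),\om')=d(s(x),s(\om))=(s^\ast d)(x,\om)=\frac{r^2}{|x\om'|},
\]
which rearranges to $|x\om'|\cdot|s(x)\om'|=r^2$ for every $x\in X\sm\{\om,\om'\}$. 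The particular case $x=a$, combined with $s(a)=a'$ (because $s$ also lies on the line $(a,a')$), yields the stated equality $|a\om'|\cdot|a'\om'|=r^2$.

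There is no substantive obstacle: once the two lines through $s$ are translated into the information that $s(\om)=\om'$ and $s$ is the s-inversion attached to the $d$-sphere $S_r^d(\om')$, the claim is a direct specialization of the inversion formula of Lemma~\ref{lem:sinversion_minversion} at $y=\om$. The only care needed is in handling the infinitely remote point $\om$ via the standard $\infty$-conventions of Section~\ref{subsect:moebius_structures}, which is exactly what lets the ratio on the right-hand side collapse to $r^2/|x\om'|$.
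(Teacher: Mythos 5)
Your proof is correct and follows essentially the same route as the paper: identify $s$ via the upper half-space model (equivalently, via Proposition~\ref{pro:sinversion_strict}) as the s-inversion $\phi_{\om,\om',S}$ with $S=S_r^d(\om')$, then specialize the formula of Lemma~\ref{lem:sinversion_minversion} at $y=\om$ using the $\infty$-conventions. The paper's proof is just a more compressed version of the same argument.
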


\begin{proof} By Proposition~\ref{pro:sinversion_strict}, the space inversion 
$s$
can be represented as
$s=\phi_c$
for 
$c=(\om',\om,T)$,
where
$T$
is the sphere between
$\om'$, $\om$
of radius
$r$
in 
$X_\om$.
Since
$\om'=s(\om)$,
we obtain, using Lemma~\ref{lem:sinversion_minversion} applied to 
$s$,
that
$|s(x)\om'|=|s(x)s(\om)|=\frac{r^2}{|x\om'|}$
for every
$x\in X\sm\{\om,\om'\}$,
\end{proof}

\begin{lem}\label{lem:dist_end_points} Given line
$\ga=(a_0,a_1)\sub Y$
and its points
$s_0=(\om_0,r_0)$, $s_1=(\om_1,r_1)\in\ga$
with distinct
$\om_0$, $\om_1\in X_\om$
in the order
$a_0,s_0,s_1,a_1$,
we have 
\begin{equation}\label{eq:dist_center_estimate}
|a_i\om_i|<\frac{4r_i^2}{|\om_0\om_1|}
\end{equation}
for 
$i=0,1$
and
$\max\{r_0,r_1\}\le|\om_0\om_1|/4$.
\end{lem}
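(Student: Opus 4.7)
The plan is to reduce the estimate to a direct trigonometric computation in the hyperbolic plane $\hyp^2$. By property (E) and Corollary \ref{cor:weak_unique}, there is a unique Ptolemy circle $\si$ through the three distinct points $\om, a_0, a_1$ (distinctness of $\om$ from $a_0, a_1$ follows from $\om_0 \neq \om_1$: if, say, $\om = a_0$, then $\om_i = s_i(\om) = a_1$ for both $i$, contradicting distinctness). By Proposition \ref{pro:sinversion_strict}, each $s_i$ is a strict s-inversion admitting a representation of the form $\phi_{(a_0, a_1, T_i)}$, so $s_i$ preserves $\si$. Hence $s_0, s_1 \in Y_\si$ and $\om_i = s_i(\om) \in \si_\om$. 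By Proposition \ref{pro:hyp2_fill}, $Y_\si = \si_\om \times \R_+$ is $\rho$-isometric to $\hyp^2$ in its upper half-plane form, and the $\rho$-geodesic $\ga$ realizes as the semicircle in this model with boundary endpoints $a_0, a_1 \in \si_\om$.

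Identify $\si_\om$ isometrically with $\R$ so that $a_0 = c - R$, $a_1 = c + R$ with $R = |a_0 a_1|/2$, and parameterize $\ga$ by $s(\theta) = (c + R\cos\theta, R\sin\theta)$ for $\theta \in (0, \pi)$, where $\theta = \pi$ corresponds to $a_0$ and $\theta = 0$ to $a_1$. The order $a_0, s_0, s_1, a_1$ yields $\theta_0 > \theta_1$, so $\cos\theta_1 > \cos\theta_0$ and $D := |\om_0\om_1| = R(\cos\theta_1 - \cos\theta_0) > 0$. A direct computation gives $|a_0\om_i| = R(1+\cos\theta_i)$, $|a_1\om_i| = R(1-\cos\theta_i)$, and $r_i = R\sin\theta_i$, thereby recovering the identity $|a_0\om_i|\cdot|a_1\om_i| = r_i^2$ of Lemma \ref{lem:level_equality}. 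The target inequality $|a_0\om_0| < 4r_0^2/D$ becomes $(1+\cos\theta_0)(\cos\theta_1 - \cos\theta_0) < 4(1-\cos^2\theta_0)$; dividing by the positive factor $(1+\cos\theta_0)$ reduces it to $\cos\theta_1 < 4 - 3\cos\theta_0$, which is trivially true since $\cos\theta_1 < 1 \le 4 - 3\cos\theta_0$. The symmetric inequality $|a_1\om_1| < 4r_1^2/D$ reduces analogously to $-\cos\theta_0 - 3\cos\theta_1 < 4$, again trivial.

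The main obstacle is the reduction step: matching the coordinate description of $s_i$ in the ambient half-space model $Y = X_\om \times \R_+$ with its coordinates in the half-plane model $Y_\si = \si_\om \times \R_+$. This requires (a) that $\si_\om$ be isometrically embedded in $X_\om$ as a Ptolemy line, so the metric identification is compatible, and (b) that the $s_i$-invariant sphere in $X_\om$ intersect $\si_\om$ in the $s_i|_\si$-invariant two-point set of the same radius, so the value $r_i$ is the same in both models. The auxiliary hypothesis $\max\{r_0, r_1\} \le D/4$ does not enter the proof of the inequality itself but is retained in the statement for use in later applications.
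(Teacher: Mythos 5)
The argument collapses at its very first step: you invoke ``property (E) and Corollary~\ref{cor:weak_unique}'' to produce a Ptolemy circle $\si$ through the \emph{three} points $\om,a_0,a_1$. The enhanced existence property only guarantees a Ptolemy circle through any \emph{two} points, and Corollary~\ref{cor:weak_unique} gives uniqueness, not existence. In fact the three-point existence property cannot hold here in general: as quoted in the Introduction, by \cite{FS2} a compact Ptolemy space in which every triple lies on a Ptolemy circle is M\"obius equivalent to $\wh\R^n$, whereas the whole point of this lemma is the case of the other rank one symmetric spaces (equivalently $p>0$ in Proposition~\ref{pro:nonintegrable_canonical_distribution}), where a generic triple -- e.g.\ in $\di\C\hyp^n$ a geodesic's two endpoints together with a third boundary point of the complex geodesic it spans -- lies on no Ptolemy circle. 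Without $\si$ there is no plane $Y_\si\cong\hyp^2$ containing $\ga$ \emph{and} having $\om$ on its boundary circle, so $\om_i=s_i(\om)$ cannot be placed on a common Ptolemy line with $a_0,a_1$ in $X_\om$, and the entire trigonometric reduction is unavailable. A telling symptom is your closing remark that the hypothesis $\max\{r_0,r_1\}\le|\om_0\om_1|/4$ ``does not enter the proof'': in the genuine setting it must, precisely because $a_0,a_1$ need not lie anywhere near the segment $[\om_0,\om_1]$ a priori.

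The paper's proof avoids any planar reduction. It takes from Lemma~\ref{lem:level_equality} only the identity $|a_0\om_i|\cdot|a_1\om_i|=r_i^2$, uses the hypothesis $|\om_0\om_1|\ge 4\max\{r_0,r_1\}$ to separate the balls $B_{r_0}(\om_0)$ and $B_{r_1}(\om_1)$, locates $a_i$ inside $B_{r_i}(\om_i)$ by a cross-ratio comparison $\langle a_0,x_0,x_1,a_1\rangle>1$ with $x_i\in S_i\cap T_i$ (Lemma~\ref{lem:sphere_intersection}), and then extracts the bound from the resulting quadratic inequality $|a_i\om_i|(|a_i\om_i|-|a_0a_1|)+r_i^2\ge 0$. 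If you want to salvage a circle-based picture, the only circle you are entitled to is one through the pair $a_0,a_1$, and $\om$ will in general not lie on it; so you must work with the metric of $X_\om$ directly, as the paper does.
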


\begin{proof} The space inversion
$s_i$
lies on the lines
$(a_0,a_1)$
and
$(\om,\om_i)$
in
$Y$, $i=0,1$.
Thus by Lemma~\ref{lem:level_equality},
\begin{equation}\label{eq:product_r_2}
|a_0\om_i|\cdot|a_1\om_i|=r_i^2.
\end{equation}
We have
$|\om_0\om_1|\ge 4r$
for 
$r=\max\{r_0,r_1\}$,
thus
$\dist(B_{r_0}(\om_0),B_{r_1}(\om_1))\ge|\om_0\om_1|-(r_0+r_1)\ge 2r$
for the balls in
$X_\om$
of radii
$r_0$, $r_1$
centered at
$\om_0$, $\om_1$
respectively. Therefore, one of the points
$a_0$, $a_1$
lies in
$B_{r_0}(\om_0)$,
while the other in 
$B_{r_1}(\om_1)$.
We show
$|a_i\om_i|<r_i$
for 
$i=0,1$.

Indeed, let
$S_i$, $T_i\sub X$
be the spheres between
$a_0$, $a_1$,
respectively
$\om$, $\om_i$
through
$s_i$, $i=0,1$. 
The order
$a_0,s_0,s_1,a_1$
on
$l$
tells us that the cross-ratio
$$\langle a_0,x_0,x_1,a_1\rangle>1$$
for any 
$x_0\in S_0$, $x_1\in S_1$.

By Lemma~\ref{lem:sphere_intersection} there is
$x_i\in S_i\cap T_i$.
The assumption 
$|a_0\om_1|<r_1$
implies
$|a_1\om_0|<r_0$.
Then
$|a_0x_0|,\ |a_1x_1|>\dist(B_r(\om_0),B_r(\om_1))\ge 2r$
and
$|a_0x_1|<2r_1$, $|a_1x_0|<2r_0$.
Therefore
$$\langle a_0,x_0,x_1,a_1\rangle
  =\frac{|a_0x_1|\cdot|a_1x_0|}{|a_0x_0|\cdot|a_1x_1|}<\frac{4r_0r_1}{4r^2}\le 1,$$
a contradiction. That is,
$a_i\in B_{r_i}(\om_i)$
for 
$i=1,2$.

We have
$|a_0a_1|\ge|\om_0\om_1|-2r\ge 2r$.
By the triangle inequality,
$|a_i\om_j|\ge|a_0a_1|-|a_j\om_j|$.
Using (\ref{eq:product_r_2}), we obtain
$|a_i\om_i|(|a_i\om_i|-|a_0a_1|)+r_i^2\ge 0$
for 
$i=0,1$.
Solving this quadratic inequality, we find
$$|a_i\om_i|\le\frac{|a_0a_1|}{2}\left(1-\sqrt{1-\frac{4r_i^2}{|a_0a_1|^2}}\right)
  <\frac{2r_i^2}{|a_0a_1|}\le\frac{4r_i^2}{|\om_0\om_1|},$$
because
$|\om_0\om_1|\le|a_0a_1|+2r\le 2|a_0a_1|$.
\end{proof}

\begin{pro}\label{pro:filling_dist}  Given 
$s_0=(\om_0,r_0)$, $s_1=(\om_1,r_1)\in Y$
with distinct
$\om_0$, $\om_1\in X_\om$,
we have
$$\exp\left(\frac{1}{2}\rho(s_0,s_1)\right)=\frac{|\om_0\om_1|}{\sqrt{r_0r_1}}(1+O(r)),$$
where for 
$r=\max\{r_0,r_1\}\le|\om_0\om_1|/4$
the function
$O(r)$
can be estimated as
$|O(r)|\le\frac{c_0r}{|\om_0\om_1|}$
with some universal constant
$c_0$.
\end{pro}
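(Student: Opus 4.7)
The plan is to work in the upper half-space model $Y=X_\om\times\R_+$ and explicitly compute the cross-ratio defining $\rho$. By Proposition~\ref{pro:unique_line_filling}, the two points $s_0,s_1$ lie on a unique line $\ga=(a_0,a_1)\sub Y$ with endpoints $a_0,a_1\in X$. Writing $s_i=\phi_{a_0,a_1,S_i}=\phi_{\om,\om_i,T_i}$ where $S_i$ is the $s_i$-invariant sphere between $a_0,a_1$ and $T_i$ the $s_i$-invariant sphere between $\om,\om_i$ (of radius $r_i$ in the metric of $X_\om$), the definition of $\rho$ in sect.~\ref{subsect:dist_filling} gives
$$\rho(s_0,s_1)=\bigl|\ln\langle a_0,x_0,x_1,a_1\rangle\bigr|$$
for any choice $x_i\in S_i$; I will choose $x_i\in S_i\cap T_i$, which is nonempty by Lemma~\ref{lem:sphere_intersection}. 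The point of this choice is that then $|x_i\om_i|=r_i$, so $x_i$ is concentrated near $\om_i$, which is exactly the regime controlled by the previous lemmas.

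Next I will gather four estimates with uniform constants, assuming $r\le|\om_0\om_1|/4$. From Lemma~\ref{lem:dist_end_points} the endpoints $a_i$ satisfy $|a_i\om_i|\le 4r_i^2/|\om_0\om_1|$, hence both $a_i$ and $x_i$ sit in $B_{2r_i}(\om_i)$. Using the triangle inequality and the relation $|\om_0\om_1|\ge 4r$, the ``far'' distances obey
$$|a_0x_1|=|\om_0\om_1|\bigl(1+O(r/|\om_0\om_1|)\bigr),\qquad
  |a_1x_0|=|\om_0\om_1|\bigl(1+O(r/|\om_0\om_1|)\bigr).$$
For the ``near'' distances I use Lemma~\ref{lem:level_equality} applied at $s_i$ (which lies on both $(a_0,a_1)$ and $(\om,\om_i)$) to get $|a_0\om_i|\cdot|a_1\om_i|=r_i^2$; combined with $|a_j\om_i|=|\om_0\om_1|(1+O(r/|\om_0\om_1|))$ for $j\neq i$ (from the $B_{2r}$-localisation), this yields $|a_0\om_0|,|a_1\om_1|=\tfrac{r_0^2\ \text{or}\ r_1^2}{|\om_0\om_1|}(1+O(r/|\om_0\om_1|))$, and finally $|a_0x_0|=r_0(1+O(r/|\om_0\om_1|))$, $|a_1x_1|=r_1(1+O(r/|\om_0\om_1|))$ after another application of the triangle inequality with $|x_i\om_i|=r_i$.

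Substituting these four asymptotics into
$$\langle a_0,x_0,x_1,a_1\rangle=\frac{|a_0x_1|\,|a_1x_0|}{|a_0x_0|\,|a_1x_1|}$$
gives $\langle a_0,x_0,x_1,a_1\rangle=\frac{|\om_0\om_1|^2}{r_0r_1}(1+O(r/|\om_0\om_1|))$. Since $r\le|\om_0\om_1|/4$ forces this cross-ratio to exceed $1$, the absolute value in the definition of $\rho$ is not needed, and taking the square root of both sides produces the asserted formula
$$\exp\!\bigl(\tfrac12\rho(s_0,s_1)\bigr)=\frac{|\om_0\om_1|}{\sqrt{r_0r_1}}\bigl(1+O(r/|\om_0\om_1|)\bigr),$$
with $|O(r)|\le c_0 r/|\om_0\om_1|$ for a universal constant $c_0$ traceable through the four estimates. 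The main obstacle is purely bookkeeping: one must keep all four individual error terms of the form $\tfrac{r}{|\om_0\om_1|}$ (rather than something weaker like $\tfrac{r_i^2}{|\om_0\om_1|^2}$) and verify that the multiplicative and divisive combinations do not blow up; this is guaranteed by the assumption $r\le|\om_0\om_1|/4$, which bounds the error factors away from $0$ and $\infty$ with explicit constants.
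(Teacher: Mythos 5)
Your proposal is correct and follows essentially the same route as the paper's own proof: both pick $x_i\in S_i\cap T_i$ via Lemma~\ref{lem:sphere_intersection}, control $|a_i\om_i|$ through Lemma~\ref{lem:dist_end_points} (whose proof already rests on the identity $|a_0\om_i|\cdot|a_1\om_i|=r_i^2$ from Lemma~\ref{lem:level_equality} that you re-invoke), and then feed the four resulting asymptotics into the cross-ratio $\langle a_0,x_0,x_1,a_1\rangle$. The only cosmetic difference is that you make explicit the observation that the cross-ratio exceeds $1$ so the absolute value is harmless, which the paper leaves implicit.
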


\begin{proof} Let 
$\ga=(a_0,a_1)$
be the line through
$s_0$, $s_1$
with the order
$a_0,s_0,s_1,a_1$
of our points.
By Proposition~\ref{pro:sinversion_strict}, the space inversion
$s_i$
can be represented as
$s_i=\phi_{b_i}=\phi_{c_i}$
for 
$b_i=(a_0,a_1,S_i)$, $c_i=(\om,\om_i,T_i)$,
where
$S_i$
is a sphere between
$a_0$, $a_1$,
and
$T_i$
a sphere between
$\om$, $\om_i$, $i=0,1$.
By the assumption,
$T_i$
is the sphere in
$X_\om$
of radius
$r_i$
centered at
$\om_i$.
By Lemma~\ref{lem:sphere_intersection} there is
$x_i\in S_i\cap T_i$.
Then
$|x_ia_i|=|x_i\om_i|+O(r_i^2)=r_i+O(r_i^2)$
with
$|O(r_i^2)|<\frac{4r^2}{|\om_0\om_1|}$
according (\ref{eq:dist_center_estimate}). Next, for 
$i\neq j$
we have
$|x_ia_j|=|\om_0\om_1|+O_i(r_0,r_1)$
with
$|O_i(r_0,r_1)|\le r_i+|a_j\om_j|<r+\frac{4r^2}{|\om_0\om_1|}$
again by (\ref{eq:dist_center_estimate}).
We conclude that
$$\frac{|x_ia_j|}{|x_ia_i|}=\frac{|\om_0\om_1|}{r_i}\left(1+O(r)\right),$$
where
$|O(r)|\le\frac{c_1r}{|\om_0\om_1|}$
for some universal constant
$c_1$.

We have
$\rho(s_0,s_1)=|\ln\langle a_0,x_0,x_1,a_1\rangle|$
for any 
$x_i\in S_i$,
and
$$\langle a_0,x_0,x_1,a_1\rangle=\frac{|a_0x_1|\cdot|x_0a_1|}{|a_0x_0|\cdot|x_1a_1|}
    =\frac{|\om_0\om_1|^2}{r_0r_1}\left(1+O(r)\right).$$
Thus
$$\exp\left(\frac{1}{2}\rho(s_0,s_1)\right)
  =\frac{|\om_0\om_1|}{\sqrt{r_0r_1}}\left(1+O(r)\right)$$
with
$|O(r)|\le\frac{c_0r}{|\om_0\om_1|}$
for some universal constant
$c_0$.
\end{proof}

For 
$s$, $s'\in Y$
we define their {\em Gromov product} w.r.t. a Busemann function
$b$
by
$$(s|s')_b=\frac{1}{2}\left(b(s)+b(s')-\rho(s,s')\right).$$

\begin{cor}\label{cor:gromov_prod_dist} For the Busemann function
$b:Y\to\R$
centered at
$\om\in X$
and defined by
$b(t)=\ln\frac{1}{r_t}$
for 
$t=(t(\om),r_t)\in X_\om\times\R_+$
and for each distinct
$a_0$, $a_1\in X_\om$
there exists a limit
$$(a_0|a_1)_b:=\lim_{r\to 0}(s_0|s_1)_b\ \textrm{and}\ e^{-(a_0|a_1)_b}=|a_0a_1|_\om,$$
where
$s_0$, $s_1\in H_r\cap(a_0,a_1)$.
In particular, the Riemannian distance
$\rho$
on the filling
$Y=\fil X$
induces on
$X=\di Y$
the initial M\"obius structure of
$X$.
\end{cor}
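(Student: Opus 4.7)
The plan is to evaluate the Gromov product $(s_0|s_1)_b$ explicitly in the upper half-space model $Y=X_\om\times\R_+$, using Proposition~\ref{pro:filling_dist} for $\rho(s_0,s_1)$ and the formula $b(t)=\ln(1/r_t)$ on the horospheres $H_r=X_\om\times\{r\}$, and then show that the ``error'' terms vanish in the limit $r\to 0$. The main input is that the two intersection points of the line $(a_0,a_1)\subset Y$ with $H_r$ are controlled by Lemma~\ref{lem:dist_end_points}.

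Fix distinct $a_0,a_1\in X_\om$, let $\ell=(a_0,a_1)\subset Y$ and pick $r>0$ small enough that $\ell\cap H_r$ consists of two points $s_0,s_1$, written as $s_i=(\om_i,r)\in X_\om\times\R_+$ in the order $a_0,s_0,s_1,a_1$. First I would observe that $b(s_0)=b(s_1)=\ln(1/r)$ by the definition of $b$, and that Lemma~\ref{lem:dist_end_points} gives
\[
|a_i\om_i|<\frac{4r^2}{|\om_0\om_1|}\qquad(i=0,1),
\]
so as $r\to 0$ one has $\om_i\to a_i$ and in particular $|\om_0\om_1|\to|a_0a_1|_\om$.

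Next I would apply Proposition~\ref{pro:filling_dist} to get
\[
\tfrac{1}{2}\rho(s_0,s_1)=\ln\frac{|\om_0\om_1|}{r}+\ln(1+O(r)),
\qquad |O(r)|\le\tfrac{c_0 r}{|\om_0\om_1|},
\]
and substitute into the definition of the Gromov product:
\[
(s_0|s_1)_b=\tfrac{1}{2}\bigl(b(s_0)+b(s_1)-\rho(s_0,s_1)\bigr)
=\ln\frac{1}{|\om_0\om_1|}-\ln(1+O(r)).
\]
Letting $r\to 0$, the first summand tends to $\ln(1/|a_0a_1|_\om)$ by continuity of the metric on $X_\om$ combined with $\om_i\to a_i$, and the error $\ln(1+O(r))$ tends to $0$ because $|\om_0\om_1|$ stays bounded below. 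Hence the limit $(a_0|a_1)_b$ exists and equals $\ln(1/|a_0a_1|_\om)$, which gives $e^{-(a_0|a_1)_b}=|a_0a_1|_\om$.

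For the ``in particular'' clause, I would then note that by Corollary~\ref{cor:filling_symmetric}, $Y$ is a rank one symmetric space of non-compact type with $\di Y=X$, so its canonical M\"obius structure is generated by metrics $d_b(\xi,\xi')=e^{-(\xi|\xi')_b}$ with $b$ a Busemann function on $Y$. The identity just proved shows that for the specific Busemann function $b$ centered at $\om\in X$ one recovers the prescribed metric $|{\cdot}\,{\cdot}|_\om$ on $X_\om$; since this metric belongs to the initial M\"obius structure by choice, and the canonical M\"obius structure is generated by a single such $d_b$, the two M\"obius structures on $X$ coincide. The only delicate point in the plan is checking that the error term in Proposition~\ref{pro:filling_dist} is uniform enough to disappear in the limit, but the explicit bound $|O(r)|\le c_0r/|\om_0\om_1|$ with $|\om_0\om_1|\to|a_0a_1|_\om>0$ makes this immediate.
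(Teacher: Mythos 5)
Your proposal is correct and follows essentially the same route as the paper: work in the upper half-space model, use Lemma~\ref{lem:dist_end_points} to get $\om_i\to a_i$, and substitute the asymptotic of Proposition~\ref{pro:filling_dist} into the definition of the Gromov product to obtain $(s_0|s_1)_b=-\ln|a_0a_1|_\om+o(1)$. Your treatment is merely more explicit than the paper's (which compresses the computation into one line), and your expanded justification of the ``in particular'' clause is consistent with how the paper uses this corollary later.
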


\begin{proof} In the upper half-space model for 
$Y$
with base
$X_\om$,
we have
$s_i=(\om_i,r)$
for some
$\om_i\in X_\om$.
We can assume that 
$s_i$
lie on the line
$(a_0,a_1)$
in the order
$a_0,s_0,s_1,a_1$.
By Lemma~\ref{lem:dist_end_points}, 
$\om_i\to a_i$
as
$r\to 0$.
With Proposition~\ref{pro:filling_dist} we have
$$(s_0|s_1)_b=\ln\frac{1}{r}-\frac{1}{2}\rho(s_0,s_1)=-\ln|a_0a_1|_\om+o(1)$$
as
$r\to 0$,
which implies the corollary.
\end{proof}

\subsection{Sectional curvature of $Y$}
\label{subsect:sect_curvature}

To complete the proof of Theorem~\ref{thm:moebius}, it remains
to check that the M\"obius structure of
$X=\di Y$
is canonical. By Corollary~\ref{cor:gromov_prod_dist}
it suffices to check that the maximum of sectional curvatures of
$Y$
is
$-1$.

We already mentioned that the assumption (E) of Theorem~\ref{thm:moebius}
also serves as a normalization condition.

\begin{pro}\label{pro:sect_curvature} Sectional curvatures 
$K_\si$
of
$Y$
bounded above by
$-1$,
moreover
$\max K_\si=-1$.
\end{pro}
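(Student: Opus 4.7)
The plan combines two structural ingredients: the totally geodesic hyperbolic planes $Y_\si \cong \hyp^2$ in $Y$ from Proposition~\ref{pro:hyp2_fill}, which provides $\max K_\si \ge -1$; and the Gromov-product identity $e^{-(a|a')_b} = |aa'|_\om$ from Corollary~\ref{cor:gromov_prod_dist}, which combined with the Ptolemy property of $X$ forces $K_\si \le -1$ everywhere.

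For the lower bound, I would first verify that each $Y_\si$ is a totally geodesic submanifold of $(Y,\rho)$. Given $s, t \in Y_\si$, the unique $\rho$-geodesic of $Y$ through them is the line $(a,a') \sub Y$ of Proposition~\ref{pro:unique_line_filling}; since $s, t$ both preserve $\si$ and the antipodal fixed pair $(a,a')$ of the composition $s \circ t$ is unique, necessarily $a, a' \in \si$, whence the whole line lies in $Y_\si$. Being totally geodesic with intrinsic Gauss curvature $-1$, $Y_\si$ forces the ambient sectional curvature to equal $-1$ on every 2-plane tangent to it, yielding $\max K_\si \ge -1$.

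For the upper bound $K_\si \le -1$, the strategy is to prove $(Y,\rho)$ is CAT($-1$), which for a Riemannian manifold is equivalent to $K_\si \le -1$. Since $Y$ is a Hadamard manifold (from Corollary~\ref{cor:filling_symmetric}) and its visual boundary $\di Y = X$ equipped with the Bourdon metric at visual parameter $1$ agrees, via Corollary~\ref{cor:gromov_prod_dist}, with the initial Ptolemy M\"obius structure, a rigidity statement of Foertsch--Schroeder type in the converse direction (Ptolemy visual-parameter-$1$ boundary of a Hadamard space implies CAT($-1$)) then yields the bound. Combined with the lower bound, $\max K_\si = -1$.

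I expect the main obstacle to be establishing this converse implication in a classification-free manner. An alternative route, better suited to the M\"obius framework of the paper, is a direct Jacobi-field argument: write $\max K_\si = -\kappa^2$ and assume by contradiction $\kappa < 1$; then the natural Bourdon metric on $\di Y$ has visual parameter $\kappa$, and Corollary~\ref{cor:gromov_prod_dist} forces $|aa'|_\om = d_B(a,a')^{1/\kappa}$ with exponent $1/\kappa > 1$. One then locates a test configuration where this super-linear-power relation violates the triangle (equivalently, Ptolemy) inequality of $|\cdot|_\om$: on fibers of the canonical projection $\pi_\om : X_\om \to B_\om$ when the canonical distribution is non-integrable (Corollary~\ref{cor:nonintegrability}), and by reduction to super-additivity of $t \mapsto t^{1/\kappa}$ on the Euclidean $X_\om = \R^n$ in the integrable case (Proposition~\ref{pro:nonintegrable_canonical_distribution}). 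The refined asymptotic formula of Proposition~\ref{pro:filling_dist} provides the quantitative input needed to identify the visual parameter with $\kappa$.
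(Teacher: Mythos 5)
Your lower bound is fine and coincides with the paper's: the totally geodesic copies $Y_\si\cong\hyp^2$ from Proposition~\ref{pro:hyp2_fill} force, via the Gauss equation, $K_\si=-1$ on every $2$-plane tangent to some $Y_\si$, so $\max K_\si\ge -1$. The genuine gap is in the upper bound $K_\si\le -1$, which is the actual content of the proposition. Your primary route rests on a ``converse Foertsch--Schroeder'' statement (Ptolemy visual boundary at parameter $1$ implies $\CAT(-1)$) that is neither in \cite{FS1} nor anywhere in this paper's toolkit; you correctly flag it as the main obstacle, but that means the proof is not closed. Your fallback sketch has a directional problem: writing $\max K_\si=-\kap^2$ with $\kap<1$, the hypothesis $K\le-\kap^2$ only tells you that $e^{-\kap(\cdot|\cdot)}$ \emph{is} a metric; to reach a contradiction with Corollary~\ref{cor:gromov_prod_dist} you must exhibit a concrete triple on which $e^{-(\cdot|\cdot)_b}=|\cdot|_\om$ \emph{violates} the triangle (or Ptolemy) inequality, and that requires knowing where and how the bound $-\kap^2$ is attained (e.g.\ on a totally geodesic surface or via controlled divergence of a specific pair of geodesics). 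Neither the ``fiber'' configuration nor the ``integrable'' reduction you name supplies this: on the $\K$-line fibers the visual metric is a root, not a power $>1$, of an additive quantity, and in the integrable case $Y=\hyp^{n+1}$ so there is nothing to test. As written, no contradiction is actually derived.

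The paper's argument is entirely different and is worth contrasting. It realizes $Y$ as the solvable Lie group $G=N_\om\ltimes\Ga_{o,\om}$ acting simply transitively by isometries, identifies $T_{s_0}Y$ with the Lie algebra $\mathfrak g$, and invokes Heintze \cite{He}: since $\nabla\cR=0$, the symmetric part of $\ad A_0|\mathfrak n$ has eigenvalues $i\la$ on the graded pieces $\mathfrak a_i$ of the lower central series, and the corresponding sectional curvatures $K(u,A_0)$ equal $-i^2\la^2$, $i=1,\dots,m$. The normalization $\la=1$ is then forced exactly by your lower-bound ingredient: vectors in $\mathfrak a_1$ are tangent to Ptolemy lines, so the $2$-direction $(u,A_0)$ is tangent to an embedded $\hyp^2$ of curvature $-1$. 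Hence all these curvatures are $-i^2\le -1$ and the maximum $-1$ is attained. If you want to salvage a boundary-based proof, you would essentially have to reprove this eigenvalue rigidity in M\"obius-geometric terms, which is a substantial missing step rather than a routine verification.
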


\begin{proof} It suffices to check that
$K_\si\le-1$
because
$Y$
contains geodesic isometric copies of the hyperbolic plane
$\hyp^2$.
We assume that for every
$\om\in X$
the nilpotent group
$N=N_\om$
of shifts of
$X_\om$
is not abelian since otherwise
$X=\wh\R^n$
by Proposition~\ref{pro:nonintegrable_canonical_distribution}
and
$Y=\hyp^{n+1}$.

We fix 
$\om\in X$, $o\in X_\om$.
Then the homothety group
$\Ga=\Ga_{o,\om}$
acts on 
$N$
by conjugation,
$\ga(\eta)=\ga\circ\eta\circ\ga^{-1}$
for every
$\ga\in\Ga$, $\eta\in N$.
Thus
$G=N\ltimes\Ga<\aut X$
is a solvable group of M\"obius automorphisms of
$X$,
which acts simply transitively by isometries on
$Y$.
We fix 
$s_0\in Y$, 
e.g.
$s_0=(o,1)\in X_\om\times\R_+$,
and identify 
$G$
with
$Y$
by
$g\mapsto g^\ast(s_0)=g\circ s_0\circ g^{-1}$
for 
$g\in G$.
Then the tangent space
$T_{s_0}Y$
is endowed with the Lie algebra structure 
$\mathfrak g$
of
$G$.

Let 
$H_1=X_\om\times\{1\}\sub Y$
be the horosphere through
$s_0$
in the upper half-space model. Then the tangent space
$T_{s_0}H_1\sub T_{s_0}Y$
is identified with the Lie algebra of
$N$, $[\mathfrak{g},\mathfrak{g}]=\mathfrak{n}$.
Let 
$\{\mathfrak{g^i}\}$
be the lower central series of
$\mathfrak{n}$
defined by
$\mathfrak{g^1}=\mathfrak{n}$, $\mathfrak{g^{i+1}}=[\mathfrak{n},\mathfrak{g^i}]$.
Decompose
$\mathfrak{g^i}$
orthogonally as
$\mathfrak{g^i}=\mathfrak{a_i}+\mathfrak{g^{i+1}}$.
Since
$\mathfrak{n}$
is nilpotent, we have
$\mathfrak{n}=\sum_1^m\mathfrak{a_i}$
for some 
$m\in\N$.
Note that
$m\ge 2$
by the assumption that 
$N$
is not abelian. We fix a unit vector 
$A_0\in\mathfrak{g}$
which is orthogonal to
$\mathfrak{n}$.
Since
$Y$
is symmetric, its curvature tensor
$\cR$
is parallel,
$\nabla\cR=0$.
Then by \cite{He} the symmetric part
$D_0$
of the endomorphism
$\ad A_0|\mathfrak{n}$
has the eigenvalues
$i\cdot\la$,
$D_0|\mathfrak{a_i}=i\cdot\la\cdot\id$
for some
$\la>0$
and
$i=1,\dots,m$,
see \cite[Proposition 3, (iib)]{He}. By \cite[Corollary]{He} this implies
that nonzero eigenvalues of the curvature operator 
$u\mapsto\cR(u,A_0,A_0)$,
i.e. the respective sectional curvatures
$K_\si$,
are
$-i^2\la^2$, $i=1,\dots,m$.
Note that the subspace
$\mathfrak{g^2}=[\mathfrak{n},\mathfrak{n}]\in T_oX_\om$
is tangent to the 
$\K$-line
through
$o$,
any nonzero vector
$u\in\mathfrak{a_1}\sub T_oX_\om$
is tangent to a Ptolemy line in
$X_\om$
through
$o$.
It means that the 2-direction
$(u,A_0)\sub T_{s_0}Y$
is tangent to a geodesic isometric copy of
$\hyp^2$
in
$Y$.
Thus
$\la=1$
and hence
$\max K_\si=-1$.
\end{proof}

\begin{rem}\label{rem:pinching} It is proved in
\cite[Proposition 3]{He} that actually
$m=2$
and thus 
$Y$
is
$1/4$-pinched, $-4\le K_\si\le -1$.
Again, the proof does not use the classification of 
rank one symmetric spaces.
\end{rem}

\bigskip
\begin{tabbing}

Sergei Buyalo,\hskip11em\relax \= Viktor Schroeder,\\

St. Petersburg Dept. of Steklov \>
Institut f\"ur Mathematik, Universit\"at \\

Math. Institute RAS, Fontanka 27, \>
Z\"urich, Winterthurer Strasse 190, \\

191023 St. Petersburg, Russia\>  CH-8057 Z\"urich, Switzerland\\

{\tt sbuyalo@pdmi.ras.ru}\> {\tt viktor.schroeder@math.uzh.ch}\\

\end{tabbing}

\end{document}